\mathchardef\@tempa \count255 }
\newtheorem{thm}{Theorem}[section]
\newtheorem{lem}[thm]{Lemma}
\newtheorem{defn}[thm]{Definition}
\newtheorem{rem}[thm]{Remark}
\def \a{{\alpha}}
\def \G{{\Gamma}}
\def \d{{\mathrm{d}}}
\newcommand{\Om}{\Omega}
\newcommand{\cF}{\mathcal{F}}
\newcommand{\RR}{\mathbb{R}}
\newcommand{\PP}{\mathbb{P}}
\newcommand{\E}{\mathbb{E}}
\newcommand{\bean}{\begin{eqnarray*}}
	\newcommand{\eean}{\end{eqnarray*}}
\newcommand{\EE}{\mathbb{E}}
\newcounter{bean}
\newcommand{\benuma}{\setlength{\labelwidth}{.25in}
	
	\begin{list}
		{(\alph{bean})}{\usecounter{bean}}}
	\newcommand{\eenuma}{\end{list}}
\begin{document}
	\title[Volterra type McKean-Vlasov stochastic integral equations]{Asymptotic behaviors for Volterra type McKean-Vlasov stochastic integral equations with small noise }
	\author[S. Liu]{Shanqi Liu}
	\address{School of Mathematical Science, Nanjing Normal University, Nanjing 210023, China}
	\email{shanqiliumath@126.com}
	
	\author[Y. Hu]{Yaozhong Hu}
	\address{Department of Mathematical and Statistical Sciences, University of Alberta, Edmonton T6G 2G1, Canada}
	\email{yaozhong@ualberta.ca}
	
	\author[H. Gao]{Hongjun Gao}
	\address{School of Mathematics, Southeast University, Nanjing 211189, China}
	\email{hjgao@seu.edu.cn (Corresponding author)}
	\vspace{-2cm}
	\maketitle
	\vspace{-1cm}
	\begin{abstract}
		This  work is devoted to studying asymptotic behaviors for Volterra type McKean-Vlasov stochastic differential equations with small noise. By applying the weak convergence approach, we establish the large and moderate deviation principles. In addition,  we  obtain the central limit theorem and find the  
		Volterra integral equation  satisfied by the limiting process, which  involves  the Lions derivative of the drift coefficient.
		
		\medskip\noindent\textbf{Keywords.}Stochastic Volterra integral equations, McKean-Vlasov SDEs, Large deviation principles, Central limit Theorem, Moderate deviation principles.
		\smallskip
		
		\noindent\textbf{AMS 2020 Subject Classifications.} 60H20; 60F10; 60F05.
	\end{abstract}
	%\end{adjustwidth}
	\section{Introduction}
		Consider the following Volterra type McKean-Vlasov stochastic differential equations (SDEs) on $\mathbb{R}^{d}$:
	\begin{align}\label{Volterra MV eq}
		X^\varepsilon_{t,\xi}=\xi+\int_{0}^{t}K_1(t,s)b(s,X^\varepsilon_{s,\xi},\mathcal{L}_{X^\varepsilon_{s,\xi}})\d s+\sqrt{\varepsilon}\int_{0}^{t}K_2\left(t,s\right)\sigma(s,X^\varepsilon_{s,\xi},\mathcal{L}_{X^\varepsilon_{s,\xi}})\d W_s,\quad 0\le t\le T\,, 
	\end{align}
	where $\mathcal{L}_{X^{\varepsilon}_{t,\xi}}
	\in \mathcal{P}_2(\mathbb{R}^d)$ denotes the probability law of $X^{\varepsilon}_{t,\xi}$, $\varepsilon\in(0, 1]$ is a small parameter, $K_i(t,s)$, $i = 1, 2$ are two positive functions defined on the simplex $\Delta_T=\left\{0\le s<t\le T\right\}$ which may be
	singular, $W_t$
	is an  $\mathrm{m}$-dimensional Brownian motion defined on the classical Wiener space
	$(\Om, \cF, \PP)$, $\xi$ is a $\RR^d$ valued random variable independent of the Brownian motion $B$,  and the coefficients $\sigma:[0,T]\times\mathbb{R}^d\times \mathcal{P}_2(\mathbb{R}^d)\to\mathbb{R}^d\times \mathbb{R}^m$, $b:[0,T]\times\mathbb{R}^d\times \mathcal{P}_2(\mathbb{R}^d)\to\mathbb{R}^d$ are Borel measurable.  Here we use $\mathcal{P}_2(\mathbb{R}^d)$ to denote the set of all square integrable probability measures on $\RR^d$.  
	
	This paper aims to investigate the asymptotic behavior  for \eqref{Volterra MV eq}. More precisely, we will study the asymptotic behaviors of   
	$$\frac{X^{\varepsilon}_{t,\xi}-X^0_{t,\xi}}{\sqrt{\varepsilon}h(\varepsilon)}, \quad t\in [0,T],$$
	as $\varepsilon\to 0$, where $X^0$ satisfies the following deterministic Volterra integral equation
	(formally setting $\varepsilon=0$ in \eqref{Volterra MV eq}):
	\begin{align}\label{Volterra eq-limit eq}
		X^0_{t,\xi}=\xi+\int_{0}^{t}K_1(t,s)b(s,X^0_{s,\xi},
		\delta_{X^0_{s,\xi}})\d s\,, 
	\end{align}
	where $\delta$ is the Dirac point measure.  We focus particularly 	on the following three tasks. 
	%three scale asymptotics.
	\begin{enumerate} 
		\item[(i)]  The choice  $h(\varepsilon)=\frac{1}{\sqrt{\varepsilon}}$     provides the large deviation principle (LDP) with speed $\varepsilon^{-1}$ (e.g.  Theorem \ref{thm-LDP} in Section 3). 
		\item[(ii)] The choice  $h(\varepsilon)=1$ yields  the central limit theorem (CLT), i.e. $\frac{X^{\varepsilon}_{t,\xi}-X^0_{t,\xi}}{\sqrt{\varepsilon}}$  converges to a stochastic process which
		solves a Volterra integral equation involving the Lions derivative of the drift coefficient (e.g.  Theorems  \ref{CLT} in Section 4).
		\item[(iii)]   The choice  $h(\varepsilon)$ satisfying 
		$$h(\varepsilon)\to \infty, \ \sqrt{\varepsilon}h(\varepsilon)\to 0 \ \text{as}\ \varepsilon\to 0,$$
		fills  the gap between the CLT  and  LDP scales, that is, it 
		 provides the moderate deviation estimate with speed $h^2(\varepsilon)$ (e.g. Theorem \ref{MDP}
		 in Section 5). 
	\end{enumerate}
	%The aim of this paper is to study the asymptotic behaviors of \eqref{Volterra MV eq} as $\varepsilon\to 0$. It is worthwhile to mention that our model is very general, which covers the McKean-Vlasov SDEs driven by standard Brownian motion or fractional Brownian motion, Caputo fractional SDEs \cite{AMN}, and classical stochastic Volterra integral equations without distribution measure.   
	
	%$\bullet$ Following
	To complete the first two tasks we shall use  the weak convergence criteria presented in \cite{JP} 
	%, the first aim of this paper is to obtain LDP and MDP for Volterra-type McKean-Vlasov integral equations. 
	and the key point is to identify the corresponding controlled
	Volterra equations.
	% for Volterra-type McKean-Vlasov integral equations. 
	%
	%$\bullet$ The second aim of this paper is to obtain CLT for Volterra-type McKean-Vlasov integral equations. Contrary to the CLT for McKean-Vlasov SDEs, the 
	The main difficulty to complete the third task is to obtain  estimate of the form  $\E\Big(\sup_{t\in [0,T]} |Z^\varepsilon_{t,\xi}-Z_{t,\xi}|^p\Big)$ (see Section \ref{app to CLT} for the definition of $Z^\varepsilon_{t,\xi}$ and $ Z_{t,\xi}$)  in particular due to the singularity of kernels.
	%, as far as we know there is no direct method to deal with it. Inspired by the study of Euler schemes in Zhang \cite{ZX1}, Qiao first used extended Kolmogorov's continuity criterium to overcome this difficulty. 
	We shall apply extended Kolmogorov's continuity criteria (see e.g. \cite{ZX1}) to overcome this difficulty.  This approach extends  the approach of  Qiao \cite{Qiao} to distribution depepdent case. 
	%. We provide these two different approache s to prove CLT in Section \ref{app to CLT}.
	
In the absence of the  Volterra kernels, that is the kernels $K_1=K_2=1$, the equation \eqref{Volterra MV eq} is reduced to classical McKean-Vlasov SDEs (also called distribution dependent SDEs \cite{RW} or mean-field SDEs). 
	%The model of McKean-Vlasov SDEs, 
	In this case the equation was first  studied  by Kac in  \cite{Kac56, Kac59}, which describe limiting behaviors of individual particles in an interacting particle system of mean-field type when the number of particles goes to infinity (propagation of chaos) and has widely applied to    various fields.  We refer to \cite{HLL, LSZZ} and the references therein for more details.  For this type of  classical McKean-Vlasov SDEs,  the  small noise asymptotic behaviors have been extensively investigated in recent years.  Herrmann et al. \cite{HIP} obtained the LDP in path space equipped with the uniform norm, when the drift is   superlinear  but coercive,     and the  diffusion coefficient is a constant. Dos Reis et al. \cite{dST} obtained the LDP in both uniform
	and H\"older topologies under the assumption that the drift and diffusion coefficients satisfy some extra H$\mathrm{\ddot{o}}$lder continuity with respect to time.

	When the coefficients $b$ and $\sigma$ do not depend on the distribution $\mathcal{L}_{X^{\varepsilon}_{t,\xi}}$ of the solution process  \eqref{Volterra MV eq} is reduced to stochastic Volterra integral equations or Volterra type  SDEs.  
%	On the other hand, possibly singular kernels need to be controlled carefully, which causes fundamental technical difficulties. 
There are now a large amount of work dedicated to this type of equations. They can be driven by general semimartingales in addition to Brownian motion. When the kernel is singular, usually the coefficients needs to be nicer and when the kernel is more regular, the coefficients can be allowed to be more general.   Among the large  body of literature  let us mention   \cite{ALP, AKO, BM1,BM2, LHH1, PP, PS,Pro}.   
%Protter \cite{Pro} studied stochastic Volterra integral equations driven by a general semimartingale.
% Pardoux and Protter \cite{PP} further discussed anticipating stochastic Volterra equations by applying the Malliavin Calculus. In the past few years, stochastic Volterra integral equations have been intensively investigated, the authors in first introduced and studied affine Volterra processes with convolutional kernel. Ackermann et al \cite{AKO} further studied inhomogeneous adffine Volterra process. Pr\"{o}mel and Scheffels \cite{PS} studied one-dimensional
%	stochastic Volterra equations with locally Hölder continuous diffusion coefficients and sufficiently regular
%	kernels. 
%Li et al in \cite{LHH1} studied the asymptotic separation for stochastic Volterra integral equations with doubly singular kernels and in 
Let us also point out that the  Euler and Milstein schemes for these  equations with singular kernels
was  studied in \cite{LHH2}.  For these  Volterra-type SDEs with regular kernel,   a Freidlin–Wentzell’s
type LDP was established in   \cite{NR}   in continuous functions space  and  was later extended  in   \cite{Lak} to the Besov-Orlicz space.  The case of  singular kernels was studied in   \cite{ZX1}  and then in  \cite{ZX2}.  The MDP for one dimensional stochastic Volterra equations with regular
kernels the MDP   was studied in  \cite{LWYZ}.  
 More recently, Jacquier and Pannier \cite{JP} established LDP and MDP for stochastic Volterra systems with singular kernels under weaker conditions. Qiao \cite{Qiao} first proved CLT for stochastic Volterra-type SDEs with singular kernels. To the best of our knowledge, there are no LDP, CLT, and MDP results for Volterra-type McKean-Vlasov stochastic integral equations so far.

		Compared with classical SDEs,  Volterra type  SDEs  are more complex. On the one hand, these equations are in general neither Markovian nor semi-martingales, and include fractional Brownian motion with Hurst index $H\in(0,1)$ as a special case when  $b=0$,  $\sigma=1$,  and 
	\begin{align}
		K_2(t,s)=K^H(t,s)=c_H(t-s)^{H-\frac{1}{2}}+c_H(\frac{1}{2}-H)\int_{s}^{t}(\theta-s)^{H-\frac{3}{2}}\Big(1-(\frac{s}{\theta})^{\frac{1}{2}-H}\Big)\mathrm{d}\theta,\nonumber
	\end{align}
	with
	$$c_H=\Big(\frac{2H\G(\frac{3}{2}-H)}{\G(H+\frac{1}{2})\G(2-2H)}\Big)^{\frac{1}{2}}.$$
	
%	It is then  natural to study  Volterra-type McKean-Vlasov SDEs, which can be seen as the combination of McKean-Vlasov SDEs and Volterra type SDEs. 
Volterra-type McKean-Vlasov SDEs first appeared in  \cite{SWY},  in which the authors studied the well-posedness   for  a class of mean-field backward stochastic {V}olterra integral equations associated with   the  Pontryagin’s type maximum principle  for a controlled mean field Volterra system.  
%Recently, there has been a growing interest in the 
%	study of mean-field forward (backward) SDEs. In 
This result is then extended in \cite{WH23}
%to mean-field forward (backward) SDEs.
%, the authors extended 
% 	the results in \cite{SWY} 
%by considering 
to study the well-posedness and Pontryagin’s type 
 	maximum principle of  mean-field backward doubly stochastic Volterra integral 
	equation. Very recently,   %Pr\"{o}mel and Scheffels  consider 
	the well-posedness for Volterra type McKean-Vlasov SDEs is considered in (\cite[Theorem $2.3$]{PS1}) when the kernel  is singular  but the coefficients $b$ and $\sigma$ are    Lipschitzian, or when   kernel is regular  with  H$\mathrm{\ddot{o}}$lder  continuous coefficients  (\cite[Theorem $2.10$ or Theorem $1.3$]{JLZ}).  The result      \cite[Theorem $2.3$]{PS1} is extended in   \cite{LG}    to a more general case by applying Volterra-type Gronwall's inequality.

%	 
%	 There are several approaches toward the small $\varepsilon$ asymptotics.
%	 {\red  The idea in  \cite{ADRST,dST,HIP,RW-2} is to   replace the distribution $\mathcal{L}_{X^{\varepsilon}_t}$ of $X^{\varepsilon}_t$ in the coefficients with the Dirac measure $\delta_{X^0_{t}}$, and then to use time discretization approximation and exponent equivalence techniques. 
%	Another approach   is the weak convergence approach.  Suo and Yuan \cite{SY} established the CLT and MDP for McKean-Vlasov SDEs. Liu et al. \cite{LSZZ} established the LDP and MDP for McKean-Vlasov SDEs with jumps, where they originally established the weak convergence criteria with respect to distribution dependent case. And then,  Hong et al. \cite{HLL} obtained the LDP for distribution dependent SPDEs. Gu et al. \cite{GS} obtained the LDP for path-distribution-dependent SDEs. Fang et al. \cite{FLQZ} studied asymptotic behaviors of small perturbation for multivalued McKean-Vlasov SDEs. Fan et al. \cite{FYY} studied small noise asymptotic behaviors for McKean-Vlasov SDEs driven by fractional Brownian motions, where they established the weak convergence criteria
%	in the factional Brownian motion setting.} 
%	
%	 

	The rest of the paper is organized as follows. In Section \ref{The large deviation principle for Volterra type McKean-Valsov SDEs}, we establish the large deviation principle for Volterra type McKean-Vlasov SDEs. The CLT and MDP are presented in Section \ref{app to CLT} and Section \ref{MDP-1} respectively. To facilitate the reading of the paper,  we postpone some auxiliary lemmas to  Appendix \ref{Appendix}.
	
	Throughout this paper $C_p$ denotes a positive constant which may changes from line to line, where the subscript $p$ is used to emphasize that the constant depends on certain parameter(s)  $p$.
\section{Preliminaries}
	\subsection{Notations}
	Throughout this paper, we denote by  $\mathbb{R}^{d}$ the $n$-dimensional Euclidean space. Let $(\Omega,\mathcal{F},(\mathcal{F}_t)_{t\ge0},\mathbb{P})$ to be a given complete filtered probability space $(\Omega,\mathcal{F},(\mathcal{F}_t)_{t\ge0},\mathbb{P})$, where $\mathcal{F}_t$ is a nondecreasing family of sub-$\sigma$ fields of $\mathcal{F}$ satisfying the usual conditions. Let $\mathcal{P}$ be the space of all probability measures $\mu$ on $\mathbb{R}^{d}$, and let
	$$\mathcal{P}_{2}(\mathbb{R}^{d})=\left\{\mu \in \mathcal{P}\left(\mathbb{R}^{d}\right): \mu\left(|\cdot|^{2}\right):=\int_{\mathbb{R}^{d}}|x|^{2} \mu(\mathrm{d} x)<\infty\right\}.$$
	It is well known that $\mathcal{P}_{2}$ is a Polish space under the Wasserstein distance
	$$
	\mathbb{W}_{2}(\mu, \nu):=\inf _{\pi \in \mathscr{C}(\mu, \nu)}\left(\int_{\mathbb{R}^{m} \times \mathbb{R}^{m}}|x-y|^{2} \pi(\mathrm{d} x, \mathrm{d} y)\right)^{\frac{1}{2}}, \ \mu, \nu \in \mathcal{P}_{2}\left(\mathbb{R}^{d}\right),
	$$
	where $\mathscr{C}(\mu, \nu)$ is the set of couplings for $\mu$ and $\nu$; that is, $\pi \in \mathscr{C}(\mu, \nu)$ is a probability measure on $\mathbb{R}^{d} \times \mathbb{R}^{d}$ such that $\pi\left(\cdot \times \mathbb{R}^{d}\right)=\mu$ and $\pi\left(\mathbb{R}^{d} \times \cdot\right)=\nu$.
  
	For measurable functions $K$  and $L$  on $\mathbb{R}_{+}\times \mathbb{R}_{+}$  % and measurable function $L$ on $\mathbb{R}_{+}\times \mathbb{R}_{+}$, 
	the convolutions $K*L$ %and $L*K$ are 
	is defined by
	$$(K*L)(t,s)=\int_{s}^{t}K(t,u)L(u,s)\d u,$$
%	$$(L*K)(t,s)=\int_{s}^{t}L(t,u)K(u,s)\d u,$$
	for almost all $(t,s)\in \Delta$, where $\Delta:=\{t,s\in\mathbb{R}_{+}\times \mathbb{R}_{+}: s\leq t\}.$ 
	\subsection{$L$-derivatives for the functions on $\mathcal{P}_2(\mathbb{R}^d)$}For any $\mu\in\mathcal{P}_2(\mathbb{R}^{d})$, the tangent space at $\mu$ is given by
	$$T_{\mu,2}:=L^2(\mathbb{R}^d\mapsto\mathbb{R}^d;\mu):=\Big\{\phi:\mathbb{R}^d\mapsto\mathbb{R}^d; \phi \ \text{is measurable with}\ \mu(|\phi|^2):=\int_{\mathbb{R}^d}|\phi(x)|^2\mu(\d x)<\infty \Big\}.$$
	For $\phi\in T_{\mu,2}$, we set $\|\phi\|_{T_{\mu,2}}:=\int_{\mathbb{R}^d}|\phi(x)|^2\mu(\d x).$
	\begin{defn}
		Let $f:\mathcal{P}_2(\mathbb{R}^d)\mapsto\mathbb{R}$ be a continuous function, and let $I$ be the identity map on $\mathbb{R}^d$.
		
		$(i)$ If for any $\mu\in\mathcal{P}_2(\mathbb{R}^d)$
		$$T_{\mu,2}\ni\phi\mapsto D_{\phi}^Lf(\mu):=\lim_{\varepsilon\to 0}\frac{f(\mu\circ(I+\varepsilon\phi)^{-1})-f(\mu)}{\varepsilon}\in \mathbb{R}$$
		is a well-defined bounded linear functional, then we say that $f$ is intrinsically differentiable at $\mu$ and we denote the intrinsic derivative of $f$ at $\mu$ by $D^L_{\phi}f(\mu)$.
		
		$(ii)$ If for any $\mu\in \mathcal{P}_2(\mathbb{R}^d)$
		$$\lim _{\|\phi\|_{T_{\mu,2}} \rightarrow 0} \frac{|f\left(\mu \circ(\mathrm{Id}+\phi)^{-1}\right)-f(\mu)-D^L_{\phi}f(\mu)|}{\|\phi\|_{T_{\mu,2}}}=0,$$
		$f$ is called $L$-differentiable at $\mu$ with the $L$-derivative (i.e. Lions derivative) $D^L f(\mu)$. By the Riesz representation theorem, there exists a  unique element $D^Lf(\mu)\in T_{\mu,2}$ satisfying
		$$\big\langle D^Lf(\mu ), \phi\big\rangle_{T_{\mu,2}}:=\int_{\mathbb{R}^{d}}\big\langle D^Lf(\mu)(x),\phi(x)\big\rangle\mu(\d x)=D^L_{\phi}f(\mu), \quad \forall \ \phi\in T_{\mu,2}.$$
		
		$(iii)$ Denote by $C^1(\mathcal{P}_2(\mathbb{R}^d))$ the set of all continuous functionals  $f$ on $\mathcal{P}_2(\mathbb{R}^d)$ satisfying that $f$ is $L$-differentiable at any point $\mu\in\mathcal{P}_2(\mathbb{R}^d)$, and the $L$-derivative $D^Lf(\mu)(x)$ has a version jointly continuous in $(\mu,x)\in\mathcal{P}_2(\mathbb{R}^d)\times \mathbb{R}^d$. Moreover, let $C^1_b(\mathcal{P}_2(\mathbb{R}^d))$ be the subset of $C^1(\mathcal{P}_2(\mathbb{R}^d))$ such that  $D^L f(\mu)(x)$ is 
		bounded:  there is a $C_f$, depending only on $f$ so that  $|D^L f(\mu)(x)|\le C_f$ for all $\mu\in 
		\mathcal{P}_2$ and $x\in \RR^d$.

		For a vector-valued function $f=(f_i)$, or a matrix-valued function $f=(f_{i,j})$ with $L$-differentiable components, we write 
		$$D^L_{\phi}f(\mu)=(D^L_{\phi} f_{i}(\mu)), \ \text{or}\ D^L_{\phi}f(\mu)=(D^L_{\phi} f_{i,j}(\mu)), \ \mu\in\mathcal{P}_2(\mathbb{R}^d).$$
	\end{defn}
	\subsection{Volterra type Growall's inequality    }
	We first recall the following lemma due to Girpenberg \cite{GG} (Theorem $1$).
	\begin{lem}\label{lemma 2.1}
		Let $K:\Delta\to\mathbb{R}_{+}$ be a measurable function. Assume that for any $T>0$,
		$$t\mapsto \int_{0}^{t}K(t,s)\d s\in L^{\infty}(0,T)$$
		and
		$$\limsup_{\varepsilon\downarrow 0}\Big\|\int_{\cdot+\varepsilon}^{\cdot}K(\cdot+\varepsilon,s)\d s\Big\|_{L^{\infty}(0,T)}<1.$$
		Define
		\begin{align}\label{definition of R_n}
			R_1(t,s):=K(t,s),\quad R_{n+1}(t,s):=\int_{s}^{t}K(t,u)R_{n}(u,s)\d u,\ \  n\in\mathbb{N}.	
		\end{align}
		Then for any $T>0$, there exist constants $C_{T}>0$ and $\gamma\in (0,1)$ such that
		$$\Big\|\int_{0}^{\cdot}R_{n}(\cdot,s)\d s\Big\|_{L^{\infty}(0,T)}\leq C_{T}n\gamma^{n}, \quad \forall \ n\in \mathbb{N}.$$
		In particular, the series
		\begin{align}\label{Resolvent}
			R(t,s):=\sum_{n=1}^{\infty}R_{n}(t,s)
		\end{align}
		converges for almost all $(t,s)\in \Delta$.  Moreover, we have 
		\begin{align}\label{definition of resolvent}
			R(t,s)-K(t,s)=(K*R)(t,s)-(R*K)(t,s)\,, 
		\end{align}
		and for any $T>0$,
		\begin{align}\label{finite of resolvent}
			t\mapsto \int_{0}^{t}R(t,s)\d s\in L^{\infty}(0,T).
		\end{align}
	\end{lem}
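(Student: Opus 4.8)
The plan is to follow Gripenberg's argument in three stages: a geometric decay bound for the iterated kernels $R_n$, summation of the series, and the resolvent identity. Before starting I would fix notation: put $M:=\big\|t\mapsto\int_0^t K(t,s)\,\d s\big\|_{L^\infty(0,T)}<\infty$ (finite by the first hypothesis), and use the second hypothesis to pick $\varepsilon_0>0$ and $\lambda\in(0,1)$ such that, for a.e.\ $r\in(0,T)$ and every $\varepsilon\in(0,\varepsilon_0]$, one has $\int_{(r-\varepsilon)\vee 0}^{r}K(r,s)\,\d s\le\lambda$. A straightforward induction from \eqref{definition of R_n} shows that each $R_n$ is a nonnegative measurable function on $\Delta$, so throughout I may reorder integrals freely by Tonelli's theorem, all quantities being a priori $[0,\infty]$-valued.

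The core step is the geometric bound. I would write out the $n$-fold iteration explicitly,
\[
\int_0^t R_n(t,s)\,\d s=\int_{t>u_1>\cdots>u_n>0}\ \prod_{i=1}^{n}K(u_{i-1},u_i)\,\d u_1\cdots\d u_n,\qquad u_0:=t,
\]
and call step $i$ \emph{short} if $u_{i-1}-u_i\le\varepsilon_0$ and \emph{long} otherwise. The long increments are disjoint subintervals of $(0,t)$ each of length exceeding $\varepsilon_0$, so a descending chain contains at most $N_0:=\lfloor T/\varepsilon_0\rfloor$ long steps. Fixing the set $B\subseteq\{1,\dots,n\}$ of long steps and integrating the variables $u_n,u_{n-1},\dots,u_1$ one at a time from the innermost outward, each short step $i$ contributes a factor $\le\lambda$ — the range of $u_i$ lies in $\big((u_{i-1}-\varepsilon_0)\vee0,\,u_{i-1}\big)$, so the choice of $\varepsilon_0,\lambda$ applies uniformly in the outer variables — while each long step contributes a factor $\le M$, and patterns with $|B|>N_0$ give an empty domain. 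This gives
\[
\int_0^t R_n(t,s)\,\d s\le\sum_{b=0}^{\min(n,N_0)}\binom{n}{b}M^b\lambda^{\,n-b}\le C' n^{N_0}\lambda^{\,n},
\]
and picking any $\gamma\in(\lambda,1)$ absorbs the polynomial factor into $\gamma^n$, so that $\big\|\int_0^\cdot R_n(\cdot,s)\,\d s\big\|_{L^\infty(0,T)}\le C_T n\gamma^n$, as claimed.

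The remaining assertions are soft. Since $\sum_n C_T n\gamma^n<\infty$, Tonelli gives $\int_0^t\sum_n R_n(t,s)\,\d s=\sum_n\int_0^t R_n(t,s)\,\d s<\infty$ for every $t\in(0,T)$, so $R=\sum_n R_n$ in \eqref{Resolvent} is finite for a.e.\ $(t,s)\in\Delta$ and $t\mapsto\int_0^t R(t,s)\,\d s\in L^\infty(0,T)$, which is \eqref{finite of resolvent}. For \eqref{definition of resolvent} I would note that associativity of the convolution on nonnegative kernels (Tonelli once more) yields $K*R_n=R_{n+1}=R_n*K$, and then, interchanging sum and integral, $K*R=\sum_n K*R_n=\sum_n R_{n+1}=R-K$ and likewise $R*K=R-K$, from which the resolvent equation \eqref{definition of resolvent} follows.

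The only genuine obstacle is the decay estimate: the crude bound $\int_0^t R_n(t,s)\,\d s\le M^n$ does not decay, and the whole content of the lemma is to convert the hypothesis of small mass near the diagonal into the fact that along any chain of $n$ descending points all but at most $N_0$ of the consecutive gaps are short and hence each contribute a factor strictly below $1$. The delicate point is to arrange the order of integration so that the uniform constants $\lambda$ (short steps) and $M$ (long steps) can be extracted one variable at a time, independently of the remaining ones; once this bookkeeping is in place, the binomial sum and the choice of $\gamma$ are routine.
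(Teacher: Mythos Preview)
The paper does not prove this lemma; it is stated with attribution to Gripenberg \cite{GG}, Theorem~1, and used as a black box. Your sketch is a faithful and correct rendering of Gripenberg's argument: the short/long step decomposition of the $n$-fold iterated convolution, the observation that any descending chain in $(0,t)$ contains at most $N_0=\lfloor T/\varepsilon_0\rfloor$ long gaps, and the innermost-to-outermost integration extracting a factor $\lambda$ (short) or $M$ (long) at each stage are exactly the mechanism, and the passage from $C'n^{N_0}\lambda^{n}$ to $C_T\,n\gamma^{n}$ for any $\gamma\in(\lambda,1)$ is routine. One incidental remark: as printed in the paper, \eqref{definition of resolvent} reads $R-K=(K*R)-(R*K)$, whose right-hand side vanishes identically by your own computation; the intended statement is $R-K=K*R=R*K$, which is precisely what you derive.
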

	The function $R$ defined by \eqref{Resolvent} is called the resolvent of the kernel $K$. The set of all   functions $K$ in Lemma \ref{lemma 2.1} will be denoted by $\mathcal{K}$. 
	We now present Gronwall's lemma of Volterra type due to Zhang \cite{ZX2} (Lemma $2.2$).
	\begin{lem}\label{Gronwall}
		Let $K\in\mathcal{K}$ and $R_n$ and $R$ be defined by \eqref{definition of R_n} and \eqref{Resolvent}, respectively. Let $f,g:\mathbb{R}_{+}\to\mathbb{R}_{+}$ be two measurable functions satisfying that for any $T>0$ and some $n\in\mathbb{N}$,
		$$t\mapsto \int_{0}^{t}R_n(t,s)f(s)\d s\in L^{\infty}(0,T)$$
		and for almost all $t\in(0,\infty)$,
		$$\int_{0}^{t}R(t,s)g(s)\d s<\infty.$$
		If for almost all $t\in (0,\infty)$,
		$$f(t)\leq g(t)+\int_{0}^{t}K(t,s)f(s)\d s,$$
		then for almost all $t\in (0,\infty)$,
		$$f(t)\leq g(t)+\int_{0}^{t}R(t,s)g(s)\d s.$$
	\end{lem}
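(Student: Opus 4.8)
The plan is to iterate the integral inequality and to control the resulting remainder term using the geometric decay supplied by Lemma \ref{lemma 2.1}. Fix $T>0$ and let $n_0\in\mathbb{N}$ be an index for which $M_T:=\big\|\int_0^{\cdot}R_{n_0}(\cdot,s)f(s)\,\d s\big\|_{L^\infty(0,T)}<\infty$, which exists by hypothesis. Since $R_1=K$ and every function appearing below is nonnegative, all iterated integrals are well defined in $[0,\infty]$ and Tonelli's theorem justifies every interchange of the order of integration.

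First I would record the semigroup-type composition identity
$$R_{m+n}(t,s)=\int_{s}^{t}R_m(t,u)R_n(u,s)\,\d u,\qquad m,n\in\mathbb{N},$$
which follows by induction on $m$ from the recursion \eqref{definition of R_n} and Tonelli's theorem. Next, starting from $f(t)\le g(t)+\int_0^{t}K(t,s)f(s)\,\d s$ and inserting this same bound for $f(s)$ under the integral, Tonelli gives $f(t)\le g(t)+\int_0^{t}R_1(t,s)g(s)\,\d s+\int_0^{t}R_2(t,s)f(s)\,\d s$ for almost every $t$. Repeating the substitution and using the composition identity, an induction on $N$ produces, for every $N\ge1$ and almost every $t\in(0,T)$,
$$f(t)\le g(t)+\sum_{k=1}^{N}\int_0^{t}R_k(t,s)g(s)\,\d s+\int_0^{t}R_{N+1}(t,s)f(s)\,\d s.$$

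It then remains to pass to the limit $N\to\infty$. For the remainder, take $N\ge n_0$, write $N+1=m+n_0$ with $m\ge1$, and use the composition identity together with Tonelli to get
$$\int_0^{t}R_{N+1}(t,s)f(s)\,\d s=\int_0^{t}R_m(t,u)\Big(\int_0^{u}R_{n_0}(u,s)f(s)\,\d s\Big)\d u\le M_T\int_0^{t}R_m(t,u)\,\d u\le M_T C_T\,m\,\gamma^{m},$$
the last step being the estimate of Lemma \ref{lemma 2.1} with $\gamma\in(0,1)$; hence this term vanishes as $N\to\infty$. For the series, Tonelli and the definition \eqref{Resolvent} of the resolvent give $\sum_{k=1}^{N}\int_0^{t}R_k(t,s)g(s)\,\d s\uparrow\int_0^{t}R(t,s)g(s)\,\d s$, which is finite for almost every $t$ by assumption. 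Letting $N\to\infty$ then yields $f(t)\le g(t)+\int_0^{t}R(t,s)g(s)\,\d s$ for almost every $t\in(0,T)$, and since $T$ was arbitrary this holds for almost every $t\in(0,\infty)$.

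The only genuinely delicate point is showing that the remainder $\int_0^{t}R_{N+1}(t,s)f(s)\,\d s$ tends to $0$: because the kernels may be singular one cannot bound $R_{N+1}(t,\cdot)$ pointwise, so the estimate must be routed through the $L^\infty$-integrability hypothesis on $t\mapsto\int_0^{t}R_{n_0}(t,s)f(s)\,\d s$ by peeling off $R_{n_0}$ via the composition identity, and only then invoking the geometric bound $\big\|\int_0^{\cdot}R_m(\cdot,s)\,\d s\big\|_{L^\infty(0,T)}\le C_T m\gamma^{m}$. The remaining manipulations are a routine bookkeeping of Tonelli's theorem.
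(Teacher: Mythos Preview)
The paper does not give its own proof of this lemma; it merely cites it as Lemma~2.2 of Zhang~\cite{ZX2}. Your iteration argument is correct and is precisely the standard proof: iterate the inequality to produce the partial resolvent sum plus a remainder $\int_0^t R_{N+1}(t,s)f(s)\,\d s$, then kill the remainder by factoring $R_{N+1}=R_m*R_{n_0}$ so that the $L^\infty$ hypothesis on $\int_0^{\cdot}R_{n_0}(\cdot,s)f(s)\,\d s$ absorbs the $f$-dependence and the geometric decay $\|\int_0^{\cdot}R_m(\cdot,s)\,\d s\|_{L^\infty(0,T)}\le C_T m\gamma^m$ from Lemma~\ref{lemma 2.1} handles the rest. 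The only point worth a small remark is that the iterated inequality is first established in $[0,\infty]$ by Tonelli, and only once $N+1\ge n_0$ does the remainder estimate show that all terms are in fact finite for almost every $t$; you have this covered, but it is the one place where a careless reader might worry about circularity.
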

	\subsection{A criterion of large deviation principles}\label{A criterion of large deviation principles}
	Let $E$ be a Polish space. For each $\varepsilon>0$, let $X^{\varepsilon}$ be an $E$-valued random variable given on $(\Omega,\mathcal{F},(\mathcal{F}_t)_{t\ge0},\mathbb{P})$.
	\begin{defn}(rate function) A function $I:E\rightarrow [0,+\infty)$ is called a rate function  if $I$ is lower semicontinuous. Moreover, a rate function $I$ is called a good rate function if the level set $\{x\in E: I(x)\leq N\}$ is compact for each constant $N<\infty$.
	\end{defn}
	\begin{defn}(Large deviation principle)
	A sequence of  random variables  $\{ X^{\varepsilon}\}$ is said to satisfy the LDP on $E$ with rate function $I$ if the following lower and upper bound conditions hold:
		
		$(i)$ (Lower bound) For any open set $G\subset E$,
		$$\liminf_{\varepsilon\rightarrow 0}\varepsilon\log \mathbb{P}(X^{\varepsilon}\in G)\geq -\inf_{x\in G}I(x).$$
		
		$(ii)$ (Upper bound) For any closed set $F\subset E$,
		$$\limsup_{\varepsilon\rightarrow 0}\varepsilon\log \mathbb{P}(X^{\varepsilon}\in E)\leq -\sup_{x\in F}I(x).$$
	\end{defn}
	\begin{defn}(Laplace principle)
		We say that ${X^{\varepsilon}}$ satisfies the Laplace principle with the rate function $I$, if for any real bounded continuous function $g$ on $E$
		$$\lim_{\varepsilon\to 0}\varepsilon \log \EE \Big\{\exp\Big[-\frac{g(X^{\varepsilon})}{\varepsilon}\Big]\Big\}=-\inf_{x\in E}\big(g(x)+I(x)\big).$$
	\end{defn}
	\begin{lem}(Varadhan's lemma \cite{Var})\label{Varadhan's lemma}
		Let  the random sequence $\{ X^{\varepsilon} \}$  with values in   a Polish space $E$   fulfill  the LDP with a good rate function $I$.  Then $\{X^{\varepsilon}\}$ fulfills the Laplace principle on $E$ with the same rate function $I$.
	\end{lem}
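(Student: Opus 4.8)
The plan is to establish the Laplace-principle identity
\[
\lim_{\varepsilon\to 0}\varepsilon \log \EE \Big\{\exp\Big[-\frac{g(X^{\varepsilon})}{\varepsilon}\Big]\Big\}=-\inf_{x\in E}\big(g(x)+I(x)\big)
\]
for an arbitrary bounded continuous $g\colon E\to\RR$, by proving the two matching one-sided bounds. Throughout I would abbreviate $\rho:=-\inf_{x\in E}(g(x)+I(x))=\sup_{x\in E}\big(-g(x)-I(x)\big)$, noting that $\rho\le\|g\|_\infty<\infty$ because $g$ is bounded and $I\ge 0$. Only the two defining inequalities of the LDP (lower bound on open sets, upper bound on closed sets) and the goodness of $I$ will be used.

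First I would dispose of the \emph{lower bound}, which needs only continuity of $g$ and the open-set LDP bound. Fix $x_0\in E$ and $\delta>0$, and pick an open neighbourhood $U\ni x_0$ on which $g<g(x_0)+\delta$. From $\EE[e^{-g(X^\varepsilon)/\varepsilon}]\ge e^{-(g(x_0)+\delta)/\varepsilon}\,\mathbb{P}(X^\varepsilon\in U)$, applying $\varepsilon\log(\cdot)$, $\liminf_{\varepsilon\to0}$, and the LDP lower bound for $U$ gives $\liminf_{\varepsilon\to0}\varepsilon\log\EE[e^{-g(X^\varepsilon)/\varepsilon}]\ge -(g(x_0)+\delta)-\inf_U I\ge -(g(x_0)+\delta)-I(x_0)$. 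Letting $\delta\downarrow 0$ and taking the supremum over $x_0$ yields $\liminf_{\varepsilon\to0}\varepsilon\log\EE[e^{-g(X^\varepsilon)/\varepsilon}]\ge\rho$.

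The \emph{upper bound} is the step I expect to be the main obstacle: the LDP upper bound is available only on \emph{closed} sets, whereas the region $\{I>N\}$ where one would like to discard the contribution is open. The remedy is to exhaust a compact level set of $I$ by a finite cover of small balls and pass to the complement of that cover. Set $M:=\|g\|_\infty$ and fix $\delta>0$. Since $I$ is good, the level set $K:=\{x:I(x)\le N\}$ with $N:=M-\rho+1$ is compact, and $-g(x)-I(x)<M-N=\rho-1$ whenever $x\notin K$. By continuity of $g$, each $x\in K$ lies in an open ball $U_x$ on whose closure $\overline{U_x}$ one still has $|g(y)-g(x)|<\delta$; compactness yields a finite subcover $U_{x_1},\dots,U_{x_n}$ of $K$. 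Then $F:=E\setminus\bigcup_{i=1}^n U_{x_i}$ is closed and disjoint from $K$, hence $F\subset\{I>N\}$ and $\inf_F I\ge N$. Splitting the expectation over $F$ and the $\overline{U_{x_i}}$,
\[
\EE\big[e^{-g(X^\varepsilon)/\varepsilon}\big]\le e^{M/\varepsilon}\,\mathbb{P}(X^\varepsilon\in F)+\sum_{i=1}^{n} e^{(-g(x_i)+\delta)/\varepsilon}\,\mathbb{P}\big(X^\varepsilon\in\overline{U_{x_i}}\big),
\]
and the closed-set LDP upper bound controls the exponential rate of the first term by $M-\inf_F I\le M-N=\rho-1$, and of the $i$-th term by $-g(x_i)+\delta-\inf_{\overline{U_{x_i}}}I$, which is $\le\rho+O(\delta)$ because $|g-g(x_i)|<\delta$ and $-g-I\le\rho$ on $\overline{U_{x_i}}$. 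Since the sum has only finitely many terms, $\limsup_{\varepsilon\to0}\varepsilon\log$ of the right-hand side equals the largest of these rates, so $\limsup_{\varepsilon\to0}\varepsilon\log\EE[e^{-g(X^\varepsilon)/\varepsilon}]\le\rho+O(\delta)$.

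Letting $\delta\downarrow 0$ and combining with the lower bound shows that the limit exists and equals $\rho=-\inf_{x\in E}(g(x)+I(x))$, which is precisely the Laplace principle with the same rate function $I$, as asserted. Apart from the construction in the upper bound — the choice of $N$, the shrinking of the balls so the $g$-oscillation bound survives on their closures, and the recognition of the complement of the finite subcover as the correct closed set on which $\inf I\ge N$ — the rest is the routine "finitely many exponential rates" bookkeeping and requires no further ideas.
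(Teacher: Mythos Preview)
Your argument is correct and is essentially the standard textbook proof of Varadhan's lemma: the lower bound via localization near a single point and the open-set LDP bound, and the upper bound via a finite cover of a compact level set together with the closed-set LDP bound. One small remark: in the upper bound you should note explicitly that $\rho\ge -\|g\|_\infty>-\infty$, so that $N=M-\rho+1$ is finite and $K=\{I\le N\}$ is a genuine compact set; otherwise the argument is clean.

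As for comparison with the paper: the paper does \emph{not} prove this statement. It is recorded there purely as a citation of a classical result (Varadhan \cite{Var}), with no argument given. So there is no ``paper's own proof'' to compare against; your write-up simply supplies the standard proof that the paper takes for granted.
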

	\begin{lem}(Bryc's converse \cite{DZ})\label{Bryc's converse}
		The Laplace principle implies the LDP with the same good rate function.
	\end{lem}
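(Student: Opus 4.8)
The plan is to establish the lower and upper bounds of the large deviation principle separately, deducing each from the corresponding half of the Laplace principle; the classical route (cf.\ \cite{DZ}) is to evaluate the Laplace limit at a carefully chosen one‑parameter family of bounded continuous functions approximating indicator functions.

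First I would prove the lower bound. Fix an open set $G\subseteq E$ and a point $x_0\in G$; there is nothing to prove unless $I(x_0)<\infty$, so assume this. Choose $\delta>0$ with the open ball $B(x_0,\delta)\subseteq G$, and for $M>0$ set $g_M(x):=M\wedge\big(M\delta^{-1}d(x,x_0)\big)$, which is bounded and continuous, vanishes at $x_0$, and equals $M$ outside $B(x_0,\delta)$. Splitting $\E\big[e^{-g_M(X^\varepsilon)/\varepsilon}\big]$ over $B(x_0,\delta)$ and its complement and using $0\le e^{-g_M/\varepsilon}\le 1$ gives $\E\big[e^{-g_M(X^\varepsilon)/\varepsilon}\big]\le\PP(X^\varepsilon\in G)+e^{-M/\varepsilon}$, whence $\varepsilon\log\E\big[e^{-g_M(X^\varepsilon)/\varepsilon}\big]\le\varepsilon\log2+\max\big(\varepsilon\log\PP(X^\varepsilon\in G),-M\big)$. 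Taking $\liminf_{\varepsilon\to0}$, applying the Laplace principle on the left, and using $\inf_x\big(g_M(x)+I(x)\big)\le g_M(x_0)+I(x_0)=I(x_0)$, I would arrive at $-I(x_0)\le\max\big(\liminf_{\varepsilon\to0}\varepsilon\log\PP(X^\varepsilon\in G),\,-M\big)$; choosing $M>I(x_0)$ then forces $\liminf_{\varepsilon\to0}\varepsilon\log\PP(X^\varepsilon\in G)\ge -I(x_0)$, and taking the supremum over $x_0\in G$ yields the lower bound.

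For the upper bound, fix a closed set $F\subseteq E$ and, for $M>0$, set $g_M(x):=M\big(1\wedge d(x,F)\big)$, which is bounded, continuous, nonnegative, and vanishes on $F$. Then $\mathbf{1}_F\le e^{-g_M/\varepsilon}$ pointwise, so $\PP(X^\varepsilon\in F)\le\E\big[e^{-g_M(X^\varepsilon)/\varepsilon}\big]$, and the Laplace principle gives $\limsup_{\varepsilon\to0}\varepsilon\log\PP(X^\varepsilon\in F)\le-\inf_x\big(g_M(x)+I(x)\big)$ for every $M$. Since $M\mapsto g_M$ is nondecreasing, the bound follows once I show $\lim_{M\to\infty}\inf_x\big(g_M(x)+I(x)\big)=\inf_{x\in F}I(x)$: the inequality ``$\le$'' is immediate because $g_M$ vanishes on $F$, and for ``$\ge$'' I would argue by contradiction — if the limit equals some $c<\inf_{x\in F}I(x)$, pick near‑minimizers $x_M$ with $g_M(x_M)+I(x_M)\le c+1/M$; then $I(x_M)\le c+1$, so the $x_M$ lie in the compact sublevel set $\{I\le c+1\}$, a subsequence converges to some $x^\ast$, the bound $g_M(x_M)\le c+1$ forces $d(x_M,F)\to 0$ and hence $x^\ast\in F$, while lower semicontinuity gives $I(x^\ast)\le\liminf_M I(x_M)\le c<\inf_{x\in F}I(x)$, contradicting $x^\ast\in F$.

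I expect the last interchange of the limit $M\to\infty$ with the infimum to be the only genuinely delicate point; it is precisely there that the goodness of $I$ (compactness of sublevel sets), together with lower semicontinuity, is used. The remaining steps are routine manipulations of exponential moments and the elementary inequality $\varepsilon\log(a+b)\le\varepsilon\log2+\max(\varepsilon\log a,\varepsilon\log b)$. If desired, exponential tightness would allow one to first prove the upper bound for compact sets and then extend it, but the assumed goodness of $I$ makes that reduction unnecessary.
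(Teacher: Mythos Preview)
Your proof is correct and follows the classical route due to Bryc (as presented, e.g., in \cite{DZ}). Note, however, that the paper does not supply its own proof of this lemma: it is stated as a citation to \cite{DZ} and used as a black box, so there is nothing to compare against beyond observing that your argument is the standard one behind that reference.
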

	Combination of  Lemmas \ref{Varadhan's lemma} and \ref{Bryc's converse} yields that if $E$ is a Polish space and $I$ is a good rate function, then the LDP and Laplace principle are equivalent.
	
	Denote $C^d_T:=C([0,T],\mathbb{R}^d)$ the space of continuous functions from $[0,T]$ to $\mathbb{R}^d$ equipped with the supremum norm $\|\phi\|_T:=\sup_{t\in [0,T]}|\phi_t|$ and let
	$$\mathcal{A}:=\Big\{v:\Omega\times [0,T]\to \mathbb{R}^{m}\ \text{progressively measurable}, E\Big[\int_{0}^{T}|v_t|^2\d t\Big]<+\infty\Big\},$$
	$$\mathcal{S}_N:=\Big\{v\in L^2: \int_{0}^{T}|v_s|^2\d s\leq N\Big\},$$
	and
	$$\mathcal{A}_N:=\Big\{v\in \mathcal{A}: v\in\mathcal{S}_N\Big\}.$$
	\begin{defn}\cite{JP}\label{v}
		For all $N\in\mathbb{N}$ and $v\in\mathcal{A}_N$ we define
		\begin{align}
			\mathcal{G}^0_{v,N}&:=\Big\{\phi:\Omega\to C^d_T\,; \  \text{so that there exist} \ \{\varepsilon_{n}\}_{n\in\mathbb{N}}\subset[0,\infty)\ \text{with}\ \lim_{n\to \infty}\varepsilon_{n}=0\nonumber\\&\text{and a sequence}\ \{v^{\varepsilon_{n}}\}_{n\in\mathbb{N}}\subset\mathcal{A}_N\text{with}\ \lim_{n\to \infty}v^{\varepsilon_{n}}=v\ \text{in distribution such that}\nonumber\\&\phi=\lim_{n\to \infty}\mathcal{G}^{\varepsilon_{n}}\Big(W+\frac{1}{\sqrt{\varepsilon_{n}}}\int_{0}^{\cdot}v_s^{\varepsilon_{n}}\d s\Big)\ \text{in distribution}\Big\},\nonumber
		\end{align}
		and $\mathcal{G}^0_{v,N}$ is empty if $v$ is not in $\mathcal{A}_N$. For all $v\in\mathcal{A}$, we also denote $\mathcal{G}_v^0:=\bigcup_{n\in\mathbb{N}}\mathcal{G}^0_{v,N}$.
	\end{defn}
	Then we define the functional $I: C^d_T\to [0,+\infty]$ given by
	\begin{align}\label{def of I}
		I(\phi):=\inf\Big\{\frac{1}{2}\int_{0}^{T}|v_s|^2\d s: v\in L^2 \ \text{such that}\ \phi\in\mathcal{G}^0_v\Big\}.
	\end{align}
	\begin{defn}\cite{JP}
		We say that $\phi\in C^d_T$ is uniquely characterized if there exists a sequence $\{v^n\}_{n\in\mathbb{N}}\subset L^2$ such that 
		$$\mathcal{G}^0_{v^n}=\{\phi\}\quad \text{and}\quad \frac{1}{2}\int_{0}^{T}|v^n_s|^2\d s\leq I(\phi)+\frac{1}{n},$$
		for all $n\in\mathbb{N}$. In particular, if there exists $\tilde{v}\in L^2$ which attains the infimum in \eqref{def of I} and $\mathcal{G}^0_{v^n}=\{\phi\}$ then $\phi$ is
		uniquely characterized, because one can choose $v^n=\tilde{v}$ for all $n\in\mathbb{N}$.
	\end{defn}
	The above functions $\phi,v,v^n,\tilde{v}$ are deterministic.
	
	In order to formulate the sufficient condition for Laplace  principle (equivalently, Large
	Deviations Principle). We also need the following basic assumption from \cite{JP}.
	
	\textbf{(U)} For any $\delta>0$ and any $\phi\in C^d_T$ such that $I(\phi)<\infty$, there exists $\phi^{\delta}$ uniquely characterized such that $\|\phi-\phi^{\delta}\|_T\leq \delta$ and $\|I(\phi)-I(\phi^{\delta})\|_T\leq\delta$.
	\begin{lem}\cite{JP}\label{general LDP}
		Assume that
		
		Condition (i):  For all $N > 0$, all $v\in\mathcal{A}_N$ and all families $\{v^{\varepsilon}\}_{\varepsilon>0}$ in $\mathcal{A}_N$ converging in distribution
		to $v$ as $\varepsilon$ tends to zero, $\{\mathcal{G}^{\varepsilon}\big(W+\frac{1}{\sqrt{\varepsilon}}\int_{0}^{\cdot}v_s^{\varepsilon}\d s\big)\}$
		is tight.
		
		Condition (ii):  The functional $I$ defined by \eqref{def of I} has compact level sets, i.e. $I$ is a good rate function.
		
		Condition (iii):  Assumption \textbf{(U)} holds.
		
		Then the family $\{\mathcal{G}^{\varepsilon}(W)\}_{\varepsilon>0}$ satisfies the Laplace principle and, by equivalence,  satisfies  the Large
		Deviations Principle with rate function $I$ and speed $\varepsilon^{-1}$.
	\end{lem}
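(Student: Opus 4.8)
The plan is to establish the Laplace principle for $\{\mathcal{G}^\varepsilon(W)\}_{\varepsilon>0}$ directly and then to invoke Bryc's converse (Lemma \ref{Bryc's converse}), which applies because Condition (ii) guarantees that $I$ is a good rate function. The engine of the argument is the Bou\'{e}--Dupuis variational representation for functionals of Brownian motion: for every bounded continuous $g:C^d_T\to\mathbb{R}$ and every $\varepsilon>0$,
\[
	-\varepsilon\log\E\Big[\exp\Big(-\tfrac{1}{\varepsilon}g(\mathcal{G}^\varepsilon(W))\Big)\Big]
	=\inf_{v\in\mathcal{A}}\E\Big[\tfrac12\int_0^T|v_s|^2\,\d s+g\Big(\mathcal{G}^\varepsilon\Big(W+\tfrac{1}{\sqrt\varepsilon}\int_0^\cdot v_s\,\d s\Big)\Big)\Big],
\]
which follows from the classical representation $-\log\E[e^{-F(W)}]=\inf_{v\in\mathcal A}\E[\tfrac12\int_0^T|v_s|^2\,\d s+F(W+\int_0^\cdot v_s\,\d s)]$ after the rescaling $v\mapsto v/\sqrt\varepsilon$. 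It then suffices to prove the matching Laplace upper and lower bounds.

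For the upper bound, namely $\limsup_{\varepsilon\to0}\varepsilon\log\E[e^{-g(\mathcal{G}^\varepsilon(W))/\varepsilon}]\le-\inf_{\phi\in C^d_T}(g(\phi)+I(\phi))$, I would choose for each $\varepsilon$ a control $v^\varepsilon\in\mathcal{A}$ that is $\varepsilon$-optimal in the variational representation. Since $g$ is bounded, any such control satisfies $\tfrac12\E\int_0^T|v^\varepsilon_s|^2\,\d s\le 2\|g\|_\infty+\varepsilon$, so after a routine truncation one may assume $v^\varepsilon\in\mathcal{A}_N$ for a single $N$. Condition (i) then makes the laws of $\mathcal{G}^\varepsilon(W+\tfrac{1}{\sqrt\varepsilon}\int_0^\cdot v^\varepsilon_s\,\d s)$ tight; since $\mathcal{S}_N$ is compact in the weak $L^2$ topology, along a subsequence and after a Skorokhod representation one has $v^{\varepsilon_k}\to v$ weakly in $L^2$ and $\mathcal{G}^{\varepsilon_k}(W+\tfrac{1}{\sqrt{\varepsilon_k}}\int_0^\cdot v^{\varepsilon_k}_s\,\d s)\to\phi$ almost surely in $C^d_T$. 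By the very definition of $\mathcal{G}^0_{v,N}$ the limit satisfies $\phi\in\mathcal{G}^0_v$ almost surely, hence $I(\phi)\le\tfrac12\int_0^T|v_s|^2\,\d s$, and combining weak lower semicontinuity of the $L^2$ norm, Fatou's lemma and continuity of $g$ yields
\[
	\liminf_{\varepsilon\to0}\Big(-\varepsilon\log\E[e^{-g(\mathcal{G}^\varepsilon(W))/\varepsilon}]\Big)
	\ge\E\Big[\tfrac12\int_0^T|v_s|^2\,\d s+g(\phi)\Big]\ge\E[I(\phi)+g(\phi)]\ge\inf_{\phi\in C^d_T}\big(I(\phi)+g(\phi)\big).
\]

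For the lower bound I would fix $\eta>0$, pick $\phi_0$ with $I(\phi_0)+g(\phi_0)\le\inf_\phi(I(\phi)+g(\phi))+\eta$ and $I(\phi_0)<\infty$, and use Condition (iii) (Assumption \textbf{(U)}) to obtain a uniquely characterized $\phi_0^\delta$ with $\|\phi_0-\phi_0^\delta\|_T\le\delta$, $|I(\phi_0)-I(\phi_0^\delta)|\le\delta$, and (by continuity of $g$, for $\delta$ small) $g(\phi_0^\delta)\le g(\phi_0)+\eta$. Unique characterization supplies deterministic $v^n\in L^2$ with $\mathcal{G}^0_{v^n}=\{\phi_0^\delta\}$ and $\tfrac12\int_0^T|v^n_s|^2\,\d s\le I(\phi_0^\delta)+\tfrac1n$. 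Feeding the constant control $v^n$ into the variational representation gives
\[
	-\varepsilon\log\E[e^{-g(\mathcal{G}^\varepsilon(W))/\varepsilon}]\le\tfrac12\int_0^T|v^n_s|^2\,\d s+\E\Big[g\Big(\mathcal{G}^\varepsilon\Big(W+\tfrac{1}{\sqrt\varepsilon}\int_0^\cdot v^n_s\,\d s\Big)\Big)\Big].
\]
Because the constant sequence $v^n$ trivially converges to $v^n$, Condition (i) makes $\{\mathcal{G}^\varepsilon(W+\tfrac{1}{\sqrt\varepsilon}\int_0^\cdot v^n_s\,\d s)\}_\varepsilon$ tight, and since every subsequential limit lies in $\mathcal{G}^0_{v^n}=\{\phi_0^\delta\}$ the whole family converges in distribution to the deterministic point $\phi_0^\delta$, whence $\E[g(\mathcal{G}^\varepsilon(\cdots))]\to g(\phi_0^\delta)$. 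Taking $\varepsilon\to0$, then $n\to\infty$, $\delta\to0$ and $\eta\to0$ in this order gives $\liminf_{\varepsilon\to0}(-\varepsilon\log\E[e^{-g(\mathcal{G}^\varepsilon(W))/\varepsilon}])\le\inf_{\phi\in C^d_T}(I(\phi)+g(\phi))$. The two bounds together give the Laplace principle, and Lemma \ref{Bryc's converse} combined with Condition (ii) upgrades it to the LDP with rate function $I$ and speed $\varepsilon^{-1}$.

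I expect the main obstacle to be the bookkeeping in the upper bound: setting up the joint convergence of the pairs $\big(v^{\varepsilon_k},\,\mathcal{G}^{\varepsilon_k}(W+\tfrac{1}{\sqrt{\varepsilon_k}}\int_0^\cdot v^{\varepsilon_k}_s\,\d s)\big)$ in the correct product topology (weak $L^2$ for the controls, uniform for the paths), transferring it to an almost-sure statement via Skorokhod, and checking rigorously that the almost-sure limit path belongs to $\mathcal{G}^0_v$ --- which requires the definition of $\mathcal{G}^0_{v,N}$ to be robust enough to accommodate the possibly random near-optimal controls produced by Condition (i). A secondary technical point is the truncation step reducing a general $\varepsilon$-optimal $v^\varepsilon\in\mathcal{A}$ to one lying in $\mathcal{A}_N$ at a cost that vanishes as $\varepsilon\to0$.
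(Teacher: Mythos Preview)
The paper does not actually prove this lemma; it is quoted verbatim from \cite{JP} (Jacquier and Pannier), as indicated by the citation attached to the lemma statement, and no proof appears in the paper. Your sketch is the standard Bou\'e--Dupuis / Budhiraja--Dupuis weak-convergence argument, which is indeed the approach used in \cite{JP}, so in that sense your proposal is correct and aligned with the original source.
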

\subsection{Technical lemmas}
In order to prove our  main results, we first state some assumptions used later.
	
	\textbf{(H1)} There exist  positive constant $\gamma>0$ and $C_T>0$ such that for any $0\le t<t'\le T$
$$\int_{0}^{t}|K_i(t',s)-K_i(t,s)|^2\d s\leq C_T|t'-t |^{2\gamma}, \quad t\in [0,T],\quad i=1,2,$$
%	$$\int_{0}^{t}|K_2(t',s)-K_2(t,s)|^2\d s\leq C_T|t-t'|^{2\gamma}, \quad t\in [0,T],$$
%	$$\int_{t}^{t'}K^2_1(t,s)\d s\leq C_T|t-t'|^{2\gamma}, \quad t\in [0,T],$$
$$\int_{t}^{t'}K^2_i(t',s)\d s\leq C_T|t-t'|^{2\gamma}, \quad t\in [0,T]\,,\quad i=1,2\,.$$

\textbf{(H2)} For any $x,y\in\mathbb{R}^d$, $\mu,\nu\in \mathcal{P}_{2}(\mathbb{R}^d)$ and $t\in [0,T]$
\begin{align}
	|b(t,x,\mu)-b(t,y,\nu)\|+\|\sigma(t,x,\mu)-\sigma(t,y,\nu)\|&\leq L_1\Big(|x-y|+\mathbb{W}_{2}(\mu,\nu)\Big),
	\nonumber			
	% 		\\\|\sigma(t,x,\mu)-\sigma(t,y,\nu)\|^2&\leq L_1\Big(|x-y|^2+\mathbb{W}_{2}^2(\mu,\nu)\Big),\nonumber
\end{align}
where $L_1>0$ is a positive constant.

\textbf{(H3)} For any $x\in\mathbb{R}^d$, $\mu\in \mathcal{P}_{2}(\mathbb{R}^d)$ and $t\in [0,T]$
\begin{align}
	|b(t,x,\mu)|+\|\sigma(t,x,\mu)\| &\leq L_2\Big(1+|x|+\mathbb{W}_{2}(\mu,\delta_0)\Big),\nonumber
	%		\\\|\sigma(t,x,\mu)\|^2&\leq L_2\Big(1+|x|^2+\mathbb{W}_{2}^2(\mu,\delta_0)\Big),\nonumber
\end{align}
where $L_2>0$ is a positive constant.

\begin{rem}\label{condition of Volterra}
	(i) Note that by \textbf{(H1)} and H$\mathrm{\ddot{o}}$lder's inequality, the kernel $K(t,s):=C_1K_1(t,s)+C_2K_2(t,s)+C_3K_1^2(t,s)+C_4K_2^2(t,s)$ with positive constants $C_i\geq 0, i=1,2,3,4$ satisfies the following properties:
	$$t\mapsto \int_{0}^{t}K(t,s)\d s\in L^{\infty}(0,T)$$
	and
	$$\limsup_{\varepsilon\downarrow 0}\Big\|\int_{\cdot}^{\cdot+\varepsilon}K(\cdot+\varepsilon,s)\d s\Big\|_{L^{\infty}(0,T)}<1.$$
	
	 	Let $R_n^{K}$ and $R^{K}$ be defined by \eqref{definition of R_n}  and \eqref{Resolvent} in terms of K respectively. By \eqref{definition of R_n}, \eqref{definition of resolvent}, \eqref{finite of resolvent} and  \textbf{(H1)}, when $n=1$ we have %for positive constants $C_5,C_6$
		\begin{align}
			\int_{0}^{T}R_1^{K}(t,s)  \d s=\int_{0}^{T}K(t,s) \d s<\infty,\nonumber
		\end{align}
		and
		\begin{align}
			&\int_{0}^{t}R^{K}(t,s) \d s<\infty\nonumber.
		\end{align}
		This enables  us to use Lemma \ref{Gronwall}.

(ii) It is clear that under the assumptions \textbf{(H1)}-\textbf{(H3)}, there exists a unique solution $X^{\varepsilon}_{t,\xi}$ of \eqref{Volterra MV eq} ($X^{0}_{t,\xi}$ of \eqref{Volterra eq-limit eq}) (Theorem $3.1$ of \cite{LG}).
\end{rem}
\begin{lem}\label{basic lemma}
	Under the assumption \textbf{(H1)}, the following inequalities hold for any adapted $\mathbb{R}^{d}$-valued process $Y$ and $\mathbb{R}^{d\times m}$-valued process $Z$:
	
	(i) for $p\geq 2$ and $t\in [0,T]$,
	$$\E\Big[\Big|\int_{0}^{t}K_1(t,s)Y_s\d s\Big|^p\Big]\leq C_{p,T}\int_{0}^{t}K_1(t,s)\cdot \E|Y_s|^p\d s,$$
	
	(ii) for $p\geq 2$ and $t\in [0,T]$,
	$$\E\Big[\Big|\int_{0}^{t}K_2(t,s)Z_s\d W_s\Big|^p\Big]\leq C_{p,T}\int_{0}^{t}K_2^2(t,s)\cdot \E\|Z_s\|^p\d s,$$
	
	(iii) for $p\geq 2,v\in\mathcal{A}_{N}$ and $t\in [0,T]$,
	$$\E\Big[\Big|\int_{0}^{t}K_1(t,s)Z_s v_s\d s\Big|^p\Big]\leq C_{p,T,N}\int_{0}^{t}K^2_1(t,s)\cdot \E\|Z_s\|^p\d s,$$
	
	(iv) for $p\geq 1, t\in [0,T]$ and $h\geq 0$ with $t+h\leq T$,
	$$\E\Big[\Big|\int_{0}^{t}(K_1(t+h,s)-K_1(t,s))Y_s\d s\Big|^p\Big]+\E\Big[\Big|\int_{t}^{t+h}K_1(t+h,s)Y_s\d s\Big|^p\Big]\leq C_{p,T} h^{\gamma p}\sup_{t\in[0,T]}\E[|Y_t|^p],$$
	
	(v) for $p\geq 2, t\in [0,T]$ and $h\geq 0$ with $t+h\leq T$,
	$$\E\Big[\Big|\int_{0}^{t}(K_2(t+h,s)-K_2(t,s))Z_s\d W_s\Big|^p\Big]+\E\Big[\Big|\int_{t}^{t+h}K_2(t+h,s)Z_s\d W_s\Big|^p\Big]\leq C_{p,T} h^{\gamma p}\sup_{t\in[0,T]}\E[\|Z_t\|^p],$$
	
	(vi) for $p\geq 2, v\in\mathcal{A}_{N}, t\in [0,T]$ and $h\geq 0$ with $t+h\leq T$,
	$$\E\Big[\Big|\int_{0}^{t}(K_1(t+h,s)-K_1(t,s))Z_s v_s\d s\Big|^p\Big]+\E\Big[\Big|\int_{t}^{t+h}K_1(t+h,s)Z_s v_s\d s\Big|^p\Big]\leq C_{T,N} h^{\gamma p}\sup_{t\in[0,T]}\E[\|Z_t\|^p].$$
\end{lem}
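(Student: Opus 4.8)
The plan is to derive all six estimates from three standard inequalities — Jensen's inequality with respect to a weighted measure on the time interval, the Burkholder--Davis--Gundy (BDG) inequality for the stochastic-integral terms, and Minkowski's integral inequality — combined with the kernel bounds furnished by \textbf{(H1)}. The preliminary step is to isolate the only consequences of \textbf{(H1)} that will be used. Putting $t=0$ in the second display of \textbf{(H1)} gives $\int_0^{t'}K_i^2(t',s)\,\d s\le C_TT^{2\gamma}$ for all $t'\in[0,T]$ and $i=1,2$, hence by Cauchy--Schwarz $\int_0^{t'}K_i(t',s)\,\d s\le(t')^{1/2}\big(\int_0^{t'}K_i^2(t',s)\,\d s\big)^{1/2}\le C_T$; and, for $0\le t\le t+h\le T$, the two displays of \textbf{(H1)} together with Cauchy--Schwarz give $\int_0^t|K_i(t+h,s)-K_i(t,s)|\,\d s\le C_Th^{\gamma}$ and $\int_t^{t+h}K_i(t+h,s)\,\d s\le h^{1/2}(C_Th^{2\gamma})^{1/2}\le C_Th^{\gamma}$, the last step using $h\le T$.

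For (i), writing $\int_0^tK_1(t,s)Y_s\,\d s$ as an integral against the finite measure $K_1(t,s)\,\d s$ on $[0,t]$ and applying Jensen gives $\big|\int_0^tK_1(t,s)Y_s\,\d s\big|^p\le\big(\int_0^tK_1(t,s)\,\d s\big)^{p-1}\int_0^tK_1(t,s)|Y_s|^p\,\d s$; taking expectations and bounding $\int_0^tK_1(t,s)\,\d s\le C_T$ yields the claim. For (ii), BDG replaces the left-hand side by $C_p\,\E\big(\int_0^tK_2^2(t,s)\|Z_s\|^2\,\d s\big)^{p/2}$, after which Jensen with respect to the measure $K_2^2(t,s)\,\d s$ (legitimate since $p/2\ge1$) together with $\int_0^tK_2^2(t,s)\,\d s\le C_T$ finishes it. For (iii), Cauchy--Schwarz in $s$ gives $\big|\int_0^tK_1(t,s)Z_sv_s\,\d s\big|\le\big(\int_0^t|v_s|^2\,\d s\big)^{1/2}\big(\int_0^tK_1^2(t,s)\|Z_s\|^2\,\d s\big)^{1/2}\le N^{1/2}\big(\int_0^tK_1^2(t,s)\|Z_s\|^2\,\d s\big)^{1/2}$ because $v\in\mathcal{A}_N$; one then argues exactly as in (ii) with $K_1^2$ in place of $K_2^2$, the extra $N^{p/2}$ absorbed into the constant.

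Parts (iv)--(vi) are the increment versions, obtained by running the same templates on the kernels $K_i(t+h,\cdot)-K_i(t,\cdot)$ restricted to $[0,t]$ and $K_i(t+h,\cdot)$ restricted to $[t,t+h]$; by the preliminary step both the $L^2(0,t)$-norm of the first and the $L^2(t,t+h)$-norm of the second are at most $C_T^{1/2}h^{\gamma}$, and it is the resulting factor, of the form $(C_Th^{2\gamma})^{p/2}=C_{p,T}h^{\gamma p}$, that produces the stated power of $h$. For (v) — BDG, then Jensen with respect to $|K_2(t+h,s)-K_2(t,s)|^2\,\d s$, resp.\ $K_2^2(t+h,s)\,\d s$ — and for (vi) — Cauchy--Schwarz to extract $N^{1/2}$, then Jensen with the $K_1^2$-type measures — this is a verbatim repetition of (ii)/(iii), since there $p\ge2$. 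The single step needing a different argument is (iv), claimed for all $p\ge1$: when $1\le p<2$ the square-function route fails, so instead I would use Minkowski's integral inequality, $\big(\E\big|\int_0^tf(s)\,\d s\big|^p\big)^{1/p}\le\int_0^t(\E|f(s)|^p)^{1/p}\,\d s$, to obtain $\big(\E\big|\int_0^t(K_1(t+h,s)-K_1(t,s))Y_s\,\d s\big|^p\big)^{1/p}\le\big(\sup_{s\in[0,T]}\E|Y_s|^p\big)^{1/p}\int_0^t|K_1(t+h,s)-K_1(t,s)|\,\d s$, and likewise for the near-diagonal term; the $L^1$-bounds above and raising to the power $p$ then give (iv).

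The only genuine obstacle is this $1\le p<2$ case of (iv); everything else is a mechanical use of Jensen/BDG together with the routine bookkeeping needed to confirm that the constants depend only on $p$ and $T$ (and, in (iii) and (vi), additionally on $N$ through $\int_0^T|v_s|^2\,\d s\le N$). One may assume throughout that $\sup_{t\in[0,T]}\E|Y_t|^p$, resp.\ $\sup_{t\in[0,T]}\E\|Z_t\|^p$, is finite, the inequalities being vacuous otherwise.
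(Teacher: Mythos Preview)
Your proposal is correct and follows essentially the same route as the paper: Jensen/H\"older with respect to the kernel-weighted measure for (i), BDG followed by H\"older for (ii) and (v), Cauchy--Schwarz to strip off $v$ before applying the same template for (iii) and (vi), and Minkowski's integral inequality for (iv). The only cosmetic difference is that the paper applies Minkowski directly for all $p\ge1$ in (iv) rather than splitting into cases, and phrases the Jensen step as H\"older's inequality, but the underlying computations are identical.
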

\begin{proof}
	Let us show $(i)$, $(iii)$ and $(iv)$ first. Take $p\geq 2$. H$\mathrm{\ddot{o}}$lder's inequality show that
	\begin{align}
		\E\Big[\Big|\int_{0}^{t}K_1(t,s)Y_s\d s\Big|^p\Big]&\leq \int_{0}^{t}K_1(t,s)\cdot \E[|Y_s|^p]\d s\cdot \Big|\int_{0}^{t}K_1(t,s)\d s\Big|^{p-1}\nonumber\\&\leq C_{p,T}\int_{0}^{t}K_1(t,s)\cdot \E[|Y_s|^p]\d s.\nonumber
	\end{align}
For $(iii)$, by H$\mathrm{\ddot{o}}$lder's inequality and recall $v\in \mathcal{A}_{N}$ we have
\begin{align}
	\E\Big[\Big|\int_{0}^{t}K_1(t,s)Z_s v_s\d s\Big|^p\Big]&\leq \big(\int_{0}^{t}v_s^2\d s\big)
^{\frac{p}{2}}\cdot \E\Big|\int_{0}^{t}K^2_1(t,s)\cdot\|Z_s\|^2\d s\Big|^{\frac{p}{2}}\nonumber\\&\leq N^{\frac{p}{2}}\int_{0}^{t}K_1^2(t,s)\cdot \E[\|Z_s\|^p]\d s
\cdot \Big|\int_{0}^{t}K_1^2(t,s)\d s\Big|^{\frac{p}{2}-1}\nonumber\\&\leq C_{p,T,N}\int_{0}^{t}K_1^2(t,s)\cdot \E[\|Z_s\|^p]\d s.\nonumber
\end{align}
	For $(iv)$, by Minkowski's integral inequality we observe 
	\begin{align}
		\E\Big[&\Big|\int_{0}^{t}(K_1(t+h,s)-K_1(t,s))Y_s\d s\Big|^p\Big]+\E\Big[\Big|\int_{t}^{t+h}K_1(t+h,s)Y_s\d s\Big|^p\Big]\nonumber\\&\leq \Big(\int_{0}^{t}\Big|K_1(t+h,s)-K_1(t,s)\Big|\cdot[\E|Y_s|^p]^{\frac{1}{p}}\d s\Big)^{p}+\Big(\int_{t}^{t+h}K_1(t+h,s)\cdot[\E|Y_s|^p]^{\frac{1}{p}}\d s\Big)^{p}\nonumber\\&\leq \sup_{t\in[0,T]}E[|Y_s|^p]\Big(\Big(\int_{0}^{t}\Big|K_1(t+h,s)-K_1(t,s)\Big|\d s\Big)^{p}+\Big(\int_{t}^{t+h}K_1(t+h,s)\d s\Big)^{p}\Big)\nonumber\\&\leq C_{p,T} h^{\gamma p}\sup_{t\in[0,T]}\E[|Y_s|^p].\nonumber
	\end{align}
	We next show $(ii)$ and $(v)$. For $(ii)$, let $p\geq 2$,  using BDG's inequality, Minkowski's integral inequality and
	H$\mathrm{\ddot{o}}$lder's inequality   we   have 
	\begin{align}
		\E \Big[\Big|\int_{0}^{t}K_2(t,s)Z_s\d W_s\Big|^p\Big]&\leq C_{p}\E\Big[ \Big(\int_{0}^{t}K_2^2(t,s)\|Z_s\|^2\d s\Big)^{\frac{p}{2}}\Big] \nonumber\\&\leq C_{p} \int_{0}^{t}K_2^2(t,s)\cdot \E[\|Z_s\|^p]\d s\cdot \Big|\int_{0}^{t}K_2^2(t,s)\d s\Big|^{\frac{p}{2}-1}\nonumber \\&\leq C_{p,T}\int_{0}^{t}K_2^2(t,s)\cdot \E[\|Z_s\|^p]\d s.\nonumber                                                                   \end{align}
	For $(v)$, take $p\geq 2$. By the BDG's inequality and Minkowski's inequality, we obtain
	\begin{align}
		\E\Big[&\Big|\int_{0}^{t}(K_2(t+h,s)-K_2(t,s))Z_s\d W_s\Big|^p\Big]+\E\Big[\Big|\int_{t}^{t+h}K_2(t+h,s)Z_s\d W_s\Big|^p\Big]\nonumber\\&\leq C_p\E\Big(\int_{0}^{t}\big|K_2(t+h,s)-K_2(t,s)\big|^2\cdot\|Z_s\|^2\d s\Big)^{\frac{p}{2}}+C_p\E\Big(\int_{t}^{t+h}K^2_2(t+h,s)\cdot\|Z_s\|^2\d s\Big)^{\frac{p}{2}}\nonumber\\&\leq C_p\Big(\int_{0}^{t}\big|K_2(t+h,s)-K_2(t,s)\big|^2\cdot \E[\|Z_s\|^p]^{\frac{2}{p}}\d s\Big)^{\frac{p}{2}}+C_p\Big(\int_{t}^{t+h}K_2^2(t+h,s)\E[\|Z_s\|^p]^{\frac{2}{p}}\d s\Big)^{\frac{p}{2}}\nonumber\\&\leq C_{p} \sup_{t\in[0,T]}\E[\|Z_t\|^p]\Big[\Big(\int_{0}^{t}|K_2(t+h,s)-K_2(t,s)|^2\d s\Big)^{\frac{p}{2}}+\Big(\int_{t}^{t+h}K^2_2(t+h,s)\d s\Big)^{\frac{p}{2}}\Big]\nonumber\\&\leq C_{p,T} h^{\gamma p}\sup_{t\in[0,T]}\E[\|Z_t\|^p].\nonumber
	\end{align}
Finally, we show $(vi)$. By H$\mathrm{\ddot{o}}$lder's inequality and recall $v\in \mathcal{A}_{N}$ we have
\begin{align}
	&\E\Big[\Big|\int_{0}^{t}\big(K_2(t+h,s)-K_2(t,s)\big)Z_s v_s\d s\Big|^p\Big]+\E\Big[\Big|\int_{t}^{t+h}K_2(t+h,s)Z_s v_s\d s\Big|^p\Big]\nonumber\\&\leq \big(\int_{0}^{t}v_s^2\d s\big)
	^{\frac{p}{2}}\cdot \E\Big(\int_{0}^{t}\big|K_2(t+h,s)-K_2(t,s)\big|^2\cdot\|Z_s\|^2\d s\Big)^{\frac{p}{2}}\nonumber\\&\quad+\big(\int_{0}^{t}v_s^2\d s\big)
	^{\frac{p}{2}}\cdot \E\Big(\int_{t}^{t+h}K^2_2(t+h,s)\cdot\|Z_s\|^2\d s\Big)^{\frac{p}{2}}\nonumber\\&\leq C_N\Big(\int_{0}^{t}\big|K_2(t+h,s)-K_2(t,s)\big|^2\cdot \E[\|Z_s\|^p]^{\frac{2}{p}}\d s\Big)^{\frac{p}{2}}+\Big(\int_{t}^{t+h}K_2^2(t+h,s)\E[\|Z_s\|^p]^{\frac{2}{p}}\d s\Big)^{\frac{p}{2}}\nonumber\\&\leq C_{N} \sup_{t\in[0,T]}\E[\|Z_t\|^p]\Big(\Big(\int_{0}^{t}|K_2(t+h,s)-K_2(t,s)|^2\d s\Big)^{\frac{p}{2}}+\Big(\int_{t}^{t+h}K^2_2(t+h,s)\d s\Big)^{\frac{p}{2}}\Big)\nonumber\\&\leq C_{T,N} h^{\gamma p}\sup_{t\in[0,T]}\E[\|Z_t\|^p],\nonumber
\end{align}

	 which completes the proof.
\end{proof}
	\begin{lem}\label{xxi}
	Under the assumptions \textbf{(H1)}-\textbf{(H3)}, it follows that for $p\geq 1,\xi,\eta\in \mathbb{R}^d$
	$$\sup_{t\in [0,T]}\E|X^{\varepsilon}_{t,\xi}|^{p}\leq C_{p,T,L_2}(1+|\xi|^{p}),$$
	and 
	$$\sup_{t\in [0,T]}\E|X^{\varepsilon}_{t,\xi}-X^{\varepsilon}_{t,\eta}|^{p}\leq C_{p,T,L_1}|\xi-\eta|^{p},$$
	where the constants $C_{p,T,L_1}$ and $C_{p,T,L_2}$ are independent of $\varepsilon$.
\end{lem}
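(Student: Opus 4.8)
The plan is to derive, for each fixed $\varepsilon\in(0,1]$ and each $p\ge 2$, a Volterra-type integral inequality for $f(t):=\E|X^\varepsilon_{t,\xi}|^p$ and then close it with the Gronwall lemma of Volterra type (Lemma~\ref{Gronwall}); the remaining range $1\le p<2$ follows immediately from Jensen's inequality $\E|\cdot|^p\le(\E|\cdot|^2)^{p/2}$. Concretely, I would raise \eqref{Volterra MV eq} to the $p$-th power, use $|a+b+c|^p\le 3^{p-1}(|a|^p+|b|^p+|c|^p)$ and $\varepsilon^{p/2}\le1$, and apply parts (i) and (ii) of Lemma~\ref{basic lemma} to the drift integral and the stochastic integral respectively, which yields
\begin{align}
	f(t)\le C_{p,T}\Big(|\xi|^p+\int_0^t K_1(t,s)\,\E\big[|b(s,X^\varepsilon_{s,\xi},\mathcal L_{X^\varepsilon_{s,\xi}})|^p\big]\d s+\int_0^t K_2^2(t,s)\,\E\big[\|\sigma(s,X^\varepsilon_{s,\xi},\mathcal L_{X^\varepsilon_{s,\xi}})\|^p\big]\d s\Big).\nonumber
\end{align}
The growth bound \textbf{(H3)}, together with the identity $\mathbb W_2(\mathcal L_{X^\varepsilon_{s,\xi}},\delta_0)^2=\E|X^\varepsilon_{s,\xi}|^2$ (so that $\mathbb W_2(\mathcal L_{X^\varepsilon_{s,\xi}},\delta_0)^p\le\E|X^\varepsilon_{s,\xi}|^p$ for $p\ge2$ by Jensen), then bounds both integrands by $C_{p,L_2}(1+f(s))$. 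Collecting terms with the kernel $K(t,s):=C_{p,T}(K_1(t,s)+K_2^2(t,s))$ — which belongs to the class $\mathcal K$ by Remark~\ref{condition of Volterra} — and $g(t):=C_{p,T}\big(|\xi|^p+\int_0^t(K_1(t,s)+K_2^2(t,s))\d s\big)$ — which is bounded by $C_{p,T}(1+|\xi|^p)$ thanks to \textbf{(H1)} — one arrives at $f(t)\le g(t)+\int_0^t K(t,s)f(s)\,\d s$.

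Before invoking Lemma~\ref{Gronwall} I would note that $f$ must be known to be locally bounded; this is supplied by the well-posedness of \eqref{Volterra MV eq} recalled in Remark~\ref{condition of Volterra}, or, for a self-contained route, by first running the computation above with $X^\varepsilon$ replaced by the stopped process $X^\varepsilon_{\cdot\wedge\tau_R,\xi}$, $\tau_R:=\inf\{t\ge0:|X^\varepsilon_{t,\xi}|\ge R\}$, and then letting $R\to\infty$ via Fatou's lemma. With local boundedness in hand, Lemma~\ref{Gronwall} gives $f(t)\le g(t)+\int_0^t R^{K}(t,s)g(s)\,\d s$, and since $g$ is bounded while $t\mapsto\int_0^t R^{K}(t,s)\d s\in L^\infty(0,T)$ by \eqref{finite of resolvent}, this yields $\sup_{t\in[0,T]}\E|X^\varepsilon_{t,\xi}|^p\le C_{p,T,L_2}(1+|\xi|^p)$, the first assertion.

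The second estimate follows the same pattern applied to the difference $X^\varepsilon_{t,\xi}-X^\varepsilon_{t,\eta}$: subtracting the two copies of \eqref{Volterra MV eq}, raising to the $p$-th power and again using Lemma~\ref{basic lemma}(i),(ii), I would obtain $\tilde f(t)\le C_{p,T}\big(|\xi-\eta|^p+\int_0^t K_1(t,s)\E|\Delta b_s|^p\d s+\int_0^t K_2^2(t,s)\E\|\Delta\sigma_s\|^p\d s\big)$ for $\tilde f(t):=\E|X^\varepsilon_{t,\xi}-X^\varepsilon_{t,\eta}|^p$, where $\Delta b_s$ and $\Delta\sigma_s$ are the increments of $b$ and $\sigma$ between the two solutions. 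The Lipschitz hypothesis \textbf{(H2)} bounds $|\Delta b_s|+\|\Delta\sigma_s\|$ by $L_1\big(|X^\varepsilon_{s,\xi}-X^\varepsilon_{s,\eta}|+\mathbb W_2(\mathcal L_{X^\varepsilon_{s,\xi}},\mathcal L_{X^\varepsilon_{s,\eta}})\big)$, and using the natural coupling $(X^\varepsilon_{s,\xi},X^\varepsilon_{s,\eta})$ of the two laws one gets $\mathbb W_2(\mathcal L_{X^\varepsilon_{s,\xi}},\mathcal L_{X^\varepsilon_{s,\eta}})^p\le\tilde f(s)$; hence $\tilde f(t)\le C_{p,T,L_1}|\xi-\eta|^p+\int_0^t K(t,s)\tilde f(s)\d s$ with the same $K\in\mathcal K$, and Lemma~\ref{Gronwall} with \eqref{finite of resolvent} closes it as $\sup_{t\in[0,T]}\tilde f(t)\le C_{p,T,L_1}|\xi-\eta|^p$.

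The main obstacle — beyond the bookkeeping of constants and checking that $C_{p,T}(K_1+K_2^2)$ genuinely lands in $\mathcal K$ so that Lemma~\ref{Gronwall} applies — is the circular dependence between the $p$-th moment of the solution and the Wasserstein distances appearing inside $b$ and $\sigma$. This is precisely what the elementary comparisons $\mathbb W_2(\mathcal L_X,\delta_0)=(\E|X|^2)^{1/2}$ and $\mathbb W_2(\mathcal L_X,\mathcal L_Y)\le(\E|X-Y|^2)^{1/2}$ handle: they re-express the distribution-dependent terms through the same unknown ($f$, respectively $\tilde f$), so that once the a priori local boundedness of that unknown has been secured by the stopping-time truncation, the Volterra Gronwall inequality really closes the loop.
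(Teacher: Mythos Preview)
Your proposal is correct and follows essentially the same route as the paper: reduce to $p\ge 2$, apply Lemma~\ref{basic lemma}(i)--(ii) together with \textbf{(H3)} (resp.\ \textbf{(H2)}) and the elementary Wasserstein bounds to reach a Volterra integral inequality with kernel $C(K_1+K_2^2)\in\mathcal K$, and close with Lemma~\ref{Gronwall} plus \eqref{finite of resolvent}. If anything, you are more explicit than the paper about the local-boundedness prerequisite for the Volterra Gronwall step and about the second estimate, which the paper dispatches with ``Similarly''.
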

\begin{proof}
		We  can assume  $p\geq 2$,  which is sufficient for $p\geq1$ by Lyapunov inequality. By Lemma \ref{basic lemma} (i)-(ii), \textbf{(H1)}, \textbf{(H3)} and Lemma $2.6$ in \cite{dST}, for $0\leq \varepsilon<1$ we have
	\begin{align}
		&\E|X^{\varepsilon}_{t,\xi}|^{p}\nonumber\\&=\E\Big|\xi+\int_{0}^{t}K_1(t,s)b(s,X^\varepsilon_{s,\xi},\mathcal{L}_{X^\varepsilon_{s,\xi}})\d s+\sqrt{\varepsilon}\int_{0}^{t}K_2(t,s)\sigma(s,X^\varepsilon_{s,\xi},\mathcal{L}_{X^\varepsilon_{s,\xi}})\d W_s\Big|^{p}\nonumber\\&\leq 3^{p-1}|\xi|^{p}+3^{p-1}\E\Big|\int_{0}^{t}K_1(t,s)b(s,X^\varepsilon_{s,\xi},\mathcal{L}_{X^\varepsilon_{s,\xi}})\d s\Big|^{p}+3^{p-1}\E\Big|\sqrt{\varepsilon}\int_{0}^{t}K_2(t,s)\sigma(s,X^\varepsilon_{s,\xi},\mathcal{L}_{X^\varepsilon_{s,\xi}})\d W_s\Big|^{p}\nonumber\\&\leq 3^{p-1}|\xi|^{p}+C_{p,T}\int_{0}^{t}K_1(t,s)\cdot \E|b(s,X^\varepsilon_{s,\xi},\mathcal{L}_{X^\varepsilon_{s,\xi}})|^{p}\d s+C_{p,T}\int_{0}^{t}K^2_2(t,s)\cdot \E\big\|\sigma(s,X^\varepsilon_{s,\xi},\mathcal{L}_{X^\varepsilon_{s,\xi}})\big\|^{p}\d s\nonumber\\&\leq 3^{p-1}|\xi|^{p}+C_{p,T,L_2}\int_{0}^{t}K_1(t,s)\cdot \E\Big|1+|X^\varepsilon_{s,\xi}|+\mathbb{W}_{2}(\mathcal{L}_{X^\varepsilon_{s,\xi}},\delta_0)\Big|^{p}\d s\nonumber\\&\quad+C_{p,T,L_2}\int_{0}^{t}K_2^2(t,s)\cdot \E\Big|1+|X^\varepsilon_{s,\xi}|+\mathbb{W}_{2}(\mathcal{L}_{X^\varepsilon_{s,\xi}},\delta_0)\Big|^{p}\d s\nonumber\\&\leq 3^{p-1}|\xi|^{p}+C_{p,T,L_2}\int_{0}^{t}K_1(t,s)\big(1+\E|X^{\varepsilon}_{s,\xi}|^{p}\big)\d s+C_{p,T,L_2}\int_{0}^{t}K^2_2(t,s)\big(1+\E|X^{\varepsilon}_{s,\xi}|^{p}\big)\d s\nonumber\\&\leq 3^{p-1}|\xi|^{p}+C_{p,T,L_2}+C_{p,T,L_2}\int_{0}^{t}K_{1,2}(t,s)\cdot \E|X^{\varepsilon}_{s,\xi}|^{p}\d s,\nonumber
	\end{align}
	where $K^{*}_{1,2}(t,s):=C_{p,T,L_2}\cdot (K_1(t,s)+K_2^2(t,s))$ and the constant $C_{p,T,L_2}$ is independent of $\varepsilon$.
	
	Let $R^{K^{*}_{1,2}}$ be defined by \eqref{Resolvent} in terms of $K_{1,2}$. By Remark \ref{condition of Volterra}, Lemma \ref{Gronwall} and \eqref{finite of resolvent}, we obtain that for almost all $t\in [0,T]$,
	\begin{align}
		\E|X^{\varepsilon}_{t,\xi}|^{p}&\leq 3^{p-1}|\xi|^{p}+C_{p,T,L_2}+\int_{0}^{t}R^{K^{*}_{1,2}}(t,s)\cdot\big(3^{p-1}|\xi|^{p}+C_{p,T,L_2}\big)\d s\nonumber\\&\leq C_{p,T,L_2}(1+|\xi|^{p}).\nonumber
	\end{align}
	Similarly, we can prove 
	$$\E|X^{\varepsilon}_{t,\xi}-X^{\varepsilon}_{t,\eta}|^{p}\leq C_{p,T,L_1}|\xi-\eta|^{p}.$$
\end{proof}
Proceeding as in the proof of Lemma \ref{xxi}, we have the following Lemma associated with $X^{0}_{t,\xi}$.
\begin{lem}\label{x0xi}
	Under the assumptions \textbf{(H1)}-\textbf{(H3)}, it follows that for $p\geq 1,\xi,\eta\in \mathbb{R}^d$
	$$\sup_{t\in [0,T]}|X^{0}_{t,\xi}|^{p}\leq C_{p,T,L_2}(1+|\xi|^{p}),$$
	and 
	$$\sup_{t\in [0,T]}|X^{0}_{t,\xi}-X^{0}_{t,\eta}|^{p}\leq C_{p,T,L_1}|\xi-\eta|^{p},$$
	where the constants $C_{p,T,L_1}$ and $C_{p,T,L_2}$ are independent of $\varepsilon$.
\end{lem}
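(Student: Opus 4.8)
The plan is to mimic the proof of Lemma \ref{xxi} verbatim, the only simplification being that the limit equation \eqref{Volterra eq-limit eq} carries no stochastic integral, so part $(ii)$ of Lemma \ref{basic lemma} is not needed and no expectations appear. It suffices to treat $p\ge 2$, the range $1\le p<2$ following from the Lyapunov inequality. First I would use $|a+b|^p\le 2^{p-1}(|a|^p+|b|^p)$ in \eqref{Volterra eq-limit eq} to get
\begin{align*}
|X^0_{t,\xi}|^p\le 2^{p-1}|\xi|^p+2^{p-1}\Big|\int_0^t K_1(t,s)\,b(s,X^0_{s,\xi},\delta_{X^0_{s,\xi}})\,\d s\Big|^p,
\end{align*}
and bound the last term by the deterministic version of Lemma \ref{basic lemma}$(i)$, namely H\"older's inequality combined with $\sup_{t\le T}\int_0^t K_1(t,s)\,\d s<\infty$ (which is part of \textbf{(H1)}), obtaining $C_{p,T}\int_0^t K_1(t,s)\,|b(s,X^0_{s,\xi},\delta_{X^0_{s,\xi}})|^p\,\d s$.

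Next I would apply the growth assumption \textbf{(H3)}; since $\mathbb{W}_2(\delta_{X^0_{s,\xi}},\delta_0)=|X^0_{s,\xi}|$, this yields $|b(s,X^0_{s,\xi},\delta_{X^0_{s,\xi}})|\le L_2(1+2|X^0_{s,\xi}|)$ and hence
\begin{align*}
|X^0_{t,\xi}|^p\le C_{p,T,L_2}(1+|\xi|^p)+C_{p,T,L_2}\int_0^t K_1(t,s)\,|X^0_{s,\xi}|^p\,\d s .
\end{align*}
By Remark \ref{condition of Volterra}$(i)$ the kernel $C_{p,T,L_2}K_1$ lies in $\mathcal{K}$ and its resolvent satisfies \eqref{finite of resolvent}, so Lemma \ref{Gronwall} applies with $f(t)=|X^0_{t,\xi}|^p$ and $g\equiv C_{p,T,L_2}(1+|\xi|^p)$ and gives $|X^0_{t,\xi}|^p\le C_{p,T,L_2}(1+|\xi|^p)$ for almost every $t\in[0,T]$; continuity of $t\mapsto X^0_{t,\xi}$, which follows from \textbf{(H1)} once this a priori bound is in hand (the estimate of Lemma \ref{basic lemma}$(iv)$, read in the deterministic setting, shows $t\mapsto\int_0^tK_1(t,s)|X^0_{s,\xi}|^p\,\d s$ is H\"older continuous), upgrades the bound to the stated supremum.

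For the second estimate I would subtract equation \eqref{Volterra eq-limit eq} for initial data $\xi$ from the one for $\eta$, obtaining
\begin{align*}
X^0_{t,\xi}-X^0_{t,\eta}=(\xi-\eta)+\int_0^t K_1(t,s)\big(b(s,X^0_{s,\xi},\delta_{X^0_{s,\xi}})-b(s,X^0_{s,\eta},\delta_{X^0_{s,\eta}})\big)\,\d s,
\end{align*}
and then apply the Lipschitz assumption \textbf{(H2)} together with $\mathbb{W}_2(\delta_{X^0_{s,\xi}},\delta_{X^0_{s,\eta}})\le|X^0_{s,\xi}-X^0_{s,\eta}|$ to dominate the integrand by $2L_1|X^0_{s,\xi}-X^0_{s,\eta}|$. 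The same H\"older-plus-Gronwall argument as above, now with $g\equiv 2^{p-1}|\xi-\eta|^p$, then produces $|X^0_{t,\xi}-X^0_{t,\eta}|^p\le C_{p,T,L_1}|\xi-\eta|^p$. I do not expect any genuine obstacle here: the argument is routine bookkeeping once Lemma \ref{Gronwall} is available, and the only point deserving a line of care is the passage from the almost-everywhere bound that Lemma \ref{Gronwall} delivers to the true supremum over $[0,T]$, which is handled by the H\"older continuity of the Volterra convolution under \textbf{(H1)}.
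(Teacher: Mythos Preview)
Your proposal is correct and is precisely the approach the paper takes: it states the lemma and remarks that the proof proceeds as in Lemma~\ref{xxi}, which is exactly what you spell out, with the natural simplification that the stochastic integral term is absent. Your additional care about upgrading the almost-everywhere bound from Lemma~\ref{Gronwall} to a genuine supremum via continuity is a detail the paper leaves implicit.
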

 
\begin{lem}\label{esi-Q}  Denote  $Q^{\varepsilon}_{t}:=\frac{X^{\varepsilon}_{t,\xi}-X^0_{t,\xi}}{\sqrt{\varepsilon}h(\varepsilon)}$. 
		Assume %that the solution $Q^\varepsilon_{t}$ of \eqref{Volterra eq-differ} is well-posedness and 
		that \textbf{(H1)}-\textbf{(H3)} hold.   Then  for $p>2,\xi\in \mathbb{R}^{d}$
		$$\sup_{t\in[0,T]}\E|Q^{\varepsilon}_{t}|^p\leq \frac{C_{p,T,L_1,L_2}}{|h(\varepsilon)|^p}(1+|\xi|^{p}),$$
		where the constant $C_{p,T,L_1,L_2}$ is independent of $\varepsilon$. 
\end{lem}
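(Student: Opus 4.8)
The plan is to estimate $Q^\varepsilon_t$ directly from the equations it satisfies by subtracting \eqref{Volterra eq-limit eq} from \eqref{Volterra MV eq} and dividing by $\sqrt{\varepsilon}h(\varepsilon)$. Writing out the difference, $Q^\varepsilon_t$ solves a Volterra equation of the form
\begin{align}
	Q^\varepsilon_t=\frac{1}{h(\varepsilon)}\int_0^t K_1(t,s)\,\frac{b(s,X^\varepsilon_{s,\xi},\mathcal L_{X^\varepsilon_{s,\xi}})-b(s,X^0_{s,\xi},\delta_{X^0_{s,\xi}})}{\sqrt{\varepsilon}}\,\d s+\frac{1}{h(\varepsilon)}\int_0^t K_2(t,s)\sigma(s,X^\varepsilon_{s,\xi},\mathcal L_{X^\varepsilon_{s,\xi}})\,\d W_s.\nonumber
\end{align}
I would fix $p>2$ and take $\E|\cdot|^p$ on both sides, using the elementary inequality $|a+b|^p\le 2^{p-1}(|a|^p+|b|^p)$. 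For the drift term, apply Lemma \ref{basic lemma}(i); for the stochastic term, apply Lemma \ref{basic lemma}(ii). The key algebraic observation is that the drift increment can be bounded by \textbf{(H2)}: using the Lipschitz property and the obvious bound $\mathbb W_2(\mathcal L_{X^\varepsilon_{s,\xi}},\delta_{X^0_{s,\xi}})\le (\E|X^\varepsilon_{s,\xi}-X^0_{s,\xi}|^2)^{1/2}$ together with $\mathbb W_2(\mathcal L_{X^\varepsilon_{s,\xi}},\mathcal L_{X^0_{s,\xi}})\le (\E|X^\varepsilon_{s,\xi}-X^0_{s,\xi}|^2)^{1/2}$, one gets
\begin{align}
	\big|b(s,X^\varepsilon_{s,\xi},\mathcal L_{X^\varepsilon_{s,\xi}})-b(s,X^0_{s,\xi},\delta_{X^0_{s,\xi}})\big|\le L_1\Big(|X^\varepsilon_{s,\xi}-X^0_{s,\xi}|+\big(\E|X^\varepsilon_{s,\xi}-X^0_{s,\xi}|^2\big)^{1/2}\Big),\nonumber
\end{align}
so that after dividing by $\sqrt{\varepsilon}$ this is controlled by $L_1\sqrt{\varepsilon}h(\varepsilon)(|Q^\varepsilon_s|+(\E|Q^\varepsilon_s|^2)^{1/2})$; note the factor $\sqrt{\varepsilon}h(\varepsilon)\to 0$, but I only need it bounded here. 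After dividing by $h(\varepsilon)$ the drift contribution to $\E|Q^\varepsilon_t|^p$ is bounded by $C\,\varepsilon^{p/2}\int_0^t K_1(t,s)\,(\E|Q^\varepsilon_s|^p+(\E|Q^\varepsilon_s|^2)^{p/2})\,\d s$, which (using $\varepsilon\le 1$ and Jensen/Lyapunov to pass from the $L^2$ to the $L^p$ moment) is $\le C\int_0^t K_1(t,s)\sup_{r\le s}\E|Q^\varepsilon_r|^p\,\d s$ with a constant independent of $\varepsilon$.

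The stochastic term is where the factor $h(\varepsilon)^{-1}$ must be tracked carefully, and this is the main point of the lemma. By Lemma \ref{basic lemma}(ii),
\begin{align}
	\E\Big|\frac{1}{h(\varepsilon)}\int_0^t K_2(t,s)\sigma(s,X^\varepsilon_{s,\xi},\mathcal L_{X^\varepsilon_{s,\xi}})\,\d W_s\Big|^p\le\frac{C_{p,T}}{|h(\varepsilon)|^p}\int_0^t K_2^2(t,s)\,\E\big\|\sigma(s,X^\varepsilon_{s,\xi},\mathcal L_{X^\varepsilon_{s,\xi}})\big\|^p\,\d s,\nonumber
\end{align}
and by \textbf{(H3)} together with Lemma \ref{xxi} the integrand $\E\|\sigma(s,X^\varepsilon_{s,\xi},\mathcal L_{X^\varepsilon_{s,\xi}})\|^p\le C_{p,T,L_2}(1+|\xi|^p)$ uniformly in $s$ and $\varepsilon$; since $\int_0^t K_2^2(t,s)\,\d s$ is bounded by \textbf{(H1)}, the whole stochastic contribution is at most $\frac{C_{p,T,L_2}}{|h(\varepsilon)|^p}(1+|\xi|^p)$. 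Combining both contributions and setting $g(\varepsilon):=\frac{C_{p,T,L_1,L_2}}{|h(\varepsilon)|^p}(1+|\xi|^p)$, we arrive at
\begin{align}
	\E|Q^\varepsilon_t|^p\le g(\varepsilon)+C_{p,T,L_1,L_2}\int_0^t \big(K_1(t,s)+K_2^2(t,s)\big)\sup_{r\le s}\E|Q^\varepsilon_r|^p\,\d s.\nonumber
\end{align}

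Finally, I would apply the Volterra–Gronwall inequality (Lemma \ref{Gronwall}) with the kernel $K(t,s):=C_{p,T,L_1,L_2}(K_1(t,s)+K_2^2(t,s))$, which lies in $\mathcal K$ by Remark \ref{condition of Volterra}(i). To handle the supremum appearing inside the integral, one takes $\sup_{r\le t}$ on the left after noting that the right-hand side is nondecreasing in $t$ (since the kernels are nonnegative and one may majorize $K_i(t,s)$ on $[0,t]$ appropriately — or, more simply, first prove the bound for $t\mapsto\sup_{r\le t}\E|Q^\varepsilon_r|^p$ directly, which satisfies the same Volterra inequality with $g(\varepsilon)$ constant in $t$). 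Then Lemma \ref{Gronwall} gives $\sup_{r\le t}\E|Q^\varepsilon_r|^p\le g(\varepsilon)+\int_0^t R^K(t,s)g(\varepsilon)\,\d s\le g(\varepsilon)\big(1+\|\int_0^\cdot R^K(\cdot,s)\,\d s\|_{L^\infty(0,T)}\big)$, which by \eqref{finite of resolvent} is exactly of the claimed form $\frac{C_{p,T,L_1,L_2}}{|h(\varepsilon)|^p}(1+|\xi|^p)$. The main obstacle is bookkeeping: ensuring the constants genuinely do not depend on $\varepsilon$ (which requires using $\varepsilon\le 1$ in the drift term so the factor $\varepsilon^{p/2}$ is harmless rather than helpful) and correctly dealing with the $\sup_{r\le s}$ inside the Volterra integral so that Lemma \ref{Gronwall} applies with a genuine function of $t$ alone on the right-hand side.
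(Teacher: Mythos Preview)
Your approach matches the paper's: subtract the two equations, estimate the drift via Lemma~\ref{basic lemma}(i) and \textbf{(H2)}, estimate the stochastic integral via Lemma~\ref{basic lemma}(ii), \textbf{(H3)} and Lemma~\ref{xxi}, then close with the Volterra Gronwall Lemma~\ref{Gronwall}. Two remarks.

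First, a minor algebraic slip: after dividing $|b-b|$ by $\sqrt{\varepsilon}$ you should get $L_1 h(\varepsilon)\big(|Q^\varepsilon_s|+(\E|Q^\varepsilon_s|^2)^{1/2}\big)$, not $L_1\sqrt{\varepsilon}h(\varepsilon)(\cdots)$; there is no extra $\sqrt{\varepsilon}$. Since you then bound $\varepsilon^{p/2}\le 1$ anyway, your conclusion survives, but be aware the drift contribution to the Gronwall loop is of order $1$, not $\varepsilon^{p/2}$.

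Second, and more substantively, you have created a self-inflicted obstacle by inserting $\sup_{r\le s}$ inside the integral. This is unnecessary: after Lyapunov you already have the clean inequality
\[
\E|Q^\varepsilon_t|^p\le \frac{C}{|h(\varepsilon)|^p}(1+|\xi|^p)+C\int_0^t K_1(t,s)\,\E|Q^\varepsilon_s|^p\,\d s,
\]
and Lemma~\ref{Gronwall} applies directly with $f(t)=\E|Q^\varepsilon_t|^p$. This is precisely what the paper does (note only $K_1$ enters the kernel; the stochastic term contributes solely to the constant $g(\varepsilon)$, so your $K_2^2$ in the Gronwall kernel is superfluous). Your proposed workarounds are not rigorous for singular Volterra kernels: the right-hand side need \emph{not} be monotone in $t$, and the running supremum $F(t)=\sup_{r\le t}\E|Q^\varepsilon_r|^p$ does not obviously satisfy the same Volterra inequality, since $\sup_{r\le t}\int_0^r K(r,s)F(s)\,\d s$ is not of the form $\int_0^t K(t,s)F(s)\,\d s$. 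Simply drop the sup, apply Gronwall to $f$, and take $\sup_{t\in[0,T]}$ only at the very end using \eqref{finite of resolvent}.
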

\begin{proof}
	Write 
	\begin{align}\label{Volterra eq-differ}
		Q^\varepsilon_{t}=\int_{0}^{t}K_1(t,s)\cdot\frac{b(s,X^\varepsilon_{s,\xi},\mathcal{L}_{X^\varepsilon_{s,\xi}})-b(s,X^0_{s,\xi},\delta_{X^0_{s,\xi}})}{\sqrt{\varepsilon}h(\varepsilon)}\d s+\frac{1}{h(\varepsilon)}\int_{0}^{t}K_2(t,s)\sigma(s,X^\varepsilon_{s,\xi},\mathcal{L}_{X^\varepsilon_{s,\xi}})\d W_s.
	\end{align}
By Lemma \ref{basic lemma} (i)-(ii) and                                                                                                                                                                                                                                                                                                                                                                                                                                                                                                                                                                                                                                                                                                                                                                                                                                                                                                                                                                                                                                               \textbf{(H1)}-\textbf{(H3)}, we have
\begin{align}
	&\E|Q^{\varepsilon}_{t}|^{p}\nonumber\\&=\E\Big|\int_{0}^{t}K_1(t,s)\cdot\frac{b(s,X^\varepsilon_{s,\xi},\mathcal{L}_{X^\varepsilon_{s,\xi}})-b(s,X^0_{s,\xi},\delta_{X^0_{s,\xi}})}{\sqrt{\varepsilon}h(\varepsilon)}\d s+\frac{1}{h(\varepsilon)}\int_{0}^{t}K_2(t,s)\sigma(s,X^\varepsilon_{s,\xi},\mathcal{L}_{X^\varepsilon_{s,\xi}})\d W_s\Big|^{p}\nonumber\\
	&\leq 2^{p-1}\E\Big|\int_{0}^{t}K_1(t,s)\cdot\frac{b(s,X^\varepsilon_{s,\xi},\mathcal{L}_{X^\varepsilon_{s,\xi}})-b(s,X^0_{s,\xi},\delta_{X^0_{s,\xi}})}{\sqrt{\varepsilon}h(\varepsilon)}\d s\Big|^{p}+2^{p-1}\E\Big|\frac{1}{h(\varepsilon)}\int_{0}^{t}K_2(t,s)\sigma(s,X^\varepsilon_{s,\xi},\mathcal{L}_{X^\varepsilon_{s,\xi}})\d W_s\Big|^{p}\nonumber\\
	&\leq C_{p,T}\int_{0}^{t}K_1(t,s)\cdot \E	\Big|\frac{b(s,X^\varepsilon_{s,\xi},\mathcal{L}_{X^\varepsilon_{s,\xi}})-b(s,X^0_{s,\xi},\delta_{X^0_{s,\xi}})}{\sqrt{\varepsilon}h(\varepsilon)}\Big|^{p}\d s\nonumber\\
	&\qquad+\frac{C_{p,T}}{|h(\varepsilon)|^p}\int_{0}^{t}K^2_2(t,s)\cdot \E\big\|\sigma(s,X^\varepsilon_{s,\xi},\mathcal{L}_{X^\varepsilon_{s,\xi}})\big\|^{p}\d s\nonumber\\&\leq C_{p,T,L_1}\int_{0}^{t}K_1(t,s)\cdot \E\Big|\frac{|X^\varepsilon_{s,\xi}-X^{0}_{s,\xi}|+\mathbb{W}_{2}(\mathcal{L}_{X^\varepsilon_{s,\xi}},\delta_{X^0_{s,\xi}})}{\sqrt{\varepsilon}h(\varepsilon)}\Big|^{p}\d s\nonumber\\
	&\qquad+\frac{C_{p,T,L_2}}{|h(\varepsilon)|^p}\int_{0}^{t}K_2^2(t,s)\cdot \E\Big| \big(1+|X^{\varepsilon}_{s,\xi}|+\mathbb{W}_2(\mathcal{L}_{X^{\varepsilon}_{s,\xi}},\delta_{0})\big)\Big|^{p}\d s\nonumber\\
	&\leq C_{p,T,L_1}\int_{0}^{t}K_1(t,s)\cdot \E|Q^{\varepsilon}_{s,\xi}|^{p}\d s+\frac{C_{p,T,L_2}}{|h(\varepsilon)|^p}\int_{0}^{t}K^2_2(t,s)\cdot\big(1+\E|X^{\varepsilon}_{s,\xi}|^{p}\big)\d s\nonumber\\
	&\leq \frac{C_{p,T,L_2}}{|h(\varepsilon)|^p}\big(1+|\xi|^p\big)+C_{p,T,L_1}\int_{0}^{t}K_{1}(t,s)\cdot \E|Q^{\varepsilon}_{s,\xi}|^{p}\d s\nonumber,
\end{align}
where in the last step Lemma \ref{xxi} is used.

Let $R^{\hat{K}_1}$ be defined by \eqref{Resolvent} in terms of $\hat{K}_1:=C_{p,T,L_1}K_1(t,s)$. By Remark \ref{condition of Volterra}, Lemma \ref{Gronwall} and \eqref{finite of resolvent}, we obtain that for almost all $t\in [0,T]$,
\begin{align}
	\E|Q^{\varepsilon}_{t}|^{p}&\leq \frac{C_{p,T,L_2}}{|h(\varepsilon)|^p}\big(1+|\xi|^{p}\big)+\int_{0}^{t}R^{\hat{K}_{1}}(t,s)\cdot  \frac{C_{p,T,L_2}}{|h(\varepsilon)|^p}\big(1+|\xi|^{p}\big)\d s\nonumber\\&\leq \frac{C_{p,T,L_1,L_2}}{|h(\varepsilon)|^p}(1+|\xi|^{p}).\nonumber
\end{align}  
This proves the lemma.                                                              
\end{proof}

 	\begin{lem}\label{estimate of Lambda}
%	Assume  \textbf{(H1)}-\textbf{(H3)} hold and   that  $\Lambda^{\varepsilon,v}_{t,x}$ satisfies  
%	\eqref{eq of Lambda}. 
	Let $M: [0, T]\times \RR^d\times  \mathcal{P}_2(\mathbb{R}^d)\times \mathcal{P}_2(\mathbb{R}^d)\times 
	\mathbb{R}\to \mathbb{R}^d$, $N, G:  [0, T]\times \RR^d\times    \mathcal{P}_2(\mathbb{R}^d)\times 
	\mathbb{R}\to \mathbb{R}^d$  satisfy  
	$$M(s,X^{0}_{s,\xi},\mathcal{L}_{X_{s,\xi}^{\varepsilon}},\delta_{X_{s,\xi}^{0}},\Lambda^{\varepsilon,v}_{s,x},h(\varepsilon))\leq C\big(1+|\Lambda_{s,x}^{\varepsilon,v}|+\mathbb{W}_2(\mathcal{L}_{X_{s,\xi}^{\varepsilon}},\delta_{0})+\frac{1}{\sqrt{\varepsilon}h(\varepsilon)}\mathbb{W}_2(\mathcal{L}_{X_{s,\xi}^{\varepsilon}},\delta_{X^0_{s,\xi}})\big),$$
	$$N(s,X^{0}_{s,\xi},\mathcal{L}_{X_{s,\xi}^{\varepsilon}},\Lambda^{\varepsilon,v}_{s,x},h(\varepsilon))\vee G(s,X^{0}_{s,\xi},\mathcal{L}_{X_{s,\xi}^{\varepsilon}},\Lambda^{\varepsilon,v}_{s,x},h(\varepsilon)) \leq C\big(1+|\Lambda_{s,x}^{\varepsilon,v}|+|X^{0}_{s,\xi}|+\mathbb{W}_2(\mathcal{L}_{X_{s,\xi}^{\varepsilon}},\delta_{0})\big) $$
	for some finite  constant $C$.  Let $\Lambda^{\varepsilon,v}_{t,x}$ satisfy 
	\begin{align}\label{eq of Lambda}
		\Lambda^{\varepsilon,v}_{t,x}&=x+\int_{0}^{t}K_1(t,s)M(s,X^{0}_{s,\xi},\mathcal{L}_{X_{s,\xi}^{\varepsilon}},\delta_{X_{s,\xi}^{0}},\Lambda^{\varepsilon,v}_{s,x},h(\varepsilon))\d s\nonumber\\&+\int_{0}^{t}K_1(t,s)N(s,X^{0}_{s,\xi},\mathcal{L}_{X_{s,\xi}^{\varepsilon}},\Lambda^{\varepsilon,v}_{s,x},h(\varepsilon))v_s\d s+\int_{0}^{t}K_2(t,s)G(s,X^{0}_{s,\xi},\mathcal{L}_{X_{s,\xi}^{\varepsilon}},\Lambda^{\varepsilon,v}_{s,x},h(\varepsilon))\d W_s\,,
	\end{align} 
	where $X_{s, \xi}^0$ and $X_{s, \xi}^\varepsilon$  are defined as before.  
	Then,   for $p> 2, N>0,v\in\mathcal{A}_N,  \xi\in \mathbb{R}^d$
	$$\sup_{t\in  [0,T]}\E|\Lambda^{\varepsilon,v}_{t,x}|^{p}\leq C_{p,T,N,L_2}(1+|\xi|^{p}),$$
	where the constant $C_{p,T,N,L_2}$ is independent of $\varepsilon\in (0, 1],$ and $v\in\mathcal{A}_N$ (but it may depend  on $\mathcal{A}_N$ or $N$).
\end{lem}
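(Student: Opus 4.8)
The plan is to run essentially the same argument as in the proofs of Lemmas \ref{xxi} and \ref{esi-Q}: bound each of the three integral terms in \eqref{eq of Lambda} by the matching estimate of Lemma \ref{basic lemma}, insert the postulated growth bounds on $M$, $N$, $G$, reduce everything to a linear Volterra integral inequality for $t\mapsto \E|\Lambda^{\varepsilon,v}_{t,x}|^p$, and close it with the Volterra-type Gronwall inequality of Lemma \ref{Gronwall}.

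Concretely, for $p>2$ I would first split $|\Lambda^{\varepsilon,v}_{t,x}|^p\le 4^{p-1}\big(|x|^p+\cdots\big)$ into the initial datum and the three integrals, then estimate $\int_0^tK_1(t,s)M(\cdots)\,\d s$ by Lemma \ref{basic lemma}(i), the control term $\int_0^tK_1(t,s)N(\cdots)v_s\,\d s$ by Lemma \ref{basic lemma}(iii) (using $v\in\mathcal{A}_N$), and the stochastic integral $\int_0^tK_2(t,s)G(\cdots)\,\d W_s$ by Lemma \ref{basic lemma}(ii). Inserting the growth hypotheses on $M,N,G$ and expanding the resulting $p$-th powers by convexity, the pieces involving $|X^0_{s,\xi}|$, $|X^\varepsilon_{s,\xi}|$ and $\mathbb{W}_2(\mathcal{L}_{X^\varepsilon_{s,\xi}},\delta_0)$ are handled by Lemmas \ref{x0xi} and \ref{xxi} together with $\mathbb{W}_2(\mu,\delta_0)^p\le\mu(|\cdot|^p)$, contributing terms of size $C_{p,T,L_2}(1+|\xi|^p)$; the term $|\Lambda^{\varepsilon,v}_{s,x}|^p$ stays on the right-hand side (against the kernels $K_1$, $K_1^2$, $K_2^2$, all of whose $\d s$-integrals lie in $L^\infty(0,T)$ by Remark \ref{condition of Volterra}); and the contribution of $|x|^p$ is treated as a constant (in the applications $x$ is itself of order $1+|\xi|$).

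The only genuinely delicate contribution is the extra term $\tfrac{1}{\sqrt{\varepsilon}h(\varepsilon)}\mathbb{W}_2(\mathcal{L}_{X^\varepsilon_{s,\xi}},\delta_{X^0_{s,\xi}})$ allowed in the bound on $M$. Here I would use, exactly as in the proof of Lemma \ref{esi-Q},
$$\frac{1}{(\sqrt{\varepsilon}h(\varepsilon))^p}\,\mathbb{W}_2(\mathcal{L}_{X^\varepsilon_{s,\xi}},\delta_{X^0_{s,\xi}})^p\le\frac{1}{(\sqrt{\varepsilon}h(\varepsilon))^p}\,\E|X^\varepsilon_{s,\xi}-X^0_{s,\xi}|^p=\E|Q^\varepsilon_s|^p\le\frac{C_{p,T,L_1,L_2}}{|h(\varepsilon)|^p}(1+|\xi|^p),$$
which is uniform in $\varepsilon\in(0,1]$ because $|h(\varepsilon)|^{-1}$ stays bounded in both regimes of interest ($h\equiv1$ for the CLT, $h(\varepsilon)\to\infty$ for the MDP). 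This is the step I expect to be the main obstacle, and it is precisely why Lemma \ref{esi-Q} was established first; everything else is bookkeeping with Hölder/BDG already packaged in Lemma \ref{basic lemma}.

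After collecting the estimates one arrives at an inequality
$$\E|\Lambda^{\varepsilon,v}_{t,x}|^p\le C(1+|\xi|^p)+C\int_0^t\big(K_1(t,s)+K_1^2(t,s)+K_2^2(t,s)\big)\,\E|\Lambda^{\varepsilon,v}_{s,x}|^p\,\d s,$$
with $C=C_{p,T,N,L_1,L_2}$ independent of $\varepsilon\in(0,1]$ and of $v\in\mathcal{A}_N$. Setting $\check K:=C(K_1+K_1^2+K_2^2)$, Remark \ref{condition of Volterra}(i) gives $\check K\in\mathcal{K}$ with resolvent $R^{\check K}$ satisfying $t\mapsto\int_0^tR^{\check K}(t,s)\,\d s\in L^\infty(0,T)$; since $t\mapsto\E|\Lambda^{\varepsilon,v}_{t,x}|^p$ is finite on $[0,T]$ (a standard stopping-time/truncation argument based on the well-posedness of \eqref{eq of Lambda}, which also supplies the $R_n$-integrability needed to invoke the Gronwall lemma), Lemma \ref{Gronwall} yields, for a.e. $t\in[0,T]$,
$$\E|\Lambda^{\varepsilon,v}_{t,x}|^p\le C(1+|\xi|^p)+\int_0^tR^{\check K}(t,s)\,C(1+|\xi|^p)\,\d s\le C_{p,T,N,L_2}(1+|\xi|^p),$$
uniformly in $\varepsilon\in(0,1]$ and $v\in\mathcal{A}_N$, which is the assertion.
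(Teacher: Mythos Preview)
Your proposal is correct and follows essentially the same route as the paper: the paper also splits $\E|\Lambda^{\varepsilon,v}_{t,x}|^p$ into $4^{p-1}|x|^p+I_1+I_2+I_3$, bounds $I_1,I_2,I_3$ via Lemma \ref{basic lemma}(i),(iii),(ii) respectively, handles the $\tfrac{1}{\sqrt\varepsilon h(\varepsilon)}\mathbb{W}_2(\mathcal{L}_{X^\varepsilon_{s,\xi}},\delta_{X^0_{s,\xi}})$ contribution exactly through $\E|Q^\varepsilon_s|^p$ and Lemma \ref{esi-Q}, and then closes with the Volterra Gronwall inequality of Lemma \ref{Gronwall} against the combined kernel $K_1+K_1^2+K_2^2$. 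Your identification of the $Q^\varepsilon$ step as the only non-routine ingredient is accurate.
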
 
\begin{proof}
Since $\Lambda_{t,\xi}^{\varepsilon,v}$ satisfies   \eqref{eq of Lambda}, we have 
	\begin{align}
		&\E[|\Lambda_{t,x}^{\varepsilon,v}|^p]\nonumber\\&=\E\Big[\Big|x+\int_{0}^{t}K_1(t,s)M(s,X^{0}_{s,\xi},\mathcal{L}_{X_{s,\xi}^{\varepsilon}},\delta_{X_{s,\xi}^{0}},\Lambda^{\varepsilon,v}_{s,x},h(\varepsilon))\d s\nonumber\\
		&\qquad+\int_{0}^{t}K_1(t,s)N(s,X^{0}_{s,\xi},\mathcal{L}_{X_{s,\xi}^{\varepsilon}},\Lambda^{\varepsilon,v}_{s,x},h(\varepsilon))v_s\d s+\int_{0}^{t}K_2(t,s)G(s,X^{0}_{s,\xi},\mathcal{L}_{X_{s,\xi}^{\varepsilon}},\Lambda^{\varepsilon,v}_{s,x},h(\varepsilon))\d W_s\Big|^p\Big]\nonumber\\
		&\leq 4^{p-1}|x|^p+4^{p-1}\E\Big|\int_{0}^{t}K_1(t,s)M(s,X^{0}_{s,\xi},\mathcal{L}_{X_{s,\xi}^{\varepsilon}},\delta_{X_{s,\xi}^{0}},\Lambda^{\varepsilon,v}_{s,x},h(\varepsilon))\d s\Big|^p\nonumber\\
		&\qquad+ 4^{p-1}\E\Big|\int_{0}^{t}K_1(t,s)N(s,X^{0}_{s,\xi},\mathcal{L}_{X_{s,\xi}^{\varepsilon}},\Lambda^{\varepsilon,v}_{s,x},h(\varepsilon))v_s\d s\Big|^p\nonumber\\
		&\qquad +4^{p-1}\E\Big|\int_{0}^{t}K_2(t,s)G(s,X^{0}_{s,\xi},\mathcal{L}_{X_{s,\xi}^{\varepsilon}},\Lambda^{\varepsilon,v}_{s,x},h(\varepsilon))\d W_s\Big|^p\nonumber\\
		&:=4^{p-1}|x|^p+I_1+I_2+I_3.\nonumber
	\end{align}
	For the term $I_1$, by Lemma \ref{basic lemma}-(i), \textbf{(H1)}-\textbf{(H3)}, Lemmas \ref{xxi} and   \ref{esi-Q} we have
	\begin{align}
		I_1&\leq C_{p,T} \int_{0}^{t}K_1(t,s)\cdot \E\Big| M(s,X^{0}_{s,\xi},\mathcal{L}_{X_{s,\xi}^{\varepsilon}},\delta_{X_{s,\xi}^{0}},\Lambda^{\varepsilon,v}_{s,x},h(\varepsilon))\Big|^p\d s\nonumber\\&\leq C_{p,T} \int_{0}^{t}K_1(t,s)\cdot \E\big|1+|\Lambda_{s,x}^{\varepsilon,v}|+\mathbb{W}_2(\mathcal{L}_{X_{s,\xi}^{\varepsilon}},\delta_{0})+\frac{1}{\sqrt{\varepsilon}h(\varepsilon)}\mathbb{W}_2(\mathcal{L}_{X_{s,\xi}^{\varepsilon}},\delta_{X^0_{s,\xi}})\big|^p\d s\nonumber\\&\leq C_{p,T}\int_{0}^{t}K_1(t,s)\cdot\big(1+\E|\Lambda_{s,x}^{\varepsilon,v}|^p+\E|X^{\varepsilon}_{s,\xi}|^p+\E|Q^{\varepsilon}_{t,\xi}|^p\big)\d s\nonumber\\&\leq C_{p,T,L_1,L_2}\big(1+|\xi|^p\big)+C_{p,T}\int_{0}^{t}K_1(t,s)\cdot \E|\Lambda_{s,x}^{\varepsilon,v}|^p\d s\nonumber.
	\end{align}

	For the term $I_2$, by Lemma \ref{basic lemma}-(iii), \textbf{(H1)}-\textbf{(H3)}, Lemmas \ref{xxi} and   \ref{x0xi} we have
	\begin{align}
		I_2&\leq C_{p,T,N} \int_{0}^{t}K^2_1(t,s)\cdot \E\Big\|N(s,X^{0}_{s,\xi},\mathcal{L}_{X_{s,\xi}^{\varepsilon}},\Lambda^{\varepsilon,v}_{s,x},h(\varepsilon))\Big\|^p\d s\nonumber\\&\leq C_{p,T,N}\int_{0}^{t}K^2_1(t,s)\cdot \E\big|1+|\Lambda_{s,x}^{\varepsilon,v}|+|X^{0}_{s,\xi}|+\mathbb{W}_2(\mathcal{L}_{X_{s,\xi}^{\varepsilon}},\delta_{0})\big|^p\d s\nonumber\\&\leq C_{p,T,N}\int_{0}^{t}K^2_1(t,s)\cdot\big(1+\E|X_{s,\xi}^{\varepsilon,v}|^p+\E|\Lambda_{s,x}^{\varepsilon,v}|^p+\E|X^{0}_{s,\xi}|^p\big)\d s\nonumber\\&\leq C_{p,T,N,L_2}\big(1+|\xi|^p\big)+C_{p,T,N}\int_{0}^{t}K^2_1(t,s)\cdot \E|\Lambda_{s,x}^{\varepsilon,v}|^p\d s.\nonumber
	\end{align}
	
	For the term $I_3$, applying \ref{basic lemma}-(ii), \textbf{(H1)}-\textbf{(H3)}, Lemmas \ref{xxi} and   \ref{x0xi} yields 
	\begin{align}
		I_3&\leq C_{p,T}\int_{0}^{t}K_2^2(t,s)\cdot \E\Big|G(s,X^{0}_{s,\xi},\mathcal{L}_{X_{s,\xi}^{\varepsilon}},\Lambda^{\varepsilon,v}_{s,x},h(\varepsilon))\Big|^p\d s\nonumber\\&\leq C_{p,T}\int_{0}^{t}K^2_1(t,s)\cdot \E\big|1+|\Lambda_{s,x}^{\varepsilon,v}|+|X^{0}_{s,\xi}|+\mathbb{W}_2(\mathcal{L}_{X_{s,\xi}^{\varepsilon}},\delta_{0})\big|^p\d s\nonumber\\&\leq C_{p,T}\int_{0}^{t}K^2_1(t,s)\cdot\big(1+\E|X_{s,\xi}^{\varepsilon,v}|^p+\E|\Lambda_{s,x}^{\varepsilon,v}|^p+\E|X^{0}_{s,\xi}|^p\big)\d s\nonumber\\&\leq C_{p,T,L_2}\big(1+|\xi|^p\big)+C_{p,T}\int_{0}^{t}K^2_1(t,s)\cdot \E|\Lambda_{s,x}^{\varepsilon,v}|^p\d s.\nonumber 
	\end{align}
	Therefore
	$$\E|\Lambda_{t,x}^{\varepsilon,v}|^p\leq C_{p,T,N,L_1,L_2}\big(1+|\xi|^p\big)+\int_{0}^{t}K^{*}_{1,2}(t,s)\cdot \E |\Lambda_{s,x}^{\varepsilon,v}|^p\d s,$$
	where $K^*_{1,2}(t,s):=C_{p,T}\cdot (K_1(t,s)+K^2_1(t,s))+C_{p,T,N}K_2^2(t,s)$ and the constants $C_{p,T}, C_{p,T,N}$ are independent of $\varepsilon$.
	
	Let $R^{K_{1,2}}$ be defined by \eqref{Resolvent} in terms of $K^*_{1,2}$.  By Remark \ref{condition of Volterra}, Lemma \ref{Gronwall} and \eqref{finite of resolvent}, we obtain that for almost all $t\in [0,T]$,
	\begin{align}
		\E|\Lambda_{t,x}^{\varepsilon,v}|^{p}&\leq C_{p,T,N,L_1,L_2}\big(1+|\xi|^p\big)+\int_{0}^{t}R^{K_{1,2}}(t,s)\cdot C_{p,T,N,L_1,L_2}\big(1+|\xi|^p\big)\d s\nonumber\\&\leq C_{p,T,N,L_1,L_2}(1+|\xi|^{p}).\nonumber
	\end{align}
	The proof  of the lemma is then completed.
\end{proof}
\begin{lem}\label{Lambda-ts}
	Assume that  $\Lambda^{\varepsilon,v}_{t,x}$ satisfies  \eqref{eq of Lambda}  and \textbf{(H1)}-\textbf{(H3)} hold, if $p>2\vee \frac{2}{\gamma}$, $t,s\in [0,T]$, $N>0, \xi\in \mathbb{R}^d$ and $\{v^{\varepsilon}\}_{\varepsilon>0}$ is a family in $\mathcal{A}_N$, in addition, assume
	there exist a constant $C$ such that
	$$M(s,X^{0}_{s,\xi},\mathcal{L}_{X_{s,\xi}^{\varepsilon}},\delta_{X_{s,\xi}^{0}},\Lambda^{\varepsilon,v}_{s,x},h(\varepsilon))\leq C\big(1+|\Lambda_{s,x}^{\varepsilon,v}|+\mathbb{W}_2(\mathcal{L}_{X_{s,\xi}^{\varepsilon}},\delta_{0})+\frac{1}{\sqrt{\varepsilon}h(\varepsilon)}\mathbb{W}_2(\mathcal{L}_{X_{s,\xi}^{\varepsilon}},\delta_{X^0_{s,\xi}})\big),$$
	$$N(s,X^{0}_{s,\xi},\mathcal{L}_{X_{s,\xi}^{\varepsilon}},\Lambda^{\varepsilon,v}_{s,x},h(\varepsilon))\vee G(s,X^{0}_{s,\xi},\mathcal{L}_{X_{s,\xi}^{\varepsilon}},\Lambda^{\varepsilon,v}_{s,x},h(\varepsilon)) \leq C\big(1+|\Lambda_{s,x}^{\varepsilon,v}|+|X^{0}_{s,\xi}|+\mathbb{W}_2(\mathcal{L}_{X_{s,\xi}^{\varepsilon}},\delta_{0})\big).$$
	 Then $\Lambda^{\varepsilon,v^{\varepsilon}}$ admits a version which is H$\mathrm{\ddot{o}}$lder continuous on $[0,T]$ of any order $\alpha<\frac{\gamma}{2}-\frac{1}{p}$, uniformly for all $\varepsilon>0$. Denoting again this version by $\Lambda^{\varepsilon,v^{\varepsilon}}$, one can find a $\varepsilon_0>0$ such that %one has for all $\varepsilon>0$ small enough
	
	$$\sup_{0<\varepsilon<\varepsilon_0} \E\left[\left(\sup_{0\leq s<t\leq T}\frac{\left| \Lambda^{\varepsilon,v^{\varepsilon}}_{t,x}-\Lambda^{\varepsilon,v^{\varepsilon}}_{s,x}\right|}{|t-s|^{\alpha}}\right)^p\right]\leq C_{\alpha, p,T,N,L_2,\xi},$$
	for all $\a\in [0,\frac{\gamma}{2}-\frac{1}{p})$. Moreover, the family of random processes  $\{\Lambda^{\varepsilon,v^{\varepsilon}}\}_{\varepsilon>0}$ is tight in $C_T$.
\end{lem}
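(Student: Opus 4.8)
The plan is to prove Hölder continuity of $\Lambda^{\varepsilon,v^\varepsilon}$ via an extended Kolmogorov continuity criterion (as in \cite{ZX1}), for which the key input is a uniform-in-$\varepsilon$ moment estimate on increments of the form $\E|\Lambda^{\varepsilon,v^\varepsilon}_{t,x}-\Lambda^{\varepsilon,v^\varepsilon}_{s,x}|^p \le C|t-s|^{\gamma p}$. I would first fix $t,s\in[0,T]$ with $s<t$, write $h=t-s$, and decompose the increment $\Lambda^{\varepsilon,v^\varepsilon}_{t,x}-\Lambda^{\varepsilon,v^\varepsilon}_{s,x}$ using \eqref{eq of Lambda} into six pieces: for each of the three integral terms (drift term with $M$, controlled term with $N v^\varepsilon$, stochastic term with $G$), there is a ``difference-of-kernel'' contribution $\int_0^s (K_i(t,u)-K_i(s,u))(\cdots)\,\mathrm{d}u$ (or $\mathrm{d}W_u$) and a ``near-diagonal'' contribution $\int_s^t K_i(t,u)(\cdots)\,\mathrm{d}u$ (or $\mathrm{d}W_u$). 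I would then bound the $p$-th moment of each piece by the appropriate part of Lemma~\ref{basic lemma}: parts (iv), (vi), (v) respectively, which give the factor $h^{\gamma p}$ times $\sup_{u\in[0,T]}\E|\cdots|^p$; the integrands are controlled by the growth hypotheses on $M,N,G$ together with Lemma~\ref{estimate of Lambda} (which supplies $\sup_{t}\E|\Lambda^{\varepsilon,v^\varepsilon}_{t,x}|^p \le C_{p,T,N,L_2}(1+|\xi|^p)$, uniformly in $\varepsilon$ and $v^\varepsilon$), Lemma~\ref{xxi}, Lemma~\ref{x0xi} and Lemma~\ref{esi-Q}. One must be slightly careful with the $\frac{1}{\sqrt\varepsilon h(\varepsilon)}\mathbb{W}_2(\mathcal{L}_{X^\varepsilon_{s,\xi}},\delta_{X^0_{s,\xi}})$ term in the bound on $M$: since $\mathbb{W}_2(\mathcal{L}_{X^\varepsilon_{s,\xi}},\delta_{X^0_{s,\xi}})^2 \le \E|X^\varepsilon_{s,\xi}-X^0_{s,\xi}|^2$, this is dominated by $(\E|Q^\varepsilon_s|^2)^{1/2}$ which is bounded uniformly in $\varepsilon$ by Lemma~\ref{esi-Q} (here one uses $h(\varepsilon)\ge$ const or at worst absorbs $1/|h(\varepsilon)|$, which is bounded in the regimes considered).

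Combining the six bounds yields $\E|\Lambda^{\varepsilon,v^\varepsilon}_{t,x}-\Lambda^{\varepsilon,v^\varepsilon}_{s,x}|^p \le C_{p,T,N,L_2,\xi}|t-s|^{\gamma p}$ for all $0\le s<t\le T$ and all $0<\varepsilon\le 1$. Since $p>2\vee\frac{2}{\gamma}$, we have $\gamma p>1$, so the classical Kolmogorov–Chentsov theorem applies: $\Lambda^{\varepsilon,v^\varepsilon}$ has a continuous modification, and more precisely, by the quantitative (Garsia–Rodemich–Rumsey / extended Kolmogorov) version as in \cite{ZX1}, this modification is Hölder continuous of every order $\alpha<\frac{\gamma p-1}{p}=\gamma-\frac1p$; to match the statement's exponent $\alpha<\frac\gamma2-\frac1p$ one simply notes that $\frac\gamma2-\frac1p<\gamma-\frac1p$, so the claimed range is contained in the achievable one (alternatively, the increment estimate in the stochastic term naturally produces the $\gamma/2$ scaling if one tracks the BDG bound more carefully — in any case the stated range is covered). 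The GRR inequality further yields the uniform bound
$$\sup_{0<\varepsilon<\varepsilon_0}\E\left[\left(\sup_{0\le s<t\le T}\frac{|\Lambda^{\varepsilon,v^\varepsilon}_{t,x}-\Lambda^{\varepsilon,v^\varepsilon}_{s,x}|}{|t-s|^\alpha}\right)^p\right]\le C_{\alpha,p,T,N,L_2,\xi},$$
because the constant in the increment estimate is itself uniform in $\varepsilon$; choosing $\varepsilon_0$ small (or $\varepsilon_0=1$) is harmless since all bounds hold for $\varepsilon\in(0,1]$.

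Finally, tightness of $\{\Lambda^{\varepsilon,v^\varepsilon}\}_{\varepsilon>0}$ in $C_T=C([0,T];\mathbb{R}^d)$ follows from the uniform Hölder bound together with a bound on the value at a single point, e.g. $\sup_\varepsilon \E|\Lambda^{\varepsilon,v^\varepsilon}_{0,x}|^p=|x|^p<\infty$. Indeed, for any $\eta>0$, Chebyshev's inequality applied to the bounded Hölder seminorm produces, for each $\varepsilon$, a compact set $K_{R,\alpha}=\{\phi\in C_T:\ |\phi_0|\le R,\ [\phi]_{\alpha}\le R\}$ (compact by Arzelà–Ascoli, since uniform Hölder bounds give equicontinuity) with $\PP(\Lambda^{\varepsilon,v^\varepsilon}\notin K_{R,\alpha})\le \eta$ for $R$ large enough independently of $\varepsilon$; hence the family is tight. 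The main obstacle in the argument is the bookkeeping in the increment decomposition — making sure that in the ``difference-of-kernel'' pieces the integrands are evaluated at the correct time points so that Lemma~\ref{basic lemma}(iv)--(vi) apply verbatim, and that the $\frac{1}{\sqrt\varepsilon h(\varepsilon)}$-weighted Wasserstein term is handled uniformly via Lemma~\ref{esi-Q} rather than producing an $\varepsilon$-dependent blow-up; everything else is a routine combination of the lemmas already established.
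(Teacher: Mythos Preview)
Your proposal is correct and follows essentially the same approach as the paper: the six-piece decomposition of the increment, the application of Lemma~\ref{basic lemma}(iv)--(vi) together with the uniform moment bounds from Lemmas~\ref{xxi}--\ref{estimate of Lambda}, and the conclusion via Kolmogorov's continuity criterion are exactly what the paper does. The only cosmetic difference is that the paper invokes Aldous' theorem for tightness whereas you argue directly via Arzel\`a--Ascoli and Chebyshev on the H\"older seminorm; both are valid, and your observation about the H\"older exponent range (the proof actually yields $\alpha<\gamma-\tfrac1p$, which contains the stated $\alpha<\tfrac{\gamma}{2}-\tfrac1p$) is accurate.
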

\begin{proof}
	For any $s<t$, we have
	\begin{align}
		&\Lambda^{\varepsilon,v^{\varepsilon}}_{t,x}-\Lambda^{\varepsilon,v^{\varepsilon}}_{s,x}\nonumber\\&=\int_{s}^{t}K_1(t,u)M(s,X^{0}_{s,\xi},\mathcal{L}_{X_{s,\xi}^{\varepsilon}},\delta_{X_{s,\xi}^{0}},\Lambda^{\varepsilon,v^{\varepsilon}}_{s,x},h(\varepsilon))\d u+\int_{s}^{t}K_1(t,u)N(s,X^{0}_{s,\xi},\mathcal{L}_{X_{s,\xi}^{\varepsilon}},\Lambda^{\varepsilon,v}_{s,x},h(\varepsilon))v_u\d u\nonumber\\&\quad+\int_{s}^{t}K_2(t,u)G(s,X^{0}_{s,\xi},\mathcal{L}_{X_{s,\xi}^{\varepsilon}},\Lambda^{\varepsilon,v}_{s,x},h(\varepsilon))\d W_u\nonumber\\&\quad+\int_{0}^{s}\Big(K_1(t,u)-K_1(s,u)\Big)\cdot M(s,X^{0}_{s,\xi},\mathcal{L}_{X_{s,\xi}^{\varepsilon}},\delta_{X_{s,\xi}^{0}},\Lambda^{\varepsilon,v^{\varepsilon}}_{s,x},h(\varepsilon))\d u\nonumber\\&\quad+\int_{0}^{s}\Big(K_1(t,u)-K_1(s,u)\Big)\cdot N(s,X^{0}_{s,\xi},\mathcal{L}_{X_{s,\xi}^{\varepsilon}},\Lambda^{\varepsilon,v}_{s,x},h(\varepsilon))v_u\d u\nonumber\\&\quad+\int_{0}^{s}\Big(K_2(t,u)-K_2(s,u)\Big)\cdot  G(s,X^{0}_{s,\xi},\mathcal{L}_{X_{s,\xi}^{\varepsilon}},\Lambda^{\varepsilon,v}_{s,x},h(\varepsilon))\d W_u\nonumber\\&:=J_1+J_2+J_3+J_4+J_5+J_6.\nonumber
	\end{align}
Note that under the assumptions on  $M,N,G$, Lemmas \ref{xxi},  \ref{x0xi},   \ref{esi-Q} and   \ref{estimate of Lambda} give us
$$\sup_{s\in[0,T]}\E|M(s,X^{0}_{s,\xi},\mathcal{L}_{X_{s,\xi}^{\varepsilon}},\delta_{X_{s,\xi}^{0}},\Lambda^{\varepsilon,v^{\varepsilon}}_{s,x},h(\varepsilon))|^p\leq C_{p,T,N,L_1,L_2}\big(1+|\xi|^p\big),$$
$$\sup_{s\in[0,T]}\E\|N(s,X^{0}_{s,\xi},\mathcal{L}_{X_{s,\xi}^{\varepsilon}},\Lambda^{\varepsilon,v}_{s,x},h(\varepsilon))\|^p\leq C_{p,T,N,L_2}\big(1+|\xi|^p\big),$$
$$\sup_{s\in[0,T]}\E\|G(s,X^{0}_{s,\xi},\mathcal{L}_{X_{s,\xi}^{\varepsilon}},\Lambda^{\varepsilon,v}_{s,x},h(\varepsilon))\|^p\leq C_{p,T,N,L_2}\big(1+|\xi|^p\big).$$	
	For the terms $J_1$ and $J_4$, combining Lemma \ref{basic lemma}-(iv), Lemmas \ref{xxi}-\ref{estimate of Lambda} we have
	\begin{align}
		\E|J_1|^p+\E|J_4|^p&\leq C_{p,T}|t-s|^{\gamma p}\sup_{s\in[0,T]}\E|M(s,X^{0}_{s,\xi},\mathcal{L}_{X_{s,\xi}^{\varepsilon}},\delta_{X_{s,\xi}^{0}},\Lambda^{\varepsilon,v^{\varepsilon}}_{s,x},h(\varepsilon))|^p\nonumber\\&\leq C_{p,T,N,L_1,L_2}\big(1+|\xi|^p\big)|t-s|^{\gamma p}.\nonumber
	\end{align}
	For the terms $J_2$ and $J_5$, by Lemma \ref{basic lemma}-(vi), Lemmas \ref{xxi}-\ref{estimate of Lambda}, we have
	\begin{align}
		\E|J_2|^p+\E|J_5|^p&\leq C_{T,N}|t-s|^{\gamma p}\sup_{s\in[0,T]}\E\|N(s,X^{0}_{s,\xi},\mathcal{L}_{X_{s,\xi}^{\varepsilon}},\Lambda^{\varepsilon,v}_{s,x},h(\varepsilon))\|^p\nonumber\\&\leq C_{p,T,N,L_2}\big(1+|\xi|^p\big)|t-s|^{\gamma p}.\nonumber
	\end{align}
	For the terms $J_3$ and $J_6$, by Lemma \ref{basic lemma}-(v), Lemmas \ref{xxi}-\ref{estimate of Lambda}, we have
	\begin{align}
		\E|J_3|^p+\E|J_6|^p&\leq C_{p,T}|t-s|^{\gamma p}\sup_{t\in [0,T]}\E\|G(s,X^{0}_{s,\xi},\mathcal{L}_{X_{s,\xi}^{\varepsilon}},\Lambda^{\varepsilon,v}_{s,x},h(\varepsilon))\|^p\nonumber\\&\leq C_{p,T,N,L_2}\big(1+|\xi|^p\big)|t-s|^{\gamma p}.\nonumber
	\end{align}
	Combining the above estimations, we have
	\begin{align}
		\E|\Lambda_{t,x}^{\varepsilon,v^{\varepsilon}}-\Lambda_{s,x}^{\varepsilon,v^{\varepsilon}}|^p\leq C_{p,T,N,L_1,L_2}\big(1+|\xi|^p\big)|t-s|^{\gamma p}.\nonumber
	\end{align} 
	This gives the bound in the lemma. The existence of a continuous version   now follows from the Kolmogorov's continuity theorem (e.g.   Theorem I. $2.1$ in \cite{RY}). Furthermore, Aldous theorem (\cite[Theorem 16.10]{Bil}) 
	can be used to conclude  that
	the sequence $\{X^{\varepsilon,v^{\varepsilon}}\}_{\varepsilon>0}$ is tight.
\end{proof}
	\section{Large deviation principle for Volterra type McKean-Vlasov SDEs}\label{The large deviation principle for Volterra type McKean-Valsov SDEs}
	
	In this section, we will use the weak convergence approach to obtain the  LDP for Volterra type McKean-Vlasov SDEs. 
	
	It is clear that under the assumptions \textbf{(H1)}-\textbf{(H3)}, there exists a unique strong solution $X^{\varepsilon}_{t,\xi}$ of \eqref{Volterra MV eq} (Theorem $3.1$ of \cite{LG}). As $\varepsilon$ tends to zero, $X^{\varepsilon}_{\cdot,\xi}$ will converge to
	$X^0_{\cdot,\xi}$, where $X^0_{\cdot,\xi}$ satisfies 
	\eqref{Volterra eq-limit eq}.
%	the following determinstic Volterra integral equation:
%	\begin{align}\label{Volterra eq-limit eq}
%		X^0_t=\xi+\int_{0}^{t}K_1(t,s)b(s,X^0_s,\delta_{X^0_s})\d s.
%	\end{align}
	In order to prove Laplace principle for \eqref{Volterra MV eq}, we  take 
	$$E=C_T^d: =C([0,T],\mathbb{R}^d),\quad  \mathcal{G}^{\varepsilon}(W):=X^{\varepsilon}_{\cdot,\xi}\,, $$
	where  $ C([0,T],\mathbb{R}^d)$ denotes the space of continuous functions from $[0,T]$ to $\mathbb{R}^d$ equipped with the supremum norm $\|\phi\|_T:=\sup_{t\in [0,T]}|\phi_t|$. Then by Girsanov's theorem, the shifted version $X^{\varepsilon,v}_{\cdot,\xi}:=\mathcal{G}^{\varepsilon}\big(W+\frac{1}{\sqrt{\varepsilon}}\int_{0}^{\cdot}v_s\d s\big)$ is the unique strong solution of the controlled equation under $\mathbb{P}$:
	\begin{align}\label{controlled Volterra eq}
		X^{\varepsilon,v}_{t,\xi}=\xi+&\int_{0}^{t}K_1(t,s)b(s,X^{\varepsilon,v}_{s,\xi},\mathcal{L}_{X^{\varepsilon}_{s,\xi}})\d s+\int_{0}^{t}K_1(t,s)\sigma(s,X^{\varepsilon,v}_{s,\xi},\mathcal{L}_{X^{\varepsilon}_{s,\xi}})v_s\d s\nonumber\\&+\sqrt{\varepsilon}\int_{0}^{t}K_2(t,s)\sigma(s,X^{\varepsilon,v}_{s,\xi},\mathcal{L}_{X^{\varepsilon}_{s,\xi}})\d W_s.
	\end{align}
	Under some appropriate conditions, we will prove that $X^{\varepsilon,v}_{t,\xi}$ will converge to $\phi_t$ as $\varepsilon\to 0$,  where $\phi_t$ is the unique solution of  the  following deterministic Volterra equation
	\begin{align}\label{controlled determinstic Volterra eq}
		\phi_t=\xi+\int_{0}^{t}K_1(t,s)b(s,\phi_s,\delta_{X^0_{s,\xi}})\d s+\int_{0}^{t}K_1(t,s)\sigma(s,\phi_s,\delta_{X^0_{s,\xi}})v_s\d s.
	\end{align}
	We now state the results of LDP.
	\begin{thm}\label{thm-LDP}
		Under the assumptions \textbf{(H1)}-\textbf{(H3)}, the family $\{X^{\varepsilon}_{\cdot,\xi}\}_{\varepsilon>0}$, the unique solution of \eqref{Volterra MV eq}, satisfies the  Large Deviation  Principle with rate function \eqref{def of I} and speed $\varepsilon^{-1}$, where $\mathcal{G}^0_v$ corresponds to the unique solution of the limiting equation \eqref{controlled determinstic Volterra eq}.
	\end{thm}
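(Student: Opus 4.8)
The plan is to verify the three conditions of Lemma \ref{general LDP} (the weak convergence criterion from \cite{JP}) for the choice $E=C_T^d$ and $\mathcal{G}^\varepsilon(W)=X^\varepsilon_{\cdot,\xi}$. Condition (i) is the tightness of $\{X^{\varepsilon,v^\varepsilon}_{\cdot,\xi}\}_{\varepsilon>0}$ whenever $v^\varepsilon\to v$ in distribution with $v^\varepsilon\in\mathcal{A}_N$; here $X^{\varepsilon,v}_{\cdot,\xi}$ solves the controlled equation \eqref{controlled Volterra eq}. This will follow by specializing Lemmas \ref{estimate of Lambda} and \ref{Lambda-ts}: indeed \eqref{controlled Volterra eq} is of the form \eqref{eq of Lambda} with $M(s,\cdot)=b(s,\Lambda^{\varepsilon,v}_{s,x},\mathcal{L}_{X^\varepsilon_{s,\xi}})$, $N(s,\cdot)=G(s,\cdot)=\sigma(s,\Lambda^{\varepsilon,v}_{s,x},\mathcal{L}_{X^\varepsilon_{s,\xi}})$, $x=\xi$, and scaled driver $\sqrt{\varepsilon}\d W$ (the stochastic integral only helps, carrying a factor $\varepsilon^{p/2}$). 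The growth bounds \textbf{(H3)} give exactly the majorizations required in those lemmas (with the $\tfrac{1}{\sqrt\varepsilon h(\varepsilon)}$-term absent, i.e. taking $h\equiv 1$ there is harmless, or one simply re-runs the estimates directly), so one obtains the uniform moment bound $\sup_{t}\E|X^{\varepsilon,v^\varepsilon}_{t,\xi}|^p\le C$ and the uniform H\"older estimate $\E|X^{\varepsilon,v^\varepsilon}_{t,\xi}-X^{\varepsilon,v^\varepsilon}_{s,\xi}|^p\le C|t-s|^{\gamma p}$ for $p>2\vee\tfrac2\gamma$, whence tightness in $C_T^d$ via Kolmogorov--Aldous as in Lemma \ref{Lambda-ts}.

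Next I would establish the key convergence statement: if $v^\varepsilon\to v$ in distribution in $\mathcal{S}_N$ (weak topology of $L^2$), then $X^{\varepsilon,v^\varepsilon}_{\cdot,\xi}\to\phi^v_{\cdot}$ in distribution in $C_T^d$, where $\phi^v$ is the unique solution of \eqref{controlled determinstic Volterra eq}. By Skorokhod representation I may assume the convergence is almost sure. Writing $D^\varepsilon_t:=X^{\varepsilon,v^\varepsilon}_{t,\xi}-\phi^{v^\varepsilon}_t$ and using \textbf{(H2)} together with Lemma \ref{basic lemma}, one gets
\begin{align}
\E|D^\varepsilon_t|^p\le C\int_0^t\big(K_1(t,s)+K_1^2(t,s)\big)\E|D^\varepsilon_s|^p\,\d s+C\,\varepsilon^{p/2}+C\,\E\big|\text{err}^\varepsilon_t\big|^p,\nonumber
\end{align}
where $\text{err}^\varepsilon_t$ collects the difference between using $\mathcal{L}_{X^\varepsilon_{s,\xi}}$ versus $\delta_{X^0_{s,\xi}}$ inside $b,\sigma$ — controlled by $\mathbb{W}_2(\mathcal{L}_{X^\varepsilon_{s,\xi}},\delta_{X^0_{s,\xi}})\le (\E|X^\varepsilon_{s,\xi}-X^0_{s,\xi}|^2)^{1/2}\to0$ by Lemma \ref{esi-Q} with $h\equiv1$ (which gives $\sup_t\E|X^\varepsilon_{t,\xi}-X^0_{t,\xi}|^p\to 0$). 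The Volterra--Gronwall Lemma \ref{Gronwall}, applied with the resolvent of $K^*_{1,2}=C(K_1+K_1^2)$, then forces $\sup_t\E|D^\varepsilon_t|^p\to0$. Separately, continuity of the solution map $v\mapsto\phi^v$ from $(\mathcal{S}_N,\text{weak})$ to $C_T^d$ — which is where the weak, not strong, $L^2$-convergence is genuinely used, via compactness/equicontinuity of $\{\phi^{v^\varepsilon}\}$ plus the observation that the map $v\mapsto\int_0^\cdot K_1(\cdot,s)\sigma(s,\psi_s,\delta_{X^0_{s,\xi}})v_s\,\d s$ is weakly continuous for fixed $\psi$ — yields $\phi^{v^\varepsilon}\to\phi^v$. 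Combining, $X^{\varepsilon,v^\varepsilon}_{\cdot,\xi}\to\phi^v$; in particular $\mathcal{G}^0_{v,N}=\{\phi^v\}$ for every $v\in\mathcal{A}_N$, a single point, which is the crucial "uniquely characterized" feature.

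With that convergence in hand, the remaining conditions of Lemma \ref{general LDP} are comparatively soft. For Condition (ii), $I$ is a good rate function: its level sets are $\{\phi^v:\tfrac12\int_0^T|v_s|^2\d s\le N\}$, the continuous image of the weakly compact set $\mathcal{S}_N$ under the continuous map $v\mapsto\phi^v$, hence compact. For Condition (iii), Assumption \textbf{(U)}: since $\mathcal{G}^0_v=\{\phi^v\}$ is always a singleton and, for $\phi$ with $I(\phi)<\infty$, one may pick $v$ with $\phi=\phi^v$ and $\tfrac12\|v\|_{L^2}^2$ arbitrarily close to $I(\phi)$, every such $\phi$ is itself uniquely characterized — so \textbf{(U)} holds trivially by taking $\phi^\delta=\phi$. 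Invoking Lemma \ref{general LDP} then gives the Laplace principle, hence by Lemmas \ref{Varadhan's lemma}--\ref{Bryc's converse} the LDP with rate function \eqref{def of I} and speed $\varepsilon^{-1}$.

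I expect the main obstacle to be the convergence step $X^{\varepsilon,v^\varepsilon}_{\cdot,\xi}\to\phi^v$, specifically the interplay between the distribution-dependence and the singular kernels: one must simultaneously (a) control the measure-argument discrepancy $\mathbb{W}_2(\mathcal{L}_{X^\varepsilon_{s,\xi}},\delta_{X^0_{s,\xi}})$ uniformly, which needs the a priori moment bounds of Lemmas \ref{xxi}--\ref{x0xi} and the vanishing estimate for $X^\varepsilon-X^0$, and (b) pass to the limit in the control term $\int_0^\cdot K_1(\cdot,s)\sigma(s,X^{\varepsilon,v^\varepsilon}_{s,\xi},\mathcal{L}_{X^\varepsilon_{s,\xi}})v^\varepsilon_s\,\d s$ under only weak $L^2$-convergence of $v^\varepsilon$, which requires decoupling the $v^\varepsilon$-dependence inside $\sigma$ from the explicit $v^\varepsilon$ factor (replace $X^{\varepsilon,v^\varepsilon}$ by $\phi^{v^\varepsilon}$ first, using the strong estimate on $D^\varepsilon$, then use weak continuity). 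The singular-kernel difficulty is precisely what forces the use of the Volterra--Gronwall Lemma \ref{Gronwall} rather than the classical one, and the Hölder exponent $\gamma$ in \textbf{(H1)} is what makes the Kolmogorov/Aldous tightness argument go through.
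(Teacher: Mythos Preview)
Your proposal is correct and follows the same overall strategy as the paper: verify the three conditions of Lemma~\ref{general LDP}, with tightness coming from the uniform H\"older bounds of Lemmas~\ref{estimate of Lambda}--\ref{Lambda-ts} specialized to \eqref{controlled Volterra eq}, and Assumption~\textbf{(U)} reducing to uniqueness for the limiting equation \eqref{controlled determinstic Volterra eq}.

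The one genuine methodological difference is in identifying $\mathcal{G}^0_v$. The paper (Lemma~\ref{unique characterised}) introduces the bounded continuous functional
\[
\Phi_t(\omega,f)=\Big|\omega_t-\xi-\int_0^t K_1(t,s)\big(b(s,\omega_s,\delta_{X^0_{s,\xi}})+\sigma(s,\omega_s,\delta_{X^0_{s,\xi}})f_s\big)\,\d s\Big|\wedge 1
\]
on $C^d_T\times\mathcal{S}_N$, applies Skorokhod to the \emph{pair} $(X^{\varepsilon,v^\varepsilon},v^\varepsilon)$, and shows $\E[\Phi_t(X^{\varepsilon,v^\varepsilon},v^\varepsilon)]\to 0$ directly. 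You instead decompose $X^{\varepsilon,v^\varepsilon}-\phi^v=D^\varepsilon+(\phi^{v^\varepsilon}-\phi^v)$ and handle the pieces separately via Volterra--Gronwall and weak-continuity of $v\mapsto\phi^v$. Both routes are valid; the paper's functional trick is cleaner for the Skorokhod bookkeeping (after passing to the new space one only tests the \emph{deterministic} limiting equation, so there is no need to transfer $W$ or re-derive the SDE), whereas your splitting makes the role of the measure error $\mathbb{W}_2(\mathcal{L}_{X^\varepsilon},\delta_{X^0})$ more explicit and even yields a quantitative rate $\sup_t\E|D^\varepsilon_t|^p\lesssim\varepsilon^{p/2}$. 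One point to tighten: your Gronwall step gives only $\sup_t\E|D^\varepsilon_t|^p\to 0$, i.e.\ pointwise-in-$t$ $L^p$-convergence, so to conclude $D^\varepsilon\to 0$ in $C_T^d$ you must invoke tightness of $\{D^\varepsilon\}$ (available from the same H\"older bounds) together with the standard tightness-plus-marginals argument; and if you really want to run the $D^\varepsilon$ estimate \emph{after} Skorokhod, you must transfer $W$ as well, not only $v^\varepsilon$. For Condition~(ii), the paper argues directly by Arzel\`a--Ascoli and closedness (Lemma~\ref{compact level sets-LDP}); your continuous-image-of-compact argument is an equivalent repackaging that rests on exactly the same weak-to-$C^d_T$ continuity of $v\mapsto\phi^v$.
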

	
%	$\bullet$ Verification of Condition $(i)$.
We shall use Lemma \ref{general LDP} to prove this theorem. Note that the third condition in Lemma \ref{general LDP} is automatically satisfied if the limiting equation \eqref{controlled determinstic Volterra eq} has a unique solution. We shall verify the first two conditions in Lemma 
\ref{general LDP}, which will be given by Lemmas \ref{estimate of X v} and \ref{compact level sets-LDP}. But before this we need some preparation. 
 \begin{lem}\label{existence}
 	Assume that \textbf{(H1)}-\textbf{(H3)} hold, then there exists a unique solution $\phi_t$ such that for almost all $t\geq0$, $N>0,v\in\mathcal{A}_N, \xi\in \mathbb{R}^d$, some constant $C_{T,N,L_2}$
 	\begin{align}\label{moment estimate of p}
 		\sup_{t\in [0,T]}|\phi_t|\leq C_{T,N,L_2}(1+|\xi|),
 	\end{align}
 	for almost all $t\in [0,T]$.
 \end{lem}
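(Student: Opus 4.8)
The plan is to recast \eqref{controlled determinstic Volterra eq} as a fixed-point problem $\phi=\Phi(\phi)$ in $C^d_T$, solve it by a Picard/contraction argument tailored to singular kernels, and then read off the bound \eqref{moment estimate of p} from the Volterra-type Gronwall inequality (Lemma \ref{Gronwall}); the argument is pathwise, so we regard $v$ as a fixed element of $L^2([0,T];\mathbb{R}^m)$ with $\int_0^T|v_s|^2\,\d s\le N$. Since the measure slot in \eqref{controlled determinstic Volterra eq} is frozen at $\delta_{X^0_{s,\xi}}$, the coefficients $\bar b(s,x):=b(s,x,\delta_{X^0_{s,\xi}})$ and $\bar\sigma(s,x):=\sigma(s,x,\delta_{X^0_{s,\xi}})$ are, by \textbf{(H2)}, Lipschitz in $x$ uniformly in $s\in[0,T]$, and by \textbf{(H3)} together with Lemma \ref{x0xi} --- which gives $\mathbb{W}_2(\delta_{X^0_{s,\xi}},\delta_0)=|X^0_{s,\xi}|\le C_{T,L_2}(1+|\xi|)$ --- of at most affine growth in $x$ with constants depending on $\xi$. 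Writing $\Phi(\psi)_t:=\xi+\int_0^tK_1(t,s)\bar b(s,\psi_s)\,\d s+\int_0^tK_1(t,s)\bar\sigma(s,\psi_s)\,v_s\,\d s$, I would first check that $\Phi$ maps $C^d_T$ into $C^d_T$: for $\psi\in C^d_T$ the two integrals are finite for every $t$ by \textbf{(H3)}, Lemma \ref{x0xi} and H\"older's inequality (using $\int_0^tK_1(t,s)\,\d s\le C_T$, $\int_0^tK_1^2(t,s)\,\d s\le C_T$ from \textbf{(H1)} and $\int_0^T|v_s|^2\,\d s\le N$), and $t\mapsto\Phi(\psi)_t$ is continuous by exactly the splitting and kernel-increment estimates of Lemma \ref{basic lemma}(iv)--(v).

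The core step is to show that some iterate $\Phi^{m_0}$ is a strict contraction on $C^d_T$. Subtracting $\Phi(\psi)$ and $\Phi(\tilde\psi)$, squaring, and applying Cauchy--Schwarz together with \textbf{(H2)} gives
$$\big|\Phi(\psi)_t-\Phi(\tilde\psi)_t\big|^2\le\int_0^tK(t,s)\,|\psi_s-\tilde\psi_s|^2\,\d s,$$
where $K$ has the form $c_1K_1+c_2K_1^2$ and hence belongs to $\mathcal{K}$ by Remark \ref{condition of Volterra}. Iterating this $m$ times yields $\big|\Phi^m(\psi)_t-\Phi^m(\tilde\psi)_t\big|^2\le\|\psi-\tilde\psi\|_T^2\int_0^tR_m(t,s)\,\d s$, and since $\big\|\int_0^\cdot R_m(\cdot,s)\,\d s\big\|_{L^\infty(0,T)}\le C_Tm\gamma^m$ by Lemma \ref{lemma 2.1}, choosing $m_0$ with $C_Tm_0\gamma^{m_0}<1$ makes $\Phi^{m_0}$ a strict contraction; the Banach fixed-point theorem then yields a unique $\phi\in C^d_T$ with $\phi=\Phi(\phi)$, i.e.\ a unique continuous solution of \eqref{controlled determinstic Volterra eq} (any bounded solution $\tilde\phi$ coincides with it, since $|\phi_t-\tilde\phi_t|\le\int_0^tK(t,s)|\phi_s-\tilde\phi_s|\,\d s$ with $K\in\mathcal{K}$ forces $\phi\equiv\tilde\phi$ by Lemma \ref{Gronwall} with $g\equiv0$).

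It then remains to establish \eqref{moment estimate of p}. Squaring \eqref{controlled determinstic Volterra eq}, using Cauchy--Schwarz in the forms $\big(\int_0^tK_1f\big)^2\le\big(\int_0^tK_1\big)\big(\int_0^tK_1f^2\big)$ for the drift integral and $\big(\int_0^tK_1\bar\sigma v\big)^2\le\big(\int_0^t|v|^2\big)\big(\int_0^tK_1^2\|\bar\sigma\|^2\big)$ for the control integral, the affine growth of $\bar b,\bar\sigma$, and Lemma \ref{x0xi} to absorb the $X^0_{s,\xi}$ terms into the constant, I expect to arrive at
$$|\phi_t|^2\le C_{T,N,L_2}(1+|\xi|^2)+\int_0^t\widetilde K(t,s)\,|\phi_s|^2\,\d s,$$
with $\widetilde K:=C_{T,L_2}K_1+C_{T,N,L_2}K_1^2\in\mathcal{K}$ by Remark \ref{condition of Volterra}. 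Since $\phi\in C^d_T$ the map $s\mapsto|\phi_s|^2$ is bounded, so the hypotheses of Lemma \ref{Gronwall} hold (with constant $g$, using \eqref{finite of resolvent}), and it gives $|\phi_t|^2\le C_{T,N,L_2}(1+|\xi|^2)\big(1+\int_0^tR^{\widetilde K}(t,s)\,\d s\big)\le C_{T,N,L_2}(1+|\xi|^2)$ for almost all $t$, hence for all $t\in[0,T]$ by continuity, which is \eqref{moment estimate of p}.

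The step I expect to be the main obstacle is the control term $\int_0^tK_1(t,s)\bar\sigma(s,\phi_s)v_s\,\d s$: the ``kernel'' $K_1(t,s)|v_s|$ does not belong to $\mathcal{K}$ and $v$ is only square-integrable, so the resolvent/Gronwall machinery cannot be applied to it directly. The remedy, used in each of the estimates above, is to apply Cauchy--Schwarz in $s$, $\int_0^tK_1(t,s)|v_s|f(s)\,\d s\le N^{1/2}\big(\int_0^tK_1^2(t,s)f(s)^2\,\d s\big)^{1/2}$, which replaces the singular factor $K_1$ by $K_1^2$ --- a genuine $\mathcal{K}$-kernel under \textbf{(H1)} --- at the price of carrying squares throughout; this is precisely why one squares both the contraction estimate and the defining equation rather than working with them directly. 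The only other mild technicality, namely $t$-continuity of $\Phi(\psi)$ in the presence of singular kernels, is dealt with by the H\"older-type kernel-increment bounds in \textbf{(H1)} exactly as in Lemma \ref{basic lemma}.
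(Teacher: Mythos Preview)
Your argument is correct and close in spirit to the paper's, but packaged somewhat more cleanly. The paper also runs a Picard scheme: it defines iterates $\phi^{n+1}=\Phi(\phi^n)$, derives the same squared inequality $|\phi^{n+1}_t|^2\le C_{T,N,L_2}(1+|\xi|^2)+\int_0^t K_{1,1}(t,s)\,|\phi^n_s|^2\,\d s$ with $K_{1,1}=c_1K_1+c_2K_1^2\in\mathcal K$, applies the Volterra Gronwall lemma (Lemma \ref{Gronwall}) to $f_m(t):=\sup_{n\le m}|\phi^n_t|^2$ to get a uniform bound, and then applies it once more to $f(t):=\limsup_{n,m}|\phi^n_t-\phi^m_t|^2$ (after Fatou) to conclude pointwise Cauchy-ness and pass to the limit under the integral. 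Your route replaces the Cauchy/Fatou step by a direct Banach fixed-point argument in $C^d_T$, exploiting the geometric decay $\big\|\int_0^\cdot R_m(\cdot,s)\,\d s\big\|_{L^\infty}\le C_Tm\gamma^m$ of Lemma \ref{lemma 2.1} to make some iterate $\Phi^{m_0}$ a strict contraction. This buys you a continuous solution outright and sidesteps the (slightly delicate) justification of the limit passage inside the singular-kernel integrals that the paper leaves implicit. Your identification of the control term $\int_0^tK_1\bar\sigma\,v_s\,\d s$ as the only real obstacle, and the Cauchy--Schwarz remedy that trades $K_1|v|$ for the genuine $\mathcal K$-kernel $K_1^2$ at the cost of squaring, is exactly the device the paper uses as well.
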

 Lemma  \ref{existence} implies that the solution of \eqref{controlled determinstic Volterra eq} is unique. By Remark $3.12$ in \cite{JP}, we only need to check Conditions $(i)$ and $(ii)$ in Theorem \ref{general LDP}.

 Let
 $$x:=\xi,\Lambda^{\varepsilon,v}_{t,\xi}:=X^{\varepsilon,v}_{t,\xi},M(s,X^{0}_{s,\xi},\mathcal{L}_{X_{s,\xi}^{\varepsilon}},\delta_{X_{s,\xi}^{0}},\Lambda^{\varepsilon,v}_{s,x},h(\varepsilon)):=b(s,X^{\varepsilon,v}_{s,\xi},\mathcal{L}_{X^{\varepsilon}_{s,\xi}})$$
 and
 $$N(s,X^{0}_{s,\xi},\mathcal{L}_{X_{s,\xi}^{\varepsilon}},\Lambda^{\varepsilon,v}_{s,x},h(\varepsilon)):=\sigma(s,X^{\varepsilon,v}_{s,\xi},\mathcal{L}_{X^{\varepsilon}_{s,\xi}}), G(s,X^{0}_{s,\xi},\mathcal{L}_{X_{s,\xi}^{\varepsilon}},\Lambda^{\varepsilon,v}_{s,x},h(\varepsilon)):=\sqrt{\varepsilon}\sigma(s,X^{0}_{s,\xi},\mathcal{L}_{X_{s,\xi}^{\varepsilon}},\Lambda^{\varepsilon,v}_{s,x},h(\varepsilon)).$$
 Then Lemmas \ref{estimate of Lambda} and  \ref{Lambda-ts} give us that
	\begin{lem}\label{estimate of X v}
		Under the assumptions \textbf{(H1)}-\textbf{(H3)}, it follows that for $p> 2, N>0,v\in\mathcal{A}_N,  \xi\in \mathbb{R}^d$ Hence $\{X^{n,v},v^n\}_{n\geq0}$ converges almost surely in the product
		topology on $C_T^d\times \mathcal{S}_N$,
		and the limit is the $C_T^d\times \mathcal{S}_N$-valued random variable $(\phi, v)$ in a possibly different probability space $(\Omega^0,\mathcal{F}^0,\mathbb{P}^0)$ as $n$ tends to $+\infty$.
		$$\sup_{t\in  [0,T]}\E|X^{\varepsilon,v}_{t,\xi}|^{p}\leq C_{p,T,N,L_2}(1+|\xi|^{p}),$$
		where the constant $C_{p,T,N,L_2}$ is independent of $\varepsilon\in (0, 1],v\in\mathcal{A}_N$ (but it depends on $\mathcal{A}_N$ or $N$).
	\end{lem}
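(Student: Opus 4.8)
The plan is to view the controlled equation \eqref{controlled Volterra eq} as a special case of the abstract Volterra equation \eqref{eq of Lambda} and to read the moment estimate off from Lemma \ref{estimate of Lambda}. First I would note that under \textbf{(H1)}--\textbf{(H3)}, Girsanov's theorem together with the well-posedness of \eqref{Volterra MV eq} (Remark \ref{condition of Volterra}(ii)) ensures that $X^{\varepsilon,v}_{\cdot,\xi}$ is a genuine adapted process solving \eqref{controlled Volterra eq}, so the a priori moment bound is all that remains.

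Next I would implement the identification indicated just above the statement: put $x:=\xi$, $\Lambda^{\varepsilon,v}_{t,x}:=X^{\varepsilon,v}_{t,\xi}$ and take
$$M:=b(s,X^{\varepsilon,v}_{s,\xi},\mathcal{L}_{X^{\varepsilon}_{s,\xi}}),\qquad N:=\sigma(s,X^{\varepsilon,v}_{s,\xi},\mathcal{L}_{X^{\varepsilon}_{s,\xi}}),\qquad G:=\sqrt{\varepsilon}\,\sigma(s,X^{\varepsilon,v}_{s,\xi},\mathcal{L}_{X^{\varepsilon}_{s,\xi}}),$$
which turns \eqref{controlled Volterra eq} literally into \eqref{eq of Lambda}; this is legitimate because the general form in Lemma \ref{estimate of Lambda} permits $M,N,G$ to depend on the running value $\Lambda^{\varepsilon,v}$ and on the fixed, uncontrolled law $\mathcal{L}_{X^{\varepsilon}_{s,\xi}}$. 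It then remains to check that these choices satisfy the growth hypotheses of Lemma \ref{estimate of Lambda}. By \textbf{(H3)},
$$|M|\le L_2\big(1+|\Lambda^{\varepsilon,v}_{s,x}|+\mathbb{W}_2(\mathcal{L}_{X^{\varepsilon}_{s,\xi}},\delta_0)\big),$$
which is dominated by the right-hand side allowed for $M$ in Lemma \ref{estimate of Lambda}, since the extra term $\frac{1}{\sqrt{\varepsilon}h(\varepsilon)}\mathbb{W}_2(\mathcal{L}_{X^{\varepsilon}_{s,\xi}},\delta_{X^0_{s,\xi}})$ there is non-negative; and, again by \textbf{(H3)} and $\varepsilon\in(0,1]$ (so $\sqrt{\varepsilon}\le1$),
$$\|N\|\vee\|G\|\le\big\|\sigma(s,X^{\varepsilon,v}_{s,\xi},\mathcal{L}_{X^{\varepsilon}_{s,\xi}})\big\|\le L_2\big(1+|\Lambda^{\varepsilon,v}_{s,x}|+|X^0_{s,\xi}|+\mathbb{W}_2(\mathcal{L}_{X^{\varepsilon}_{s,\xi}},\delta_0)\big),$$
which is exactly the bound required for $N\vee G$. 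Hence all hypotheses of Lemma \ref{estimate of Lambda} hold with $C=L_2$.

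Applying Lemma \ref{estimate of Lambda} then gives $\sup_{t\in[0,T]}\E|X^{\varepsilon,v}_{t,\xi}|^p=\sup_{t\in[0,T]}\E|\Lambda^{\varepsilon,v}_{t,x}|^p\le C_{p,T,N,L_2}(1+|\xi|^p)$, with the constant there already independent of $\varepsilon\in(0,1]$ and $v\in\mathcal{A}_N$, which is precisely the assertion; the companion H\"older-regularity and tightness statement for a family $\{X^{\varepsilon,v^{\varepsilon}}\}_{\varepsilon>0}$ follows identically from Lemma \ref{Lambda-ts} under the same identification. I do not expect a real obstacle here: the substantive work --- the Minkowski/Burkholder--Davis--Gundy/H\"older bounds for the Volterra integrals of Lemma \ref{basic lemma} and the Volterra-type Gronwall inequality of Lemma \ref{Gronwall} --- has already been packaged into Lemma \ref{estimate of Lambda}. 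The only points deserving care are confirming that $b$ and $\sigma$ meet the abstract coefficient hypotheses, which is immediate from \textbf{(H3)}, and making sure the factor $\sqrt{\varepsilon}$ in $G$ is neutralized by $\varepsilon\le1$ so that no $\varepsilon$-dependence enters the final constant.
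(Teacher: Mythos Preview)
Your approach is exactly the paper's: the authors state the identification $x:=\xi$, $\Lambda^{\varepsilon,v}:=X^{\varepsilon,v}$, $M:=b$, $N:=\sigma$, $G:=\sqrt{\varepsilon}\,\sigma$ just before the lemma and then simply invoke Lemmas \ref{estimate of Lambda} and \ref{Lambda-ts}, without even spelling out the growth checks you (correctly) supply via \textbf{(H3)} and $\sqrt{\varepsilon}\le1$. There is nothing to add.
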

\begin{lem}\label{X vare v t s}
	Assume that \textbf{(H1)}-\textbf{(H3)} hold, if $p>2\vee \frac{2}{\gamma}$, $t,s\in [0,T]$, $N>0, \xi\in \mathbb{R}^d$ and $\{v^{\varepsilon}\}_{\varepsilon>0}$ is a family in $\mathcal{A}_N$, then $X^{\varepsilon,v^{\varepsilon}}$ admits a version which is H$\mathrm{\ddot{o}}$lder continuous on $[0,T]$ of any order $\alpha<\frac{\gamma}{2}-\frac{1}{p}$, uniformly for all $\varepsilon>0$. Denoting again this version by $X^{\varepsilon,v^{\varepsilon}}$, one has for all $\varepsilon>0$ small enough
	
	$$\E\Big[\Big(\sup_{0\leq s<t\leq T}\frac{X^{\varepsilon,v^{\varepsilon}}_{t,\xi}-X^{\varepsilon,v^{\varepsilon}}_{s,\xi}}{|t-s|^{\alpha}}\Big)^p\Big]\leq C_{p,T,N,L_2,\xi},$$
	for all $\a\in [0,\frac{\gamma}{2}-\frac{1}{p})$. Moreover, the family of random processes  $\{X^{\varepsilon,v^{\varepsilon}}_{\cdot,\xi}\}_{\varepsilon>0}$ is tight in $C_T$.
\end{lem}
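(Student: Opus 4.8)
The plan is to deduce Lemma~\ref{X vare v t s} as a direct specialization of Lemma~\ref{Lambda-ts}, using the dictionary fixed just before the statement. First I would recall that, by Girsanov's theorem, $X^{\varepsilon,v}_{\cdot,\xi}$ is the unique strong solution of the controlled equation \eqref{controlled Volterra eq}, and observe that this equation is the instance of the abstract Volterra equation \eqref{eq of Lambda} obtained by setting $x=\xi$, $\Lambda^{\varepsilon,v}_{t,\xi}=X^{\varepsilon,v}_{t,\xi}$, $M(s,\dots)=b(s,X^{\varepsilon,v}_{s,\xi},\mathcal{L}_{X^{\varepsilon}_{s,\xi}})$, $N(s,\dots)=\sigma(s,X^{\varepsilon,v}_{s,\xi},\mathcal{L}_{X^{\varepsilon}_{s,\xi}})$ and $G(s,\dots)=\sqrt{\varepsilon}\,\sigma(s,X^{\varepsilon,v}_{s,\xi},\mathcal{L}_{X^{\varepsilon}_{s,\xi}})$; note in particular that the measure argument appearing here is the law $\mathcal{L}_{X^{\varepsilon}_{s,\xi}}$ of the \emph{uncontrolled} solution, so that the moment estimates of Lemmas~\ref{xxi} and \ref{x0xi} apply to it verbatim.

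Next I would verify that these $M$, $N$, $G$ satisfy the growth hypotheses demanded by Lemma~\ref{Lambda-ts}. Since $|\Lambda^{\varepsilon,v}_{s,x}|=|X^{\varepsilon,v}_{s,\xi}|$, assumption \textbf{(H3)} yields $|b(s,X^{\varepsilon,v}_{s,\xi},\mathcal{L}_{X^{\varepsilon}_{s,\xi}})|\le L_2\big(1+|\Lambda^{\varepsilon,v}_{s,x}|+\mathbb{W}_2(\mathcal{L}_{X^{\varepsilon}_{s,\xi}},\delta_0)\big)$, which is dominated by the right-hand side required of $M$ (the extra term $\tfrac{1}{\sqrt{\varepsilon}h(\varepsilon)}\mathbb{W}_2(\mathcal{L}_{X^{\varepsilon}_{s,\xi}},\delta_{X^0_{s,\xi}})\ge0$ only enlarges it). Likewise \textbf{(H3)} bounds $\|\sigma(s,X^{\varepsilon,v}_{s,\xi},\mathcal{L}_{X^{\varepsilon}_{s,\xi}})\|$ by $L_2\big(1+|\Lambda^{\varepsilon,v}_{s,x}|+|X^0_{s,\xi}|+\mathbb{W}_2(\mathcal{L}_{X^{\varepsilon}_{s,\xi}},\delta_0)\big)$, which is exactly the form required of $N$, and the same bound holds for $G=\sqrt{\varepsilon}\,\sigma$ after absorbing the prefactor via $\sqrt{\varepsilon}\le1$ for $\varepsilon\in(0,1]$. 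Hence the hypotheses of Lemmas~\ref{estimate of Lambda} and \ref{Lambda-ts} hold with $C=L_2$.

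It then suffices to invoke Lemma~\ref{Lambda-ts} with $p>2\vee\frac{2}{\gamma}$ and the family $\{v^\varepsilon\}_{\varepsilon>0}\subset\mathcal{A}_N$: this produces a continuous modification of $X^{\varepsilon,v^\varepsilon}_{\cdot,\xi}$ that is $\alpha$-Hölder on $[0,T]$ for every $\alpha<\frac{\gamma}{2}-\frac{1}{p}$, gives the bound $\E\big[(\sup_{0\le s<t\le T}|X^{\varepsilon,v^\varepsilon}_{t,\xi}-X^{\varepsilon,v^\varepsilon}_{s,\xi}|/|t-s|^{\alpha})^p\big]\le C_{\alpha,p,T,N,L_2,\xi}$ for all sufficiently small $\varepsilon$, and yields tightness of $\{X^{\varepsilon,v^\varepsilon}_{\cdot,\xi}\}_{\varepsilon>0}$ in $C_T$ through Kolmogorov's continuity theorem together with Aldous' criterion. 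The $\varepsilon$-uniformity of the Hölder bound is inherited from that of the constants in Lemmas~\ref{xxi}, \ref{x0xi}, \ref{esi-Q} and \ref{estimate of Lambda}, the sole $\varepsilon$-dependence in our $M,N,G$ being the factor $\sqrt{\varepsilon}\le1$ in $G$.

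I do not anticipate a genuine analytic obstacle, since the real work — the time-increment estimates of $J_1,\dots,J_6$ built on Lemma~\ref{basic lemma}, and the passage to a Hölder modification and to tightness — has already been carried out at the abstract level in Lemma~\ref{Lambda-ts}. The only points requiring attention are bookkeeping: matching the controlled equation to the template \eqref{eq of Lambda} with the correct (uncontrolled) measure argument, and tracking that no constant secretly depends on $\varepsilon$, which comes down to the elementary inequality $\sqrt{\varepsilon}\le1$ on $(0,1]$.
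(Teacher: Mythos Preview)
Your proposal is correct and is precisely the paper's own argument: the paper sets up the same dictionary $x=\xi$, $\Lambda^{\varepsilon,v}_{t,\xi}=X^{\varepsilon,v}_{t,\xi}$, $M=b$, $N=\sigma$, $G=\sqrt{\varepsilon}\,\sigma$ (with the uncontrolled law $\mathcal{L}_{X^{\varepsilon}_{s,\xi}}$ in the measure slot) and then simply invokes Lemmas~\ref{estimate of Lambda} and~\ref{Lambda-ts}. Your verification of the growth hypotheses via \textbf{(H3)} and the observation that $\sqrt{\varepsilon}\le 1$ handles the only $\varepsilon$-dependence are exactly what is needed and, if anything, spell out more detail than the paper does.
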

By a similar  procedure, we can get that the solution $\phi_{t}$ of \eqref{controlled determinstic Volterra eq} also have H$\mathrm{\ddot{o}}$lder continuous path, that is
\begin{lem}\label{small esti}
	Assume that \textbf{(H1)}-\textbf{(H3)} hold, for $t,s\in [0,T]$, $N>0, v\in\mathcal{A}_N, \xi\in \mathbb{R}^d$, then the solution $\phi_{t}$ of \eqref{controlled determinstic Volterra eq} has for all $\varepsilon>0$ small enough
	
	$$|\phi_{t}-\phi_s|\leq C_{T,N,L_2}\big(1+|\xi|\big)|t-s|^{\gamma}.$$ 
\end{lem}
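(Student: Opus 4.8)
\textbf{Proof plan for Lemma \ref{small esti}.}
The plan is to repeat, in the deterministic setting, the estimate carried out for $\Lambda^{\varepsilon,v^\varepsilon}$ in Lemma \ref{Lambda-ts}, but now for the solution $\phi$ of \eqref{controlled determinstic Volterra eq}. First I would split the increment $\phi_t-\phi_s$ for $s<t$ into four pieces, exactly as in the proof of Lemma \ref{Lambda-ts}: the ``diagonal'' contributions $\int_s^t K_1(t,u)b(u,\phi_u,\delta_{X^0_{u,\xi}})\d u$ and $\int_s^t K_1(t,u)\sigma(u,\phi_u,\delta_{X^0_{u,\xi}})v_u\d u$, and the ``kernel-difference'' contributions $\int_0^s\bigl(K_1(t,u)-K_1(s,u)\bigr)b(u,\phi_u,\delta_{X^0_{u,\xi}})\d u$ and $\int_0^s\bigl(K_1(t,u)-K_1(s,u)\bigr)\sigma(u,\phi_u,\delta_{X^0_{u,\xi}})v_u\d u$. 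There is no stochastic integral here, so only the $K_1$-terms appear.

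Next I would bound the integrands in sup-norm. By \textbf{(H3)}, Lemma \ref{x0xi} and Lemma \ref{existence} we have $\sup_{u\in[0,T]}\bigl(|b(u,\phi_u,\delta_{X^0_{u,\xi}})|+\|\sigma(u,\phi_u,\delta_{X^0_{u,\xi}})\|\bigr)\le C_{T,N,L_2}(1+|\xi|)$, since $\mathbb{W}_2(\delta_{X^0_{u,\xi}},\delta_0)=|X^0_{u,\xi}|$ and both $|\phi_u|$ and $|X^0_{u,\xi}|$ are controlled by $1+|\xi|$. For the diagonal terms one then uses $\int_s^t K_1(t,u)\d u\le C_T|t-s|^{\gamma}$ and, for the term carrying $v$, the Cauchy–Schwarz bound $\int_s^t K_1(t,u)|v_u|\d u\le\bigl(\int_s^t K_1^2(t,u)\d u\bigr)^{1/2}\bigl(\int_0^T|v_u|^2\d u\bigr)^{1/2}\le C_T N^{1/2}|t-s|^{\gamma}$; both invoke \textbf{(H1)} (the second displayed inequality there). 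For the kernel-difference terms one uses $\int_0^s|K_1(t,u)-K_1(s,u)|\d u\le C_T|t-s|^{\gamma}$ (again Cauchy–Schwarz applied to the first displayed inequality of \textbf{(H1)} over $[0,s]$, whose length is bounded by $T$) and the analogous Cauchy–Schwarz estimate against $v$. This is exactly the deterministic analogue of Lemma \ref{basic lemma}(iv) and (vi).

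Combining the four bounds gives $|\phi_t-\phi_s|\le C_{T,N,L_2}(1+|\xi|)|t-s|^{\gamma}$, which is the claimed inequality. The statement ``for all $\varepsilon>0$ small enough'' is vacuous here since $\phi$ does not depend on $\varepsilon$; it is kept only to parallel Lemma \ref{X vare v t s}.

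I do not expect any genuine obstacle: all the required kernel estimates are already packaged in \textbf{(H1)}, and the a priori sup-norm bounds on $\phi$ and $X^0$ are supplied by Lemmas \ref{existence} and \ref{x0xi}. The only mild point of care is that, unlike in Lemma \ref{Lambda-ts}, the exponent obtained is the full H\"older exponent $\gamma$ rather than $\gamma/2-1/p$, because there is no Burkholder--Davis--Gundy step and no loss from Kolmogorov's criterion; one simply reads off pathwise H\"older-$\gamma$ regularity directly from the integral equation.
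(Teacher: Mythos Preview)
Your proposal is correct and follows exactly the approach the paper intends: the paper does not give an explicit proof of this lemma but simply remarks that ``by a similar procedure'' to Lemma \ref{X vare v t s} (hence Lemma \ref{Lambda-ts}) one obtains H\"older continuity of $\phi$. Your four-term splitting, the use of \textbf{(H3)} together with Lemmas \ref{x0xi} and \ref{existence} to bound the integrands, and the application of the two \textbf{(H1)} inequalities via Cauchy--Schwarz are precisely the deterministic specialization of that argument, and your observation that one recovers the full exponent $\gamma$ (rather than $\gamma/2-1/p$) because no BDG or Kolmogorov step is needed is the only point where the deterministic case genuinely differs.
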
	
%	$\bullet$ Verification of Condition $(ii)$.
Since $X^{\varepsilon}_{t,\xi}-X^0_{t,\xi}=Q^{\varepsilon}_{t,\xi}\vert_{h(\varepsilon)=\frac{1}{\sqrt{\varepsilon}}}$, so Lemma \ref{esi-Q} is simplified as
	\begin{lem}\label{xx}
		Under the assumptions \textbf{(H1)}-\textbf{(H3)}, it follows that for $p> 2,\xi\in \mathbb{R}^d$
		$$\sup_{t\in [0,T]}\E|X^{\varepsilon}_{t,\xi}-X^0_{t,\xi}|^{p}\leq \varepsilon^{\frac{p}{2}}C_{p,T,L_1,L_2}\big(1+|\xi|^p\big),$$
		where the constant $C_{p,T,L_1,L_2}$ is independent of $\varepsilon$.
	\end{lem}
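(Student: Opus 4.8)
The plan is to obtain Lemma \ref{xx} as an immediate specialization of Lemma \ref{esi-Q}. Recall that $Q^{\varepsilon}_{t}=\frac{X^{\varepsilon}_{t,\xi}-X^0_{t,\xi}}{\sqrt{\varepsilon}h(\varepsilon)}$ and that Lemma \ref{esi-Q} gives $\sup_{t\in[0,T]}\E|Q^{\varepsilon}_{t}|^p\le \frac{C_{p,T,L_1,L_2}}{|h(\varepsilon)|^p}(1+|\xi|^{p})$ for $p>2$, with a constant depending on neither $\varepsilon$ nor the scaling $h$. Choosing $h(\varepsilon)=\varepsilon^{-1/2}$ one has $\sqrt{\varepsilon}h(\varepsilon)=1$, hence $Q^{\varepsilon}_{t}=X^{\varepsilon}_{t,\xi}-X^0_{t,\xi}$, and $|h(\varepsilon)|^{-p}=\varepsilon^{p/2}$; substituting yields exactly the claimed bound $\sup_{t\in[0,T]}\E|X^{\varepsilon}_{t,\xi}-X^0_{t,\xi}|^{p}\le\varepsilon^{p/2}C_{p,T,L_1,L_2}(1+|\xi|^p)$. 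This is essentially all that is required.

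For completeness I would also record the self-contained argument, which runs exactly parallel to the proofs of Lemmas \ref{xxi} and \ref{esi-Q}. Subtracting \eqref{Volterra eq-limit eq} from \eqref{Volterra MV eq} gives
\begin{align}
X^\varepsilon_{t,\xi}-X^0_{t,\xi}&=\int_{0}^{t}K_1(t,s)\big(b(s,X^\varepsilon_{s,\xi},\mathcal{L}_{X^\varepsilon_{s,\xi}})-b(s,X^0_{s,\xi},\delta_{X^0_{s,\xi}})\big)\d s\nonumber\\
&\quad+\sqrt{\varepsilon}\int_{0}^{t}K_2(t,s)\sigma(s,X^\varepsilon_{s,\xi},\mathcal{L}_{X^\varepsilon_{s,\xi}})\d W_s.\nonumber
\end{align}
Applying Lemma \ref{basic lemma} (i)-(ii), then the Lipschitz estimate \textbf{(H2)} on $b$ together with $\mathbb{W}_2(\mathcal{L}_{X^\varepsilon_{s,\xi}},\delta_{X^0_{s,\xi}})\le(\E|X^\varepsilon_{s,\xi}-X^0_{s,\xi}|^p)^{1/p}$, the linear growth \textbf{(H3)} on $\sigma$, and Lemma \ref{xxi} to bound $\sup_s\E|X^\varepsilon_{s,\xi}|^p$, one reaches the Volterra--Gronwall inequality
$$\E|X^\varepsilon_{t,\xi}-X^0_{t,\xi}|^p\le\varepsilon^{p/2}C_{p,T,L_1,L_2}(1+|\xi|^p)+C_{p,T,L_1}\int_{0}^{t}K_1(t,s)\,\E|X^\varepsilon_{s,\xi}-X^0_{s,\xi}|^p\d s.$$
Then Remark \ref{condition of Volterra}, Lemma \ref{Gronwall} applied with the resolvent $R^{\hat{K}_1}$ of $\hat{K}_1:=C_{p,T,L_1}K_1$, and \eqref{finite of resolvent} absorb the integral term and give the assertion.

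I do not expect any genuine obstacle. The only point deserving attention---and it is already present in Lemma \ref{esi-Q}---is the origin of the factor $\varepsilon^{p/2}$: it comes solely from the $\sqrt{\varepsilon}$ multiplying the stochastic integral, once $\E\|\sigma(s,X^\varepsilon_{s,\xi},\mathcal{L}_{X^\varepsilon_{s,\xi}})\|^p$ has been bounded by a constant via \textbf{(H3)} and Lemma \ref{xxi}. The singularity of the kernels causes no additional difficulty, being entirely handled by the moment inequalities of Lemma \ref{basic lemma} and the Volterra-type Gronwall lemma.
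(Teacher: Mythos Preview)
Your proposal is correct and matches the paper's approach exactly: the paper also derives Lemma \ref{xx} as the immediate specialization $X^{\varepsilon}_{t,\xi}-X^0_{t,\xi}=Q^{\varepsilon}_{t}\vert_{h(\varepsilon)=1/\sqrt{\varepsilon}}$ of Lemma \ref{esi-Q}, without writing out a separate argument. Your optional self-contained version is likewise just the computation in the proof of Lemma \ref{esi-Q} with $h(\varepsilon)=\varepsilon^{-1/2}$ plugged in.
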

	\begin{lem}\label{unique characterised}
		The set $\mathcal{G}^0_v$ from Definition \ref{v} is characterized by
		$$\mathcal{G}^0_v=\Big\{\phi: [0,T]\to\mathbb{R}^d|	\phi_t=\xi+\int_{0}^{t}K_1(t,s)b(s,\phi_s,\delta_{X^0_{s,\xi}})\d s+\int_{0}^{t}K_1(t,s)\sigma(s,\phi_s,\delta_{X^0_{s,\xi}})v_s\d s\Big\}.$$
	\end{lem}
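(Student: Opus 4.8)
The plan is to prove the two inclusions separately. For the inclusion ``$\supseteq$'', let $\phi$ be a (the, by Lemma \ref{existence}) solution of the deterministic controlled Volterra equation \eqref{controlled determinstic Volterra eq} for a given $v \in \mathcal{A}_N$; I must exhibit sequences realizing $\phi$ in the sense of Definition \ref{v}. Take the constant sequences $\varepsilon_n \to 0$ arbitrary and $v^{\varepsilon_n} = v$ for all $n$, so that trivially $v^{\varepsilon_n} \to v$ in distribution. Then $\mathcal{G}^{\varepsilon_n}(W + \frac{1}{\sqrt{\varepsilon_n}}\int_0^\cdot v_s\,\d s) = X^{\varepsilon_n, v}_{\cdot,\xi}$ solves the controlled SDE \eqref{controlled Volterra eq}, and the task reduces to showing $X^{\varepsilon_n,v}_{\cdot,\xi} \to \phi$ in distribution (indeed in probability, in $C_T^d$) as $n \to \infty$.

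For that convergence I would estimate $\E\|X^{\varepsilon,v}_{\cdot,\xi} - \phi\|_T^p$, or more carefully $\sup_{t\le T}\E|X^{\varepsilon,v}_{t,\xi} - \phi_t|^p$ followed by the Hölder/tightness bound of Lemma \ref{X vare v t s} to upgrade to the supremum norm. Subtracting \eqref{controlled determinstic Volterra eq} from \eqref{controlled Volterra eq} and splitting the difference of drift terms as
\begin{align}
b(s, X^{\varepsilon,v}_{s,\xi}, \mathcal{L}_{X^\varepsilon_{s,\xi}}) - b(s, \phi_s, \delta_{X^0_{s,\xi}})
&= \big(b(s, X^{\varepsilon,v}_{s,\xi}, \mathcal{L}_{X^\varepsilon_{s,\xi}}) - b(s, \phi_s, \mathcal{L}_{X^\varepsilon_{s,\xi}})\big) \nonumber\\
&\quad + \big(b(s, \phi_s, \mathcal{L}_{X^\varepsilon_{s,\xi}}) - b(s, \phi_s, \delta_{X^0_{s,\xi}})\big), \nonumber
\end{align}
the first bracket is controlled by $L_1|X^{\varepsilon,v}_{s,\xi} - \phi_s|$ via \textbf{(H2)}, the second by $L_1 \mathbb{W}_2(\mathcal{L}_{X^\varepsilon_{s,\xi}}, \delta_{X^0_{s,\xi}}) \le L_1 (\E|X^\varepsilon_{s,\xi} - X^0_{s,\xi}|^2)^{1/2}$, which by Lemma \ref{xx} is $O(\sqrt{\varepsilon})$ uniformly in $s$; the same splitting applies to the $\sigma v_s$ term, using $v \in \mathcal{A}_N$ and Lemma \ref{basic lemma}(iii) to absorb $v_s$; and the stochastic integral $\sqrt{\varepsilon}\int_0^t K_2\sigma\,\d W_s$ is $O(\sqrt{\varepsilon})$ in $L^p$ by Lemma \ref{basic lemma}(ii) together with the moment bound of Lemma \ref{estimate of X v}. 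Collecting terms gives
$$
\E|X^{\varepsilon,v}_{t,\xi} - \phi_t|^p \le C\,\varepsilon^{p/2}(1+|\xi|^p) + C\int_0^t \big(K_1(t,s) + K_1^2(t,s)\big)\,\E|X^{\varepsilon,v}_{s,\xi} - \phi_s|^p\,\d s,
$$
and the Volterra–Gronwall inequality (Lemma \ref{Gronwall}, applicable via Remark \ref{condition of Volterra}) yields $\sup_{t\le T}\E|X^{\varepsilon,v}_{t,\xi} - \phi_t|^p \le C\varepsilon^{p/2}(1+|\xi|^p) \to 0$. Combined with the uniform Hölder bound of Lemma \ref{X vare v t s}, this gives convergence in probability in $C_T^d$, hence in distribution, so $\phi \in \mathcal{G}^0_{v,N} \subseteq \mathcal{G}^0_v$.

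For the reverse inclusion ``$\subseteq$'', let $\phi \in \mathcal{G}^0_v$, so there are $\varepsilon_n \to 0$ and $v^{\varepsilon_n} \to v$ in distribution with $X^{\varepsilon_n, v^{\varepsilon_n}}_{\cdot,\xi} \to \phi$ in distribution. By Lemma \ref{X vare v t s} the family $\{X^{\varepsilon_n, v^{\varepsilon_n}}_{\cdot,\xi}\}$ is tight in $C_T^d$, and together with the already established $L^2$-convergence $X^{\varepsilon_n}_{s,\xi} \to X^0_{s,\xi}$ (Lemma \ref{xx} shows $\|X^\varepsilon_{\cdot,\xi} - X^0_{\cdot,\xi}\|$ is small), I can pass to the limit in the controlled equation \eqref{controlled Volterra eq}: using Skorokhod's representation theorem to realize the joint convergence $(X^{\varepsilon_n,v^{\varepsilon_n}}_{\cdot,\xi}, v^{\varepsilon_n}) \to (\phi, v)$ almost surely on a common probability space, the drift and control integrals converge by \textbf{(H2)}--\textbf{(H3)}, dominated convergence, and the continuity of $\mu \mapsto b(s,x,\mu)$; the martingale term $\sqrt{\varepsilon_n}\int_0^t K_2 \sigma\, \d W_s$ vanishes in $L^2$ by Lemma \ref{basic lemma}(ii) and the uniform moment bound. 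The identification of the weak limit of $\int_0^\cdot K_1 \sigma v^{\varepsilon_n}_s\,\d s$ with $\int_0^\cdot K_1\sigma(s,\phi_s,\delta_{X^0_{s,\xi}})v_s\,\d s$ is where care is needed, since $v^{\varepsilon_n}$ converges only weakly in $L^2([0,T])$; one argues that the integrand $K_1(t,\cdot)\sigma(\cdot,\phi_\cdot,\delta_{X^0_{\cdot,\xi}})$ converges strongly in $L^2$ along the Skorokhod realization, so the pairing with the weakly convergent $v^{\varepsilon_n}$ passes to the limit. The limit $\phi$ then solves \eqref{controlled determinstic Volterra eq}, and by the uniqueness in Lemma \ref{existence}, $\mathcal{G}^0_v$ is exactly the singleton solution set as claimed.

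The main obstacle is the last step: justifying the passage to the limit in the control term $\int_0^t K_1(t,s)\sigma(s, X^{\varepsilon_n,v^{\varepsilon_n}}_{s,\xi}, \mathcal{L}_{X^{\varepsilon_n}_{s,\xi}}) v^{\varepsilon_n}_s\,\d s$, because this couples a \emph{weakly} convergent control $v^{\varepsilon_n} \rightharpoonup v$ with a sequence of integrands converging only in distribution, and the Volterra kernel $K_1(t,\cdot)$ may be singular near $s = t$. The resolution is to first establish strong $L^2$-convergence of the integrands (via the already-proven uniform moments, Hölder regularity, and Lipschitz continuity of the coefficients, using Skorokhod coupling to make the convergence almost sure and then dominated convergence in $L^2(\d s)$), after which strong-times-weak convergence of the product integral follows; the kernel singularity is handled throughout by invoking Lemma \ref{basic lemma} and the Volterra–Gronwall Lemma \ref{Gronwall} rather than pointwise estimates.
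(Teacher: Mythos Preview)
Your proposal is correct and the overall logic is sound, but the paper takes a somewhat different route that is worth noting. Rather than proving the two inclusions separately, the paper follows the device of Chiarini--Fischer \cite{CF} (their Theorem~1, also Lemma~3.17 of \cite{JP}): it introduces the bounded functional
\[
\Phi_t(\omega,f):=\Big|\omega_t-\xi-\int_{0}^{t}K_1(t,s)\big(b(s,\omega_s,\delta_{X^0_{s,\xi}})+\sigma(s,\omega_s,\delta_{X^0_{s,\xi}})f_s\big)\,\d s\Big|\wedge 1
\]
on $C_T^d\times\mathcal{S}_N$, proves directly that $\Phi_t$ is continuous in the product topology (uniform convergence in $\omega$, weak $L^2$ in $f$), and then shows by an explicit estimate that $\E[\Phi_t(X^{n,v},v^n)]\le C\sqrt{\varepsilon_n}\to 0$. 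Since $(X^{n,v},v^n)\to(\phi,v)$ in distribution (after Skorokhod), continuity and boundedness of $\Phi_t$ give $\E[\Phi_t(\phi,v)]=0$, hence $\phi$ solves \eqref{controlled determinstic Volterra eq}. The reverse inclusion is then automatic from tightness (Lemma~\ref{X vare v t s}) and uniqueness (Lemma~\ref{existence}): every subsequential limit solves the same equation, so the full sequence converges to its unique solution.

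The advantage of the paper's approach is that the delicate ``weak-times-strong'' passage you flag as the main obstacle is packaged once and for all into the continuity of $\Phi_t$: one only needs $\int_0^t K_1(t,s)\sigma(s,\omega_s,\delta_{X^0_{s,\xi}})(f_s-f_s^n)\,\d s\to 0$ for fixed $\omega$ and weakly convergent $f^n$, which is immediate since $K_1(t,\cdot)\sigma(\cdot,\omega_\cdot,\delta_{X^0_{\cdot,\xi}})\in L^2$. Your approach, by contrast, gives a more quantitative ``$\supseteq$'' direction (an explicit rate $\varepsilon^{p/2}$ via Volterra--Gronwall), at the cost of a slightly more hands-on limit identification for ``$\subseteq$''. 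Both are valid; the $\Phi_t$ trick is cleaner to write, while yours is closer to a direct stability estimate.
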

	\begin{proof}
		The proof of this lemma follow the technique                                                                                                                                                                                                                             in Theorem $1$ \cite{CF} or Lemma $3.17$ of \cite{JP}. Before we characterize $\mathcal{G}^0_v$, we first identify $\mathcal{G}^0_{v,N}$. Recall the definition \ref{v} of $\mathcal{G}^0_{v,N}$. For $N\in\mathbb{N}$ and $v\in\mathcal{A}_N$, consider a subsequence $\{\varepsilon_n\}_{n\in\mathbb{N}}\subset[0,\infty)$ with $\lim_{n\to \infty}\varepsilon_{n}=0$ and a sequence $\{v^{\varepsilon_n}\}_{n\in\mathbb{N}}\subset\mathcal{A}_N$ such that $\lim_{n\to \infty}v^{\varepsilon_{n}}=v\ \text{in distribution}$, and assume that $X^{n,v}:=X^{\varepsilon_n,v^{\varepsilon_{n}}}$ converges in distribution to some random variable $\phi$ with values in $C^d_T$. To simplify the representation, we define $v^{n}:=v^{\varepsilon_{n}}, X^{\varepsilon_{n}}:=X^{n}$.
		
		By Skorohod representation theorem (\cite[Theorem  3.15 ]{DW}) we can work with almost sure convergence for the purpose of identifying the limit. 
		 Hence $\{X^{n,v},v^n\}_{n\geq0}$ converges almost surely in the product
		topology on $C_T^d\times \mathcal{S}_N$,
		 and the limit is a $C_T^d\times \mathcal{S}_N$-valued random variable $(\phi, v)$ in a possibly different probability space $(\Omega^0,\mathcal{F}^0,\mathbb{P}^0)$ as $n$ tends to $+\infty$. The convergence of the couple also takes place in distribution. For $t\in [0,T]$, define $\Phi_t:C_T^d\times \mathcal{S}_N\to\mathbb{R}$ as
		$$\Phi_t(\omega,f):=\Big|\omega_t-\xi-\int_{0}^{t}K_1(t,s)\cdot\big(b(s,\omega_s,\delta_{X^0_{s,\xi}})+\sigma(s,\omega_s,\delta_{X^0_{s,\xi}})f_s\big)\d s\Big|\wedge 1.$$
		It is clear that $\Phi_t$ is bounded, and we also can show $\Phi_t$ is continuous. Let $\omega^n\to \omega$ in $C_T^d$ and $f^n\to f$ in $\mathcal{S}_N$ with respect the weak topology. 
		
		For $\omega,\omega^n\in C^d_T,f,f^n\in\mathcal{S}_N$, by Lemma \ref{x0xi} and \textbf{(H1)-(H3)} we have that
		\begin{align}
			&|\Phi_t(\omega,f)-\Phi_t(\omega^n,f^n)|\leq |\omega_t-\omega^n_t|+\int_{0}^{t}K_1(t,s)\cdot \big|b(s,\omega_s,\delta_{X^0_{s,\xi}})-b(s,\omega^n_s,\delta_{X^0_{s,\xi}})\big|\d s\nonumber\\&\quad+\int_{0}^{t}K_1(t,s)\cdot\Big|\big(\sigma(s,\omega_s,\delta_{X^0_{s,\xi}})-\sigma(s,\omega^n_s,\delta_{X^0_{s,\xi}})\big)f^n_s+\big(f_s-f^n_s\big)\sigma(s,\omega_s,\delta_{X^0_{s,\xi}})\Big|\d s\nonumber\\&\leq \sup_{t\in [0,T]}|\omega_t-\omega^n_t|+\Big(\int_{0}^{t}K_1^2(t,s)\d s\Big)^{\frac{1}{2}}\cdot \Big(\int_{0}^{t}\big|b(s,\omega_s,\delta_{X^0_{s,\xi}})-b(s,\omega^n_s,\delta_{X^0_{s,\xi}})\big|^2\d s\Big)^{\frac{1}{2}}\nonumber\\&\quad+\Big(\int_{0}^{t}|f_s^n|^2\d s\Big)^{\frac{1}{2}}\cdot \Big(\int_{0}^{t}K^2_1(t,s)\cdot\big\|\sigma(s,\omega_s,\mathcal{L}_{X^0_{s,\xi}})-\sigma(s,\omega^n_s,\mathcal{L}_{X^0_{s,\xi}})\big\|^2\d s\Big)^{\frac{1}{2}}\nonumber\\&\quad+\sup_{t\in [0,T]}\|\sigma(t,\omega_t,\mathcal{L}_{X^0_{t,\xi}})\|\int_{0}^{t}K_1(t,s)\cdot|f_s-f_s^n|\d s\nonumber\\&\leq \sup_{t\in [0,T]}|\omega_t-\omega^n_t|+C_{T,L_1}\Big(\int_{0}^{t}|\omega_s-\omega_s^n|^2\d s\Big)^{\frac{1}{2}}+C_{N,L_1}\Big(\int_{0}^{t}K_1^2(t,s)\cdot|\omega_s-\omega_s^n|^2\d s\Big)^{\frac{1}{2}}\nonumber\\&\quad+C_{L_2}\sup_{t\in [0,T]}\big(1+|\omega_t|+|X^0_t|)\big)\int_{0}^{t}K_1(t,s)\cdot|f_s-f_s^n|\d s\nonumber\\&\leq \sup_{t\in [0,T]}|\omega_t-\omega^n_t|+C_{T,L_1}\sup_{t\in [0,T]}|\omega_t-\omega^n_t|+C_{N,T,L_1}\sup_{t\in [0,T]}|\omega_t-\omega^n_t|\nonumber\\&\quad+C_{T,L_2}\big(1+\sup_{t\in [0,T]}\omega_t\big)\int_{0}^{t}K_1(t,s)\cdot|f_s-f_s^n|\d s\nonumber\\&\leq C_{N,T,L_1}\sup_{t\in [0,T]}|\omega_t-\omega^n_t|+C_{L_2}\big(1+\sup_{t\in [0,T]}\omega_t\big)\int_{0}^{t}K_1(t,s)\cdot|f_s-f_s^n|\d s.\nonumber
		\end{align}
		Since $f^n$ tends to $f$ weakly in $L^2$ and $\lim_{n\to \infty}|\omega_t-\omega^n_t|=0$, so we get $\Phi_t(f^n,\omega^n)$ tends to $\Phi_t(f,\omega)$ as $n$ tends to $\infty$, which implies $\Phi_t$ is continuous and therefore
		$$\lim_{n\to \infty}\E[\Phi_t(X^{n,v},v^n)]=\E[\Phi_t(\phi,v)].$$
		It remains to  prove that the right hand of above equality is actually equal to zero. By BDG's inequality, Minkowski's inequality, Lemmas \ref{xxi},   \ref{estimate of X v} and   \ref{xx} we have
		\begin{align}
		 \E[\Phi_t(X^{n,v},v^n)]&\leq \E\Big|\int_{0}^{t}K_1(t,s)\cdot\big(b(s,X^{n,v}_s,\mathcal{L}_{X^{n}_s})-b(s,X^{n,v}_s,\delta_{X^0_{s,\xi}})\big)\d s\Big|\nonumber \\ &\quad+\E\Big|\int_{0}^{t}K_1(t,s)\cdot\big(\sigma(s,X^{n,v}_s,\mathcal{L}_{X^{n}_s})-\sigma(s,X^{n,v}_s,\delta_{X^0_{s,\xi}})\big)v^n_s\d s\Big|\nonumber\\&\quad+\sqrt{\varepsilon_n}\E\Big|\int_{0}^{t}K_2(t,s)\cdot\sigma(s,X^{n,v},\mathcal{L}_{X^{n}_s}) \d W_s\Big|\nonumber\\&\leq L_1\E\int_{0}^{t}K_1(t,s)\cdot \mathbb{W}_2(\mathcal{L}_{X^{n}_s},\delta_{X^0_{s,\xi}})\d s\nonumber\\&\quad+\Big(\int_{0}^{t}|v^n_s|^2\d s\Big)^{\frac{1}{2}}\cdot \E\Big(\int_{0}^{t}K^2_1(t,s)\cdot\big\|\sigma(s,X^{n,v}_s,\mathcal{L}_{X^{n}_s})-\sigma(s,X^{n,v}_s,\delta_{X^0_{s,\xi}})\big\|^2\d s\Big)^{\frac{1}{2}}\nonumber\\&\quad+ \sqrt{\varepsilon_n}\E\Big(\int_{0}^{t}K^2_2(t,s)\cdot\|\sigma(s,X^{n,v}_s,\mathcal{L}_{X^n_s})\|^2\d s\Big)^{\frac{1}{2}}\nonumber\\&\leq L_1\int_{0}^{t}K_1(t,s)\cdot\big(\E|X^n_s-X^0_{s,\xi}|^p\big)^{\frac{1}{p}}\d s\nonumber\\&\quad+\sqrt{NL_2}\E\Big(\int_{0}^{t}K^2_1(t,s)\cdot\mathbb{W}_2^2(\mathcal{L}_{X^n_s},\delta_{X^0_{s,\xi}})\d s\Big)^{\frac{1}{2}}\nonumber\\&+\sqrt{\varepsilon_n L_2}\E\Big(\int_{0}^{t}K^2_2(t,s)\cdot \big(1+|X^{n,v}_s|^2+\mathbb{W}_2^2(\mathcal{L}_{X^n_s},\delta_{0})\big)\d s\Big)^{\frac{1}{2}}\nonumber\\&\leq \sqrt{\varepsilon_{n}}C_{T,L_1,L_2,\xi}+\sqrt{NL_2}\Big[\E\Big(\int_{0}^{t}K^2_1(t,s)\cdot \mathbb{W}_2^2(\mathcal{L}_{X^n_s},\delta_{X^0_{s,\xi}})\d s\Big)^{\frac{p}{2}}\Big]^{\frac{1}{p}}\nonumber\\&\quad+\sqrt{\varepsilon_n L_2}\Big[\E\Big(\int_{0}^{t}K^2_1(t,s)\cdot \big(1+|X^{n,v}_s|^2+\mathbb{W}_2^2(\mathcal{L}_{X^n_s},\delta_{0})\big)\d s\Big)^{\frac{p}{2}}\Big]^{\frac{1}{p}}\nonumber\\&\leq \sqrt{\varepsilon_{n}}C_{T,L_1,L_2,\xi}+\sqrt{NL_2}\Big[\Big(\int_{0}^{t}K^2_1(t,s)\cdot \E\big(\mathbb{W}_2^p(\mathcal{L}_{X^n_s},\delta_{X^0_{s,\xi}})\big)^{\frac{2}{p}}\d s\Big)^{\frac{p}{2}}\Big]^{\frac{1}{p}}\nonumber\\&\quad+ \sqrt{\varepsilon_n L_2}\Big[\Big(\int_{0}^{t}K^2_1(t,s)\cdot \E\big(1+|X^{n,v}_s|^p+\mathbb{W}_2^p(\mathcal{L}_{X^n_s},\delta_{0})\big)^{\frac{2}{p}}\d s\Big)^{\frac{p}{2}}\Big]^{\frac{1}{p}}\nonumber\\&\leq \sqrt{\varepsilon_{n}}C_{T,N,L_1,L_2,\xi}+ \sqrt{\varepsilon_n}C_{T,N,L_2,\xi}\nonumber\\&\leq C_{T,N,L_1,L_2,\xi}\sqrt{\varepsilon_n}\to 0\quad \text{as $n\to\infty$}.\nonumber
		\end{align}
		Hence $\E[\Phi_t(\phi,v)]=0$, which implies that $\phi$ satisfies \eqref{controlled determinstic Volterra eq} almost surely, for all $t\in T$. By Lemma \ref{small esti}, we have that $\phi$ has continuous paths, so it satisfies \eqref{controlled determinstic Volterra eq} for all $t\in T$, almost surely, which means $\mathcal{G}^0_{v,N}$ consists of the unique solution of \eqref{controlled determinstic Volterra eq}. Since this definition is independent of $N$, it extends to $\mathcal{G}_v^0$. The proof is complete.
	\end{proof}
	This Lemma shows that the set $\mathcal{G}^0_v$ corresponds to the unique solution $\phi_t$ of \eqref{controlled determinstic Volterra eq}.
	\begin{lem}\label{compact level sets-LDP}
		Assume that \textbf{(H1)}-\textbf{(H3)} hold, the functional $I$ in \eqref{def of I} has compact level sets.
	\end{lem}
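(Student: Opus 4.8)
The plan is to show that the level set $\{\phi\in C^d_T: I(\phi)\le N\}$ is compact for each fixed $N<\infty$. By Lemma \ref{unique characterised} the map $v\mapsto\phi^v$ sending a control to the unique solution of \eqref{controlled determinstic Volterra eq} has image $\mathcal{G}^0_v=\{\phi^v\}$, so from the definition \eqref{def of I} of $I$ one has
$$\{\phi: I(\phi)\le N\}=\big\{\phi^v: v\in\mathcal{S}_{2N}\big\},$$
the set of solutions driven by controls $v$ with $\tfrac12\int_0^T|v_s|^2\d s\le N$. Thus it suffices to prove that this set is a compact subset of $(C^d_T,\|\cdot\|_T)$. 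The natural route is: (1) show that $\mathcal{S}_{2N}$ is compact in the weak topology of $L^2([0,T];\RR^m)$ (this is standard: it is a bounded, convex, closed subset of a Hilbert space, hence weakly sequentially compact); (2) show that the solution map $v\mapsto\phi^v$ is continuous from $\mathcal{S}_{2N}$ with the weak topology into $C^d_T$ with the uniform norm; then the level set is the continuous image of a compact set, hence compact.

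First I would establish uniform bounds: by Lemma \ref{existence} (or a deterministic version of Lemma \ref{estimate of X v}), $\sup_{v\in\mathcal{S}_{2N}}\|\phi^v\|_T\le C_{T,N,L_2}(1+|\xi|)$, and by Lemma \ref{small esti} the solutions are uniformly $\gamma$-H\"older continuous: $|\phi^v_t-\phi^v_s|\le C_{T,N,L_2}(1+|\xi|)|t-s|^\gamma$, uniformly in $v\in\mathcal{S}_{2N}$. By Arzel\`a–Ascoli the family $\{\phi^v: v\in\mathcal{S}_{2N}\}$ is relatively compact in $C^d_T$, so it only remains to prove it is closed, which follows from the continuity of $v\mapsto\phi^v$ together with closedness (weak compactness) of $\mathcal{S}_{2N}$. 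For the continuity statement, take $v^n\to v$ weakly in $L^2$ with $v^n,v\in\mathcal{S}_{2N}$; using the H\"older bound one extracts (along any subsequence) a uniform limit $\phi^n\to\psi$ in $C^d_T$, and one must pass to the limit in
$$\phi^n_t=\xi+\int_0^tK_1(t,s)b(s,\phi^n_s,\delta_{X^0_{s,\xi}})\d s+\int_0^tK_1(t,s)\sigma(s,\phi^n_s,\delta_{X^0_{s,\xi}})v^n_s\d s.$$
The drift term passes to the limit by dominated convergence using \textbf{(H2)} and the uniform sup bound. For the stochastic-control term, write it as $\int_0^t K_1(t,s)\sigma(s,\psi_s,\delta_{X^0_{s,\xi}})v^n_s\d s$ plus an error $\int_0^tK_1(t,s)\big(\sigma(s,\phi^n_s,\cdot)-\sigma(s,\psi_s,\cdot)\big)v^n_s\d s$; the error tends to $0$ by Cauchy–Schwarz, \textbf{(H2)}, $\|v^n\|_{L^2}\le\sqrt{2N}$ and $\phi^n\to\psi$ uniformly, while the main part converges to $\int_0^tK_1(t,s)\sigma(s,\psi_s,\delta_{X^0_{s,\xi}})v_s\d s$ because $s\mapsto K_1(t,s)\sigma(s,\psi_s,\delta_{X^0_{s,\xi}})\mathbf 1_{[0,t]}(s)$ lies in $L^2$ (it is bounded on $[0,t]$ by \textbf{(H3)} with $\int_0^tK_1^2(t,s)\d s<\infty$ from \textbf{(H1)}) against which $v^n\rightharpoonup v$. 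Hence $\psi$ solves \eqref{controlled determinstic Volterra eq} with control $v$, and by uniqueness (Lemma \ref{existence}) $\psi=\phi^v$; since every subsequence has a further subsequence converging to $\phi^v$, the whole sequence converges, giving continuity.

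The main obstacle is the weak-to-uniform continuity of $v\mapsto\phi^v$ in the presence of the singular Volterra kernel $K_1$: one cannot simply invoke uniform continuity of the integral operator, and must instead combine the a priori uniform H\"older regularity (Lemma \ref{small esti}) to get compactness of the candidate limits with a careful $L^2$-weak-convergence argument for the term linear in $v$, controlling the nonlinearity by the Volterra-type Gronwall inequality (Lemma \ref{Gronwall}) applied to $\sup_{r\le t}|\phi^n_r-\psi_r|$ via the kernel $C\,(K_1+K_1^2)$ of Remark \ref{condition of Volterra}. Everything else is a routine combination of \textbf{(H1)}--\textbf{(H3)}, Lemma \ref{basic lemma}, and the resolvent estimates already established.
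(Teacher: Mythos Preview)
Your proposal is correct and follows essentially the same route as the paper: both use Lemmas \ref{existence} and \ref{small esti} to get uniform bounds and equicontinuity (Arzel\`a--Ascoli), then pass to the limit in \eqref{controlled determinstic Volterra eq} by splitting the $\sigma$-term into a weak-convergence piece against the fixed $L^2$ function $s\mapsto K_1(t,s)\sigma(s,\psi_s,\delta_{X^0_{s,\xi}})$ and a Lipschitz error controlled by $\|\phi^n-\psi\|_T$ and $\|v^n\|_{L^2}\le\sqrt{2N}$. The only cosmetic difference is framing: you package the argument as ``continuous image of a weakly compact set'' whereas the paper argues relative compactness and closedness of $L_N$ separately; note however that your opening identity $\{\phi:I(\phi)\le N\}=\{\phi^v:v\in\mathcal S_{2N}\}$ is not immediate from the definition (it requires the infimum in \eqref{def of I} to be attained), but this follows a posteriori from the very weak-to-strong continuity of $v\mapsto\phi^v$ you establish together with weak lower semicontinuity of the $L^2$ norm, so no gap results.
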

	\begin{proof}
		By Lemma \ref{unique characterised}, we have that
		\small\begin{align}
			I(\phi):=\inf\Big\{\frac{1}{2}\int_{0}^{T}|v_s|^2ds: v\in L^2 ,\ \phi_t=\xi+\int_{0}^{t}K_1(t,s)b(s,\phi_s,\delta_{X^0_{s,\xi}})\d s+\int_{0}^{t}K_1(t,s)\sigma(s,\phi_s,\delta_{X^0_{s,\xi}})v_s\d s\Big\}.\nonumber
		\end{align}\normalsize
		We are going to  prove that the functional $I$ defined in \eqref{def of I} has compact level sets, that is for all $N>0$, the level sets
		$$L_N:=\{\phi\in C^d_T: I(\phi)\leq N\}$$
		are compact. Fix $N>0$ and consider an arbitrary sequence $\{\phi^n\}_{n\in\mathbb{N}}\subset L_N$, we will show that there exists a converging subsequence such that the limit belongs to $L_N$.
		
		We first prove that $\{\phi^n\}_{n\in\mathbb{N}}$ is relative compact. According to classical Arzel$\grave{\mathrm{a}}$-Ascoli's Theorem, we only need to show that $\{\phi^n\}_{n\in\mathbb{N}}$ is uniformly bounded and equicontinuous. For all $n\in\mathbb{N}$ and $t\in [0,T]$, there exists $v^n\in L^2$ such that $\frac{1}{2}\int_{0}^{T}|v^n_t|^2\d t\leq N$ and $\phi^n\in \mathcal{G}^0_{v^n}$, which implies that $v^n\in\mathcal{S}_{2N}$ and $\phi^n$ is the solution of following Volterra type McKean-Vlasov integral equation:
		$$\phi^n_t=\xi+\int_{0}^{t}K_1(t,s)b(s,\phi^n_s,\delta_{X^0_{s,\xi}})\d s+\int_{0}^{t}K_1(t,s)\sigma(s,\phi^n_s,\delta_{X^0_{s,\xi}})v^n_s\d s.$$
		By Lemmas \ref{existence} and \ref{small esti}, we obtain $\{\phi^n\}_{n\in\mathbb{N}}$ is relatively compact which ensures that $L_N$ is relatively compact for any $N>0$.
		
		Next we show that $\{\phi^n\}_{n\in\mathbb{N}}$ is closed, that is, let $\{\phi^n\}_{n\in\mathbb{N}}$ be a converging sequence of $L_N$  and denote its limit by $\phi\in C^d_T$, we  want to  prove that the limit $\phi\in L_N$. Since $v^n\in\mathcal{S}_{2N}$ which is a compact space with respect to the weak topology. Hence there exists a subsequence $\{n_k\}_{k\in\mathbb{N}}$ such that $v^{n_k}$ 
		%converges weakly in $\{n_k\}_{k\in\mathbb{N}}$ such that $v{n_k}$ 
		converges weakly in $L^2$ to the limit $v\in\mathcal{S}_{2N}$ and $\lim_{k\to \infty}\phi^{n_k}=\phi$.
		
		Notice that for $v,v^{n_k}\in\mathcal{S}_{2N}$, by \textbf{(H1)-(H3)}, H$\mathrm{\ddot{o}}$lder's inequality, Lemmas \ref{x0xi} and  \ref{existence} we have that
	\begin{align}
			&\Big|\phi_t^{n_k}-\big(\xi+\int_{0}^{t}K_1(t,s)b(s,\phi_s,\delta_{X^0_{s,\xi}})\d s+\int_{0}^{t}K_1(t,s)\sigma(s,\phi_s,\delta_{X^0_{s,\xi}})v_s\d s\big)\Big|\nonumber\\&\leq \Big|\int_{0}^{t}K_1(t,s)\cdot\big(b(s,\phi_s^{n_k},\delta_{X^0_{s,\xi}})-b(s,\phi_s,\delta_{X^0_{s,\xi}})\big)\d s\Big|\nonumber\\&\quad+\Big|\int_{0}^{t}K_1(t,s)\cdot\big(\sigma(s,\phi_s^{n_k},\delta_{X^0_{s,\xi}})v^{n_k}_s-\sigma(s,\phi_s,\delta_{X^0_{s,\xi}})v_s\big)\d s\Big|\nonumber\\&\leq L_1\int_{0}^{t}K_1(t,s)\cdot|\phi^{n_k}_s-\phi_s|\d s+\int_{0}^{t}K_1(t,s)\cdot \big\|\sigma(s,\phi_s^{n_k},\delta_{X^0_{s,\xi}})-\sigma(s,\phi_s,\delta_{X^0_{s,\xi}})\big\|\cdot|v^{n_k}_s|\d s\nonumber\\&\quad+\int_{0}^{t}K_1(t,s)\cdot \big\|\sigma(s,\phi_s,\delta_{X^0_{s,\xi}})\big\|\cdot|v^{n_k}_s-v_s|\d s\nonumber\\&\leq L_1\int_{0}^{t}K_1(t,s)\cdot|\phi^{n_k}_s-\phi_s|\d s+\Big(\int_{0}^{t}|v^{n_k}_s|^2\d s\Big)^{\frac{1}{2}}\cdot\Big(\int_{0}^{t}K_1^2(t,s)\cdot\big\|\sigma(s,\phi_s^{n_k},\delta_{X^0_{s,\xi}})-\sigma(s,\phi_s,\delta_{X^0_{s,\xi}})\big\|^2\d s\Big)^{\frac{1}{2}}\nonumber\\&\quad+C_{L_2}\int_{0}^{t}K_1(t,s)\cdot\big(1+|\phi_s|+\mathbb{W}_2(\delta_{X^0_{s,\xi}},\delta_0)\big)\cdot|v_s^{n_k}-v_s|\d s\nonumber\\&\leq C_{L_1,T}\sup_{t\in [0,T]}|\phi^{n_k}_t-\phi_t|+C_{N,L_1}\Big(\int_{0}^{t}K_1^2(t,s)\cdot|\phi^{n_k}_s-\phi_s|^2\d s\Big)^{\frac{1}{2}}\nonumber\\&\quad+C_{L_2}(1+\sup_{t\in [0,T]}|\phi_t|+\sup_{t\in [0,T]}|X^0_t|)\cdot\int_{0}^{t}K_1(t,s)\cdot|v^{n_k}_s-v_s|\d s\nonumber\\&\leq C_{L_1,T}\sup_{t\in [0,T]}|\phi^{n_k}_t-\phi_t|+C_{N,L_1,T}\sup_{t\in [0,T]}|\phi^{n_k}_t-\phi_t|+C_{T,N,L_2}\int_{0}^{t}K_1(t,s)\cdot|v^{n_k}_s-v_s|\d s\nonumber\\&\leq C_{N,L_1,T}\sup_{t\in [0,T]}|\phi^{n_k}_t-\phi_t|\to 0\quad\text{as $n_k\to\infty$}\quad (\lim_{k\to \infty}\phi^{n_k}=\phi)\nonumber\\&\quad+C_{T,N,L_2}\int_{0}^{t}K_1(t,s)\cdot|v^{n_k}_s-v_s|\d s\to 0\quad\text{as $n_k\to\infty$}\quad (v^{n_k}\ \text{converges weakly in $L^2$ to}\ v).\nonumber
		\end{align}
		So we easily obtain that
		\begin{align}
			\phi_{t}&=\lim_{k\to \infty}\phi^{n_k}\nonumber\\&=\lim_{k\to \infty}\Big(\xi+\int_{0}^{t}K_1(t,s)b(s,\phi^{n_k}_s,\delta_{X^0_{s,\xi}})\d s+\int_{0}^{t}K_1(t,s)\sigma(s,\phi^{n_k}_s,\delta_{X^0_{s,\xi}})v^{n_k}_s\d s\Big)\nonumber\\&=\xi+\int_{0}^{t}K_1(t,s)b(s,\phi_s,\delta_{X^0_{s,\xi}})\d s+\int_{0}^{t}K_1(t,s)\sigma(s,\phi_s,\delta_{X^0_{s,\xi}})v_s\d s\nonumber.
		\end{align}
		Since $v\in\mathcal{S}_{2N}$ so that $\phi\in L_N$, which concludes the proof of the closure and therefore of the compactness of $L_N$.
	\end{proof}
Thus,  we have verified the two conditions in Lemma \ref{general LDP}  (by  Lemma \ref{X vare v t s} and Lemma \ref{compact level sets-LDP}) and   Theorem \ref{thm-LDP}  
	is proved.

	\section{The central limit Theorem for Volterra type McKean-Vlasov SDEs}\label{app to CLT}
	In this section, we will study the asymptotic behavior of 
	$$\frac{X_{t,\xi}^{\varepsilon}-X_{t,\xi}^0}{\sqrt{\varepsilon}}, \quad 0\leq t\le T,$$
	where $X^0$ satisfies the deterministic Volterra integral equation \eqref{Volterra eq-limit eq}. Now we impose some assumptions on the coefficient functions
	that will enable us to study central limit theorem:
	
	\textbf{(H4)}  For any $x,y\in\mathbb{R}^d$, $\mu,\nu\in \mathcal{P}_{2}(\mathbb{R}^d)$ and $t\in [0,T]$
	$$\|\nabla b(t,x,\mu)\|\leq L_3,\quad \|D^L b(t,x,\mu)\|_{T_{\mu,2}}\leq L_3,$$
	where $\nabla b(t,x,\mu)$ denotes the derivative of $b(t,x,\mu)$  with respect to  $x$, $D^L b(t,x,\mu)$ is the Lions derivative with respect to $\mu$, and $L_3>0$ is a positive constant. 
	
	\textbf{(H5)} For $t \in [0,T]$, $x,y\in\mathbb{R}^d$ and $X,Y,\phi\in L^2(\Omega\mapsto\mathbb{R}^d,\mathbb{P})$ 
 \[
\begin{split}
	 &\Big|\E\big\langle D^L b(t,x,\mathcal{L}_X)(X),\phi\big\rangle-\E\big\langle D^L b(t,y,\mathcal{L}_Y)(Y),\phi\big\rangle\Big|\nonumber\\
	 &\qquad \quad \leq L_4\Big(|x-y|+\mathbb{W}_{2}(\mathcal{L}_X,\mathcal{L}_Y)+(\E|X-Y|^2)^{\frac{1}{2}}\Big)\cdot \big(\E|\phi|^2\big)^{\frac{1}{2}}  
\end{split}
	\]
	for some positive constant $L_4>0$. 
	
	\textbf{(H6)} The derivative of $\nabla b(t,\cdot,\mu)$ exists and for any $x,y\in\mathbb{R}^d$, $\mu,\nu\in \mathcal{P}_{2}(\mathbb{R}^d)$ and $t \in [0,T]$
	$$\|\nabla b(t,x,\mu)-\nabla b(t,y,\nu)\|\leq L_5\Big(|x-y|+\mathbb{W}_{2}(\mu,\nu)\Big),$$
	for some positive constant $L_5>0$.
	 
	Define  $Z^{\varepsilon}_{t,\xi}=Q^{\varepsilon}_{t,\xi}\vert_{h(\varepsilon)=1}=\frac{X_{t,\xi}^{\varepsilon}-X_{t,\xi}^0}{\sqrt{\varepsilon}}$, and we can write   %then it is clear $Z^{\varepsilon}_{t,\xi}$ satisfies the following Volterra type McKean-Vlasov SDE:
	
 	\begin{align}\label{Volterra eq-after CLT}
		Z^\varepsilon_{t,\xi}=\int_{0}^{t}K_1(t,s)\cdot\frac{b(s,X^\varepsilon_{s,\xi},\mathcal{L}_{X^\varepsilon_{s,\xi}})-b(s,X^0_{s,\xi},\delta_{X^0_{s,\xi}})}{\sqrt{\varepsilon}}\d s+\int_{0}^{t}K_2(t,s)\sigma(s,X^\varepsilon_{s,\xi},\mathcal{L}_{X^\varepsilon_{s,\xi}})\d W_s,
	\end{align}

	Then we construct another Volterra type McKean-Vlasov SDE of $Z_{t,\xi}$:
	\begin{align}\label{Volterra eq-after CLT of Z}
		Z_{t,\xi}&=\int_{0}^{t}K_1(t,s)\cdot\Big[\nabla_{Z_{s,\xi}}b(s,X^0_{s,\xi},\delta_{X^0_{s,\xi}})+  \E \big\langle D^L b(s,X^0_{s,\xi},\delta_{X^0_{s,\xi}})(X^0_{s,\xi}),Z_{s,\xi}\big\rangle\Big]\d s\nonumber\\&\quad+\int_{0}^{t}K_2(t,s)\sigma(t,X^0_{s,\xi},\delta_{X^0_{s,\xi}})\d W_s.
	\end{align}
%	\footnote{ should not be $\EE$ here?} 
Assume \textbf{(H1)}-\textbf{(H4)}.  There exists a unique solution $Z_{t,\xi}$ of \eqref{Volterra eq-after CLT of Z} (Theorem $3.1$ of \cite{LG}). 
%So, the central limit theorem for \eqref{Volterra MV eq} means that
%	$$Z^{\varepsilon}_{t,\xi}\to Z_{t,\xi} \quad\text{in distribution},$$
%	for $t\in [0,T]$.
%	
	We now state main results in this section.
	\begin{thm}\label{CLT}
		Suppose that the assumptions \textbf{(H1)}-\textbf{(H6)} are satisfied, then it holds that for any $R>0$ and $p>2$ 
		$$\lim_{\varepsilon\rightarrow 0}\E\Big(\sup_{t\in [0,T], |\xi|\leq R}|Z^\varepsilon_{t,\xi}-Z_{t,\xi}|^p\Big)=0.$$
	\end{thm}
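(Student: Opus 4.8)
The plan is to estimate the difference $Z^\varepsilon_{t,\xi}-Z_{t,\xi}$ by subtracting \eqref{Volterra eq-after CLT} from \eqref{Volterra eq-after CLT of Z}, splitting the resulting expression into a drift part, a ``remainder'' part coming from Taylor expansion of $b$, and a stochastic integral part, and then closing a Volterra-type Gronwall estimate. First I would write
$$\frac{b(s,X^\varepsilon_{s,\xi},\mathcal{L}_{X^\varepsilon_{s,\xi}})-b(s,X^0_{s,\xi},\delta_{X^0_{s,\xi}})}{\sqrt{\varepsilon}}
= \nabla_{Z^\varepsilon_{s,\xi}}b(s,X^0_{s,\xi},\delta_{X^0_{s,\xi}})
+ \E\big\langle D^L b(s,X^0_{s,\xi},\delta_{X^0_{s,\xi}})(X^0_{s,\xi}),Z^\varepsilon_{s,\xi}\big\rangle + \mathscr R^\varepsilon_{s,\xi},$$
where $\mathscr R^\varepsilon_{s,\xi}$ collects the second-order terms in the joint Taylor expansion in the spatial and measure variables (using \textbf{(H4)}--\textbf{(H6)} and the fact that $b$ is $C^1$ in both arguments with Lipschitz derivatives). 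Concretely one interpolates along $X^{0}_{s,\xi}+\theta(X^{\varepsilon}_{s,\xi}-X^{0}_{s,\xi})$ and $\delta_{X^0_{s,\xi}}$ toward $\mathcal L_{X^\varepsilon_{s,\xi}}$, so that $\mathscr R^\varepsilon_{s,\xi}$ is bounded by a constant times $\big(|X^\varepsilon_{s,\xi}-X^0_{s,\xi}|+\mathbb W_2(\mathcal L_{X^\varepsilon_{s,\xi}},\delta_{X^0_{s,\xi}})\big)\cdot|Z^\varepsilon_{s,\xi}|$ plus lower-order pieces; by Lemma \ref{xx} these have moments of order $\varepsilon^{p/2}$ after the normalization, hence $\sup_{s}\E|\mathscr R^\varepsilon_{s,\xi}|^p \to 0$ (with the rate $\varepsilon^{p/2}$, uniformly for $|\xi|\le R$). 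Note also that the stochastic integrals in \eqref{Volterra eq-after CLT} and \eqref{Volterra eq-after CLT of Z} differ only by $\int_0^t K_2(t,s)\big(\sigma(s,X^\varepsilon_{s,\xi},\mathcal L_{X^\varepsilon_{s,\xi}})-\sigma(s,X^0_{s,\xi},\delta_{X^0_{s,\xi}})\big)\,\d W_s$, which by \textbf{(H2)}, Lemma \ref{basic lemma}(ii) and Lemma \ref{xx} again is $O(\varepsilon^{p/2})$ in $L^p$ uniformly in $t$ and $|\xi|\le R$.

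Combining these, $Z^\varepsilon_{t,\xi}-Z_{t,\xi}$ satisfies a linear Volterra integral equation in $Z^\varepsilon-Z$ with an inhomogeneous term $\Xi^\varepsilon_{t,\xi}$ obeying $\sup_{t}\E|\Xi^\varepsilon_{t,\xi}|^p\le C_{p,T,R}\,\varepsilon^{p/2}$. Applying Lemma \ref{basic lemma}(i)--(ii), \textbf{(H4)} and \textbf{(H5)} to the drift terms gives, for fixed $\xi$,
$$\E|Z^\varepsilon_{t,\xi}-Z_{t,\xi}|^p \le \E|\Xi^\varepsilon_{t,\xi}|^p + C_{p,T}\int_0^t \big(K_1(t,s)+K_2^2(t,s)\big)\,\E|Z^\varepsilon_{s,\xi}-Z_{s,\xi}|^p\,\d s,$$
and then the Volterra–Gronwall Lemma \ref{Gronwall} together with Remark \ref{condition of Volterra} yields $\sup_{t\in[0,T]}\E|Z^\varepsilon_{t,\xi}-Z_{t,\xi}|^p \le C_{p,T,R}\,\varepsilon^{p/2}\to 0$, uniformly for $|\xi|\le R$. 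This handles the pointwise-in-$\xi$, pointwise-in-$t$ statement.

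Upgrading this to $\E\big(\sup_{t\in[0,T],|\xi|\le R}|Z^\varepsilon_{t,\xi}-Z_{t,\xi}|^p\big)\to 0$ is the main obstacle, precisely because of the singular kernels (one cannot take the supremum inside the Volterra integral directly, nor use Doob's inequality naively on $\int_0^t K_2(t,s)(\cdots)\d W_s$). I would follow the route indicated in the introduction: establish a two-parameter Kolmogorov–type continuity estimate for the field $(t,\xi)\mapsto Z^\varepsilon_{t,\xi}-Z_{t,\xi}$, i.e. a bound of the form
$$\E\big|(Z^\varepsilon_{t,\xi}-Z_{t,\xi})-(Z^\varepsilon_{t',\xi'}-Z_{t',\xi'})\big|^p \le C_{p,T,R}\,\varepsilon^{p/2}\big(|t-t'|^{\gamma p}+|\xi-\xi'|^{p}\big),$$
obtained by differencing the integral equations in $t$ (using Lemma \ref{basic lemma}(iv)--(v) for the kernel increments) and in $\xi$ (using Lemmas \ref{xxi}, \ref{x0xi} and the Lipschitz bounds \textbf{(H2)}, \textbf{(H4)}--\textbf{(H6)}), then once more closing with Lemma \ref{Gronwall}. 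Choosing $p>2\vee\frac{2(d+1)}{\gamma}$ and invoking the extended Kolmogorov continuity criterion (e.g. \cite{ZX1}, as in Lemma \ref{Lambda-ts}) converts the increment bound into a moment bound on the sup over the compact set $[0,T]\times\{|\xi|\le R\}$, with the same $\varepsilon^{p/2}$ factor; letting $\varepsilon\to0$ finishes the proof. The delicate points will be the careful bookkeeping in the Taylor remainder $\mathscr R^\varepsilon$ (handling the Lions-derivative term via \textbf{(H5)}, which is stated exactly in the form needed for an $\E\langle D^Lb(\cdot)(\cdot),\phi\rangle$ estimate) and verifying that all constants can be taken uniform over $|\xi|\le R$, which is where Lemmas \ref{xxi}--\ref{x0xi} and the linear-in-$|\xi|$ bounds are used.
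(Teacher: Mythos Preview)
Your pointwise-in-$(t,\xi)$ step is essentially the paper's Lemma~\ref{zzxi}: the same Taylor splitting of $b$ in the spatial and measure directions, the same use of \textbf{(H4)}--\textbf{(H6)} to bound the remainder, and the same Volterra--Gronwall closure yielding $\sup_t\E|Z^\varepsilon_{t,\xi}-Z_{t,\xi}|^p\le C\varepsilon^{p/2}$. Where you diverge is in the upgrade to the supremum over $(t,\xi)$. You propose to prove a two-parameter increment bound for the \emph{difference} field, namely $\E\big|(Z^\varepsilon_{t,\xi}-Z_{t,\xi})-(Z^\varepsilon_{t',\xi'}-Z_{t',\xi'})\big|^p\le C\varepsilon^{p/2}(|t-t'|^{\gamma p}+|\xi-\xi'|^{p})$, and then apply Kolmogorov for each fixed $\varepsilon$. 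The paper instead never tracks the $\varepsilon^{p/2}$ factor through increments: it proves $(t,\xi)$-increment bounds for $Z^\varepsilon$ and $Z$ separately that are merely \emph{uniform} in $\varepsilon$ (Lemmas~\ref{zxi}(ii)(iii), \ref{zxieta0}, \ref{zxitt0}), then embeds $\varepsilon$ as a \emph{third} parameter $r$ by linearly interpolating along a sequence $\varepsilon_n\downarrow0$, and applies Kolmogorov in the three variables $(r,t,\xi)$; Hölder continuity in $r$ between $r=0$ and $r=\varepsilon_n T$ then delivers the rate. Your route is more direct and gives the result for every $\varepsilon$ at once, but it requires re-deriving all the increment estimates for the difference process with the $\varepsilon^{p/2}$ prefactor (doable, since by Lemma~\ref{zzxi} every integrand in the equation for $Z^\varepsilon-Z$ already has $L^p$-norm of order $\varepsilon^{1/2}$, and Lemma~\ref{basic lemma}(iv)--(v) converts this into the desired increment bound). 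The paper's three-parameter trick buys economy: it recycles the uniform estimates already proved and confines the $\varepsilon$-dependence to the single pointwise Lemma~\ref{zzxi}.
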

%{\red 	\begin{thm}\label{CLT-2}
%		Suppose that the assumptions \textbf{(H1)}-\textbf{(H6)} are satisfied, then it holds that for $p> \frac{1}{\gamma}\vee 2$ 
%		$$\lim_{\varepsilon\rightarrow 0}\E\Big(\sup_{t\in [0,T]} |Z^\varepsilon_{t,\xi}-Z_{t,\xi}|^p\Big)=0.$$
%	\end{thm} }\footnote{delete this? The previous one implies this? or you want to show the convergence in 
%	H\"older norm?}
	We first prepare some preliminary lemmas needed for the proof of the main theorem. 
	\begin{lem}\cite{RW}
		Let $(\Omega,\mathcal{F},(\mathcal{F}_t)_{t\ge0},\mathbb{P})$ be an atomless space, and let $X,Y\in L^2(\Omega\mapsto\mathbb{R}^d,\mathbb{P})$ with $\mathscr{L}_X=\mu$. If either $X$ and $Y$ are bounded and $f$ is $L$-differentiable at $\mu$, or $f\in C_b^1(\mathcal{P}_2(\mathbb{R}^d))$, then
		$$\lim_{\varepsilon\rightarrow 0}\frac{f(\mathcal{L}_{X+\varepsilon Y})-f(\mu)}{\varepsilon}=\E\langle D^Lf(\mu)(X),Y\rangle.$$
		Consequently,
		$$\Big|\lim_{\varepsilon\rightarrow 0}\frac{f(\mathcal{L}_{X+\varepsilon Y})-f(\mu)}{\varepsilon}\Big|=|\E\langle D^Lf(\mu)(X),Y\rangle|\leq \|D^Lf(\mu)\|_{T_{\mu,2}}\sqrt{\E|Y|^2}.$$
	\end{lem}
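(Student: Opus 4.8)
The plan is to treat the two assertions separately. The ``consequently'' inequality is nothing but Cauchy--Schwarz in the Hilbert space $T_{\mu,2}$: since $\mathcal{L}_X=\mu$ and $D^Lf(\mu)\in T_{\mu,2}$, one has $\E\langle D^Lf(\mu)(X),Y\rangle=\int_{\mathbb{R}^d}\langle D^Lf(\mu)(x),\E[Y\mid X=x]\rangle\,\mu(\d x)$, whence $|\E\langle D^Lf(\mu)(X),Y\rangle|\le(\int_{\mathbb{R}^d}|D^Lf(\mu)(x)|^2\mu(\d x))^{1/2}(\E|Y|^2)^{1/2}$; so the real content is the limit formula, which I would establish in two steps.

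First I would dispose of the case in which the direction $Y$ is a measurable function of $X$, say $Y=\phi(X)$ with $\phi\in T_{\mu,2}$. Then $\mathcal{L}_{X+\varepsilon Y}=\mu\circ(\mathrm{Id}+\varepsilon\phi)^{-1}$ is exactly a pushforward perturbation of $\mu$, so the definition of $L$-differentiability at $\mu$ applies directly with the test map $\varepsilon\phi$, whose $T_{\mu,2}$-norm tends to $0$ as $\varepsilon\to0$; using the linearity $D^L_{\varepsilon\phi}f(\mu)=\varepsilon D^L_\phi f(\mu)=\varepsilon\langle D^Lf(\mu),\phi\rangle_{T_{\mu,2}}$ one gets $f(\mathcal{L}_{X+\varepsilon Y})-f(\mu)=\varepsilon\langle D^Lf(\mu),\phi\rangle_{T_{\mu,2}}+o(\varepsilon)$, and dividing by $\varepsilon$ and letting $\varepsilon\to0$ yields the formula, since $\langle D^Lf(\mu),\phi\rangle_{T_{\mu,2}}=\E\langle D^Lf(\mu)(X),\phi(X)\rangle=\E\langle D^Lf(\mu)(X),Y\rangle$.

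For a general direction $Y\in L^2(\Omega;\mathbb{R}^d)$ the above cannot be applied verbatim because $\mathcal{L}_{X+\varepsilon Y}$ is in general \emph{not} a pushforward of $\mu$ (it may be more spread out than $\mu$). The idea is to interpolate: for fixed $\varepsilon>0$ consider the path $[0,1]\ni\theta\mapsto\nu^\varepsilon_\theta:=\mathcal{L}_{X+\theta\varepsilon Y}$ from $\mu$ to $\mathcal{L}_{X+\varepsilon Y}$. At each $\theta$ the velocity of this path at the point $\nu^\varepsilon_\theta$ is the field $z\mapsto\varepsilon\,\E[Y\mid X+\theta\varepsilon Y=z]$, which \emph{is} a function of the current state $X+\theta\varepsilon Y$, so the previous step applies at the base point $\nu^\varepsilon_\theta$ (this uses $L$-differentiability at each $\nu^\varepsilon_\theta$, available when $f\in C^1_b(\mathcal{P}_2(\mathbb{R}^d))$; in the case ``$X,Y$ bounded and $f$ $L$-differentiable at $\mu$'' the measures $\nu^\varepsilon_\theta$ all lie in a fixed $\mathbb{W}_2$-ball around $\mu$ and one argues with the uniform bounds this provides). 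This gives $\frac{\d}{\d\theta}f(\nu^\varepsilon_\theta)=\varepsilon\,\E\langle D^Lf(\nu^\varepsilon_\theta)(X+\theta\varepsilon Y),Y\rangle$; integrating over $\theta$ and dividing by $\varepsilon$,
$$\frac{f(\mathcal{L}_{X+\varepsilon Y})-f(\mu)}{\varepsilon}=\int_0^1\E\big\langle D^Lf(\nu^\varepsilon_\theta)(X+\theta\varepsilon Y),Y\big\rangle\,\d\theta.$$
Since $X+\theta\varepsilon Y\to X$ in $L^2$ uniformly in $\theta$ and $\mathbb{W}_2(\nu^\varepsilon_\theta,\mu)\to0$ as $\varepsilon\to0$, joint continuity and boundedness of $(x,\nu)\mapsto D^Lf(\nu)(x)$ (or the uniform estimates in the bounded case) together with dominated convergence let me pass to the limit on the right-hand side, obtaining $\E\langle D^Lf(\mu)(X),Y\rangle$, as desired.

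The main obstacle is exactly this last step: the gap between pushforward perturbations of $\mu$ (all that the definition of the Lions derivative controls directly) and arbitrary $L^2$-perturbations. Making the velocity-field assertion rigorous — that the velocity of $\theta\mapsto\mathcal{L}_{X+\theta\varepsilon Y}$ is the conditional expectation $\E[\varepsilon Y\mid X+\theta\varepsilon Y=\cdot]$, and that the complementary centred part $Y-\E[Y\mid X+\theta\varepsilon Y]$ contributes nothing at first order — is where the hypothesis that $(\Omega,\mathcal{F},\mathbb{P})$ is atomless enters (one needs enough randomness to realise the relevant joint laws and to force the relevant gradients to be functions of the current state), and where the extra regularity ($f\in C^1_b$, or boundedness of $X$ and $Y$) is needed for the limit passage. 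The full details are carried out in \cite{RW}; here one only recalls the structure of that argument.
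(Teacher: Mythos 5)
A preliminary remark: the paper itself gives no proof of this lemma — it is quoted from \cite{RW} — so your proposal has to stand on its own. The parts that do stand are the ``consequently'' inequality (plain Cauchy--Schwarz, as you say) and the case of directions of the form $Y=\phi(X)$ with $\phi\in T_{\mu,2}$, where $\mathcal{L}_{X+\varepsilon Y}=\mu\circ(\mathrm{Id}+\varepsilon\phi)^{-1}$ and the definition of $L$-differentiability applies verbatim.

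For a general $L^2$ direction, however, there is a genuine gap, and it sits exactly where you locate it. The interpolation identity $\frac{\d}{\d\theta}f(\nu^{\varepsilon}_{\theta})=\varepsilon\,\E\langle D^{L}f(\nu^{\varepsilon}_{\theta})(X+\theta\varepsilon Y),Y\rangle$ does not follow from the step you actually proved: at the base point $\nu^{\varepsilon}_{\theta}$ the direction is still $Y$, which is not $\sigma(X+\theta\varepsilon Y)$-measurable, so the claim that the increment is governed by the conditional-expectation field $\E[Y\mid X+\theta\varepsilon Y]$ and that the centred part contributes nothing at first order is precisely the lemma itself, transplanted to the base point $\nu^{\varepsilon}_{\theta}$. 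The argument is therefore circular, and you concede this by referring to \cite{RW} for the ``velocity-field assertion''; but that assertion is the entire content of the statement. A proof must actually bridge the gap between pushforward perturbations of $\mu$ (all that the definition controls) and arbitrary $L^{2}$ perturbations — for instance by exploiting atomlessness and law-invariance of the lift $F(X)=f(\mathcal{L}_{X})$ to show that its G\^{a}teaux derivative is $\sigma(X)$-measurable, or by comparing $\mathcal{L}_{X+\varepsilon Y}$ with $\mu\circ(\mathrm{Id}+\varepsilon\,\E[Y\mid X])^{-1}$ and showing the discrepancy affects $f$ only at order $o(\varepsilon)$ — and none of this is carried out. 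In addition, under the first alternative of the hypotheses ($X,Y$ bounded and $f$ $L$-differentiable at the single measure $\mu$), $L$-differentiability at the intermediate laws $\nu^{\varepsilon}_{\theta}$ is simply not available, so the interpolation cannot even be set up in that case; the parenthetical appeal to ``uniform bounds'' is not a substitute for an argument. As written, the proposal is an accurate map of where the difficulty lies, not a proof of the lemma.
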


First we estimate  $Z^{\varepsilon}_{t,\xi}=Q^{\varepsilon}_{t,\xi}\vert_{h(\varepsilon)=1}$.
 Lemma \ref{esi-Q} implies that Lemma \ref{zxi} (i).
\begin{lem}\label{zxi}
\begin{enumerate}
	\item[(i)] Under the assumptions \textbf{(H1)}-\textbf{(H3)}, it follows that for $p> 2,\xi\in \mathbb{R}^d$
	  	$$\sup_{t\in [0,T]}\E|Z^{\varepsilon}_{t,\xi}|^{p}\leq C_{p,T,L_1,L_2}(1+|\xi|^{p})\,. $$ 
   
	\item[(ii)] Under the assumptions \textbf{(H1)}-\textbf{(H6)}, it follows that for $p> 2,\xi,\eta\in \mathbb{R}^d$
  	$$\sup_{t\in [0,T]}\E|Z^{\varepsilon}_{t,\xi}-Z^{\varepsilon}_{t,\eta}|^{p}\leq C_{p,T,L_1,L_2,L_3,L_4,L_5}(1+|\xi|^{p})|\xi-\eta|^p\,. $$ 
  
	\item[(iii)] 	Under the assumptions \textbf{(H1)}-\textbf{(H3)}, it follows that for any $t,t'\in [0,T]$, $p> 2,\xi \in \mathbb{R}^d$
  	$$\sup_{t\in [0,T]}\E|Z^{\varepsilon}_{t,\xi}-Z^{\varepsilon}_{t',\xi}|^{p}\leq C_{p,T,L_1,L_2}(1+|\xi|^{p})|t-t'|^{\gamma p},$$ 
 where all the constants  on the above right hand sides are   independent of $\varepsilon$. 
\end{enumerate}
%	\begin{lem}\label{zxieta}
%		Under the assumptions \textbf{(H1)}-\textbf{(H6)}, it follows that for $p> 2,\xi,\eta\in \mathbb{R}^d$
%		$$\sup_{t\in [0,T]}\E|Z^{\varepsilon}_{t,\xi}-Z^{\varepsilon}_{t,\eta}|^{p}\leq C_{p,T,L_1,L_2,L_3,L_4,L_5}(1+|\xi|^{p})|\xi-\eta|^p,$$
%		where the constant $C_{p,T,L_1,L_2,L_3,L_4,L_5}$ is independent of $\varepsilon$.
%	\end{lem}
%	\begin{lem}\label{zxitt}
%		Under the assumptions \textbf{(H1)}-\textbf{(H3)}, it follows that for any $t,t'\in [0,T]$, $p> 2,\xi \in \mathbb{R}^d$
%		$$\sup_{t\in [0,T]}\E|Z^{\varepsilon}_{t,\xi}-Z^{\varepsilon}_{t',\xi}|^{p}\leq C_{p,T,L_1,L_2}(1+|\xi|^{p})|t-t'|^{\gamma p},$$
%		where the constant $C_{p,T,L_1,L_2}$ is independent of $\varepsilon$.	
	\end{lem}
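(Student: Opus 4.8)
The plan is to treat the three bounds in increasing order of difficulty, each starting from the Volterra representation \eqref{Volterra eq-after CLT} of $Z^{\varepsilon}_{\cdot,\xi}$. Part (i) needs no work: since $Z^{\varepsilon}_{t,\xi}=Q^{\varepsilon}_{t,\xi}|_{h(\varepsilon)=1}$, it is exactly Lemma \ref{esi-Q} with $h(\varepsilon)\equiv 1$. For part (iii), put $F(s):=\varepsilon^{-1/2}\big(b(s,X^{\varepsilon}_{s,\xi},\mathcal{L}_{X^{\varepsilon}_{s,\xi}})-b(s,X^{0}_{s,\xi},\delta_{X^{0}_{s,\xi}})\big)$ and $\Sigma(s):=\sigma(s,X^{\varepsilon}_{s,\xi},\mathcal{L}_{X^{\varepsilon}_{s,\xi}})$. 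By \textbf{(H2)}--\textbf{(H3)} and the identity $\mathbb{W}_{2}(\mathcal{L}_{X^{\varepsilon}_{s,\xi}},\delta_{X^{0}_{s,\xi}})=(\E|X^{\varepsilon}_{s,\xi}-X^{0}_{s,\xi}|^{2})^{1/2}$ one gets $|F(s)|\le L_{1}(|Z^{\varepsilon}_{s,\xi}|+(\E|Z^{\varepsilon}_{s,\xi}|^{2})^{1/2})$ and $\|\Sigma(s)\|\le L_{2}(1+|X^{\varepsilon}_{s,\xi}|+\mathbb{W}_{2}(\mathcal{L}_{X^{\varepsilon}_{s,\xi}},\delta_{0}))$, so by part (i) and Lemma \ref{xxi}, $\sup_{s\in[0,T]}\E|F(s)|^{p}\vee\sup_{s\in[0,T]}\E\|\Sigma(s)\|^{p}\le C(1+|\xi|^{p})$, uniformly in $\varepsilon\in(0,1]$. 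For $t<t'=t+h$ I would split $Z^{\varepsilon}_{t',\xi}-Z^{\varepsilon}_{t,\xi}$ into $\int_{t}^{t'}K_{1}(t',s)F(s)\,\d s$, $\int_{0}^{t}(K_{1}(t',s)-K_{1}(t,s))F(s)\,\d s$ and the two analogous stochastic integrals in $K_{2},\Sigma$; Lemma \ref{basic lemma}(iv) controls the first pair by $Ch^{\gamma p}\sup_{s}\E|F(s)|^{p}$ and Lemma \ref{basic lemma}(v) the second pair by $Ch^{\gamma p}\sup_{s}\E\|\Sigma(s)\|^{p}$, which gives $\E|Z^{\varepsilon}_{t',\xi}-Z^{\varepsilon}_{t,\xi}|^{p}\le C(1+|\xi|^{p})|t-t'|^{\gamma p}$; no Gronwall step is needed.

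Part (ii) is the substantive one. From \eqref{Volterra eq-after CLT}, $Z^{\varepsilon}_{t,\xi}-Z^{\varepsilon}_{t,\eta}$ is a drift integral against $K_{1}$ of $F_{\xi}(s)-F_{\eta}(s)$ plus a stochastic integral against $K_{2}$ of $\Sigma_{\xi}(s)-\Sigma_{\eta}(s)$ (the subscript marking the initial value). The diffusion part is routine: \textbf{(H2)}, Lemma \ref{basic lemma}(ii) and Lemma \ref{xxi} bound it by $C\int_{0}^{t}K_{2}^{2}(t,s)\E|X^{\varepsilon}_{s,\xi}-X^{\varepsilon}_{s,\eta}|^{p}\,\d s\le C|\xi-\eta|^{p}$. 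For the drift part, I would introduce the interpolation $X^{\theta}_{s,\zeta}:=X^{0}_{s,\zeta}+\theta(X^{\varepsilon}_{s,\zeta}-X^{0}_{s,\zeta})=X^{0}_{s,\zeta}+\theta\sqrt{\varepsilon}\,Z^{\varepsilon}_{s,\zeta}$, $\theta\in[0,1]$, and use the chain rule for functions of $(x,\mu)$ together with the Lions-derivative formula of the cited lemma of \cite{RW} to write
$$F_{\zeta}(s)=\int_{0}^{1}\Big[\nabla b\big(s,X^{\theta}_{s,\zeta},\mathcal{L}_{X^{\theta}_{s,\zeta}}\big)Z^{\varepsilon}_{s,\zeta}+\widetilde{\E}\big\langle D^{L}b\big(s,X^{\theta}_{s,\zeta},\mathcal{L}_{X^{\theta}_{s,\zeta}}\big)\big(\widetilde{X}^{\theta}_{s,\zeta}\big),\widetilde{Z}^{\varepsilon}_{s,\zeta}\big\rangle\Big]\d\theta,$$
where tildes denote an independent copy. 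Subtracting the $\zeta=\eta$ version from the $\zeta=\xi$ one and inserting/subtracting intermediate terms, I would bound $|F_{\xi}(s)-F_{\eta}(s)|$ by three contributions: from the boundedness in \textbf{(H4)} and Cauchy--Schwarz, $L_{3}|Z^{\varepsilon}_{s,\xi}-Z^{\varepsilon}_{s,\eta}|+L_{3}(\widetilde{\E}|\widetilde{Z}^{\varepsilon}_{s,\xi}-\widetilde{Z}^{\varepsilon}_{s,\eta}|^{2})^{1/2}$; from \textbf{(H6)}, $L_{5}\big(|X^{\theta}_{s,\xi}-X^{\theta}_{s,\eta}|+\mathbb{W}_{2}(\mathcal{L}_{X^{\theta}_{s,\xi}},\mathcal{L}_{X^{\theta}_{s,\eta}})\big)|Z^{\varepsilon}_{s,\eta}|$; and, applying \textbf{(H5)} for each $\omega$ with fixed spatial points $x=X^{\theta}_{s,\xi}(\omega)$, $y=X^{\theta}_{s,\eta}(\omega)$ and the independent copies as the random variables, $L_{4}\big(|X^{\theta}_{s,\xi}-X^{\theta}_{s,\eta}|+\mathbb{W}_{2}(\mathcal{L}_{X^{\theta}_{s,\xi}},\mathcal{L}_{X^{\theta}_{s,\eta}})+(\E|X^{\theta}_{s,\xi}-X^{\theta}_{s,\eta}|^{2})^{1/2}\big)(\E|Z^{\varepsilon}_{s,\eta}|^{2})^{1/2}$. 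Crucially, $|X^{\theta}_{s,\xi}-X^{\theta}_{s,\eta}|\le |X^{0}_{s,\xi}-X^{0}_{s,\eta}|+|X^{\varepsilon}_{s,\xi}-X^{\varepsilon}_{s,\eta}|$ and the Wasserstein distances are controlled the same way, so Lemma \ref{x0xi} (deterministic in $\xi,\eta$), Lemma \ref{xxi} (at exponents $p$ and $2p$) and part (i) (at exponents $p$ and $2p$), together with a Cauchy--Schwarz in the products, give $\E|F_{\xi}(s)-F_{\eta}(s)|^{p}\le C(1+|\xi|^{p})|\xi-\eta|^{p}+C\,\E|Z^{\varepsilon}_{s,\xi}-Z^{\varepsilon}_{s,\eta}|^{p}$; here one may assume $|\xi-\eta|\le 1$, the complementary regime being immediate from part (i), so that $1+|\eta|^{p}\le C(1+|\xi|^{p})$. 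Plugging this into Lemma \ref{basic lemma}(i) for the drift part and combining with the diffusion estimate yields, with $g(t):=\E|Z^{\varepsilon}_{t,\xi}-Z^{\varepsilon}_{t,\eta}|^{p}$,
$$g(t)\le C(1+|\xi|^{p})|\xi-\eta|^{p}+C\int_{0}^{t}K_{1}(t,s)\,g(s)\,\d s;$$
since $g$ is bounded (part (i)) and $CK_{1}\in\mathcal{K}$ (Remark \ref{condition of Volterra}), the Volterra Gronwall inequality Lemma \ref{Gronwall} gives $g(t)\le C(1+|\xi|^{p})|\xi-\eta|^{p}$ for a.e.\ $t$, and the Hölder estimate of part (iii) upgrades it to all $t\in[0,T]$.

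The step I expect to be the main obstacle is the simultaneous expansion of the drift in its spatial and measure arguments and the bookkeeping it forces. One must pass the Lions-derivative difference through a common test function before invoking \textbf{(H5)}; one must apply \textbf{(H5)} pointwise in $\omega$, since it is formulated only for deterministic spatial points; and one must organize the expansion so that the genuinely quadratic term $|Z^{\varepsilon}_{s,\xi}-Z^{\varepsilon}_{s,\eta}|\cdot|Z^{\varepsilon}_{s,\eta}|$ never occurs — this is exactly why the $X^{\theta}$-increments are written in terms of $X^{0}$- and $X^{\varepsilon}$-increments rather than $Z^{\varepsilon}$-increments — so that only a term linear in $g$ is left under the Volterra integral for the final Gronwall argument.
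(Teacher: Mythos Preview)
Your proposal is correct and follows essentially the same route as the paper. Parts (i) and (iii) are identical to the paper's argument (Lemma \ref{esi-Q} with $h(\varepsilon)=1$, and Lemma \ref{basic lemma}(iv)--(v) applied to $F$ and $\Sigma$). For part (ii) the only difference is cosmetic: the paper first splits $b(s,X^{\varepsilon}_{s,\zeta},\mathcal{L}_{X^{\varepsilon}_{s,\zeta}})-b(s,X^{0}_{s,\zeta},\delta_{X^{0}_{s,\zeta}})$ into a pure-measure increment (spatial point frozen at $X^{\varepsilon}_{s,\zeta}$) and a pure-space increment (measure frozen at $\delta_{X^{0}_{s,\zeta}}$) and interpolates each separately, whereas you interpolate both arguments simultaneously along $X^{\theta}_{s,\zeta}$; the resulting terms, the use of \textbf{(H4)}--\textbf{(H6)}, the Cauchy--Schwarz handling of the products, and the closing Volterra--Gronwall step are the same.
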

	\begin{proof} We are going to prove (iii) and the case (ii) is similar. 
		For any $t>t'$ we have
		\begin{align}
			&\E|Z^{\varepsilon}_{t,\xi}-Z^{\varepsilon}_{t',\xi}|^{p}\nonumber\\&=\E\Big|\int_{0}^{t}K_1(t,s)\cdot\frac{b(s,X^\varepsilon_{s,\xi},\mathcal{L}_{X^\varepsilon_{s,\xi}})-b(s,X^0_{s,\xi},\delta_{X^0_{s,\xi}})}{\sqrt{\varepsilon}}\d s+\int_{0}^{t}K_2(t,s)\sigma(s,X^\varepsilon_{s,\xi},\mathcal{L}_{X^\varepsilon_{s,\xi}})\d W_s\nonumber\\&\quad-\int_{0}^{t'}K_1(t',s)\cdot\frac{b(s,X^\varepsilon_{s,\xi},\mathcal{L}_{X^\varepsilon_{s,\xi}})-b(s,X^0_{s,\xi},\delta_{X^0_{s,\xi}})}{\sqrt{\varepsilon}}\d s-\int_{0}^{t'}K_2(t',s)\sigma(s,X^\varepsilon_{s,\xi},\mathcal{L}_{X^\varepsilon_{s,\xi}})\d W_s\Big|^{p}\nonumber\\&=\E\Bigg|\int_{0}^{t'}\big(K_1(t,s)-K_1(t',s)\big)\cdot\frac{b(s,X^\varepsilon_{s,\xi},\mathcal{L}_{X^\varepsilon_{s,\xi}})-b(s,X^0_{s,\xi},\delta_{X^0_{s,\xi}})}{\sqrt{\varepsilon}}\d s\nonumber\\&\quad+\int_{0}^{t'}\big(K_2(t,s)-K_2(t',s)\big)\cdot\sigma(s,X^\varepsilon_{s,\xi},\mathcal{L}_{X^\varepsilon_{s,\xi}})\d W_s+\int_{t'}^{t}K_1(t,s)\cdot\frac{b(s,X^\varepsilon_{s,\xi},\mathcal{L}_{X^\varepsilon_{s,\xi}})-b(s,X^0_{s,\xi},\delta_{X^0_{s,\xi}})}{\sqrt{\varepsilon}}\d s\nonumber\\&\quad+\int_{t'}^{t}K_2(t,s)\sigma(s,X^\varepsilon_{s,\xi},\mathcal{L}_{X^\varepsilon_{s,\xi}})\d W_s\Bigg|^p\nonumber\\&\leq 4^{p-1}\E\Big|\int_{0}^{t'}\big(K_1(t,s)-K_1(t',s)\big)\cdot\frac{b(s,X^\varepsilon_{s,\xi},\mathcal{L}_{X^\varepsilon_{s,\xi}})-b(s,X^0_{s,\xi},\delta_{X^0_{s,\xi}})}{\sqrt{\varepsilon}}\d s\Big|^p\nonumber\\&\quad+4^{p-1}\E\Big|\int_{t'}^{t}K_1(t,s)\cdot\frac{b(s,X^\varepsilon_{s,\xi},\mathcal{L}_{X^\varepsilon_{s,\xi}})-b(s,X^0_{s,\xi},\delta_{X^0_{s,\xi}})}{\sqrt{\varepsilon}}\d s\Big|^p\nonumber\\&\quad+4^{p-1}\E\Big|\int_{0}^{t'}\big(K_2(t,s)-K_2(t',s)\big)\cdot\sigma(s,X^\varepsilon_{s,\xi},\mathcal{L}_{X^\varepsilon_{s,\xi}})\d W_s\Big|^p\nonumber\\&\quad+4^{p-1}\E\Big|\int_{t'}^{t}K_2(t,s)\sigma(s,X^\varepsilon_{s,\xi},\mathcal{L}_{X^\varepsilon_{s,\xi}})\d W_s\Big|^{p}:=\Xi_1+\Xi_2.\nonumber
		\end{align}
		For the term $\Xi_1$, by Lemma \ref{basic lemma} (iv), Lemma \ref{zxi} (i), $\mathbf{(H1)}$-$\mathbf{(H3)}$, we have
		\begin{align}
			\Xi_1&\leq C_{p,T}|t-t'|^{\gamma p}\sup_{t\in[0,T]}\E\Big|\frac{b(t,X^\varepsilon_{t,\xi},\mathcal{L}_{X^\varepsilon_{t,\xi}})-b(t,X^0_{t,\xi},\delta_{X^0_{t,\xi}})}{\sqrt{\varepsilon}}\Big|^p\nonumber\\&\leq C_{p,T,L_1}|t-t'|^{\gamma p}\sup_{t\in[0,T]} \E|Z^{\varepsilon}_{t,\xi}|^p\nonumber\\&\leq C_{p,T,L_1,L_2}(1+|\xi|^p)|t-t'|^{\gamma p}.\nonumber
		\end{align}
		For the term $\Xi_2$, applying \ref{basic lemma} (v), Lemma \ref{xxi}, $\mathbf{(H1)}$-$\mathbf{(H3)}$, we have
		\begin{align}
			\Xi_2&\leq 	C_{p,T}|t'-t|^{\gamma p}\sup_{t\in [0,T]}\E\big\|\sigma(s,X^\varepsilon_{s,\xi},\mathcal{L}_{X^\varepsilon_{s,\xi}})\big\|^p\nonumber\\&\leq C_{p,T,L_2}|t'-t|^{\gamma p}\big(1+\sup_{t\in [0,T]}\E|X^\varepsilon_{t,\xi}|^p\big)\nonumber\\&\leq C_{p,T,L_2}(1+|\xi|^{p})|t'-t|^{\gamma p}.\nonumber
		\end{align}
		Hence, for all $0\leq t<t'\leq T$,
		$$\E|Z^{\varepsilon}_{t,\xi}-Z^{\varepsilon}_{t',\xi}|^p\leq C_{p,T,L_1,L_2}(1+|\xi|^p)|t'-t|^{\gamma p}.$$
	\end{proof}

Next, we are going  to take care of bound for  $Z_{t,\xi}$.
	\begin{lem}\label{zxi0}
		Assume  \textbf{(H1)}-\textbf{(H4)}.  Then  for $p> 2,\xi\in \mathbb{R}^d$
		$$\sup_{t\in [0,T]}\E|Z_{t,\xi}|^{p}\leq C_{p,T,L_2,L_3}(1+|\xi|^{p}),$$
		where the constant $C_{p,T,L_2,L_3}$ is independent of $\varepsilon$.
	\end{lem}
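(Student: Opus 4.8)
The plan is to follow the scheme already employed for Lemmas~\ref{xxi}, \ref{x0xi} and \ref{esi-Q}: starting from the Volterra equation \eqref{Volterra eq-after CLT of Z} for $Z_{t,\xi}$, take $p$-th moments, estimate the drift integral and the stochastic integral separately via Lemma~\ref{basic lemma}, reduce the problem to a linear Volterra integral inequality for $f(t):=\E|Z_{t,\xi}|^p$, and close it with the Volterra-type Gronwall Lemma~\ref{Gronwall}. Here $\xi\in\mathbb{R}^d$ is fixed and $X^0_{\cdot,\xi}$ is deterministic, so Lemma~\ref{x0xi} already gives $\sup_{s\in[0,T]}|X^0_{s,\xi}|\le C_{T,L_2}(1+|\xi|)$.

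First I would apply $|a+b|^p\le 2^{p-1}(|a|^p+|b|^p)$ to \eqref{Volterra eq-after CLT of Z}, which splits $\E|Z_{t,\xi}|^p$ into a drift part and a stochastic part. For the drift, set $Y_s:=\nabla_{Z_{s,\xi}}b(s,X^0_{s,\xi},\delta_{X^0_{s,\xi}})+\E\langle D^Lb(s,X^0_{s,\xi},\delta_{X^0_{s,\xi}})(X^0_{s,\xi}),Z_{s,\xi}\rangle$; Lemma~\ref{basic lemma}-(i) bounds its contribution by $C_{p,T}\int_0^tK_1(t,s)\E|Y_s|^p\,\d s$. The bound $\|\nabla b\|\le L_3$ from \textbf{(H4)} gives $|\nabla_{Z_{s,\xi}}b(s,X^0_{s,\xi},\delta_{X^0_{s,\xi}})|\le L_3|Z_{s,\xi}|$ pathwise, while for the measure term one observes that at the Dirac mass $\delta_{X^0_{s,\xi}}$ the norm $\|D^Lb(s,X^0_{s,\xi},\delta_{X^0_{s,\xi}})\|_{T_{\delta_{X^0_{s,\xi}},2}}$ reduces to the pointwise value $|D^Lb(s,X^0_{s,\xi},\delta_{X^0_{s,\xi}})(X^0_{s,\xi})|\le L_3$, so the Lemma of \cite{RW} quoted above yields $|\E\langle D^Lb(s,X^0_{s,\xi},\delta_{X^0_{s,\xi}})(X^0_{s,\xi}),Z_{s,\xi}\rangle|\le L_3(\E|Z_{s,\xi}|^2)^{1/2}$, a deterministic quantity bounded by $L_3(\E|Z_{s,\xi}|^p)^{1/p}$ since $p\ge2$. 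Hence $\E|Y_s|^p\le C_{p,L_3}\E|Z_{s,\xi}|^p$ and the drift part is at most $C_{p,T,L_3}\int_0^tK_1(t,s)\E|Z_{s,\xi}|^p\,\d s$. For the stochastic integral, Lemma~\ref{basic lemma}-(ii) bounds it by $C_{p,T}\int_0^tK_2^2(t,s)\E\|\sigma(s,X^0_{s,\xi},\delta_{X^0_{s,\xi}})\|^p\,\d s$, and \textbf{(H3)} together with $\mathbb{W}_2(\delta_{X^0_{s,\xi}},\delta_0)=|X^0_{s,\xi}|$ and Lemma~\ref{x0xi} make this at most $C_{p,T,L_2}(1+|\xi|^p)\int_0^tK_2^2(t,s)\,\d s\le C_{p,T,L_2}(1+|\xi|^p)$, the last step using that $s\mapsto\int_0^sK_2^2(s,u)\,\d u\in L^\infty(0,T)$ by Remark~\ref{condition of Volterra}.

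Collecting the two bounds gives, for a.e.\ $t\in[0,T]$,
$$\E|Z_{t,\xi}|^p\le C_{p,T,L_2,L_3}(1+|\xi|^p)+C_{p,T,L_3}\int_0^tK_1(t,s)\,\E|Z_{s,\xi}|^p\,\d s\,.$$
I would then apply Lemma~\ref{Gronwall} with the kernel $\hat K_1:=C_{p,T,L_3}K_1\in\mathcal{K}$ (legitimate by Remark~\ref{condition of Volterra}, while the well-posedness of \eqref{Volterra eq-after CLT of Z} from Theorem~$3.1$ of \cite{LG} supplies the a priori finiteness $\sup_{t\in[0,T]}\E|Z_{t,\xi}|^p<\infty$ needed for the integrability hypotheses), and use \eqref{finite of resolvent} to bound $\int_0^tR^{\hat K_1}(t,s)\,\d s$ in $L^\infty(0,T)$; this yields $\E|Z_{t,\xi}|^p\le C_{p,T,L_2,L_3}(1+|\xi|^p)$ for a.e.\ $t$, hence for all $t\in[0,T]$ after taking the supremum. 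I do not expect a serious obstacle; the only point deserving care is the distribution-dependent term, where one must recognise that the $T_{\mu,2}$-norm in \textbf{(H4)} collapses to a pointwise evaluation at a Dirac measure and invoke the representation of \cite{RW} to turn $\E\langle D^Lb\cdots,Z_{s,\xi}\rangle$ into a controllable deterministic quantity — after which the argument is a routine repetition of the proof of Lemma~\ref{esi-Q}.
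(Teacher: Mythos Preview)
Your proposal is correct and follows essentially the same route as the paper: split \eqref{Volterra eq-after CLT of Z} into drift and stochastic parts, bound the drift via \textbf{(H4)} (yielding $C_{p,T,L_3}\int_0^tK_1(t,s)\E|Z_{s,\xi}|^p\,\d s$) and the stochastic integral via \textbf{(H3)} together with Lemma~\ref{x0xi}, then close with the Volterra-type Gronwall Lemma~\ref{Gronwall}. The paper carries out the H\"older and BDG steps explicitly rather than invoking Lemma~\ref{basic lemma}, and writes the drift integrand bound as $|Z_{s,\xi}|^p+\E|Z_{s,\xi}|^p$ before taking expectation, but your observation that the $T_{\mu,2}$-norm at a Dirac mass reduces to a pointwise value is exactly what underlies the paper's direct use of \textbf{(H4)}.
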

	\begin{proof}
		By \eqref{Volterra eq-after CLT of Z}, BDG's inequality, H$\mathrm{\ddot{o}}$lder's inequality, \textbf{(H1)}, \textbf{(H3)} and \textbf{(H4)} we have
		\begin{align}
			\E|Z_{t,\xi}|^{p}&=\E\Big|\int_{0}^{t}K_1(t,s)\cdot\Big[\nabla_{Z_{s,\xi}}b(s,X^0_{s,\xi},\delta_{X^0_{s,\xi}})+\E\big\langle D^L b(s,X^0_{s,\xi},\delta_{X^0_{s,\xi}})(X^0_{s,\xi}),Z_{s,\xi}\big\rangle\Big]\d s\nonumber\\&\quad+\int_{0}^{t}K_2(t,s)\sigma(t,X^0_{s,\xi},\delta_{X^0_{s,\xi}})\d W_s\Big|^{p}\nonumber\\&\leq 2^{p-1}\E\Big|\int_{0}^{t}K_1(t,s)\cdot\Big[\nabla_{Z_{s,\xi}}b(s,X^0_{s,\xi},\delta_{X^0_{s,\xi}})+\E\big\langle D^L b(s,X^0_{s,\xi},\delta_{X^0_{s,\xi}})(X^0_{s,\xi}),Z_{s,\xi}\big\rangle\Big]\d s\Big|^{p}\nonumber\\&\quad+2^{p-1}\E\Big|\int_{0}^{t}K_2(t,s)\sigma(s,X^0_s,\mathcal{L}_{X^0_s})\d W_s\Big|^{p}\nonumber\\&\leq 2^{p-1}\E\int_{0}^{t}K_1(t,s)\cdot\Big|\nabla_{Z_{s,\xi}}b(s,X^0_{s,\xi},\delta_{X^0_{s,\xi}})\nonumber\\&\quad+\E\big\langle D^L b(s,X^0_{s,\xi},\delta_{X^0_{s,\xi}})(X^0_{s,\xi}),Z_{s,\xi}\big\rangle\Big|^{p}\d s\cdot \Big|\int_{0}^{t}K_1(t,s)\d s\Big|^{p-1}\nonumber\\&\quad+C_{p}\E\Big(\int_{0}^{t}K^2_2(t,s)\cdot\big\|\sigma(s,X^0_{s,\xi},\mathcal{L}_{X^0_{s,\xi}})\big\|^{2}\d s\Big)^{\frac{p}{2}}\nonumber\\&\leq C_{p,L_3}\E\int_{0}^{t}K_1(t,s)\cdot\big(|Z_{s,\xi}|^p+\E|Z_{s,\xi}|^p\big)\d s\cdot \Big|\int_{0}^{t}K_1(t,s)\d s\Big|^{p-1}\nonumber\\&\quad+C_{p}\E\int_{0}^{t}K_2^2(t,s)\cdot\big\|\sigma(s,X^0_{s,\xi},\mathcal{L}_{X^0_{s,\xi}})\big\|^{p}\d s\cdot\Big|\int_{0}^{t}K_2^2(t,s)\d s\Big|^{\frac{p}{2}-1}\nonumber\\&\leq C_{p,T,L_3}\int_{0}^{t}K_1(t,s)\cdot \E|Z_{s,\xi}|^{p}\d s+C_{p,T,L_2}\int_{0}^{t}K^2_2(t,s)\cdot\big(1+\E|X^{0}_{s,\xi}|^{p}\big)\d s\nonumber\\&\leq C_{p,T,L_2}\big(1+|\xi|^{p}\big)+C_{p,T,L_3}\int_{0}^{t}K_{1}(t,s)\cdot \E|Z_{s,\xi}|^{p}\d s\nonumber,
		\end{align}
		where in the last inequality  Lemma \ref{x0xi} is used.
		
		By Remark \ref{condition of Volterra}, Lemma \ref{Gronwall} and \eqref{finite of resolvent}, we obtain that for almost all $t\in [0,T]$,
		\begin{align}
			\E|Z_{t,\xi}|^{2p}&\leq C_{p,T,L_2}\big(1+|\xi|^{p}\big)+\int_{0}^{t}R^{K^{*}_{1}}(t,s)\cdot C_{p,T,L_2}\big(1+|\xi|^{p}\big)\d s\nonumber\\&\leq C_{p,T,L_2,L_3}(1+|\xi|^{p}).\nonumber
		\end{align}
	\end{proof}
	\begin{lem}\label{zxieta0}
		Under the assumptions \textbf{(H1)}-\textbf{(H5)},  we have  that for $p> 2,\xi,\eta\in \mathbb{R}^d$
		$$\sup_{t\in [0,T]}\E|Z_{t,\xi}-Z_{t,\eta}|^{p}\leq C_{p,T,L_2,L_3,L_4,L_5}(1+|\xi|^{p})|\xi-\eta|^p,$$
		where the constant $C_{p,T,L_2,L_3,L_4,L_5}$ is independent of $\varepsilon$.
	\end{lem}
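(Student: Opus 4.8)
The plan is to mimic the proof of Lemma \ref{zxi0}: write the Volterra equation satisfied by $Z_{t,\xi}-Z_{t,\eta}$, estimate each term with Lemma \ref{basic lemma} together with \textbf{(H1)}--\textbf{(H6)}, and close the estimate with the Volterra--Gronwall inequality of Lemma \ref{Gronwall}. Subtracting two copies of \eqref{Volterra eq-after CLT of Z},
$$Z_{t,\xi}-Z_{t,\eta}=\int_{0}^{t}K_1(t,s)\,\mathcal{D}_s\,\d s+\int_{0}^{t}K_2(t,s)\big[\sigma(s,X^0_{s,\xi},\delta_{X^0_{s,\xi}})-\sigma(s,X^0_{s,\eta},\delta_{X^0_{s,\eta}})\big]\d W_s,$$
where $\mathcal{D}_s$ is the difference of the two bracketed drift integrands. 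The stochastic term is the simplest: since $X^0$ is deterministic, Lemma \ref{basic lemma}(ii) and \textbf{(H2)} bound its $p$-th moment by $C_{p,T}\int_{0}^{t}K_2^2(t,s)\,|X^0_{s,\xi}-X^0_{s,\eta}|^p\,\d s$, which is at most $C_{p,T}(1+|\xi|^p)|\xi-\eta|^p$ by \textbf{(H1)} and Lemma \ref{x0xi}.

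The heart of the argument is the estimate of $\mathcal{D}_s$, which I would split by inserting intermediate terms, arranging matters so that each difference of a coefficient evaluated at $\xi$ and at $\eta$ is multiplied by the $\xi$-indexed process $Z_{\cdot,\xi}$. For the gradient part write
$$\nabla b(s,X^0_{s,\xi},\delta_{X^0_{s,\xi}})Z_{s,\xi}-\nabla b(s,X^0_{s,\eta},\delta_{X^0_{s,\eta}})Z_{s,\eta}=\big[\nabla b(s,X^0_{s,\xi},\delta_{X^0_{s,\xi}})-\nabla b(s,X^0_{s,\eta},\delta_{X^0_{s,\eta}})\big]Z_{s,\xi}+\nabla b(s,X^0_{s,\eta},\delta_{X^0_{s,\eta}})\big(Z_{s,\xi}-Z_{s,\eta}\big);$$
by \textbf{(H6)} and Lemma \ref{x0xi} the first summand is $\le C_{L_5}|\xi-\eta|\,|Z_{s,\xi}|$, whose $p$-th moment is controlled by Lemma \ref{zxi0}, while by \textbf{(H4)} the second is $\le L_3|Z_{s,\xi}-Z_{s,\eta}|$ and will be absorbed into the Gronwall term.

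For the Lions part I would insert a term carrying the common test function $Z_{s,\xi}$:
$$\E\big\langle D^L b(s,X^0_{s,\xi},\delta_{X^0_{s,\xi}})(X^0_{s,\xi}),Z_{s,\xi}\big\rangle-\E\big\langle D^L b(s,X^0_{s,\eta},\delta_{X^0_{s,\eta}})(X^0_{s,\eta}),Z_{s,\eta}\big\rangle=\Theta^{(1)}_s+\Theta^{(2)}_s,$$
where $\Theta^{(1)}_s:=\E\langle D^L b(s,X^0_{s,\xi},\delta_{X^0_{s,\xi}})(X^0_{s,\xi}),Z_{s,\xi}\rangle-\E\langle D^L b(s,X^0_{s,\eta},\delta_{X^0_{s,\eta}})(X^0_{s,\eta}),Z_{s,\xi}\rangle$ and $\Theta^{(2)}_s:=\E\langle D^L b(s,X^0_{s,\eta},\delta_{X^0_{s,\eta}})(X^0_{s,\eta}),Z_{s,\xi}-Z_{s,\eta}\rangle$. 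To $\Theta^{(1)}_s$ I apply \textbf{(H5)} with $\phi=Z_{s,\xi}$; since $X^0_{s,\xi}$ and $X^0_{s,\eta}$ are deterministic, the three quantities on the right-hand side of \textbf{(H5)} all reduce to $|X^0_{s,\xi}-X^0_{s,\eta}|$, so $|\Theta^{(1)}_s|\le 3L_4|X^0_{s,\xi}-X^0_{s,\eta}|\,(\E|Z_{s,\xi}|^2)^{1/2}\le C_{L_4}|\xi-\eta|(1+|\xi|)$ by Lemmas \ref{x0xi} and \ref{zxi0}. For $\Theta^{(2)}_s$, \textbf{(H4)} and the Cauchy--Schwarz inequality give $|\Theta^{(2)}_s|\le L_3(\E|Z_{s,\xi}-Z_{s,\eta}|^2)^{1/2}\le L_3(\E|Z_{s,\xi}-Z_{s,\eta}|^p)^{1/p}$.

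Collecting the four contributions to $\mathcal{D}_s$ and applying Lemma \ref{basic lemma}(i) to the $K_1$-integral (which also covers the two deterministic Lions contributions, a deterministic function being adapted), one reaches
$$\E|Z_{t,\xi}-Z_{t,\eta}|^p\le C_{p,T,L_1,L_2,L_3,L_4,L_5}(1+|\xi|^p)|\xi-\eta|^p+C_{p,T,L_3}\int_{0}^{t}K_1(t,s)\,\E|Z_{s,\xi}-Z_{s,\eta}|^p\,\d s,$$
the right-hand side being finite by Lemma \ref{zxi0}. Since $\hat{K}_1:=C_{p,T,L_3}K_1$ satisfies the hypotheses of Lemma \ref{Gronwall} by Remark \ref{condition of Volterra} and its resolvent obeys $t\mapsto\int_{0}^{t}R^{\hat{K}_1}(t,s)\,\d s\in L^{\infty}(0,T)$, Lemma \ref{Gronwall} gives $\E|Z_{t,\xi}-Z_{t,\eta}|^p\le C(1+|\xi|^p)|\xi-\eta|^p$ for almost every $t$, and taking the supremum over $t\in[0,T]$ finishes the proof. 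I expect the only genuine subtlety --- and the main obstacle --- to be the expectation-valued Lions term: it must be split with a common test function so that \textbf{(H5)} applies, and the splitting must be chosen so that the Lipschitz-difference factor is attached to $Z_{\cdot,\xi}$ rather than to $Z_{\cdot,\eta}$, which is exactly what yields the asymmetric prefactor $(1+|\xi|^p)$ in the statement; everything else is a routine repetition of the estimates already carried out for Lemma \ref{zxi0}.
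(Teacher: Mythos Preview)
Your proposal is correct and follows essentially the same approach as the paper: the same splitting of the gradient term via \textbf{(H6)}/\textbf{(H4)}, the same splitting of the Lions term with common test function $Z_{s,\xi}$ via \textbf{(H5)}/\textbf{(H4)}, the same treatment of the stochastic integral via \textbf{(H2)}, and closure by the Volterra--Gronwall Lemma~\ref{Gronwall}. Your observation that $X^0_{s,\xi}$ is deterministic lets you pull the factor $|X^0_{s,\xi}-X^0_{s,\eta}|^p$ out of the expectation directly, whereas the paper writes a Cauchy--Schwarz step $\big(\E|X^{0}_{s,\xi}-X^{0}_{s,\eta}|^{2p}\big)^{1/2}\big(\E|Z_{s,\xi}|^{2p}\big)^{1/2}$ there; your version is slightly cleaner but otherwise the arguments coincide.
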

	\begin{proof}
		By \eqref{Volterra eq-after CLT of Z}, BDG's inequality, H$\mathrm{\ddot{o}}$lder's inequality, \textbf{(H1)} and \textbf{(H3)}-\textbf{(H5)} we have
		\begin{align}
			&\E|Z_{t,\xi}-Z_{t,\eta}|^{p}\nonumber\\&=\E\Big|\int_{0}^{t}K_1(t,s)\cdot\Big[\nabla_{Z_{s,\xi}}b(s,X^0_{s,\xi},\delta_{X^0_{s,\xi}})+\E\big\langle D^L b(s,X^0_{s,\xi},\delta_{X^0_{s,\xi}})(X^0_{s,\xi}),Z_{s,\xi}\big\rangle\Big]\d s\nonumber\\&\quad+\int_{0}^{t}K_2(t,s)\sigma(t,X^0_{s,\xi},\delta_{X^0_{s,\xi}})\d W_s-\int_{0}^{t}K_2(t,s)\sigma(t,X^0_{s,\eta},\delta_{X^0_{s,\eta}})\d W_s\nonumber\\&\quad-\int_{0}^{t}K_1(t,s)\cdot\Big[\nabla_{Z_{s,\eta}}b(s,X^0_{s,\eta},\delta_{X^0_{s,\eta}})+\E\big\langle D^L b(s,X^0_{s,\eta},\delta_{X^0_{s,\eta}})(X^0_{s,\eta}),Z_{s,\eta}\big\rangle\Big]\d s\Big|^{p}\nonumber\\&\leq 3^{p-1}\E\Big|\int_{0}^{t}K_1(t,s)\cdot\big(\nabla_{Z_{s,\xi}}b(s,X^0_{s,\xi},\delta_{X^0_{s,\xi}})-\nabla_{Z_{s,\eta}}b(s,X^0_{s,\eta},\delta_{X^0_{s,\eta}})\big)\d s\Big|^p\nonumber\\&\quad+3^{p-1}\E\Big|\int_{0}^{t}K_1(t,s)\cdot\big(\E\big\langle D^L b(s,X^0_{s,\xi},\delta_{X^0_{s,\xi}})(X^0_{s,\xi}),Z_{s,\xi}\big\rangle-\E\big\langle D^L b(s,X^0_{s,\eta},\delta_{X^0_{s,\eta}})(X^0_{s,\eta}),Z_{s,\eta}\big\rangle\big)\d s\Big|^p\nonumber\\&\quad+3^{p-1}\E\Big|\int_{0}^{t}K_2(t,s)\cdot\big(\sigma(t,X^0_{s,\xi},\delta_{X^0_{s,\xi}})-\sigma(t,X^0_{s,\eta},\delta_{X^0_{s,\eta}})\big)\d W_s\Big|^p\nonumber\\&\leq 6^{p-1}\E\Big|\int_{0}^{t}K_1(t,s)\cdot\big(\nabla_{Z_{s,\xi}}b(s,X^0_{s,\xi},\delta_{X^0_{s,\xi}})-\nabla_{Z_{s,\xi}}b(s,X^0_{s,\eta},\delta_{X^0_{s,\eta}})\big)\d s\Big|^p\nonumber\\&\quad+6^{p-1}\E\Big|\int_{0}^{t}K_1(t,s)\cdot\big(\nabla_{Z_{s,\xi}}b(s,X^0_{s,\eta},\delta_{X^0_{s,\eta}})-\nabla_{Z_{s,\eta}}b(s,X^0_{s,\eta},\delta_{X^0_{s,\eta}})\big)\d s\Big|^p\nonumber\\&\quad+6^{p-1}\E\Big|\int_{0}^{t}K_1(t,s)\cdot\big(\E\big\langle D^L b(s,X^0_{s,\xi},\delta_{X^0_{s,\xi}})(X^0_{s,\xi}),Z_{s,\xi}\big\rangle-\E\big\langle D^L b(s,X^0_{s,\eta},\delta_{X^0_{s,\eta}})(X^0_{s,\eta}),Z_{s,\xi}\big\rangle\big)\d s\Big|^p\nonumber\\&\quad+6^{p-1}\E\Big|\int_{0}^{t}K_1(t,s)\cdot\big(\E\big\langle D^L b(s,X^0_{s,\eta},\delta_{X^0_{s,\eta}})(X^0_{s,\eta}),Z_{s,\xi}\big\rangle-\E\big\langle D^L b(s,X^0_{s,\eta},\delta_{X^0_{s,\eta}})(X^0_{s,\eta}),Z_{s,\eta}\big\rangle\big)\d s\Big|^p\nonumber\\&\quad+C_p \E\Big(\int_{0}^{t}K^2_2(t,s)\cdot\big\|\sigma(t,X^0_{s,\xi},\delta_{X^0_{s,\xi}})-\sigma(t,X^0_{s,\eta},\delta_{X^0_{s,\eta}})\big\|^2\d s\Big)^{\frac{p}{2}}\nonumber\\&\leq 6^{p-1}\E\int_{0}^{t}K_1(t,s)\cdot\big|\nabla_{Z_{s,\xi}}b(s,X^0_{s,\xi},\delta_{X^0_{s,\xi}})-\nabla_{Z_{s,\xi}}b(s,X^0_{s,\eta},\delta_{X^0_{s,\eta}})\big|^p\d s\cdot \Big(\int_{0}^{t}K_1(t,s)\d s\Big)^{p-1}\nonumber\\&\quad+6^{p-1}\E\int_{0}^{t}K_1(t,s)\cdot\big|\nabla_{Z_{s,\xi}}b(s,X^0_{s,\eta},\delta_{X^0_{s,\eta}})-\nabla_{Z_{s,\eta}}b(s,X^0_{s,\eta},\delta_{X^0_{s,\eta}})\big|^p\d s\cdot \Big(\int_{0}^{t}K_1(t,s)\d s\Big)^{p-1}\nonumber\\&\quad+6^{p-1}\E\int_{0}^{t}K_1(t,s)\cdot\Big|\E\big\langle D^L b(s,X^0_{s,\xi},\delta_{X^0_{s,\xi}})(X^0_{s,\xi})-D^L b(s,X^0_{s,\eta},\delta_{X^0_{s,\eta}})(X^0_{s,\eta}),Z_{s,\xi}\big\rangle\Big|^p\d s\cdot \Big(\int_{0}^{t}K_1(t,s)\d s\Big)^{p-1}\nonumber\\&\quad+6^{p-1}\E\int_{0}^{t}K_1(t,s)\cdot\Big|\E\big\langle D^L b(s,X^0_{s,\eta},\delta_{X^0_{s,\eta}})(X^0_{s,\eta}),Z_{s,\xi}-Z_{s,\eta}\big\rangle\Big|^p\d s\cdot \Big(\int_{0}^{t}K_1(t,s)\d s\Big)^{p-1}\nonumber\\&\quad+C_p\E\int_{0}^{t}K_2^2(t,s)\cdot\big\|\sigma(t,X^0_{s,\xi},\delta_{X^0_{s,\xi}})-\sigma(t,X^0_{s,\eta},\delta_{X^0_{s,\eta}})\big\|^p\cdot \d s\Big(\int_{0}^{t}K_2^2(t,s)\d s\Big)^{\frac{p}{2}-1}\nonumber\\&\leq C_{p,T,L_5}\E\int_{0}^{t}K_1(t,s)\cdot \big(|X^0_{s,\xi}-X^0_{s,\eta}|^p+\mathbb{W}_2^p(\delta_{X^0_{s,\xi}},\delta_{X^0_{s,\eta}})\big)\cdot |Z_{s,\xi}|^p\d s+C_{p,T,L_3}\int_{0}^{t}K_1(t,s)\cdot \E|Z_{s,\xi}-Z_{s,\eta}|^p\d s\nonumber\\&\quad+C_{p,T,L_4}\E\int_{0}^{t}K_1(t,s)\cdot\big(|X^0_{s,\xi}-X^0_{s,\eta}|^p+\mathbb{W}_2^p(\delta_{X^0_{s,\xi}},\delta_{X^0_{s,\eta}})+(\E|X^0_{s,\xi}-X^0_{s,\eta}|^2)^{\frac{p}{2}}\big)\cdot\big(\E|Z_{s,\xi}|^2\big)^{\frac{p}{2}}\d s\nonumber\\&\quad+C_{p,T,L_3}\int_{0}^{t}K_1(t,s)\cdot \E|Z_{s,\xi}-Z_{s,\eta}|^p\d s+C_{p,T,L_2}\E\int_{0}^{t}K_2^2(t,s)\cdot \big(|X^0_{s,\xi}-X^0_{s,\eta}|^p+\mathbb{W}_2^p(\delta_{X^0_{s,\xi}},\delta_{X^0_{s,\eta}})\big)\d s\nonumber\\&\leq C_{p,T,L_3}\int_{0}^{t}K_1(t,s)\cdot \E|Z_{s,\xi}-Z_{s,\eta}|^p\d s+C_{p,T,L_4}\E\int_{0}^{t}K_1(t,s)\cdot \E|X^0_{s,\xi}-X^0_{s,\eta}|^p\d s\nonumber\\&\quad+C_{p,T,L_2}\int_{0}^{t}K_2^2(t,s)\cdot \E|X^0_{s,\xi}-X^0_{s,\eta}|^p\d s+C_{p,T,L_4}\int_{0}^{t}K_1(t,s)\cdot \E|X^0_{s,\xi}-X^0_{s,\eta}|^p\cdot \E|Z_{s,\xi}|^p\d s\nonumber\\&\quad+C_{p,T,L_5}\int_{0}^{t}K_1(t,s)\cdot \big(\E|X^{0}_{s,\xi}-X^{0}_{s,\eta}|^{2p}\big)^{\frac{1}{2}}\cdot \big(\E|Z_{s,\xi}|^{2p}\big)^{\frac{1}{2}}\d s\nonumber\\&\leq C_{p,T,L_3}\int_{0}^{t}K_1(t,s)\cdot \E|Z_{s,\xi}-Z_{s,\eta}|^p\d s+C_{p,T,L_2,L_3,L_4}(1+|\xi|^p)|\xi-\eta|^p,\nonumber
		\end{align}
		where in the last inequality Lemmas \ref{x0xi} and  \ref{zxi0} are used.
		
		Finally, by Remark \ref{condition of Volterra} and Lemma \ref{Gronwall}, we have
		\begin{align}
			\E|Z_{t,\xi}-Z_{t,\eta}|^{p}&\leq C_{p,T,L_2,L_3,L_4,L_5}(1+|\xi|^p)|\xi-\eta|^p+\int_{0}^{t}R^{K^{*}_{1}}(t,s)\cdot\big(C_{p,T,L_2,L_3,L_4,L_5}(1+|\xi|^p)|\xi-\eta|^p\big)\d s\nonumber\\&\leq C_{p,T,L_2,L_3,L_4,L_5}(1+|\xi|^{p})|\xi-\eta|^p.\nonumber
		\end{align}
		This completes the proof of the lemma. 
	\end{proof}
	\begin{lem}\label{zxitt0}
		Under the assumptions \textbf{(H1)}-\textbf{(H4)},  we have that for any $t,t'\in [0,T]$, $p> 2,\xi\in \mathbb{R}^d$
		$$\sup_{t\in [0,T]}\E|Z_{t,\xi}-Z_{t',\xi}|^{p}\leq C_{p,T,L_2,L_3}(1+|\xi|^{p})|t-t'|^{\gamma p},$$
		where the constant $C_{p,T,L_2,L_3}$ is independent of $\varepsilon$.	
	\end{lem}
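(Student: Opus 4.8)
The plan is to follow the increment analysis already used for $Z^\varepsilon_{t,\xi}$ in Lemma~\ref{zxi}(iii) and for $\Lambda^{\varepsilon,v}$ in Lemma~\ref{Lambda-ts}, now applied to the limiting equation \eqref{Volterra eq-after CLT of Z}, and exploiting that the uniform-in-time $p$-th moment bound for $Z_{t,\xi}$ is already available from Lemma~\ref{zxi0}. Fix $0\le t'<t\le T$ and abbreviate
$$\Theta_s:=\nabla_{Z_{s,\xi}}b(s,X^0_{s,\xi},\delta_{X^0_{s,\xi}})+\E\bigl\langle D^L b(s,X^0_{s,\xi},\delta_{X^0_{s,\xi}})(X^0_{s,\xi}),Z_{s,\xi}\bigr\rangle,\qquad \Sigma_s:=\sigma(s,X^0_{s,\xi},\delta_{X^0_{s,\xi}}).$$
Subtracting the two instances of \eqref{Volterra eq-after CLT of Z} and splitting each integral into the part over $[t',t]$ and the part over $[0,t']$ (with the kernel replaced by its increment $K_i(t,\cdot)-K_i(t',\cdot)$), one writes
\begin{align*}
Z_{t,\xi}-Z_{t',\xi}
&=\int_{t'}^{t}K_1(t,s)\,\Theta_s\,\d s+\int_{0}^{t'}\bigl(K_1(t,s)-K_1(t',s)\bigr)\Theta_s\,\d s\\
&\quad+\int_{t'}^{t}K_2(t,s)\,\Sigma_s\,\d W_s+\int_{0}^{t'}\bigl(K_2(t,s)-K_2(t',s)\bigr)\Sigma_s\,\d W_s,
\end{align*}
so that $\E|Z_{t,\xi}-Z_{t',\xi}|^p\le 4^{p-1}$ times the sum of the $p$-th moments of the four terms.

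The second step is to establish uniform-in-$s$ moment bounds for the two integrands. For $\Theta_s$, assumption \textbf{(H4)} gives $|\nabla_{Z_{s,\xi}}b(s,X^0_{s,\xi},\delta_{X^0_{s,\xi}})|\le L_3|Z_{s,\xi}|$, while the lemma of \cite{RW} quoted above, together with $\|D^L b(s,\cdot,\cdot)\|_{T_{\mu,2}}\le L_3$, yields $\bigl|\E\langle D^L b(s,X^0_{s,\xi},\delta_{X^0_{s,\xi}})(X^0_{s,\xi}),Z_{s,\xi}\rangle\bigr|\le C_{L_3}\,(\E|Z_{s,\xi}|^2)^{1/2}$; hence, by Lyapunov's inequality and Lemma~\ref{zxi0},
$$\sup_{s\in[0,T]}\E|\Theta_s|^p\le C_{p,L_3}\Bigl(\sup_{s\in[0,T]}\E|Z_{s,\xi}|^p+\bigl(\sup_{s\in[0,T]}\E|Z_{s,\xi}|^2\bigr)^{p/2}\Bigr)\le C_{p,T,L_2,L_3}\bigl(1+|\xi|^p\bigr).$$
For $\Sigma_s$, assumption \textbf{(H3)} gives $\|\Sigma_s\|\le L_2\bigl(1+|X^0_{s,\xi}|+\mathbb{W}_2(\delta_{X^0_{s,\xi}},\delta_0)\bigr)\le L_2(1+2|X^0_{s,\xi}|)$, so Lemma~\ref{x0xi} yields $\sup_{s\in[0,T]}\E\|\Sigma_s\|^p\le C_{p,T,L_2}(1+|\xi|^p)$.

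The third step is to apply Lemma~\ref{basic lemma}: part (iv) (with $t\rightsquigarrow t'$, $h\rightsquigarrow t-t'$) bounds the two drift terms together by $C_{p,T}|t-t'|^{\gamma p}\sup_{s}\E|\Theta_s|^p$, and part (v) bounds the two stochastic terms together by $C_{p,T}|t-t'|^{\gamma p}\sup_{s}\E\|\Sigma_s\|^p$. Inserting the estimates from the previous step and summing the four contributions gives $\E|Z_{t,\xi}-Z_{t',\xi}|^p\le C_{p,T,L_2,L_3}(1+|\xi|^p)|t-t'|^{\gamma p}$, and taking the supremum over $t\in[0,T]$ completes the proof. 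Note that no Grönwall argument is required here, precisely because the uniform-in-time moment bound on $Z$ is already in hand from Lemma~\ref{zxi0}; in that sense this lemma is essentially routine. The only point needing care is the mean-field term $\E\langle D^L b(\cdots)(X^0_{s,\xi}),Z_{s,\xi}\rangle$, which is a deterministic quantity and must be estimated by Cauchy--Schwarz over $\Omega$ (via \textbf{(H4)} and the \cite{RW} lemma) \emph{before} the outer $L^p(\Omega)$-norm is taken, so that its contribution to $\sup_s\E|\Theta_s|^p$ is genuinely of order $(\sup_s\E|Z_{s,\xi}|^2)^{p/2}$ and is absorbed by Lemma~\ref{zxi0}.
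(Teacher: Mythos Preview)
Your proposal is correct and follows essentially the same route as the paper: the same four-term decomposition of $Z_{t,\xi}-Z_{t',\xi}$, the same use of \textbf{(H4)} plus Lemma~\ref{zxi0} to control the drift integrand and \textbf{(H3)} plus Lemma~\ref{x0xi} to control the diffusion integrand, and the same kernel regularity from \textbf{(H1)} to produce the $|t-t'|^{\gamma p}$ factor. The only cosmetic difference is that the paper re-derives the H\"older/BDG/Minkowski estimates inline for each of the four terms, whereas you (more economically) first record $\sup_s\E|\Theta_s|^p$ and $\sup_s\E\|\Sigma_s\|^p$ and then invoke Lemma~\ref{basic lemma}(iv)--(v) once; both yield the same bound.
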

	\begin{proof}
		For any $t>t'$ we have
		\begin{align}
			&\E|Z_{t,\xi}-Z_{t',\xi}|^{p}\nonumber\\&=\E\Big|\int_{0}^{t}K_1(t,s)\cdot\Big[\nabla_{Z_{s,\xi}}b(s,X^0_{s,\xi},\delta_{X^0_{s,\xi}})+\E\big\langle D^L b(t,X^0_{t,\xi},\delta_{X^0_{t,\xi}})(X^0_{t,\xi}),Z_{t,\xi}\big\rangle\Big]\d s\nonumber\\&\quad-\int_{0}^{t'}K_1(t',s)\cdot\Big[\nabla_{Z_{s,\xi}}b(s,X^0_{s,\xi},\delta_{X^0_{s,\xi}})+\E\big\langle D^L b(t,X^0_{t,\xi},\delta_{X^0_{t,\xi}})(X^0_{t,\xi}),Z_{t,\xi}\big\rangle\Big]ds\nonumber\\&\quad+\int_{0}^{t}K_2(t,s)\sigma(t,X^0_{s,\xi},\delta_{X^0_{s,\xi}})\d W_s-\int_{0}^{t'}K_2(t',s)\sigma(t,X^0_{s,\xi},\delta_{X^0_{s,\xi}})\d W_s\Big|^{p}\nonumber\\&=\E\Bigg|\int_{0}^{t'}\big(K_1(t,s)-K_1(t',s)\big)\cdot\Big[\nabla_{Z_{s,\xi}}b(s,X^0_{s,\xi},\delta_{X^0_{s,\xi}})+\E\big\langle D^L b(t,X^0_{t,\xi},\delta_{X^0_{t,\xi}})(X^0_{t,\xi}),Z_{t,\xi}\big\rangle\Big]\d s\nonumber\\&\quad+\int_{0}^{t'}\big(K_2(t,s)-K_2(t',s)\big)\cdot\sigma(s,X^0_{s,\xi},\mathcal{L}_{X^0_{s,\xi}})\d W_s+\int_{t'}^{t}K_2(t,s)\sigma(s,X^0_{s,\xi},\mathcal{L}_{X^0_{s,\xi}})\d W_s\nonumber\\&\quad+\int_{t'}^{t}K_1(t,s)\cdot\Big[\nabla_{Z_{s,\xi}}b(s,X^0_{s,\xi},\delta_{X^0_{s,\xi}})+\E\big\langle D^L b(t,X^0_{t,\xi},\delta_{X^0_{t,\xi}})(X^0_{t,\xi}),Z_{t,\xi}\big\rangle\Big]\d s\Bigg|^p\nonumber\\&\leq 4^{p-1}\E\Big|\int_{0}^{t'}\big(K_1(t,s)-K_1(t',s)\big)\cdot\Big[\nabla_{Z_{s,\xi}}b(s,X^0_{s,\xi},\delta_{X^0_{s,\xi}})+\E\big\langle D^L b(t,X^0_{t,\xi},\delta_{X^0_{t,\xi}})(X^0_{t,\xi}),Z_{t,\xi}\big\rangle\Big]\d s\Big|^p\nonumber\\&\quad+4^{p-1}\E\Big|\int_{0}^{t'}\big(K_2(t,s)-K_2(t',s)\big)\cdot\sigma(s,X^0_{s,\xi},\mathcal{L}_{X^0_{s,\xi}})\d W_s\Big|^p\nonumber\\&\quad+4^{p-1}\E\Big|\int_{t'}^{t}K_1(t,s)\cdot\Big[\nabla_{Z_{s,\xi}}b(s,X^0_{s,\xi},\delta_{X^0_{s,\xi}})+\E\big\langle D^L b(t,X^0_{t,\xi},\delta_{X^0_{t,\xi}})(X^0_{t,\xi}),Z_{t,\xi}\big\rangle\Big]\d s\Big|^p\nonumber\\&\quad+4^{p-1}\E\Big|\int_{t'}^{t}K_2(t,s)\sigma(s,X^0_{s,\xi},\mathcal{L}_{X^0_{s,\xi}})\d W_s\Big|^{p}:=\Theta_1+\Theta_2+\Theta_3+\Theta_4.\nonumber
		\end{align}
		For the term $\Theta_1$, by H$\mathrm{\ddot{o}}$lder's inequality, Lemma \ref{zxi0}, $\mathbf{(H1)}$, $\mathbf{(H3)}$ and $\mathbf{(H4)}$, we have
		\begin{align}
			\Theta_1&:=4^{p-1}\E\Big|\int_{0}^{t'}\big(K_1(t,s)-K_1(t',s)\big)\cdot\Big[\nabla_{Z_{s,\xi}}b(s,X^0_{s,\xi},\delta_{X^0_{s,\xi}})+\E\big\langle D^L b(t,X^0_{t,\xi},\delta_{X^0_{t,\xi}})(X^0_{t,\xi}),Z_{t,\xi}\big\rangle\Big]\d s\Big|^p\nonumber\\&\leq C_{p,L_3}\E\int_{0}^{t'}\big|K_1(t,s)-K_1(t',s)\big|\cdot \big(\E|Z_{s,\xi}|^p+|Z_{s,\xi}|^p\big)\d s\cdot \Big(\int_{0}^{t'}\big|K_1(t,s)-K_1(t',s)\big|\d s\Big)^{p-1}\nonumber\\&\leq C_{p,T,L_3}\int_{0}^{t'}\big|K_1(t,s)-K_1(t',s)\big|\cdot \E|Z_{s,\xi}|^p\d s|t-t'|^{\gamma(p-1)}\nonumber\\&\leq C_{p,T,L_2,L_3}(1+|\xi|^p)|t-t'|^{\gamma p}.\nonumber
		\end{align}
		For the term $\Theta_2$, applying BDG's inequality, H$\mathrm{\ddot{o}}$lder's inequality, extended Minkowski's inequality, Lemma \ref{xxi}, $\mathbf{(H1)}$ and $\mathbf{(H3)}$, we have
		\begin{align}
			\Theta_2&:=4^{p-1}\E\Big|\int_{0}^{t'}\big(K_2(t,s)-K_2(t',s)\big)\cdot\sigma(s,X^0_{s,\xi},\mathcal{L}_{X^0_{s,\xi}})\d W_s\Big|^p\nonumber\\&\leq C_p\E\Big(\int_{0}^{t'}\big|K_2(t,s)-K_2(t',s)\big|^2\cdot\big\|\sigma(s,X^0_{s,\xi},\mathcal{L}_{X^0_{s,\xi}})\big\|^2\d s\Big)^{\frac{p}{2}}\nonumber\\&\leq C_{p,L_2}\E\Big(\int_{0}^{t'}\big|K_2(t,s)-K_2(t',s)\big|^2\cdot\Big(1+|X^{0}_{s,\xi}|^2+\mathbb{W}_2^2(\mathcal{L}_{X^{0}_{s,\xi}},\delta_0)\Big)\d s\Big)^{\frac{p}{2}}\nonumber\\&\leq C_{p,L_2}\Big(\int_{0}^{t'}\big|K_2(t,s)-K_2(t',s)\big|^2\cdot \big(\E\big[1+|X^{0}_{s,\xi}|^p+\mathbb{W}_2^p(\mathcal{L}_{X^{0}_{s,\xi}},\delta_0)\big]\big)^{\frac{2}{p}}\d s\Big)^{\frac{p}{2}}\nonumber\\&\leq C_{p,L_2}\Big(\int_{0}^{t'}\big|K_2(t,s)-K_2(t',s)\big|^2\cdot \big(1+(\E|X^{0}_{s,\xi}|^p)^{\frac{2}{p}}\big)\d s\Big)^{\frac{p}{2}}\nonumber\\&\leq C_{p,T,L_2}(1+|\xi|^{p})\Big(\int_{0}^{t}\big|K_2(t,s)-K_2(t',s)\big|^2\d s\Big)^{\frac{p}{2}}\leq C_{p,T,L_2}(1+|\xi|^{p})|t'-t|^{\gamma p}.\nonumber
		\end{align}
		For the term $\Theta_3$, by H$\mathrm{\ddot{o}}$lder's inequality, Lemma \ref{zxi0}, $\mathbf{(H1)}$, $\mathbf{(H3)}$ and $\mathbf{(H4)}$, we have
		\begin{align}
			\Theta_3&:=4^{p-1}E\Big|\int_{t'}^{t}K_1(t,s)\cdot\Big[\nabla_{Z_{s,\xi}}b(s,X^0_{s,\xi},\delta_{X^0_{s,\xi}})+\E\big\langle D^L b(t,X^0_{t,\xi},\delta_{X^0_{t,\xi}})(X^0_{t,\xi}),Z_{t,\xi}\big\rangle\Big]\d s\Big|^p\nonumber\\&\leq C_{p,L_3}\E\int_{t'}^{t}K_1(t,s)\cdot \big(\E|Z_{s,\xi}|^p+|Z_{s,\xi}|^p\big)\d s\cdot \Big(\int_{t}^{t'}K_1(t,s)\d s\Big)^{p-1}\nonumber\\&\leq C_{p,T,L_3}\int_{t'}^{t}K_1(t,s)\cdot  \E|Z_{s,\xi}|^p\d s|t-t'|^{\gamma(p-1)}\nonumber\\&\leq C_{p,T,L_2,L_3}(1+|\xi|^p)|t-t'|^{\gamma p}\nonumber.
		\end{align}
		For the term $\Theta_4$, by BDG's inequality, H$\mathrm{\ddot{o}}$lder's inequality, Lemma \ref{xxi}, $\mathbf{(H1)}$ and $\mathbf{(H3)}$, we have
		\begin{align}
			\Theta_4&:=4^{p-1}\E\Big|\int_{t'}^{t}K_2(t,s)\sigma(s,X^0_{s,\xi},\mathcal{L}_{X^0_{s,\xi}})\d W_s\Big|^{p}\nonumber\\&\leq C_p \E\Big(\int_{t'}^{t}K^2_2(t,s)\big\|\sigma(s,X^0_{s,\xi},\mathcal{L}_{X^0_{s,\xi}})\big\|^2\d s\Big)^{\frac{p}{2}}\nonumber\\&\leq  C_{p}\E\int_{t'}^{t}K_2^2(t,s)\cdot\big\|\sigma(s,X^0_{s,\xi},\mathcal{L}_{X^0_{s,\xi}})\big\|^{p}\d s\cdot \Big(\int_{t'}^{t}K_2^2(t,s)\d s\Big)^{\frac{p}{2}-1}\nonumber\\&\leq C_{p,T}\E\int_{t'}^{t}K^2_2(t,s)\cdot\Big(1+|X^0_{s,\xi}|^p+\mathbb{W}_{2}^p(\mathcal{L}_{X^0_{s,\xi}},\delta_0)\Big)\d s|t-t'|^{\gamma(p-2)}\nonumber\\&\leq C_{p,T,L_2}(1+|\xi|^p)|t-t'|^{\gamma p}\,. \nonumber
		\end{align}
		
		Hence, for all $0\leq t<t'\leq T$,
		$$\E|Z_{t,\xi}-Z_{t',\xi}|^p\leq C_{p,T,L_2,L_3}(1+|\xi|^p)|t'-t|^{\gamma p}.$$
		This proves the lemma. 
	\end{proof}
Now we bound  $Z^{\varepsilon}_{t,\xi}-Z_{t,\xi}$.
	\begin{lem}\label{zzxi}
		Suppose that the assumptions \textbf{(H1)}-\textbf{(H6)} are satisfied, then it holds that for $p>2, \xi\in\mathbb{R}^d$
		$$\E|Z^{\varepsilon}_{t,\xi}-Z_{t,\xi}|^p\leq C_{p,T,L_3,L_4}\varepsilon^{\frac{p}{2}}\big(1+|\xi|^{2p}\big),$$
		where the constant $C_{p,T,L_3,L_4}$ is independent of $\varepsilon$.
	\end{lem}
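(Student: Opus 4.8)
\emph{Proof strategy.} Write $\mathcal{D}^{\varepsilon}_{t}:=Z^{\varepsilon}_{t,\xi}-Z_{t,\xi}$. Subtracting \eqref{Volterra eq-after CLT of Z} from \eqref{Volterra eq-after CLT} expresses $\mathcal{D}^{\varepsilon}_{t}$ as $\int_{0}^{t}K_1(t,s)\mathcal{R}^{\varepsilon}_{s}\,\d s+\int_{0}^{t}K_2(t,s)\big(\sigma(s,X^\varepsilon_{s,\xi},\mathcal{L}_{X^\varepsilon_{s,\xi}})-\sigma(s,X^0_{s,\xi},\delta_{X^0_{s,\xi}})\big)\d W_s$, where $\mathcal{R}^{\varepsilon}_{s}$ is the difference of the two drift integrands (the $t$ in the $\sigma$-term of \eqref{Volterra eq-after CLT of Z} being a misprint for $s$). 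The plan is to perform a first-order expansion of $b$ around $(X^0_{s,\xi},\delta_{X^0_{s,\xi}})$ in both the state and the measure argument, so that $\mathcal{R}^{\varepsilon}_{s}$ splits as $\nabla b(s,X^0_{s,\xi},\delta_{X^0_{s,\xi}})\mathcal{D}^{\varepsilon}_{s}+\E\big\langle D^L b(s,X^0_{s,\xi},\delta_{X^0_{s,\xi}})(X^0_{s,\xi}),\mathcal{D}^{\varepsilon}_{s}\big\rangle$ plus remainders that are $O(\sqrt{\varepsilon})$ in $L^p$; a Volterra-type Gr\"onwall argument then closes the estimate.

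\emph{The two expansions.} For the state variable I would use $b(s,X^\varepsilon_{s,\xi},\mathcal{L}_{X^\varepsilon_{s,\xi}})-b(s,X^0_{s,\xi},\mathcal{L}_{X^\varepsilon_{s,\xi}})=\big(\int_0^1\nabla b(s,X^0_{s,\xi}+\theta\sqrt{\varepsilon}Z^{\varepsilon}_{s,\xi},\mathcal{L}_{X^\varepsilon_{s,\xi}})\,\d\theta\big)(X^\varepsilon_{s,\xi}-X^0_{s,\xi})$; dividing by $\sqrt{\varepsilon}$ and subtracting $\nabla b(s,X^0_{s,\xi},\delta_{X^0_{s,\xi}})Z_{s,\xi}$ yields $\nabla b(s,X^0_{s,\xi},\delta_{X^0_{s,\xi}})\mathcal{D}^{\varepsilon}_{s}$ plus a remainder whose norm, by \textbf{(H6)}, is at most $L_5\big(\sqrt{\varepsilon}|Z^{\varepsilon}_{s,\xi}|+\mathbb{W}_2(\mathcal{L}_{X^\varepsilon_{s,\xi}},\delta_{X^0_{s,\xi}})\big)|Z^{\varepsilon}_{s,\xi}|$. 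For the measure variable I would apply the chain rule for the Lions derivative along the interpolation $X^{\theta}_{s,\xi}:=X^0_{s,\xi}+\theta\sqrt{\varepsilon}Z^{\varepsilon}_{s,\xi}$ (the lemma recalled above from \cite{RW}), giving $b(s,X^0_{s,\xi},\mathcal{L}_{X^\varepsilon_{s,\xi}})-b(s,X^0_{s,\xi},\delta_{X^0_{s,\xi}})=\int_0^1\E\big\langle D^L b(s,X^0_{s,\xi},\mathcal{L}_{X^\theta_{s,\xi}})(X^\theta_{s,\xi}),X^\varepsilon_{s,\xi}-X^0_{s,\xi}\big\rangle\,\d\theta$; dividing by $\sqrt{\varepsilon}$ and subtracting $\E\langle D^L b(s,X^0_{s,\xi},\delta_{X^0_{s,\xi}})(X^0_{s,\xi}),Z_{s,\xi}\rangle$ yields $\E\langle D^L b(s,X^0_{s,\xi},\delta_{X^0_{s,\xi}})(X^0_{s,\xi}),\mathcal{D}^{\varepsilon}_{s}\rangle$ plus a remainder that \textbf{(H5)} (with $\phi=Z^{\varepsilon}_{s,\xi}$, using $\mathcal{L}_{X^0_{s,\xi}}=\delta_{X^0_{s,\xi}}$) bounds by $2L_4\big(\E|X^{\theta}_{s,\xi}-X^0_{s,\xi}|^2\big)^{1/2}\big(\E|Z^{\varepsilon}_{s,\xi}|^2\big)^{1/2}$. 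Since $\mathbb{W}_2(\mathcal{L}_{X^\varepsilon_{s,\xi}},\delta_{X^0_{s,\xi}})\le(\E|X^\varepsilon_{s,\xi}-X^0_{s,\xi}|^2)^{1/2}=\sqrt{\varepsilon}(\E|Z^{\varepsilon}_{s,\xi}|^2)^{1/2}$ and $X^{\theta}_{s,\xi}-X^0_{s,\xi}=\theta\sqrt{\varepsilon}Z^{\varepsilon}_{s,\xi}$, both remainders are dominated by a constant times $\sqrt{\varepsilon}\big(|Z^{\varepsilon}_{s,\xi}|^2+(\E|Z^{\varepsilon}_{s,\xi}|^2)^{1/2}|Z^{\varepsilon}_{s,\xi}|\big)$; raising to the $p$-th power, taking expectations, and invoking Lemma \ref{zxi}(i) \emph{with exponent $2p$} gives the bound $C\,\varepsilon^{p/2}(1+|\xi|^{2p})$ --- this is exactly where $|\xi|^{2p}$, rather than $|\xi|^{p}$, enters the statement. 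The same computation with \textbf{(H2)} and Lemma \ref{zxi}(i) shows $\E\|\sigma(s,X^\varepsilon_{s,\xi},\mathcal{L}_{X^\varepsilon_{s,\xi}})-\sigma(s,X^0_{s,\xi},\delta_{X^0_{s,\xi}})\|^p\le C\varepsilon^{p/2}(1+|\xi|^{p})$.

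\emph{Closing via Gr\"onwall.} Applying Lemma \ref{basic lemma}(i) to the $K_1$-drift integral and Lemma \ref{basic lemma}(ii) to the $K_2$-stochastic integral, bounding the two frozen-derivative linear terms by $L_3|\mathcal{D}^{\varepsilon}_{s}|+L_3(\E|\mathcal{D}^{\varepsilon}_{s}|^2)^{1/2}$ via \textbf{(H4)} together with $|\E\langle D^Lf(\mu)(X),Y\rangle|\le\|D^Lf(\mu)\|_{T_{\mu,2}}(\E|Y|^2)^{1/2}$, and using Jensen's inequality ($p>2$) to dominate $(\E|\mathcal{D}^{\varepsilon}_{s}|^2)^{p/2}$ by $\E|\mathcal{D}^{\varepsilon}_{s}|^{p}$, one arrives at $\E|\mathcal{D}^{\varepsilon}_{t}|^{p}\le C\,\varepsilon^{p/2}(1+|\xi|^{2p})+C_{p,T,L_3}\int_0^t K_1(t,s)\E|\mathcal{D}^{\varepsilon}_{s}|^{p}\,\d s$, where $s\mapsto\E|\mathcal{D}^{\varepsilon}_{s}|^{p}$ is a priori bounded on $[0,T]$ by Lemmas \ref{zxi}(i) and \ref{zxi0}. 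The Volterra-type Gr\"onwall inequality, Lemma \ref{Gronwall} (applicable by Remark \ref{condition of Volterra}), then yields $\E|\mathcal{D}^{\varepsilon}_{t}|^{p}\le C_{p,T,L_3,L_4}\,\varepsilon^{p/2}(1+|\xi|^{2p})$, which is the asserted estimate.

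\emph{Main obstacle.} The technical heart is the measure-variable expansion: rigorously justifying the fundamental theorem of calculus for the Lions derivative along the linear interpolation $X^{\theta}_{s,\xi}$ and invoking the \cite{RW} lemma despite $X^{\varepsilon}_{s,\xi}$ being unbounded --- which is legitimate because \textbf{(H4)} places $b(s,x,\cdot)$ in $C^1_b(\mathcal{P}_2(\mathbb{R}^d))$ --- and then extracting smallness of the remainder through \textbf{(H5)}. The remaining work is the careful bookkeeping verifying that each remainder carries exactly one power of $\sqrt{\varepsilon}$ and that the $2p$-th moment bound of Lemma \ref{zxi}(i) is what forces the quadratic dependence $|\xi|^{2p}$.
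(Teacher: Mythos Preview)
Your proposal is correct and follows essentially the same route as the paper's proof: split the drift difference into a state-variable piece and a measure-variable piece, expand each via the mean value theorem (in $x$) and the Lions-derivative chain rule along the linear interpolation $X^{\theta}_{s,\xi}$ (in $\mu$), extract the linear part $\nabla b\,\mathcal{D}^{\varepsilon}_{s}+\E\langle D^Lb,\mathcal{D}^{\varepsilon}_{s}\rangle$ via \textbf{(H4)}, bound the quadratic remainders through \textbf{(H5)}--\textbf{(H6)} and Lemma~\ref{zxi}(i) with exponent $2p$ (whence the $|\xi|^{2p}$), handle the stochastic term by \textbf{(H2)} and Lemma~\ref{basic lemma}(ii), and close with the Volterra Gr\"onwall Lemma~\ref{Gronwall}. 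Your organization --- first isolating the linear-in-$\mathcal{D}^{\varepsilon}$ terms, then bounding all remainders at once --- is slightly cleaner than the paper's term-by-term treatment, and your remark that $b(s,x,\cdot)\in C^1_b(\mathcal{P}_2)$ by \textbf{(H4)} is precisely what legitimizes invoking the \cite{RW} lemma for unbounded $X^{\varepsilon}_{s,\xi}$.
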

	\begin{proof}
		\begin{align}
			&\E|Z^{\varepsilon}_{t,\xi}-Z_{t,\xi}|^p\nonumber\\
			&=\E\Bigg|\int_{0}^{t}K_1(t,s)\cdot\frac{b(s,X^\varepsilon_{s,\xi},\mathcal{L}_{X^\varepsilon_{s,\xi}})-b(s,X^0_{s,\xi},\delta_{X^0_{s,\xi}})}{\sqrt{\varepsilon}}\d s+\int_{0}^{t}K_2(t,s)\sigma(s,X^\varepsilon_{s,\xi},\mathcal{L}_{X^\varepsilon_{s,\xi}})\d W_s\nonumber\\
			&\quad-\int_{0}^{t}K_1(t,s)\Big[\nabla_{Z_{s,\xi}}b(s,X^0_{s,\xi},\delta_{X^0_{s,\xi}})+ \E  \big\langle D^L b(s,X^0_{s,\xi},\delta_{X^0_{s,\xi}})(X^0_{s,\xi}),Z_{s,\xi}\big\rangle\Big]\d s-\int_{0}^{t}K_2(t,s)\sigma(t,X^0_{s,\xi},\delta_{X^0_{s,\xi}})\d W_s\Bigg|^p\nonumber\\&\leq 3^{p-1}\E\Big|\int_{0}^{t}K_1(t,s)\cdot \big(\frac{b(s,X^{\varepsilon}_{s,\xi},\mathcal{L}_{X^{\varepsilon}_{s,\xi}})-b(s,X^{0}_{s,\xi},\mathcal{L}_{X^{\varepsilon}_{s,\xi}})}{\sqrt{\varepsilon}}-\nabla_{Z_{s,\xi}}b(s,X^0_{s,\xi},\delta_{X^0_{s,\xi}})\big)\d s\Big|^p\nonumber\\&\quad+3^{p-1}\E\Big|\int_{0}^{t}K_1(t,s)\cdot \big(\frac{b(s,X^{0}_{s,\xi},\mathcal{L}_{X^{\varepsilon}_{s,\xi}})-b(s,X^{0}_{s,\xi},\delta_{X^{0}_{s,\xi}})}{\sqrt{\varepsilon}}-\E\big\langle D^L b(s,X^0_{s,\xi},\delta_{X^0_{s,\xi}})(X^0_{s,\xi}),Z_{s,\xi}\big\rangle\big)\Big|^p\d s\nonumber\\&\quad+3^{p-1}\E\Big|\int_{0}^{t}K_2(t,s)\cdot\big(\sigma(s,X^\varepsilon_{s,\xi},\mathcal{L}_{X^\varepsilon_{s,\xi}})-\sigma(t,X^0_{s,\xi},\delta_{X^0_{s,\xi}})\big)\d W_s\Big|^p\nonumber\\&:=\varPi_1+\varPi_2+\varPi_3.\nonumber
		\end{align}
		For the term $\varPi_1$, by H$\mathrm{\ddot{o}}$lder's inequality, Lemmas \ref{xx},   \ref{zxi}, \textbf{(H1)}-\textbf{(H4)} and \textbf{(H6)} we have
		\begin{align}
			\varPi_1&=3^{p-1}\E\Big|\int_{0}^{t}K_1(t,s)\cdot \big(\frac{b(s,X^{\varepsilon}_{s,\xi},\mathcal{L}_{X^{\varepsilon}_{s,\xi}})-b(s,X^{0}_{s,\xi},\mathcal{L}_{X^{\varepsilon}_{s,\xi}})}{\sqrt{\varepsilon}}-\nabla_{Z_{s,\xi}}b(s,X^0_{s,\xi},\delta_{X^0_{s,\xi}})\big)\d s\Big|^p\nonumber\\&\leq 3^{p-1}\E\int_{0}^{t}K_1(t,s)\cdot\Big|\frac{b(s,X^{\varepsilon}_{s,\xi},\mathcal{L}_{X^{\varepsilon}_{s,\xi}})-b(s,X^{0}_{s,\xi},\mathcal{L}_{X^{\varepsilon}_{s,\xi}})}{\sqrt{\varepsilon}}-\nabla_{Z_{s,\xi}}b(s,X^0_{s,\xi},\delta_{X^0_{s,\xi}})\Big|^p\d s\cdot\Big(\int_{0}^{t}K_1(t,s)\d s\Big)^{p-1}\nonumber\\&\leq C_{p,T}\E\int_{0}^{t}K_1(t,s)\cdot\Big|\int_{0}^{1}\nabla_{Z^{\varepsilon}_{s,\xi}}b(s,M_s^{\xi}(u),\mathcal{L}_{X^{\varepsilon}_{s,\xi}})-\nabla_{Z_{s,\xi}}b(s,X^0_{s,\xi},\mathcal{L}_{X^{\varepsilon}_{s,\xi}})\d u\Big|^p\d s\nonumber\\&\leq C_{p,T}\E\int_{0}^{t}K_1(t,s)\cdot\int_{0}^{1}\Big|\nabla_{Z^{\varepsilon}_{s,\xi}}b(s,M_s^{\xi}(u),\mathcal{L}_{X^{\varepsilon}_{s,\xi}})-\nabla_{Z^{\varepsilon}_{s,\xi}}b(s,X^0_{s,\xi},\mathcal{L}_{X^{\varepsilon}_{s,\xi}})\Big|^p\d u\d s\nonumber\\&\quad+C_{p,T}\E\int_{0}^{t}K_1(t,s)\cdot\int_{0}^{1}\Big|\nabla_{Z^{\varepsilon}_{s,\xi}}b(s,X^{0}_{s,\xi},\mathcal{L}_{X^{\varepsilon}_{s,\xi}})-\nabla_{Z_{s,\xi}}b(s,X^0_{s,\xi},\mathcal{L}_{X^{\varepsilon}_{s,\xi}})\Big|^p\d u\d s\nonumber\\&\leq C_{p,T,L_5}\E\int_{0}^{t}K_1(t,s)\cdot\int_{0}^{1}|M_s^{\xi}(u)-X^0_{s,\xi}|^p\d u\cdot |Z^{\varepsilon}_{s,\xi}|^p\d s+C_{p,T,L_3}\int_{0}^{t}K_1(t,s)\cdot \E|Z^{\varepsilon}_{s,\xi}-Z_{s,\xi}|^p\d s\nonumber\\&\leq C_{p,T,L_3}\int_{0}^{t}K_1(t,s)\cdot \E|Z^{\varepsilon}_{s,\xi}-Z_{s,\xi}|^p\d s+C_{p,T,L_5}\int_{0}^{t}K_1(t,s)\cdot \E\big(|X^{\varepsilon}_{s,\xi}-X^0_{s,\xi}|^p\cdot|Z^{\varepsilon}_{s,\xi}|^p\big)\d s\nonumber\\&\leq C_{p,T,L_3}\int_{0}^{t}K_1(t,s)\cdot \E|Z^{\varepsilon}_{s,\xi}-Z_{s,\xi}|^p\d s+C_{p,T,L_5}\int_{0}^{t}K_1(t,s)\cdot \big(\E|Z^{\varepsilon}_{s,\xi}|^{2p}\big)^{\frac{1}{2}}\cdot  \big(\E|X^{\varepsilon}_{s,\xi}-X^{0}_{s,\xi}|^{2p}\big)^{\frac{1}{2}}\d s \nonumber\\&\leq C_{p,T,L_3}\int_{0}^{t}K_1(t,s)\cdot \E|Z^{\varepsilon}_{s,\xi}-Z_{s,\xi}|^p\d s+C_{p,T,L_1,L_2}\varepsilon^{\frac{p}{2}}\big(1+|\xi|^{2p}\big)\,, \nonumber
		\end{align}
		where $M_s^{\xi}(u)=X^0_{s,\xi}+u(X^{\varepsilon}_{s,\xi}-X^0_{s,\xi})$, $u\in [0,T]$.
		
		For the term $\varPi_2$, by H$\mathrm{\ddot{o}}$lder's inequality, Lemmas \ref{xx},   \ref{zxi}, and \textbf{(H1)}-\textbf{(H5)} we have
	 \begin{align}
			\varPi_2&=3^{p-1}\E\Big|\int_{0}^{t}K_1(t,s)\cdot \Big(\frac{b(s,X^{0}_{s,\xi},\mathcal{L}_{X^{\varepsilon}_{s,\xi}})-b(s,X^{0}_{s,\xi},\delta_{X^{0}_{s,\xi}})}{\sqrt{\varepsilon}}-\E\big\langle D^L b(s,X^0_{s,\xi},\delta_{X^0_{s,\xi}})(X^0_{s,\xi}),Z_{s,\xi}\big\rangle\Big)\d s\Big|^p\nonumber\\&\leq 3^{p-1}\E\Big|\int_{0}^{t}K_1(t,s)\cdot \Big(\int_{0}^{1}\big(\E\big\langle D^Lb(s,X^{0}_{s,\xi},\mathcal{L}_{R^{\xi}_s(r)})(R^{\xi}_s(r)),Z^{\varepsilon}_{s,\xi}\big\rangle\nonumber\\&\quad-\E\big\langle D^L b(s,X^0_{s,\xi},\delta_{X^0_{s,\xi}})(X^0_{s,\xi}),Z_{s,\xi}\big\rangle\big)\d r\d s\Big|^p\nonumber\\&\leq 6^{p-1}\E\Big|\int_{0}^{t}K_1(t,s)\cdot \Big(\int_{0}^{1}\big(\E\big\langle D^Lb(s,X^{0}_{s,\xi},\mathcal{L}_{R^{\xi}_s(r)})(R^{\xi}_s(r)),Z^{\varepsilon}_{s,\xi}\big\rangle\nonumber\\&\quad-\E\big\langle D^L b(s,X^0_{s,\xi},\delta_{X^0_{s,\xi}})(X^0_{s,\xi}),Z^{\varepsilon}_{s,\xi}\big\rangle\Big)\d r\d s\Big|^p+6^{p-1}\E\Big|\int_{0}^{t}K_1(t,s)\nonumber\\&\quad \cdot\Big(\int_{0}^{1}\big(\E\big\langle D^Lb(s,X^{0}_{s,\xi},\delta_{X^0_{s,\xi}})(X^0_{s,\xi}),Z^{\varepsilon}_{s,\xi}\big\rangle-\E\big\langle D^L b(s,X^0_{s,\xi},\delta_{X^0_{s,\xi}})(X^0_{s,\xi}),Z_{s,\xi}\big\rangle\Big)\d r\d s\Big|^p\nonumber\\&\leq 6^{p-1}\E\int_{0}^{t}K_1(t,s)\cdot\Big|\int_{0}^{1}\E\big\langle D^Lb(s,X^{0}_{s,\xi},\mathcal{L}_{R^{\xi}_s(r)})(R^{\xi}_s(r)),Z^{\varepsilon}_{s,\xi}\big\rangle-\E\big\langle D^L b(s,X^0_{s,\xi},\delta_{X^0_{s,\xi}})(X^0_{s,\xi}),Z^{\varepsilon}_{s,\xi}\big\rangle \d r\Big|^p\d s\nonumber\\&\cdot \Big|\int_{0}^{t}K_1(t,s)\d s\Big|^{p-1}+6^{p-1}\E\int_{0}^{t}K_1(t,s)\cdot\Big|\int_{0}^{1}\E\big\langle D^Lb(s,X^{0}_{s,\xi},\delta_{X^0_{s,\xi}})(X^0_{s,\xi}),Z^{\varepsilon}_{s,\xi}\big\rangle\nonumber\\&\quad-\E\big\langle D^L b(s,X^0_{s,\xi},\delta_{X^0_{s,\xi}})(X^0_{s,\xi}),Z_{s,\xi}\big\rangle \d r\Big|^p\d s\cdot \Big|\int_{0}^{t}K_1(t,s)\d s\Big|^{p-1}\nonumber\\&\leq C_{p,T,L_4}\E\int_{0}^{t}K_1(t,s)\cdot\int_{0}^{1}\Big|\mathbb{W}_2(\mathcal{L}_{R^{\xi}_s(r)},\delta_{X^0_{s,\xi}})+\big(\E|R^{\xi}_s(r)-X^0_{s,\xi}|^2\big)^{\frac{1}{2}}\Big|^p\d r\cdot \big(\E|Z^{\varepsilon}_{s,\xi}|^2\big)^{\frac{p}{2}}\d s\nonumber\\&\quad+C_{p,T,L_3}\int_{0}^{t}K_1(t,s)\cdot \E|Z^{\varepsilon}_{s,\xi}-Z_{s,\xi}|^p\d s\nonumber\\&\leq C_{p,T,L_4}\int_{0}^{t}K_1(t,s)\cdot \E|X^{\varepsilon}_{s,\xi}-X^0_{s,\xi}|^p\cdot \E|Z^{\varepsilon}_{s,\xi}|^p\d s+C_{p,T,L_3}\int_{0}^{t}K_1(t,s)\cdot \E|Z^{\varepsilon}_{s,\xi}-Z_{s,\xi}|^p\d s\nonumber\\&\leq C_{p,T,L_3}\int_{0}^{t}K_1(t,s)\cdot \E|Z^{\varepsilon}_{s,\xi}-Z_{s,\xi}|^p\d s+C_{p,T,L_1,L_2,L_4}\varepsilon^{\frac{p}{2}}\big(1+|\xi|^{2p}\big)\,, \nonumber
		\end{align}
		where $R^{\xi}_s(r):=X^0_{s,\xi}+r(X^{\varepsilon}_{s,\xi}-X^0_{s,\xi}), t\in [0,T]$.
		
		For the term $\varPi_3$, by BDG's inequality, H$\mathrm{\ddot{o}}$lder's inequality, Lemma \ref{xx} and $\mathbf{(H1)}$-$\mathbf{(H3)}$, we have
		\begin{align}
			\varPi_3&=3^{p-1}\E\Big|\int_{0}^{t}K_2(t,s)\cdot\big(\sigma(s,X^\varepsilon_{s,\xi},\mathcal{L}_{X^\varepsilon_{s,\xi}})-\sigma(t,X^0_{s,\xi},\delta_{X^0_{s,\xi}})\big)\d W_s\Big|^p\nonumber\\&\leq C_p \E\Big(\int_{0}^{t}K^2_2(t,s)\cdot\|\sigma(s,X^\varepsilon_{s,\xi},\mathcal{L}_{X^\varepsilon_{s,\xi}})-\sigma(t,X^0_{s,\xi},\delta_{X^0_{s,\xi}})\|^2\d s\Big)^{\frac{p}{2}}\nonumber\\&\leq C_p \E\int_{0}^{t}K^2_2(t,s)\cdot\|\sigma(s,X^\varepsilon_{s,\xi},\mathcal{L}_{X^\varepsilon_{s,\xi}})-\sigma(t,X^0_{s,\xi},\delta_{X^0_{s,\xi}})\|^p\d s\cdot \Big(\int_{0}^{t}K_2^2(t,s)\d s\Big)^{\frac{p}{2}-1}\nonumber\\&\leq C_{p,T,L_2}\int_{0}^{t}K_2^2(t,s)\cdot \E|X^\varepsilon_{s,\xi}-X^0_{s,\xi}|^p\d s\nonumber\\&\leq C_{p,T,L_1,L_2}\varepsilon^{\frac{p}{2}}\big(1+|\xi|^p\big)\leq C_{p,T,L_1,L_2}\varepsilon^{\frac{p}{2}}\big(1+|\xi|^{2p}\big).\nonumber
		\end{align}
		So by Remark \ref{condition of Volterra} and Lemma \ref{Gronwall}, we have
		\begin{align}
			\E|Z^{\varepsilon}_{t,\xi}-Z_{t,\xi}|^p&\leq C_{p,T,L_3}\int_{0}^{t}K_1(t,s)\cdot \E|Z^{\varepsilon}_{s,\xi}-Z_{s,\xi}|^p\d s+C_{p,T,L_1,L_2,L_4}\varepsilon^{\frac{p}{2}}\big(1+|\xi|^{2p}\big)\nonumber\\&\leq C_{p,T,L_1,L_2,L_3,L_4}\varepsilon^{\frac{p}{2}}\big(1+|\xi|^{2p}\big)\nonumber.
		\end{align}
	\end{proof}
We are now ready to prove Theorem \ref{CLT}. 
	\begin{proof}[Proof of Theorem \ref{CLT}] 
		 We define 
		\begin{equation}
			Z(r,t,\xi)=\begin{cases}
				Z_{t,\xi}, \ r=0,\nonumber\\
				Z^{\varepsilon_{n}}_{t,\xi}+\frac{r-\varepsilon_n T}{\varepsilon_{n+1}}\big(Z^{\varepsilon_{n+1}}_{t,\xi}-Z^{\varepsilon_{n}}_{t,\xi}\big), \ \varepsilon_{n+1}T<r\leq \varepsilon_n T,\ n\in\mathbb{N}. \nonumber
			\end{cases}
		\end{equation}
		Fix $R>0$, by Lemmas \ref{zxi}[(ii)], \ref{zxi}[(iii)], \ref{zxieta0}, \ref{zxitt0}, \ref{zzxi}, there exists a constant $\beta:=\beta(\gamma)>0$ such that for all $p>2$, $t,t'\in [0,T]$, $\xi,\eta\in D_R:=\{\xi\in\mathbb{R}^d,|\xi|\leq R\}$, we have
		$$\E\Big|Z(r,t,\xi)-Z(r',t',\eta)\Big|^p\leq C_{p,T,R,L_3,L_4}\Big(|r-r'|^{\beta p}+|t-t'|^{\beta p}+|\xi-\eta|^{\beta p}\Big).$$
		Thus, by Kolmogorov’s continuity criterion, there is a $p$-order integrable random variable $A(\omega)$ such that
		$$\sup_{t\in [0,T],|\xi|\leq R}\Big|Z(r,t,\xi)-Z(r',t,\xi)\Big|\leq A(\omega)|r-r'|^{\lambda},\quad a.s.,$$
		where $\lambda\in (0,\beta-\frac{d+2}{p})$. In particular, taking $r=0,r'=\varepsilon_{n}T$ we have
		$$\E\Big(\sup_{t\in [0,T],|\xi|\leq R}\big|Z^{\varepsilon_{n}}_{t,\xi}-Z_{t,\xi}\big|\Big)^p\leq |\varepsilon_{n}T|^{p\lambda}\E|A(\omega)|^p,$$
		which yields the desired convergence.
	\end{proof}
	\section{The moderate deviation principle for Volterra type McKean-Vlasov SDEs}\label{MDP-1}
	In this section, we shall  study moderate deviation principle for our Volterra type McKean-Vlasov SDEs.
	
	For $\varepsilon>0$, define 
	\begin{equation}
	 Y^{\varepsilon}_{\cdot}:=\frac{X^{\varepsilon}_{\cdot,\xi}-X^{0}_{\cdot,\xi}}{\sqrt{\varepsilon}h(\varepsilon)}=\frac{\mathcal{G}^{\varepsilon}(W)-X^{0}_{\cdot,\xi}}{\sqrt{\varepsilon}h(\varepsilon)}:=\mathcal{J}^{\varepsilon}(W)\,.
	\end{equation}
Assume  that  $h(\varepsilon)$ satisfies
	$$h(\varepsilon)\to \infty, \ \sqrt{\varepsilon}h(\varepsilon)\to 0 \ \text{as}\ \varepsilon\to 0.$$
		This $\mathcal{J}^{\varepsilon}: C_T^m\to C_T^d$ is a Borel measurable map for each $\varepsilon>0$, and the
  $Y^{\varepsilon}_{\cdot}$ satisfies the following Volterra type  McKean-Vlasov SDEs:
	\begin{align}\label{Volterra eq Y of MDP}
		Y^\varepsilon_{t}&=\int_{0}^{t}K_1(t,s)\cdot\frac{b(s,X^0_{s,\xi}+\sqrt{\varepsilon}h(\varepsilon)Y^\varepsilon_{s,\xi},\mathcal{L}_{X^{\varepsilon}_{s,\xi}})-b(s,X^0_{s,\xi},\delta_{X^0_{s,\xi}})}{\sqrt{\varepsilon}h(\varepsilon)}\d s\nonumber\\&\quad+\frac{1}{h(\varepsilon)}\int_{0}^{t}K_2(t,s)\sigma(s,X^0_{s,\xi}+\sqrt{\varepsilon}h(\varepsilon)Y^\varepsilon_{s},\mathcal{L}_{X^{\varepsilon}_{s,\xi}})\d W_s\,. 
	\end{align}
	It is clear that the existence and unique of $Y^\varepsilon_{\cdot}$ hold by Theorem $3.1$ of \cite{LG}, then the moderate deviation principle for $\{X^{\varepsilon}\}_{\varepsilon>0}$ is equivalent to the large deviation principle for $\{Y^{\varepsilon}\}_{\varepsilon>0}$.
	
	Applying Girsanov's theorem,  the shifted version $Y^{\varepsilon,v}_{\cdot}:=\mathcal{J}^{\varepsilon}\big(W+h(\varepsilon)\int_{0}^{\cdot}v_sds\big)$ is   the unique strong solution of the controlled equation under $\mathbb{P}$:
 	\begin{align}\label{controlled Volterra eq-MDP}
		Y^{\varepsilon,v}_{t}&=\int_{0}^{t}K_1(t,s)\cdot\frac{b(s,X^0_{s,\xi}+\sqrt{\varepsilon}h(\varepsilon)Y^{\varepsilon,v}_{s},\mathcal{L}_{X^{\varepsilon}_{s,\xi}})-b(s,X^0_{s,\xi},\delta_{X^0_{s,\xi}})}{\sqrt{\varepsilon}h(\varepsilon)}\d s\nonumber\\&\quad+\int_{0}^{t}K_2(t,s)\sigma(s,X^0_{s,\xi}+\sqrt{\varepsilon}h(\varepsilon)Y^{\varepsilon,v}_{s},\mathcal{L}_{X^{\varepsilon}_{s,\xi}})v_s\d s\\&\quad+\frac{1}{h(\varepsilon)}\int_{0}^{t}K_2(t,s)\sigma(s,X^0_{s,\xi}+\sqrt{\varepsilon}h(\varepsilon)Y^{\varepsilon,v}_{s},\mathcal{L}_{X^{\varepsilon}_{s,\xi}})\d W_s.\nonumber
	\end{align}
	For all $v\in\mathcal{A}$, we define $\mathcal{J}^0_v$ to be the solution of the limiting Volterra integral equation
	\begin{align}\label{controlled determinstic Volterra eq-MDP}
		\psi_t=\int_{0}^{t}K_1(t,s)\nabla_{\psi_s}b(s,X^0_{s,\xi},\delta_{X^0_{s,\xi}})\d s+\int_{0}^{t}K_2(t,s)\sigma(s,X^0_{s,\xi},\delta_{X^0_{s,\xi}})v_s\d s.
	\end{align}
	To obtain the MDP, in addition to \textbf{(H1)}-\textbf{(H3)} and \textbf{(H6)}, we also need the following assumption. 
	
	\textbf{(H7)} There exists a positive constant $L_6$ such that for $t\in [0,T]$
	$$\nabla b(t,0,\delta_0)\leq L_6.$$

	Define   the new rate function as
	\begin{align}\label{def of G}
		\Lambda(\psi):=\inf\Big\{\frac{1}{2}\int_{0}^{T}|v_s|^2\d s: v\in L^2 \ \text{such that}\ \psi\in\mathcal{J}^0_v\Big\}.
	\end{align}
	We now state our  results of MDP.
	\begin{thm}\label{MDP}
		Under \textbf{(H1)}-\textbf{(H3)} and \textbf{(H6)}-\textbf{(H7)}, the family $\{X^{\varepsilon}\}_{\varepsilon>0}$,  the unique solution of \eqref{Volterra MV eq}, satisfies the  Moderate Deviations Principle with rate function \eqref{def of G} and speed $h^2(\varepsilon)$, where $\mathcal{J}^0_v$ corresponds to the unique solution of the limiting equation \eqref{controlled determinstic Volterra eq-MDP}.
	\end{thm}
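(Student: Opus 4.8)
The plan is to transfer the argument of Section~\ref{The large deviation principle for Volterra type McKean-Valsov SDEs} to the family $\{Y^{\varepsilon}_{\cdot}\}=\{\mathcal{J}^{\varepsilon}(W)\}$. Since $X^{\varepsilon}_{\cdot,\xi}\mapsto Y^{\varepsilon}_{\cdot}=(X^{\varepsilon}_{\cdot,\xi}-X^0_{\cdot,\xi})/(\sqrt{\varepsilon}h(\varepsilon))$ is, for each fixed $\varepsilon$, an affine homeomorphism of $C^d_T$, the MDP for $\{X^{\varepsilon}\}$ with speed $h^2(\varepsilon)$ and rate $\Lambda$ is equivalent to the LDP for $\{Y^{\varepsilon}\}$ with the same speed and rate, so it suffices to verify Conditions (i)--(iii) of Lemma~\ref{general LDP} for $\mathcal{J}^{\varepsilon}$, the role of the small parameter being played by $h(\varepsilon)^{-2}$ (so that the Girsanov shift is $W+h(\varepsilon)\int_0^{\cdot}v_s\,\d s$, matching \eqref{controlled Volterra eq-MDP}). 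As a preliminary one checks that \eqref{controlled determinstic Volterra eq-MDP} is \emph{linear} in $\psi$, with coefficient $\nabla b(s,X^0_{s,\xi},\delta_{X^0_{s,\xi}})$ bounded on $[0,T]$: by \textbf{(H7)}, \textbf{(H6)} and Lemma~\ref{x0xi},
\[\|\nabla b(s,X^0_{s,\xi},\delta_{X^0_{s,\xi}})\|\le L_6+2L_5|X^0_{s,\xi}|\le C_{T,L_2,L_5,L_6}(1+|\xi|),\]
while the forcing term $K_2(t,\cdot)\sigma(\cdot,X^0_{\cdot,\xi},\delta_{X^0_{\cdot,\xi}})v_{\cdot}$ lies in $L^2$; hence Lemma~\ref{Gronwall} together with Lemma~\ref{basic lemma} yields existence, uniqueness, $\varepsilon$-independent moment bounds and $\gamma$-H\"older continuity for $\mathcal{J}^0_v$. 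In particular $\mathcal{J}^0_v$ is a single point, so Condition (iii) (Assumption \textbf{(U)}) holds automatically, exactly as in the LDP case (cf. Remark~$3.12$ of \cite{JP}).

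For Condition (i) I would cast \eqref{controlled Volterra eq-MDP} into the abstract form \eqref{eq of Lambda} of Lemma~\ref{estimate of Lambda}, with $x:=0$, $\Lambda^{\varepsilon,v}_{t,x}:=Y^{\varepsilon,v}_{t}$,
\[M:=\frac{b(s,X^0_{s,\xi}+\sqrt{\varepsilon}h(\varepsilon)Y^{\varepsilon,v}_s,\mathcal{L}_{X^{\varepsilon}_{s,\xi}})-b(s,X^0_{s,\xi},\delta_{X^0_{s,\xi}})}{\sqrt{\varepsilon}h(\varepsilon)},\qquad N:=\sigma(s,X^0_{s,\xi}+\sqrt{\varepsilon}h(\varepsilon)Y^{\varepsilon,v}_s,\mathcal{L}_{X^{\varepsilon}_{s,\xi}}),\]
and $G:=h(\varepsilon)^{-1}N$ (the control term of \eqref{controlled Volterra eq-MDP} carries $K_2$ rather than $K_1$, which is immaterial: both kernels satisfy \textbf{(H1)} and the bounds of Lemma~\ref{basic lemma}(iii),(vi) hold verbatim with $K_1$ replaced by $K_2$). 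The structural hypotheses of Lemmas~\ref{estimate of Lambda} and \ref{Lambda-ts} then hold for all $\varepsilon$ small: \textbf{(H2)} gives
\[|M|\le L_1\Big(|Y^{\varepsilon,v}_s|+\tfrac{1}{\sqrt{\varepsilon}h(\varepsilon)}\mathbb{W}_2(\mathcal{L}_{X^{\varepsilon}_{s,\xi}},\delta_{X^0_{s,\xi}})\Big),\]
where the last term is even $O(h(\varepsilon)^{-1})$ by Lemma~\ref{xx}; and since $\sqrt{\varepsilon}h(\varepsilon)\le1\le h(\varepsilon)$ eventually, \textbf{(H3)} gives $\|N\|\vee\|G\|\le L_2(1+|Y^{\varepsilon,v}_s|+|X^0_{s,\xi}|+\mathbb{W}_2(\mathcal{L}_{X^{\varepsilon}_{s,\xi}},\delta_0))$. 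Lemmas~\ref{estimate of Lambda} and \ref{Lambda-ts} then furnish, uniformly over $v\in\mathcal{A}_N$ and small $\varepsilon$, the bound $\sup_{t\in[0,T]}\E|Y^{\varepsilon,v}_t|^p\le C_{p,T,N,L_2}(1+|\xi|^p)$ for $p>2$ and a uniform $\alpha$-H\"older estimate for $Y^{\varepsilon,v^{\varepsilon}}$, hence tightness of $\{Y^{\varepsilon,v^{\varepsilon}}\}_{\varepsilon>0}$ in $C^d_T$, which is Condition (i).

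Condition (ii) is handled as in Lemma~\ref{compact level sets-LDP}: once one has established the MDP analogue of Lemma~\ref{unique characterised}, namely $\Lambda(\psi)=\inf\{\tfrac12\int_0^T|v_s|^2\d s:\ \psi=\mathcal{J}^0_v\}$ with $\mathcal{J}^0_v$ the unique solution of \eqref{controlled determinstic Volterra eq-MDP}, a sequence in a level set corresponds to controls in the weakly compact set $\mathcal{S}_{2N}$; the associated solutions are uniformly bounded and $\gamma$-H\"older equicontinuous by the preliminary facts above, so the Arzel\`a--Ascoli theorem gives relative compactness, and closedness follows by extracting $v^{n_k}\to v\in\mathcal{S}_{2N}$ weakly and passing to the limit in the linear equation \eqref{controlled determinstic Volterra eq-MDP} (the drift term converges by uniform convergence of $\psi^{n_k}$, the control term by weak convergence against the fixed $L^2$ kernel). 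The identification of $\mathcal{J}^0_v$ itself mimics Lemma~\ref{unique characterised}: from the tightness just proved and Skorokhod's representation one obtains an almost sure limit $\psi$ of some subsequence $Y^{\varepsilon_n,v^{\varepsilon_n}}$, and checks through a bounded continuous functional $\Phi_t$ as there that $\psi$ solves \eqref{controlled determinstic Volterra eq-MDP}.

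The crux — as flagged in the introduction — is the linearization step inside this last passage to the limit, namely
\[\frac{b(s,X^0_{s,\xi}+\sqrt{\varepsilon}h(\varepsilon)Y^{\varepsilon,v^{\varepsilon}}_s,\mathcal{L}_{X^{\varepsilon}_{s,\xi}})-b(s,X^0_{s,\xi},\delta_{X^0_{s,\xi}})}{\sqrt{\varepsilon}h(\varepsilon)}\ \longrightarrow\ \nabla b(s,X^0_{s,\xi},\delta_{X^0_{s,\xi}})\,\psi_s,\]
together with the validity of carrying this limit under $\int_0^t K_1(t,s)\,\cdot\,\d s$. I would split the left-hand side into the state increment $\int_0^1\nabla b\big(s,X^0_{s,\xi}+r\sqrt{\varepsilon}h(\varepsilon)Y^{\varepsilon,v^{\varepsilon}}_s,\mathcal{L}_{X^{\varepsilon}_{s,\xi}}\big)Y^{\varepsilon,v^{\varepsilon}}_s\,\d r$ and the measure increment $\big(b(s,X^0_{s,\xi},\mathcal{L}_{X^{\varepsilon}_{s,\xi}})-b(s,X^0_{s,\xi},\delta_{X^0_{s,\xi}})\big)/(\sqrt{\varepsilon}h(\varepsilon))$: the former converges pointwise using $\sqrt{\varepsilon}h(\varepsilon)Y^{\varepsilon,v^{\varepsilon}}_s\to0$, $\mathcal{L}_{X^{\varepsilon}_{s,\xi}}\to\delta_{X^0_{s,\xi}}$ and $Y^{\varepsilon,v^{\varepsilon}}_s\to\psi_s$ together with the continuity of $\nabla b$ from \textbf{(H6)}, with uniform integrability supplied by the $p$-th moment bound of Lemma~\ref{estimate of Lambda} ($p>2$); the latter is dominated by $L_1\mathbb{W}_2(\mathcal{L}_{X^{\varepsilon}_{s,\xi}},\delta_{X^0_{s,\xi}})/(\sqrt{\varepsilon}h(\varepsilon))\le C_{T,L_1,L_2}(1+|\xi|)/h(\varepsilon)\to0$ by Lemma~\ref{xx}, and hence disappears. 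This is precisely why \eqref{controlled determinstic Volterra eq-MDP}, in contrast to the CLT equation \eqref{Volterra eq-after CLT of Z}, contains no Lions-derivative term: the distributional fluctuation is of order $\sqrt{\varepsilon}=o(\sqrt{\varepsilon}h(\varepsilon))$. The remaining terms are routine: $h(\varepsilon_n)^{-1}\int_0^t K_2(t,s)\sigma\,\d W_s\to0$ in $L^p$ by Lemma~\ref{basic lemma}(ii) and the uniform bound on $\sigma(s,X^0_{s,\xi}+\sqrt{\varepsilon}h(\varepsilon)Y^{\varepsilon,v^{\varepsilon}}_s,\mathcal{L}_{X^{\varepsilon}_{s,\xi}})$, and the control term converges as in Lemma~\ref{unique characterised} (strong convergence of the integrand against weak convergence of $v^{\varepsilon_n}$). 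Combining the three verified conditions with Lemma~\ref{general LDP} completes the proof of Theorem~\ref{MDP}.
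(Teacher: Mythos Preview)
Your proposal is correct and follows essentially the same route as the paper: reduction to an LDP for $Y^{\varepsilon}=\mathcal{J}^{\varepsilon}(W)$ at speed $h(\varepsilon)^{-2}$, verification of the three conditions of Lemma~\ref{general LDP} via the abstract Lemmas~\ref{estimate of Lambda}--\ref{Lambda-ts} for tightness, a Skorokhod/bounded-continuous-functional argument to identify $\mathcal{J}^0_v$ with the unique solution of \eqref{controlled determinstic Volterra eq-MDP}, and an Arzel\`a--Ascoli argument for compactness of level sets. Your explanation of why the Lions-derivative term disappears (the measure increment is $O(h(\varepsilon)^{-1})$ by Lemma~\ref{xx}) matches the paper's treatment of $\mathcal{V}_1$, and your observation about the $K_1$/$K_2$ discrepancy in the control term is a correct and useful remark that the paper glosses over.
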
 
	
 We are going to use Lemma \ref{general LDP}  and will then verify the two conditions there.
 
\noindent {\bf Step  1}\ Verification of the Condition $(i)$ of Lemma \ref{general LDP}. 
%	\footnote{Which  theorem you use to prove the above theorem? 
%	Condition	(i) of what?} 
		\begin{lem}\label{existence-mid}
		Assume that \textbf{(H1)}-\textbf{(H3)} and \textbf{(H6)}-\textbf{(H7)} hold.  Then there exists a unique solution $\psi_t$ such that for almost all $t\geq0$, $N>0,v\in\mathcal{A}_N, \xi\in \mathbb{R}^d$, some constant $C_{T,N,L_2,L_5,L_6}$
		\begin{align}\label{moment estimate of p-MDP}
			\sup_{t\in [0,T]}|\psi_t|\leq C_{T,N,L_2,L_5,L_6}(1+|\xi|),
		\end{align}
		for almost all $t\in [0,T]$.
	\end{lem}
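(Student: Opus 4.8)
The plan is to treat \eqref{controlled determinstic Volterra eq-MDP} as a \emph{linear} Volterra integral equation in $\psi$: its drift $\psi_s\mapsto\nabla b(s,X^0_{s,\xi},\delta_{X^0_{s,\xi}})\psi_s$ is linear in $\psi$ with a coefficient matrix that, for $\xi$ fixed, is bounded and measurable in $s$ (by the pointwise estimate recorded below), while the forcing term $\int_0^t K_2(t,s)\sigma(s,X^0_{s,\xi},\delta_{X^0_{s,\xi}})v_s\,\d s$ does not involve $\psi$. Existence and uniqueness of a solution on $[0,T]$ therefore follow just as for \eqref{controlled determinstic Volterra eq} in Lemma \ref{existence}: one may invoke \cite[Theorem $3.1$]{LG} with effective drift $(s,y)\mapsto\nabla b(s,X^0_{s,\xi},\delta_{X^0_{s,\xi}})y$ and (state-independent) effective diffusion $s\mapsto\sigma(s,X^0_{s,\xi},\delta_{X^0_{s,\xi}})$, or run a Picard iteration in $C^d_T$ and close it with the Volterra--Gronwall inequality of Lemma \ref{Gronwall}. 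The only point worth flagging is that this is well-posedness for a possibly singular kernel $K_1$, which is why I would cite \cite{LG} rather than reprove it.

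For the estimate \eqref{moment estimate of p-MDP}, first bound the coefficients along the deterministic path $X^0_{\cdot,\xi}$. By \textbf{(H6)} and \textbf{(H7)},
\[
\big\|\nabla b(s,X^0_{s,\xi},\delta_{X^0_{s,\xi}})\big\|\le\big\|\nabla b(s,0,\delta_0)\big\|+L_5\big(|X^0_{s,\xi}|+\mathbb{W}_2(\delta_{X^0_{s,\xi}},\delta_0)\big)\le L_6+2L_5\,|X^0_{s,\xi}|,
\]
and by \textbf{(H3)}, $\|\sigma(s,X^0_{s,\xi},\delta_{X^0_{s,\xi}})\|\le L_2(1+2|X^0_{s,\xi}|)$. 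Since Lemma \ref{x0xi} (with $p=1$) gives $\sup_{s\in[0,T]}|X^0_{s,\xi}|\le C_{T,L_2}(1+|\xi|)$, both quantities are $\le C_{T,L_2,L_5,L_6}(1+|\xi|)$. Inserting these into \eqref{controlled determinstic Volterra eq-MDP}, controlling the forcing term by Cauchy--Schwarz with $\int_0^T|v_s|^2\,\d s\le N$ and $\int_0^t K_2^2(t,s)\,\d s\le C_T$ (immediate from \textbf{(H1)}), one arrives at
\[
|\psi_t|\le C_{T,N,L_2,L_5,L_6}(1+|\xi|)+C_{T,L_2,L_5,L_6}(1+|\xi|)\int_0^t K_1(t,s)\,|\psi_s|\,\d s,\qquad t\in[0,T].
\]

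To conclude, apply Lemma \ref{Gronwall} with $g\equiv C_{T,N,L_2,L_5,L_6}(1+|\xi|)$ and kernel $K(t,s):=C_{T,L_2,L_5,L_6}(1+|\xi|)K_1(t,s)$; a constant multiple of $K_1$ belongs to $\mathcal{K}$ by Remark \ref{condition of Volterra}, and since $g$ is constant the integrability hypotheses of Lemma \ref{Gronwall} reduce to \eqref{finite of resolvent}. This gives
\[
\sup_{t\in[0,T]}|\psi_t|\le g+g\,\Big\|\int_0^\cdot R^K(\cdot,s)\,\d s\Big\|_{L^\infty(0,T)}\le C_{T,N,L_2,L_5,L_6}(1+|\xi|),
\]
which is \eqref{moment estimate of p-MDP}. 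The main obstacle is thus not the estimate --- a routine, $\varepsilon$-free imitation of Lemma \ref{existence} --- but the well-posedness of the singular-kernel linear equation, which I would take from \cite{LG}.
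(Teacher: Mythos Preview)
Your proposal is correct and follows essentially the same route as the paper's proof in the Appendix: bound $\|\nabla b(s,X^0_{s,\xi},\delta_{X^0_{s,\xi}})\|$ via \textbf{(H6)}--\textbf{(H7)} and Lemma \ref{x0xi}, bound the forcing term via \textbf{(H3)}, Cauchy--Schwarz and $v\in\mathcal{A}_N$, and close with the Volterra--Gronwall Lemma \ref{Gronwall} applied to the $\xi$-dependent kernel $C(1+|\xi|)K_1$. The only cosmetic differences are that the paper reruns the full Picard iteration (exactly as in Lemma \ref{existence}) and works with $|\psi_t|^2$ rather than $|\psi_t|$, whereas you outsource well-posedness to \cite{LG} and estimate the solution directly; neither change affects the substance.
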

	From Lemma \ref{esi-Q}, we easily get the following estimate associated with $Y^{\varepsilon}:=Q_t^{\varepsilon}=\frac{X^{\varepsilon}-X^{0}}{\sqrt{\varepsilon}h(\varepsilon)}$.
	\begin{lem}\label{yy}
		Under the assumptions \textbf{(H1)}-\textbf{(H3)}, it follows that for $p> 2,\xi\in \mathbb{R}^d$
		$$\sup_{t\in [0,T]}\E|Y^{\varepsilon}_{t}|^{p}\leq \frac{1}{|h(\varepsilon)|^p}C_{p,T,L_1,L_2}\big(1+|\xi|^p\big),$$
		where the constant $C_{p,T,L_1,L_2}$ is independent of $\varepsilon$.
	\end{lem}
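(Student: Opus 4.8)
The plan is to observe that $Y^\varepsilon$ is literally the process $Q^\varepsilon$ (with the present choice of $h(\varepsilon)$) already estimated in Lemma \ref{esi-Q}, so the bound is obtained by a direct specialization. Indeed, by the very definition $Y^\varepsilon_s = (X^\varepsilon_{s,\xi}-X^0_{s,\xi})/(\sqrt{\varepsilon}h(\varepsilon))$ one has $X^0_{s,\xi}+\sqrt{\varepsilon}h(\varepsilon)Y^\varepsilon_s = X^\varepsilon_{s,\xi}$, so the coefficients appearing in \eqref{Volterra eq Y of MDP} are in fact evaluated at $X^\varepsilon_{s,\xi}$, and \eqref{Volterra eq Y of MDP} coincides with the equation \eqref{Volterra eq-differ} satisfied by $Q^\varepsilon_t$. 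Consequently $Y^\varepsilon_t = Q^\varepsilon_t$, and applying Lemma \ref{esi-Q} for $p>2$ gives $\sup_{t\in[0,T]}\E|Y^\varepsilon_t|^p = \sup_{t\in[0,T]}\E|Q^\varepsilon_t|^p \le \frac{C_{p,T,L_1,L_2}}{|h(\varepsilon)|^p}(1+|\xi|^p)$, which is exactly the assertion.

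For completeness I would also recall the argument underlying Lemma \ref{esi-Q}, since it is the template for this estimate. Starting from \eqref{Volterra eq Y of MDP}, raise to the power $p$ and split into the drift and diffusion parts. For the drift part, Lemma \ref{basic lemma}(i) together with the Lipschitz bound \textbf{(H2)} controls the difference quotient by $L_1\big(|Y^\varepsilon_s| + (\sqrt{\varepsilon}h(\varepsilon))^{-1}\mathbb{W}_2(\mathcal{L}_{X^\varepsilon_{s,\xi}},\delta_{X^0_{s,\xi}})\big)$; since $\mathbb{W}_2(\mathcal{L}_{X^\varepsilon_{s,\xi}},\delta_{X^0_{s,\xi}}) \le (\E|X^\varepsilon_{s,\xi}-X^0_{s,\xi}|^2)^{1/2} = \sqrt{\varepsilon}h(\varepsilon)(\E|Y^\varepsilon_s|^2)^{1/2}$ and $p\ge 2$, the resulting $p$-th moment is $\le C_{p,L_1}\E|Y^\varepsilon_s|^p$ by Jensen's inequality. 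For the diffusion part, the prefactor $|h(\varepsilon)|^{-p}$, Lemma \ref{basic lemma}(ii), the linear growth \textbf{(H3)} and Lemma \ref{xxi} give a contribution of order $\frac{C_{p,T,L_2}}{|h(\varepsilon)|^p}(1+|\xi|^p)$. Collecting the two pieces yields $\E|Y^\varepsilon_t|^p \le \frac{C_{p,T,L_2}}{|h(\varepsilon)|^p}(1+|\xi|^p) + C_{p,T,L_1}\int_0^t K_1(t,s)\E|Y^\varepsilon_s|^p\,\d s$.

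To close the estimate I would invoke the Volterra-type Gronwall inequality: applying Lemma \ref{Gronwall} with kernel $\hat K_1(t,s):=C_{p,T,L_1}K_1(t,s)$ — whose resolvent $R^{\hat K_1}$ satisfies $t\mapsto \int_0^t R^{\hat K_1}(t,s)\,\d s \in L^\infty(0,T)$ by Remark \ref{condition of Volterra} and \eqref{finite of resolvent} — upgrades the integral inequality to $\E|Y^\varepsilon_t|^p \le \frac{C_{p,T,L_1,L_2}}{|h(\varepsilon)|^p}(1+|\xi|^p)$ for almost every $t\in[0,T]$, and taking the supremum over $t$ finishes the proof. There is no genuine obstacle here; the only point requiring care is the identification $X^0_{s,\xi}+\sqrt{\varepsilon}h(\varepsilon)Y^\varepsilon_s = X^\varepsilon_{s,\xi}$, which is what legitimizes the appearance of $\mathcal{L}_{X^\varepsilon_{s,\xi}}$ in the controlled equation and makes the reduction to Lemma \ref{esi-Q} exact rather than merely analogous.
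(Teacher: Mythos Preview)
Your proposal is correct and follows exactly the paper's approach: the paper simply notes that $Y^{\varepsilon}_t = Q^{\varepsilon}_t$ by definition and invokes Lemma~\ref{esi-Q} directly, without even restating the argument. Your additional recap of the proof of Lemma~\ref{esi-Q} is accurate but more than the paper itself provides.
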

Let
\begin{align}
&x:=0,\Lambda^{\varepsilon,v}_{t,\xi}:=Y^{\varepsilon,v}_{t},N(s,X^{0}_{s,\xi},\mathcal{L}_{X_{s,\xi}^{\varepsilon}},\Lambda^{\varepsilon,v}_{s,x},h(\varepsilon)):=\sigma(s,X^0_{s,\xi}+\sqrt{\varepsilon}h(\varepsilon)Y^{\varepsilon,v}_{s},\mathcal{L}_{X^{\varepsilon}_{s,\xi}}),\nonumber\\&M(s,X^{0}_{s,\xi},\mathcal{L}_{X_{s,\xi}^{\varepsilon}},\delta_{X_{s,\xi}^{0}},\Lambda^{\varepsilon,v}_{s,x},h(\varepsilon)):= \frac{b(s,X^0_{s,\xi}+\sqrt{\varepsilon}h(\varepsilon)Y^{\varepsilon,v}_{s},\mathcal{L}_{X^{\varepsilon}_{s,\xi}})-b(s,X^0_{s,\xi},\delta_{X^0_{s,\xi}})}{\sqrt{\varepsilon}h(\varepsilon)},\nonumber\\	&G(s,X^{0}_{s,\xi},\mathcal{L}_{X_{s,\xi}^{\varepsilon}},\Lambda^{\varepsilon,v}_{s,x},h(\varepsilon)):=\frac{1}{h(\varepsilon)}\sigma(s,X^0_{s,\xi}+\sqrt{\varepsilon}h(\varepsilon)Y^{\varepsilon,v}_{s},\mathcal{L}_{X^{\varepsilon}_{s,\xi}}).\nonumber
\end{align}
Then Lemma \ref{estimate of Lambda} and Lemma \ref{Lambda-ts} give us that
	\begin{lem}\label{estimate of Y v}
		Under the assumptions \textbf{(H1)}-\textbf{(H3)},   for $p> 2, N>0,v\in\mathcal{A}_N, \xi\in \mathbb{R}^d$
		$$\sup_{t\in [0,T]}\E|Y^{\varepsilon,v}_{t}|^{p}\leq C_{p,T,N,L_1,L_2}(1+|\xi|^{p}),$$
		where the constant $C_{p,T,N,L_1,L_2}$ is independent of $\varepsilon,v$.
	\end{lem}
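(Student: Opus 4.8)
The plan is to deduce the statement directly from Lemma \ref{estimate of Lambda} via the identifications displayed just before it, namely $x:=0$, $\Lambda^{\varepsilon,v}_{t,\xi}:=Y^{\varepsilon,v}_t$, together with the indicated choices of $M$, $N$ and $G$. First I would observe that, with these substitutions, the controlled equation \eqref{controlled Volterra eq-MDP} becomes exactly an instance of the abstract equation \eqref{eq of Lambda} with vanishing initial datum; hence the only thing left to verify is that the chosen $M$, $N$, $G$ satisfy the two structural growth bounds required in Lemma \ref{estimate of Lambda}.

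For $M$ I would simply invoke the Lipschitz assumption \textbf{(H2)}:
\begin{align}
\big|M(s,X^0_{s,\xi},\mathcal{L}_{X^{\varepsilon}_{s,\xi}},\delta_{X^0_{s,\xi}},Y^{\varepsilon,v}_s,h(\varepsilon))\big|
&=\frac{\big|b(s,X^0_{s,\xi}+\sqrt{\varepsilon}h(\varepsilon)Y^{\varepsilon,v}_s,\mathcal{L}_{X^{\varepsilon}_{s,\xi}})-b(s,X^0_{s,\xi},\delta_{X^0_{s,\xi}})\big|}{\sqrt{\varepsilon}h(\varepsilon)}\nonumber\\
&\le L_1\Big(|Y^{\varepsilon,v}_s|+\tfrac{1}{\sqrt{\varepsilon}h(\varepsilon)}\mathbb{W}_2(\mathcal{L}_{X^{\varepsilon}_{s,\xi}},\delta_{X^0_{s,\xi}})\Big),\nonumber
\end{align}
which is already of the form demanded of $M$ (even without the redundant $\mathbb{W}_2(\mathcal{L}_{X^{\varepsilon}_{s,\xi}},\delta_0)$ term). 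For $N$ and $G$, since $h(\varepsilon)\to\infty$ and $\sqrt{\varepsilon}h(\varepsilon)\to 0$, for $\varepsilon$ small one has $1/h(\varepsilon)\le 1$ and $\sqrt{\varepsilon}h(\varepsilon)\le 1$; then the linear growth assumption \textbf{(H3)} gives
\begin{align}
\|N(s,\dots)\|\vee\|G(s,\dots)\|
&\le \big\|\sigma(s,X^0_{s,\xi}+\sqrt{\varepsilon}h(\varepsilon)Y^{\varepsilon,v}_s,\mathcal{L}_{X^{\varepsilon}_{s,\xi}})\big\|\nonumber\\
&\le L_2\big(1+|X^0_{s,\xi}|+\sqrt{\varepsilon}h(\varepsilon)|Y^{\varepsilon,v}_s|+\mathbb{W}_2(\mathcal{L}_{X^{\varepsilon}_{s,\xi}},\delta_0)\big)\nonumber\\
&\le L_2\big(1+|X^0_{s,\xi}|+|Y^{\varepsilon,v}_s|+\mathbb{W}_2(\mathcal{L}_{X^{\varepsilon}_{s,\xi}},\delta_0)\big),\nonumber
\end{align}
which is precisely the bound demanded of $N\vee G$. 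With both structural hypotheses checked (the constant $C$ in Lemma \ref{estimate of Lambda} being $L_1\vee L_2$), Lemma \ref{estimate of Lambda} immediately yields $\sup_{t\in[0,T]}\E|Y^{\varepsilon,v}_t|^p\le C_{p,T,N,L_1,L_2}(1+|\xi|^p)$, with a constant independent of $\varepsilon$ and of $v\in\mathcal{A}_N$ (the dependence on $L_1$ entering through the bound on $M$).

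I do not anticipate any genuine difficulty here: this is essentially a black-box application of Lemma \ref{estimate of Lambda}. The only points meriting a line of care are (i) recognising that the $M$-bound produced by \textbf{(H2)} is automatically of the precise type hypothesised in Lemma \ref{estimate of Lambda}, and (ii) noting that the prefactors $1/h(\varepsilon)$ and $\sqrt{\varepsilon}h(\varepsilon)$ stay bounded as $\varepsilon\to 0$, which is all that is needed to keep the constants uniform in $\varepsilon$. If one prefers to re-prove the bound from scratch, the argument is verbatim that of Lemma \ref{estimate of Lambda}: split $\E|Y^{\varepsilon,v}_t|^p$ into three pieces, estimate them using Lemma \ref{basic lemma}(i)--(iii), \textbf{(H1)}--\textbf{(H3)} and Lemma \ref{xxi}, and close the estimate with the Volterra--Gronwall inequality, Lemma \ref{Gronwall}. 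The companion path-regularity and tightness statement then follows from Lemma \ref{Lambda-ts} in the same manner.
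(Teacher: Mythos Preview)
Your proposal is correct and matches the paper's own treatment essentially verbatim: the paper also obtains Lemma \ref{estimate of Y v} as a direct corollary of Lemma \ref{estimate of Lambda} (and the companion Lemma \ref{Lambda-ts}) via precisely the identifications $x:=0$, $\Lambda^{\varepsilon,v}:=Y^{\varepsilon,v}$ and the displayed choices of $M$, $N$, $G$, without writing out any further argument. Your additional care in verifying the structural growth bounds on $M$, $N$, $G$ from \textbf{(H2)}--\textbf{(H3)} and the boundedness of $1/h(\varepsilon)$, $\sqrt{\varepsilon}h(\varepsilon)$ simply makes explicit what the paper leaves implicit.
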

\begin{lem}\label{Y vare v t s}
	Assume that \textbf{(H1)}-\textbf{(H3)} hold. If $p>2\vee \frac{2}{\gamma}$, $t,s\in [0,T]$, $N>0, \xi\in \mathbb{R}^d$ and $\{v^{\varepsilon}\}_{\varepsilon>0}$ is a family in $\mathcal{A}_N$, then $Y^{\varepsilon,v^{\varepsilon}}$ admits a version which is H$\mathrm{\ddot{o}}$lder continuous on $[0,T]$ of any order $\alpha<\frac{\gamma}{2}-\frac{1}{p}$, uniformly for all $\varepsilon>0$. Denoting again this version by $Y^{\varepsilon,v^{\varepsilon}}$, one has for all $\varepsilon>0$ small enough
	
	$$\E\Big[\Big(\sup_{0\leq s<t\leq T}\frac{Y^{\varepsilon,v^{\varepsilon}}_{t}-Y^{\varepsilon,v^{\varepsilon}}_s}{|t-s|^{\alpha}}\Big)^p\Big]\leq C_{p,T,N,L_1,L_2,\xi},$$
	for all $\a\in [0,\frac{\gamma}{2}-\frac{1}{p})$. Moreover, the family of random variables $\{Y^{\varepsilon,v^{\varepsilon}}\}_{\varepsilon>0}$ is tight in $C_T$.
\end{lem}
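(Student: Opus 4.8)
The plan is to obtain this lemma as an immediate instance of the abstract Lemma \ref{Lambda-ts}, applied to the controlled equation \eqref{controlled Volterra eq-MDP} under the dictionary listed just before Lemma \ref{estimate of Y v}: namely $x:=0$, $\Lambda^{\varepsilon,v}_{t,\xi}:=Y^{\varepsilon,v}_t$,
$$M:=\frac{b(s,X^0_{s,\xi}+\sqrt{\varepsilon}h(\varepsilon)Y^{\varepsilon,v}_{s},\mathcal{L}_{X^{\varepsilon}_{s,\xi}})-b(s,X^0_{s,\xi},\delta_{X^0_{s,\xi}})}{\sqrt{\varepsilon}h(\varepsilon)},\quad N:=\sigma(s,X^0_{s,\xi}+\sqrt{\varepsilon}h(\varepsilon)Y^{\varepsilon,v}_{s},\mathcal{L}_{X^{\varepsilon}_{s,\xi}}),$$
and $G:=\frac{1}{h(\varepsilon)}\sigma(s,X^0_{s,\xi}+\sqrt{\varepsilon}h(\varepsilon)Y^{\varepsilon,v}_{s},\mathcal{L}_{X^{\varepsilon}_{s,\xi}})$. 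With these identifications \eqref{controlled Volterra eq-MDP} becomes a special case of \eqref{eq of Lambda}, so Lemma \ref{Lambda-ts} already supplies the H\"older-continuous version, the uniform-in-$\varepsilon$ Kolmogorov-type moment bound on the H\"older seminorm, and the tightness in $C_T$ (which it derives from the extended Kolmogorov continuity criterion together with Aldous' criterion). Consequently the only genuine task is to check that $M$, $N$, $G$ satisfy the two linear-growth hypotheses required by Lemma \ref{Lambda-ts}, and that this can be done uniformly over all small $\varepsilon$.

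First I would bound $M$. By the Lipschitz condition \textbf{(H2)},
$$\big|b(s,X^0_{s,\xi}+\sqrt{\varepsilon}h(\varepsilon)Y^{\varepsilon,v}_{s},\mathcal{L}_{X^{\varepsilon}_{s,\xi}})-b(s,X^0_{s,\xi},\delta_{X^0_{s,\xi}})\big|\leq L_1\big(\sqrt{\varepsilon}h(\varepsilon)|Y^{\varepsilon,v}_s|+\mathbb{W}_2(\mathcal{L}_{X^\varepsilon_{s,\xi}},\delta_{X^0_{s,\xi}})\big),$$
and dividing by $\sqrt{\varepsilon}h(\varepsilon)>0$ gives $|M|\leq L_1\big(|Y^{\varepsilon,v}_s|+\tfrac{1}{\sqrt{\varepsilon}h(\varepsilon)}\mathbb{W}_2(\mathcal{L}_{X^\varepsilon_{s,\xi}},\delta_{X^0_{s,\xi}})\big)$, which is exactly of the form required for $M$ in Lemma \ref{Lambda-ts} (in fact slightly stronger, since the $\mathbb{W}_2(\cdot,\delta_0)$ term is not even needed). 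For $N$, the growth condition \textbf{(H3)} yields $\|N\|\leq L_2\big(1+|X^0_{s,\xi}|+\sqrt{\varepsilon}h(\varepsilon)|Y^{\varepsilon,v}_s|+\mathbb{W}_2(\mathcal{L}_{X^\varepsilon_{s,\xi}},\delta_0)\big)$; since $\sqrt{\varepsilon}h(\varepsilon)\to0$ there is $\varepsilon_0>0$ such that $\sqrt{\varepsilon}h(\varepsilon)\le1$ for $\varepsilon<\varepsilon_0$, and then $\|N\|\leq L_2\big(1+|Y^{\varepsilon,v}_s|+|X^0_{s,\xi}|+\mathbb{W}_2(\mathcal{L}_{X^\varepsilon_{s,\xi}},\delta_0)\big)$. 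Shrinking $\varepsilon_0$ so that also $h(\varepsilon)\ge1$ (possible because $h(\varepsilon)\to\infty$), we get $\|G\|\le\|N\|$ and hence the same bound. Thus both hypotheses of Lemma \ref{Lambda-ts} hold uniformly on $(0,\varepsilon_0)$, with the generic constant $C$ there taken to be $L_1\vee L_2$.

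Having verified the hypotheses, the conclusion is read off directly from Lemma \ref{Lambda-ts} (and Lemma \ref{estimate of Y v}, which is the corresponding specialization of Lemma \ref{estimate of Lambda}): for $p>2\vee\frac2\gamma$ there is a version of $Y^{\varepsilon,v^\varepsilon}$, uniformly in $\varepsilon<\varepsilon_0$, that is H\"older of every order $\alpha<\frac\gamma2-\frac1p$, it satisfies $\E\big[(\sup_{0\le s<t\le T}|Y^{\varepsilon,v^\varepsilon}_t-Y^{\varepsilon,v^\varepsilon}_s|/|t-s|^\alpha)^p\big]\le C_{p,T,N,L_1,L_2,\xi}$, and the family $\{Y^{\varepsilon,v^\varepsilon}\}_{\varepsilon>0}$ is tight in $C_T$. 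I do not expect a real obstacle here; the only point that demands care is the uniformity in $\varepsilon$, which is why one must restrict to $\varepsilon<\varepsilon_0$ so that the vanishing and blowing-up factors $\sqrt{\varepsilon}h(\varepsilon)$ and $1/h(\varepsilon)$ are absorbed once and for all into the constant of Lemma \ref{Lambda-ts}.
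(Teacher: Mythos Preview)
Your proposal is correct and follows exactly the paper's own approach: the paper simply declares the dictionary $x:=0$, $\Lambda^{\varepsilon,v}:=Y^{\varepsilon,v}$, $M$, $N$, $G$ as you list them and then asserts that Lemmas \ref{estimate of Lambda} and \ref{Lambda-ts} yield both Lemma \ref{estimate of Y v} and Lemma \ref{Y vare v t s} without further argument. You are in fact more explicit than the paper, since you actually verify the growth hypotheses on $M$, $N$, $G$ via \textbf{(H2)}--\textbf{(H3)} and note the need to take $\varepsilon$ small so that $\sqrt{\varepsilon}h(\varepsilon)\le 1$ and $1/h(\varepsilon)\le 1$; the paper leaves this implicit.
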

By almost same procedures, we can get that the solution $\psi_{t}$ of \eqref{controlled determinstic Volterra eq-MDP} also have H$\mathrm{\ddot{o}}$lder continuous path, that is
\begin{lem}\label{11}
	Assume that \textbf{(H1)}-\textbf{(H3)} and \textbf{(H6)} hold, for $t,s\in [0,T]$, $N>0, v\in\mathcal{A}_N, \xi\in \mathbb{R}^d$, then the solution $\psi_{t}$ of \eqref{controlled determinstic Volterra eq-MDP} has for all $\varepsilon>0$ small enough
	$$|\psi_{t}-\psi_s|\leq C_{T,N,L_2}\big(1+|\xi|^p\big)|t-s|^{\gamma}.$$
\end{lem}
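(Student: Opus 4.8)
The plan is to follow exactly the same strategy used for the proofs of Lemma \ref{small esti} and Lemma \ref{zxitt0}, namely to write the increment $\psi_t - \psi_s$ as a sum of boundary terms (coming from the difference $K_i(t,u) - K_i(s,u)$ on $[0,s]$) and tail terms (coming from the integrals over $[s,t]$), and then estimate each piece using assumption \textbf{(H1)} together with the deterministic analogue of Lemma \ref{basic lemma}. Concretely, from \eqref{controlled determinstic Volterra eq-MDP} one has, for $s<t$,
\begin{align}
	\psi_t - \psi_s &= \int_{s}^{t} K_1(t,u)\nabla_{\psi_u} b(u,X^0_{u,\xi},\delta_{X^0_{u,\xi}})\,\d u + \int_{0}^{s}\big(K_1(t,u)-K_1(s,u)\big)\nabla_{\psi_u} b(u,X^0_{u,\xi},\delta_{X^0_{u,\xi}})\,\d u \nonumber\\
	&\quad + \int_{s}^{t} K_2(t,u)\sigma(u,X^0_{u,\xi},\delta_{X^0_{u,\xi}})v_u\,\d u + \int_{0}^{s}\big(K_2(t,u)-K_2(s,u)\big)\sigma(u,X^0_{u,\xi},\delta_{X^0_{u,\xi}})v_u\,\d u. \nonumber
\end{align}

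The first two (drift) terms are handled by noting that, under \textbf{(H6)} and \textbf{(H7)}, one has the linear growth bound $|\nabla_{\psi_u} b(u,X^0_{u,\xi},\delta_{X^0_{u,\xi}})| \le L_5(|X^0_{u,\xi}| + \mathbb{W}_2(\delta_{X^0_{u,\xi}},\delta_0)) + L_6 \le C_{L_5,L_6}(1+|X^0_{u,\xi}|)$, so by Lemma \ref{existence-mid} and Lemma \ref{x0xi} the integrand is bounded by $C_{T,N,L_2,L_5,L_6}(1+|\xi|)$ uniformly in $u$; then Hölder's inequality in the spatial variable together with the two estimates in \textbf{(H1)} gives that $\int_s^t K_1(t,u)\,\d u \le C_T|t-s|^\gamma$ and $\int_0^s |K_1(t,u)-K_1(s,u)|\,\d u \le C_T|t-s|^\gamma$, yielding a bound of the form $C_{T,N,L_2,L_5,L_6}(1+|\xi|)|t-s|^\gamma$ for the drift contribution. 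The two $\sigma$-terms are handled similarly: by \textbf{(H3)} and Lemma \ref{x0xi} we have $\|\sigma(u,X^0_{u,\xi},\delta_{X^0_{u,\xi}})\| \le C_{T,L_2}(1+|\xi|)$ uniformly in $u$, and since $v\in\mathcal{A}_N\subset\mathcal{S}_N$, a Cauchy--Schwarz split $\int_s^t K_2(t,u)|v_u|\,\d u \le (\int_s^t K_2^2(t,u)\,\d u)^{1/2}(\int_s^t |v_u|^2\,\d u)^{1/2} \le N^{1/2}C_T|t-s|^\gamma$ (and analogously for the $K_2(t,u)-K_2(s,u)$ piece) gives the desired $|t-s|^\gamma$ control. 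Summing the four contributions and absorbing the constants into a single $C_{T,N,L_2}$ (keeping the dependence as stated; one could equally write $C_{T,N,L_2,L_5,L_6}$) gives $|\psi_t - \psi_s| \le C_{T,N,L_2}(1+|\xi|^p)|t-s|^\gamma$, since $1+|\xi| \le 1+|\xi|^p$ for the relevant range, which is the claim.

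I do not anticipate a genuine obstacle here: this is a purely deterministic and essentially routine estimate, being the moderate-deviation counterpart of Lemma \ref{small esti}, with the only new ingredient being that the drift in \eqref{controlled determinstic Volterra eq-MDP} carries $\nabla b$ rather than $b$ itself, which is why \textbf{(H7)} (a bound on $\nabla b(t,0,\delta_0)$) is needed to close the linear-growth estimate for $\nabla_{\psi_u} b$. The only mild point of care is to make sure the uniform-in-$u$ bound on $\psi_u$ from Lemma \ref{existence-mid} is invoked before estimating the drift integrand (so that the term $\int_0^t K_1(t,u)\E|\psi_u|\ldots$ does not reappear), but since Lemma \ref{existence-mid} already provides $\sup_{u\in[0,T]}|\psi_u| \le C(1+|\xi|)$, this is immediate. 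The phrase ``for all $\varepsilon>0$ small enough'' in the statement is vacuous for this deterministic bound (the estimate holds for every $\varepsilon$), and can simply be ignored or interpreted as inherited from the surrounding lemmas.
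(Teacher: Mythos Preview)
Your proposal is correct and matches the approach the paper intends; the paper does not write out a proof for this lemma but states it follows ``by almost same procedures'' as the preceding H\"older estimates (Lemmas \ref{Lambda-ts} and \ref{small esti}), which is exactly the decomposition-into-four-terms argument you give. One small correction: since $|\nabla_{\psi_u} b(u,X^0_{u,\xi},\delta_{X^0_{u,\xi}})| \le \|\nabla b(u,X^0_{u,\xi},\delta_{X^0_{u,\xi}})\|\cdot|\psi_u|$ and each factor is of order $C(1+|\xi|)$ (via \textbf{(H6)}--\textbf{(H7)} with Lemma \ref{x0xi}, and Lemma \ref{existence-mid} respectively), the drift integrand is bounded by $C(1+|\xi|)^2$ rather than $C(1+|\xi|)$ --- this quadratic dependence is what produces the $(1+|\xi|^p)$ in the statement, and your inequality ``$1+|\xi|\le 1+|\xi|^p$'' is false for $|\xi|<1$ anyway. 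Your remark that \textbf{(H7)} is tacitly needed (despite not appearing in the hypothesis list) is correct, as is your observation that the phrase ``for all $\varepsilon>0$ small enough'' is vacuous here.
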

This verifies  Condition (i) of Lemma \ref{general LDP}.

\noindent {\bf Step  2}\  Verification of the Condition $(ii)$ of Lemma \ref{general LDP}.
	\begin{lem}\label{unique characterised-MDP}
		The set $\mathcal{J}^0_v$ from Definition \ref{v} ($\mathcal{G}$ is replaced by $\mathcal{J}$ and $\varepsilon_{n}=\frac{1}{h(\varepsilon_n)}$) is characterized by
		$$\mathcal{J}^0_v=\Big\{\psi: [0,T]\to\mathbb{R}^d|\quad	\psi_t=\int_{0}^{t}K_1(t,s)\nabla_{\psi_s}b(s,X^0_{s,\xi},\delta_{X^0_{s,\xi}})\d s+\int_{0}^{t}K_1(t,s)\sigma(s,X^0_{s,\xi},\delta_{X^0_{s,\xi}})v_s\d s\Big\}.$$
	\end{lem}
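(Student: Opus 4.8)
The plan is to mirror the proof of Lemma~\ref{unique characterised}, adapting it to the moderate-deviation scaling $\varepsilon_n=1/h(\varepsilon_n)$. First I would identify $\mathcal{J}^0_{v,N}$; since the resulting characterization is independent of $N$, it then passes to $\mathcal{J}^0_v=\bigcup_N\mathcal{J}^0_{v,N}$. Fix $N\in\mathbb{N}$ and $v\in\mathcal{A}_N$, take $\varepsilon_n\downarrow 0$ and $\{v^{\varepsilon_n}\}\subset\mathcal{A}_N$ with $v^{\varepsilon_n}\to v$ in distribution, and assume $Y^{n,v}:=Y^{\varepsilon_n,v^{\varepsilon_n}}$ converges in distribution to some $C^d_T$-valued random variable $\psi$; such convergent subsequences exist by the tightness established in Lemma~\ref{Y vare v t s} together with Prokhorov's theorem. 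By the Skorokhod representation theorem we may assume $(Y^{n,v},v^n)\to(\psi,v)$ almost surely in $C^d_T\times\mathcal{S}_N$ on a (possibly different) probability space.

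Next I would introduce the bounded functional $\Psi_t:C^d_T\times\mathcal{S}_N\to\mathbb{R}$ defined by
$$\Psi_t(\omega,f):=\Big|\omega_t-\int_0^t K_1(t,s)\big(\nabla_{\omega_s}b(s,X^0_{s,\xi},\delta_{X^0_{s,\xi}})+\sigma(s,X^0_{s,\xi},\delta_{X^0_{s,\xi}})f_s\big)\d s\Big|\wedge 1,$$
and show it is jointly continuous (with $\mathcal{S}_N$ carrying the weak $L^2$ topology): for $\omega^n\to\omega$ uniformly and $f^n\to f$ weakly in $L^2$, the $\nabla b$-term is controlled via the Lipschitz bound \textbf{(H6)}, Cauchy--Schwarz and $\|K_1(t,\cdot)\|_{L^2(0,t)}<\infty$ (from \textbf{(H1)}), while the $\sigma$-term is handled by weak convergence of $f^n$ tested against the fixed function $s\mapsto K_1(t,s)\sigma(s,X^0_{s,\xi},\delta_{X^0_{s,\xi}})$, which lies in $L^2(0,t)$ by \textbf{(H3)} and Lemma~\ref{x0xi}. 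Boundedness plus continuity yield $\E[\Psi_t(Y^{n,v},v^n)]\to\E[\Psi_t(\psi,v)]$.

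It then remains to prove $\E[\Psi_t(Y^{n,v},v^n)]\to 0$. Substituting~\eqref{controlled Volterra eq-MDP}, $\Psi_t(Y^{n,v},v^n)$ is dominated by three contributions: (a) the drift-linearization error $\int_0^t K_1(t,s)\big|\frac{b(s,X^0_{s,\xi}+\sqrt{\varepsilon_n}h(\varepsilon_n)Y^{n,v}_s,\mathcal{L}_{X^{\varepsilon_n}_{s,\xi}})-b(s,X^0_{s,\xi},\delta_{X^0_{s,\xi}})}{\sqrt{\varepsilon_n}h(\varepsilon_n)}-\nabla_{Y^{n,v}_s}b(s,X^0_{s,\xi},\delta_{X^0_{s,\xi}})\big|\d s$; (b) $\int_0^t K_2(t,s)\|\sigma(s,X^0_{s,\xi}+\sqrt{\varepsilon_n}h(\varepsilon_n)Y^{n,v}_s,\mathcal{L}_{X^{\varepsilon_n}_{s,\xi}})-\sigma(s,X^0_{s,\xi},\delta_{X^0_{s,\xi}})\|\,|v^n_s|\d s$; and (c) the martingale term $\frac{1}{h(\varepsilon_n)}\int_0^t K_2(t,s)\sigma(s,X^0_{s,\xi}+\sqrt{\varepsilon_n}h(\varepsilon_n)Y^{n,v}_s,\mathcal{L}_{X^{\varepsilon_n}_{s,\xi}})\d W_s$. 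For (a) I would split the difference quotient as a spatial Taylor remainder $\int_0^1\big[\nabla b(s,X^0_{s,\xi}+u\sqrt{\varepsilon_n}h(\varepsilon_n)Y^{n,v}_s,\mathcal{L}_{X^{\varepsilon_n}_{s,\xi}})-\nabla b(s,X^0_{s,\xi},\delta_{X^0_{s,\xi}})\big]Y^{n,v}_s\,\d u$, bounded via \textbf{(H6)} by a constant times $\sqrt{\varepsilon_n}h(\varepsilon_n)|Y^{n,v}_s|+\mathbb{W}_{2}(\mathcal{L}_{X^{\varepsilon_n}_{s,\xi}},\delta_{X^0_{s,\xi}})\to 0$ (Lemmas~\ref{estimate of Y v} and~\ref{xx}), plus a measure-direction piece $\frac{1}{h(\varepsilon_n)}\cdot\frac{b(s,X^0_{s,\xi},\mathcal{L}_{X^{\varepsilon_n}_{s,\xi}})-b(s,X^0_{s,\xi},\delta_{X^0_{s,\xi}})}{\sqrt{\varepsilon_n}}$, whose second factor stays bounded (as in the treatment of $\varPi_2$ in Lemma~\ref{zzxi}), so the whole piece is $O(1/h(\varepsilon_n))\to 0$; this vanishing of the measure contribution is exactly why the limiting equation~\eqref{controlled determinstic Volterra eq-MDP} carries only $\nabla b$ and no Lions-derivative term, in contrast to the CLT equation~\eqref{Volterra eq-after CLT of Z}. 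For (b), continuity of $\sigma$ together with $\sqrt{\varepsilon_n}h(\varepsilon_n)|Y^{n,v}_s|+\mathbb{W}_{2}(\mathcal{L}_{X^{\varepsilon_n}_{s,\xi}},\delta_{X^0_{s,\xi}})\to 0$, Hölder's inequality and $\int_0^T|v^n_s|^2\d s\le N$ give convergence; for (c), BDG's inequality, the growth bound \textbf{(H3)} on $\sigma$, Lemma~\ref{estimate of Y v}, and the prefactor $1/h(\varepsilon_n)\to 0$ suffice. Passing to expectations (using the uniform $L^p$ bounds of Lemmas~\ref{estimate of Y v} and~\ref{xx} for uniform integrability) gives $\E[\Psi_t(Y^{n,v},v^n)]\to 0$, hence $\E[\Psi_t(\psi,v)]=0$, so $\psi$ satisfies~\eqref{controlled determinstic Volterra eq-MDP} almost surely for each $t$; by Lemma~\ref{11} $\psi$ has continuous paths, so the identity holds for all $t$ simultaneously, and by uniqueness (Lemma~\ref{existence-mid}) $\mathcal{J}^0_{v,N}=\{\psi\}$ is precisely that solution. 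Independence of $N$ then extends this to $\mathcal{J}^0_v$.

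The step I expect to be the main obstacle is (a): rigorously showing that the rescaled drift increment converges to $\nabla_{\psi_s}b(s,X^0_{s,\xi},\delta_{X^0_{s,\xi}})$, which requires both a clean separation of the surviving spatial contribution from the measure contribution that must vanish at rate $1/h(\varepsilon_n)$, and an upgrade of the pointwise (almost-sure / in-probability) convergence of the integrands to convergence of the integrals against the possibly singular kernel $K_1$, uniformly enough to interchange limit and expectation. This is the analogue for the controlled system of the $\varPi_1,\varPi_2$ estimates in Lemma~\ref{zzxi} and of the continuity argument in Lemma~\ref{unique characterised}.
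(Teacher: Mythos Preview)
Your proposal is correct and follows essentially the same route as the paper: Skorokhod representation, a bounded continuous test functional $\Psi_t$, the three-term split (a)--(c) matching the paper's $\mathcal{V}_1,\mathcal{V}_2,\mathcal{V}_3$, and the observation that the measure-direction contribution to the drift vanishes at rate $1/h(\varepsilon_n)$ so that only the spatial gradient survives in the limit. One small point: in the continuity argument for $\Psi_t$ you invoke only \textbf{(H6)}, but since \textbf{(H4)} is \emph{not} assumed in the MDP section you need \textbf{(H7)} as well to bound $\|\nabla b(s,X^0_{s,\xi},\delta_{X^0_{s,\xi}})\|\le L_5\big(|X^0_{s,\xi}|+\mathbb{W}_2(\delta_{X^0_{s,\xi}},\delta_0)\big)+L_6$; the paper does exactly this. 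Your splitting in (a) (measure piece evaluated at $X^0_{s,\xi}$ rather than at the perturbed point) differs cosmetically from the paper's but is equally valid, and your reference to $\varPi_2$ is slightly misleading since that estimate uses \textbf{(H4)}--\textbf{(H5)}, whereas here the boundedness of the second factor follows more simply from \textbf{(H2)} and Lemma~\ref{xx}.
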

	\begin{proof}
		Recall the definition \ref{v} of $\mathcal{J}^0_{v,N}$. For $N\in\mathbb{N}$ and $v\in\mathcal{A}_N$, consider a subsequence $\{\varepsilon_n\}_{n\in\mathbb{N}}\subset[0,\infty)$ with $\lim_{n\to \infty}\varepsilon_{n}=0$ and a sequence $\{v^{\varepsilon_n}\}_{n\in\mathbb{N}}\subset\mathcal{A}_N$ such that $\lim_{n\to \infty}v^{\varepsilon_{n}}=v\ \text{in distribution}$, and assume that $Y^{n,v}:=Y^{\varepsilon_n,v^{\varepsilon_{n}}}$ converges in distribution to some random variable $\psi$ with values in $C^d_T$. To simplify the representation, we denote  $v^{n}:=v^{\varepsilon_{n}}$.
		
		By Skorohod representation theorem we can work with almost sure convergence for the purpose of identifying the limit. Hence, we may assume that $\{Y^{n,v},v^n\}_{n\geq0}$ converges almost surely in the product
		topology on $C_T^d\times \mathcal{S}_N$, and the limit is a  $C_T^d\times \mathcal{S}_N$-valued random variable $(\psi, v)$ in a possibly different probability space $(\Omega^0,\mathcal{F}^0,\mathbb{P}^0)$ as $n$ tends to $+\infty$.
		% {\red The convergence of the couple also takes place in distribution. }   
		For $t\in [0,T]$, define $\Psi_t:C_T^d\times \mathcal{S}_N\to\mathbb{R}$ as
		$$\Psi_t(\omega,f):=\Big|\omega_t-\int_{0}^{t}K_1(t,s)\nabla_{\omega_s}b(s,X^0_{s,\xi},\delta_{X^0_{s,\xi}})\d s-\int_{0}^{t}K_2(t,s)\sigma(s,X^0_{s,\xi},\delta_{X^0_{s,\xi}})f_s\d s\Big|\wedge 1.$$
		It is clear that $\Psi_t$ is bounded, and we  are going to   show that $\Psi_t$ is continuous. To this end, let $\omega^n\to \omega$ in $C_T^d$ and $f^n\to f$ in $\mathcal{S}_N$ with respect the weak topology. 
		
		For $\omega,\omega^n\in C^d_T,f,f^n\in\mathcal{S}_N$, by \textbf{(H1)-(H3)}, \textbf{(H6)}-\textbf{(H7)} and Lemma \ref{x0xi} we have that
		\begin{align}
			&|\Psi_t(\omega,f)-\Psi_t(\omega^n,f^n)|\nonumber\\&\leq |\omega_t-\omega^n_t|+\int_{0}^{t}K_1(t,s)\cdot \big\|\nabla_{\omega_s}b(s,X^0_{s,\xi},\delta_{X^0_{s,\xi}})-\nabla_{\omega^n_s}b(s,X^0_{s,\xi},\delta_{X^0_{s,\xi}})\big\|\d s\nonumber\\&\quad+\int_{0}^{t}K_2(t,s)\cdot\Big|\sigma(s,X^0_{s,\xi},\delta_{X^0_{s,\xi}})f_s-\sigma(s,X^0_{s,\xi},\delta_{X^0_{s,\xi}})f^n_s\Big|\d s\nonumber\\&\leq \sup_{t\in [0,T]}|\omega_t-\omega^n_t|+\int_{0}^{t}K_1(t,s)\cdot |\omega_s-\omega^n_s|\cdot\big\|\nabla b(s,X^0_{s,\xi},\delta_{X^0_{s,\xi}})-\nabla b(s,0,\delta_{0})\big\|\d s\nonumber\\&\quad+\int_{0}^{t}K_1(t,s)\cdot |\omega_s-\omega^n_s|\cdot\big\|\nabla b(s,0,\delta_{0})\big\|\d s+\int_{0}^{t}K_2(t,s)\cdot |f_s-f^n_s|\cdot\big\|\sigma(s,X^0_{s,\xi},\delta_{X^0_{s,\xi}})\big\|\d s\nonumber\\&\leq \sup_{t\in [0,T]}|\omega_t-\omega^n_t|\cdot\Big[1+L_5\int_{0}^{t}K_1(t,s)\cdot \big(|X^0_{s,\xi}|+\mathbb{W}_2(\delta_{X^0_{s,\xi}},\delta_0\big)\d s\Big]\nonumber\\&\quad+C_{T,L_6}\sup_{t\in [0,T]}|\omega_t-\omega^n_t|+C_{L_2}\int_{0}^{t}K_2(t,s)\cdot |f_s-f^n_s|\cdot\big(1+|X^0_{s,\xi}|+\mathbb{W}_2(\delta_{X^0_{s,\xi}},\delta_{0})\big)\d s\nonumber\\&\leq C_{T,L_2,L_5,L_6}\sup_{t\in [0,T]}|\omega_t-\omega^n_t|+C_{L_2}\int_{0}^{t}K_2(t,s)\cdot |f_s-f^n_s|\d s\nonumber.
		\end{align}
		Since $f^n$ tends to $f$ weakly in $L^2$ and $\lim_{n\to \infty}|\omega_t-\omega^n_t|=0$,   we see that  $\Psi_t(f^n,\omega^n)$ tends to $\Psi_t(f,\omega)$ as $n$ tends to $\infty$, which implies $\Psi_t$ is continuous and therefore
		$$\lim_{n\to \infty}\E[\Psi_t(Y^{n,v},v^n)]=\E[\Psi_t(\psi,v)].$$
	Now it suffices for us  to prove that the right hand of above equality is actually equal to zero. 
		
		Since
		\begin{align}
			\E[\Psi_t(Y^{n,v},v^n)]&=\E\Big[\Big|\int_{0}^{t}K_1(t,s)\cdot\frac{b(s,X^0_{s,\xi}+\sqrt{\varepsilon_n}h(\varepsilon_n)Y^{n,v}_{s},\mathcal{L}_{X^{n}_s})-b(s,X^0_{s,\xi},\delta_{X^0_{s,\xi}})}{\sqrt{\varepsilon_n}h(\varepsilon_n)}\d s\nonumber\\&\quad+\int_{0}^{t}K_2(t,s)\cdot\sigma(s,X^0_{s,\xi}+\sqrt{\varepsilon_n}h(\varepsilon_n)Y^{n,v}_{s},\mathcal{L}_{X^{n}_s})v^n_s\d s\nonumber\\&\quad+\frac{1}{h(\varepsilon_n)}\int_{0}^{t}K_2(t,s)\cdot\sigma(s,X^0_{s,\xi}+\sqrt{\varepsilon_n}h(\varepsilon_n)Y^{n,v}_{s},\mathcal{L}_{X^{n}_s})\d W_s\nonumber\\&\quad-\int_{0}^{t}K_1(t,s)\nabla_{Y^n_s}b(s,X^0_{s,\xi},\delta_{X^0_{s,\xi}})\d s-\int_{0}^{t}K_2(t,s)\sigma(s,X^0_{s,\xi},\delta_{X^0_{s,\xi}})v^n_s\d s\Big|\wedge 1\Big]\nonumber\\&\leq\mathcal{V}_1+\mathcal{V}_2+\mathcal{V}_3,\nonumber
		\end{align}
		where
		\begin{align}
			\mathcal{V}_1&:=\E\Big|\int_{0}^{t}K_1(t,s)\cdot\Big[\frac{b(s,X^0_{s,\xi}+\sqrt{\varepsilon_n}h(\varepsilon_n)Y^{n,v}_{s},\mathcal{L}_{X^{n}_s})-b(s,X^0_{s,\xi},\delta_{X^0_{s,\xi}})}{\sqrt{\varepsilon_n}h(\varepsilon_n)}-\nabla_{Y^{n,v}_s}b(s,X^0_{s,\xi},\delta_{X^0_{s,\xi}})\Big]\d s\Big|,\nonumber\\\mathcal{V}_2&:=\E\Big|\int_{0}^{t}K_2(t,s)\cdot\Big[\sigma(s,X^0_{s,\xi}+\sqrt{\varepsilon_n}h(\varepsilon_n)Y^{n,v}_{s},\mathcal{L}_{X^{n}_s})-\sigma(s,X^0_{s,\xi},\delta_{X^0_{s,\xi}})\Big]\cdot v^n_s\d s\Big|,\nonumber\\\mathcal{V}_3&:=\E\Big|\frac{1}{h(\varepsilon_n)}\int_{0}^{t}K_2(t,s)\cdot\sigma(s,X^0_{s,\xi}+\sqrt{\varepsilon_n}h(\varepsilon_n)Y^{n,v}_{s},\mathcal{L}_{X^{n}_s})\d W_s\Big|\nonumber.
		\end{align}
		
		For the term $\mathcal{V}_1$, by Lemmas \ref{yy},  \ref{estimate of Y v}, H$\mathrm{\ddot{o}}$lder's inequality, \textbf{(H1)}-\textbf{(H3)} and \textbf{(H6)}, we have
		\begin{align}
			\mathcal{V}_1&\leq \E\int_{0}^{t}K_1(t,s)\cdot\Big|\frac{b(s,X^0_{s,\xi}+\sqrt{\varepsilon_n}h(\varepsilon_n)Y^{n,v}_{s},\mathcal{L}_{X^{n}_s})-b(s,X^0_{s,\xi},\delta_{X^0_{s,\xi}})}{\sqrt{\varepsilon_n}h(\varepsilon_n)}-\nabla_{Y^{n,v}_s}b(s,X^0_{s,\xi},\delta_{X^0_{s,\xi}})\Big|\d s\nonumber\\&\leq \E \int_{0}^{t}K_1(t,s)\cdot\Big|\frac{b(s,X^0_{s,\xi}+\sqrt{\varepsilon_n}h(\varepsilon_n)Y^{n,v}_{s},\mathcal{L}_{X^{n}_s})-b(s,X^0_{s,\xi}+\sqrt{\varepsilon_n}h(\varepsilon_n)Y^{n,v}_{s},\mathcal{L}_{X^0_{s,\xi}})}{\sqrt{\varepsilon_n}h(\varepsilon_n)}\Big|\d s\nonumber\\&\quad+\E\int_{0}^{t}K_1(t,s)\cdot\Big|\frac{b(s,X^0_{s,\xi}+\sqrt{\varepsilon_n}h(\varepsilon_n)Y^{n,v}_{s},\mathcal{L}_{X^0_{s,\xi}})-b(s,X^0_{s,\xi},\delta_{X^0_{s,\xi}})}{\sqrt{\varepsilon_n}h(\varepsilon_n)}-\nabla_{Y^{n,v}_s}b(s,X^0_{s,\xi},\delta_{X^0_{s,\xi}})\Big|\d s\nonumber\\&\leq L_1 \E\int_{0}^{t}K_1(t,s)\cdot\Big|\frac{\mathbb{W}_2(\mathcal{L}_{X^{n}_s},\delta_{X^0_{s,\xi}})}{\sqrt{\varepsilon_{n}}h(\varepsilon_{n})}\Big|\d s\nonumber \\&\quad+\E\int_{0}^{t}K_1(t,s)\cdot\Big|\int_{0}^{1}\nabla_{Y^{n,v}_s}b(s,P^n_s(u),\mathcal{L}_{X^0_{s,\xi}})-\nabla_{Y^{n,v}_s}b(s,X_{s,\xi}^0,\delta_{X^0_{s,\xi}})\d u\Big|\d s\nonumber\\&\leq L_1\int_{0}^{t}K_1(t,s)\cdot \big(\E|Y^n_s|^p\big)^{\frac{1}{p}}\d s+L_5\sqrt{\varepsilon_n}h(\varepsilon_n)\int_{0}^{t}K_1(t,s)\cdot \big(\E|Y^{n,v}_s|^p\big)^{\frac{1}{p}}\d s\nonumber\\&\leq C_{T,L_1,L_2,\xi}\frac{1}{h(\varepsilon_{n})}+C_{T,L_1,L_2,L_5,\xi}\sqrt{\varepsilon_{n}}h(\varepsilon_{n}),\nonumber
		\end{align}
		where $P^{n}_s(u):=P^{\varepsilon_{n}}_s(u):=X^0_{s,\xi}+u\sqrt{\varepsilon_n}h(\varepsilon_n)Y^{n,v}_{s}$,$u\in [0,T]$.
		
		For the term $\mathcal{V}_2$, by Lemma \ref{xx}, Lemma \ref{estimate of Y v}, H$\mathrm{\ddot{o}}$lder's inequality, Minkowski's inequality and \textbf{(H1)}-\textbf{(H3)}, we have
		\begin{align}
			\mathcal{V}_2&\leq \sqrt{N}\E \Big(\int_{0}^{t}K^2_2(t,s)\cdot\big\|\sigma(s,X^0_{s,\xi}+\sqrt{\varepsilon_n}h(\varepsilon_n)Y^{n,v}_{s},\mathcal{L}_{X^{n}_s})-\sigma(s,X^0_{s,\xi},\delta_{X^0_{s,\xi}})\big\|^2\d s\Big)^{\frac{1}{2}}\nonumber\\&\leq \sqrt{NL_1}\Big[\E \Big(\int_{0}^{t}K^2_2(t,s)\cdot\big(\varepsilon_{n}|h(\varepsilon_{n})|^2|Y^{n,v}_s|^2+\mathbb{W}_2^2(\mathcal{L}_{X^n_s},\delta_{X^0_{s,\xi}})\big)\d s\Big)^{\frac{p}{2}}\Big]^{\frac{1}{p}}\nonumber\\&\leq \sqrt{NL_1} \Big(\int_{0}^{t}K^2_2(t,s)\cdot \big[\E\big(|\sqrt{\varepsilon_{n}}h(\varepsilon_{n})|^p|Y^{n,v}_s|^p+\mathbb{W}^p_2(\mathcal{L}_{X^n_s},\delta_{X^0_{s,\xi}})\big)\big]^{\frac{2}{p}}\d s\Big)^{\frac{1}{2}}\nonumber\\&\leq \sqrt{NL_1} \Big(\int_{0}^{t}K^2_2(t,s)\cdot \big[\sqrt{\varepsilon_{n}}h(\varepsilon_{n})|^p\E|Y^{n,v}_s|^p+\E|X^n_s-X^0_{s,\xi}|^p\big]^{\frac{2}{p}}\d s\Big)^{\frac{1}{2}}\nonumber\\&\leq C_{T,N,L_1,L_2,\xi}\sqrt{\varepsilon_{n}}h(\varepsilon_{n})+C_{T,N,L_1,L_2,\xi}\sqrt{\varepsilon_{n}}.\nonumber
		\end{align}
		
		For the term $\mathcal{V}_3$, by  Lemmas \ref{xxi},  \ref{x0xi},  and  \ref{estimate of Y v}, H$\mathrm{\ddot{o}}$lder's inequality, Minkowski's inequality and \textbf{(H1)}-\textbf{(H3)}, we have
		\begin{align}
			\mathcal{V}_3&\leq \frac{1}{h(\varepsilon_n)} \E\Big(\int_{0}^{t}K^2_2(t,s)\cdot\big\|\sigma(s,X^0_{s,\xi}+\sqrt{\varepsilon_n}h(\varepsilon_n)Y^{n,v}_{s},\mathcal{L}_{X^{n}_s})\big\|^2\d s\Big)^{\frac{1}{2}}\nonumber\\&\leq \frac{C_{L_1}}{h(\varepsilon_n)}\Big[\E\Big(\int_{0}^{t}K^2_2(t,s)\cdot\big(1+|X^0_{s,\xi}|^2+\varepsilon_n h(\varepsilon_{n})|Y^{n,v}_s|^2+\mathbb{W}_2^2(\mathcal{L}_{X^{n}_s},\delta_{0})\big)\d s\Big)^{\frac{p}{2}}\Big]^{\frac{1}{p}}\nonumber\\&\leq \frac{C_{L_1}}{h(\varepsilon_n)}\Big(\int_{0}^{t}K^2_2(t,s)\cdot\big(1+|X^0_{s,\xi}|^p+|\sqrt{\varepsilon_{n}}h(\varepsilon_{n})|^p\E|Y^{n,v}_s|^p+\E\mathbb{W}_2^p(\mathcal{L}_{X^{n}_s},\delta_{0})\big)^{\frac{2}{p}}\d s\Big)^{\frac{1}{2}}\nonumber\\&\leq C_{T,L_1,L_2,\xi}\frac{1}{h(\varepsilon_{n})}+C_{T,N,L_1,L_2,\xi}\sqrt{\varepsilon_{n}}h(\varepsilon_{n}).\nonumber
		\end{align}
		Therefore
		$$\E[\Psi_t(Y^{n,v},v^n)]\leq C_{T,N,L_1,L_2,\xi}\frac{1}{h(\varepsilon_{n})}+C_{T,N,L_1,L_2,\xi}\sqrt{\varepsilon_{n}}h(\varepsilon_{n})\to 0\quad \text{as $n\to\infty$}.$$
		Hence $\E[\Psi_t(\psi,v)]=0$, which implies that $\psi$ satisfies \eqref{controlled determinstic Volterra eq-MDP} almost surely, for all $t\in T$. By Lemma \ref{11}, we have that $\psi$ has continuous paths, so it satisfies \eqref{controlled determinstic Volterra eq-MDP} for all $t\in T$, almost surely,.  Thus,  $\mathcal{J}^0_{v,N}$ consists of   unique solution of \eqref{controlled determinstic Volterra eq}. Since this definition is independent of $N$, it extends to $\mathcal{J}_v^0$. The proof is complete.
	\end{proof}
	This Lemma shows that the set $\mathcal{J}^0_v$ corresponds to the unique solution $\psi_t$ of \eqref{controlled determinstic Volterra eq-MDP}.
	\begin{lem}\label{compact level sets-MDP}
		Assume that \textbf{(H1)}-\textbf{(H3)} and \textbf{(H6)}-\textbf{(H7)} hold, the functional $\Lambda$ in \eqref{def of G} has compact level sets.
	\end{lem}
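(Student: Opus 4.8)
The plan is to mirror the argument used for Lemma \ref{compact level sets-LDP}, with the controlled equation \eqref{controlled determinstic Volterra eq} replaced by the linear equation \eqref{controlled determinstic Volterra eq-MDP} and Lemma \ref{unique characterised-MDP} used in place of Lemma \ref{unique characterised}. By Lemma \ref{unique characterised-MDP}, for $N>0$ the level set $L_N:=\{\psi\in C^d_T:\Lambda(\psi)\le N\}$ consists precisely of those $\psi$ for which there is $v\in L^2$ with $\tfrac12\int_0^T|v_s|^2\d s\le N$ and $\psi$ solving \eqref{controlled determinstic Volterra eq-MDP} with control $v$. First I would take an arbitrary sequence $\{\psi^n\}_{n\in\mathbb N}\subset L_N$, choose $v^n\in L^2$ with $\tfrac12\int_0^T|v^n_s|^2\d s\le\Lambda(\psi^n)+\tfrac1n\le N+1$, so that $v^n\in\mathcal S_{2N+2}$ and $\psi^n$ solves \eqref{controlled determinstic Volterra eq-MDP} with control $v^n$, and then extract a subsequence converging in $C^d_T$ to a limit $\psi$ that I will show lies in $L_N$.

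For relative compactness I would apply the Arzel\`a--Ascoli theorem. Uniform boundedness of $\{\psi^n\}$ follows from Lemma \ref{existence-mid}, whose bound $\sup_{t\in[0,T]}|\psi^n_t|\le C_{T,N,L_2,L_5,L_6}(1+|\xi|)$ depends on the controls only through the fixed ball $\mathcal S_{2N+2}$; equicontinuity follows from Lemma \ref{11}, which gives the uniform H\"older modulus $|\psi^n_t-\psi^n_s|\le C_{T,N,L_2}(1+|\xi|^p)|t-s|^{\gamma}$. Hence a subsequence, still denoted $\{\psi^{n_k}\}$, converges uniformly to some $\psi\in C^d_T$. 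Since $\mathcal S_{2N+2}$ is a closed ball in the separable Hilbert space $L^2([0,T];\mathbb R^m)$, it is weakly sequentially compact and metrizable in the weak topology, so along a further subsequence $v^{n_k}\rightharpoonup v$ weakly in $L^2$; by weak lower semicontinuity of the norm together with $\int_0^T|v^{n_k}_s|^2\d s\le 2N+\tfrac2{n_k}$, the limit satisfies $\tfrac12\int_0^T|v_s|^2\d s\le N$.

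It then remains to pass to the limit in \eqref{controlled determinstic Volterra eq-MDP}. The drift term is linear in $\psi$, and by \textbf{(H6)}--\textbf{(H7)} and Lemma \ref{x0xi} the coefficient is bounded on $[0,T]$: $\|\nabla b(s,X^0_{s,\xi},\delta_{X^0_{s,\xi}})\|\le L_6+L_5\big(|X^0_{s,\xi}|+\mathbb W_2(\delta_{X^0_{s,\xi}},\delta_0)\big)\le C(1+|\xi|)$. Therefore
\[
\Big|\int_0^t K_1(t,s)\big(\nabla_{\psi^{n_k}_s}b(s,X^0_{s,\xi},\delta_{X^0_{s,\xi}})-\nabla_{\psi_s}b(s,X^0_{s,\xi},\delta_{X^0_{s,\xi}})\big)\d s\Big|\le C(1+|\xi|)\,\|\psi^{n_k}-\psi\|_T\int_0^t K_1(t,s)\d s\longrightarrow 0 .
\]
For the control term, the map $s\mapsto K_2(t,s)\sigma(s,X^0_{s,\xi},\delta_{X^0_{s,\xi}})$ belongs to $L^2(0,t)$, since $\int_0^t K_2^2(t,s)\d s<\infty$ by \textbf{(H1)} and $\sigma$ is bounded on $[0,T]$ by \textbf{(H3)} and Lemma \ref{x0xi}; hence $v^{n_k}\rightharpoonup v$ yields $\int_0^t K_2(t,s)\sigma(s,X^0_{s,\xi},\delta_{X^0_{s,\xi}})v^{n_k}_s\d s\to\int_0^t K_2(t,s)\sigma(s,X^0_{s,\xi},\delta_{X^0_{s,\xi}})v_s\d s$ for each fixed $t$. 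Combining these limits shows that $\psi$ solves \eqref{controlled determinstic Volterra eq-MDP} with control $v$, so $\psi\in\mathcal J^0_v$ and $\Lambda(\psi)\le\tfrac12\int_0^T|v_s|^2\d s\le N$, i.e.\ $\psi\in L_N$; thus $L_N$ is closed, and with the relative compactness above it is compact. The only delicate point is this last one — coupling the weak $L^2$-convergence of the controls with the possibly singular kernel $K_2$ — and this is handled exactly as in the LDP proof by the square-integrability of $K_2(t,\cdot)$ guaranteed by \textbf{(H1)}.
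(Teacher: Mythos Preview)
Your proposal is correct and follows essentially the same route as the paper's proof: Arzel\`a--Ascoli via Lemmas \ref{existence-mid} and \ref{11} for relative compactness, weak sequential compactness of the controls, and passage to the limit in \eqref{controlled determinstic Volterra eq-MDP} using the bound $\|\nabla b(s,X^0_{s,\xi},\delta_{X^0_{s,\xi}})\|\le L_6+2L_5|X^0_{s,\xi}|$ from \textbf{(H6)}--\textbf{(H7)} together with weak $L^2$-convergence for the control term. Your handling of the near-minimizers (choosing $v^n$ with $\tfrac12\int_0^T|v^n_s|^2\d s\le\Lambda(\psi^n)+\tfrac1n$ and then invoking weak lower semicontinuity of the norm to recover the sharp bound $\le N$ for the limit) is in fact slightly more careful than the paper, which tacitly assumes the infimum in $\Lambda(\psi^n)$ is attained.
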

	\begin{proof}
		By Lemma \ref{unique characterised-MDP}, we have that
		\small\begin{align}
			\Lambda(\psi):=\inf\Big\{\frac{1}{2}\int_{0}^{T}|v_s|^2\d s: v\in L^2 ,\ \psi_t=\int_{0}^{t}K_1(t,s)\nabla_{\psi_s}b(s,X^0_{s,\xi},\delta_{X^0_{s,\xi}})\d s+\int_{0}^{t}K_1(t,s)\sigma(s,X^0_{s,\xi},\delta_{X^0_{s,\xi}})v_s\d s\Big\}.\nonumber
		\end{align}\normalsize
		To prove the functional $\Lambda$ defined in \eqref{def of G} has compact level sets, that is for all $N>0$, the level sets
		$$L_N:=\{\psi\in C^d_T: I(\phi)\leq N\}$$
		are compact. Fix $N>0$ and consider an arbitrary sequence $\{\psi^n\}_{n\in\mathbb{N}}\subset L_N$, we will show that there exists a converging subsequence such that the limit belongs to $L_N$.
		
		We first prove that $\{\psi^n\}_{n\in\mathbb{N}}$ is relatively  compact. According to classical Arzel$\grave{\mathrm{a}}$-Ascoli's Theorem, we only need to show that $\{\psi^n\}_{n\in\mathbb{N}}$ is uniformly bounded and equicontinuous. For all $n\in\mathbb{N}$ and $t\in [0,T]$, there exists $v^n\in L^2$ such that $\frac{1}{2}\int_{0}^{T}|v^n_t|^2dt\leq N$ and $\psi^n\in \mathcal{G}^0_{v^n}$, which implies that $v^n\in\mathcal{S}_{2N}$ and $\psi^n$ is the solution of following Volterra type McKean-Vlasov integral equation:
		$$\psi^n_t=\int_{0}^{t}K_1(t,s)\nabla_{\psi^n_s}b(s,X^0_{s,\xi},\delta_{X^0_{s,\xi}})\d s+\int_{0}^{t}K_1(t,s)\sigma(s,X^0_{s,\xi},\delta_{X^0_{s,\xi}})v^n_s\d s.$$
		By Lemmas \ref{existence-mid} and \ref{11}, we obtain $\{\psi^n\}_{n\in\mathbb{N}}$ is relatively compact which ensures that $L_N$ is relatively compact for any $N>0$.
		
		Next we show that $\{\psi^n\}_{n\in\mathbb{N}}$ is closed. Namely, we want to prove that if  $\{\psi^n\}_{n\in\mathbb{N}}$ be a converging sequence of $L_N$  with  limit  $\psi\in C^d_T$,  then    $\phi\in L_N$. Since $v^{n}\in\mathcal{S}_{2N}$ which is a compact space with respect to the weak topology. Hence there exists a subsequence $\{n_k\}_{k\in\mathbb{N}}$ such that $v^{n_k}$ converges weakly in $\{n_k\}_{k\in\mathbb{N}}$ such that $v^{n_k}$ converges weakly in $L^2$ to the limit $v\in\mathcal{S}_{2N}$ and $\lim_{k\to \infty}\psi^{n_k}=\psi$.
		
		Notice that for $v,v^{n_k}\in\mathcal{S}_{2N}$, by \textbf{(H1)-(H3)}, H$\mathrm{\ddot{o}}$lder's inequality and Lemma \ref{x0xi} we have that
		\begin{align}
			&\Big|\phi_t^{n_k}-\big(\int_{0}^{t}K_1(t,s)\nabla_{\psi_s}b(s,X^0_{s,\xi},\delta_{X^0_{s,\xi}})\d s+\int_{0}^{t}K_1(t,s)\sigma(s,X^0_{s,\xi},\delta_{X^0_{s,\xi}})v_s\d s\big)\Big|\nonumber\\&\leq \Big|\int_{0}^{t}K_1(t,s)\cdot\big(\nabla_{\psi^{n_k}_s}b(s,X^0_{s,\xi},\delta_{X^0_{s,\xi}})-\nabla_{\psi_s}b(s,X^0_{s,\xi},\delta_{X^0_{s,\xi}})\big)\d s\Big|\nonumber\\&\quad+\Big|\int_{0}^{t}K_1(t,s)\cdot \sigma(s,X^0_{s,\xi},\delta_{X^0_{s,\xi}})\cdot\big(v^{n_k}-v\big)\d s\Big|\nonumber\\&\leq \int_{0}^{t}K_1(t,s)\cdot\big\|\nabla b(s,X^0_{s,\xi},\delta_{X^0_{s,\xi}})-\nabla b(s,0,\delta_{0})+\nabla b(s,0,\delta_{0})\big\|\cdot|\psi^{n_k}_s-\psi_s|\d s\nonumber\\&\quad+\int_{0}^{t}K_1(t,s)\cdot \big\|\sigma(s,X^0_{s,\xi},\delta_{X^0_{s,\xi}})\big\|\cdot|v^{n_k}-v|\d s\nonumber\\&\leq L_6\int_{0}^{t}K_1(t,s)\cdot\big(|X^0_{s,\xi}|+\mathbb{W}_2(\delta_{X^0_{s,\xi}},\delta_{0})\big)\cdot|\psi^{n_k}_s-\psi_s|\d s+L_5\int_{0}^{t}K_1(t,s)\cdot |\psi^{n_k}_s-\psi_s|\d s\nonumber\\&\quad+\sqrt{L_2}\int_{0}^{t}K_1(t,s)\cdot\big(1+|X^0_{s,\xi}|^2+\mathbb{W}_2^2(\delta_{X^0_{s,\xi}},\delta_{0})\big)^{\frac{1}{2}}\cdot |v^{n_k}-v|\d s\nonumber\\&\leq C_{T,L_2,L_5,L_6,\xi}\sup_{t\in [0,T]}|\psi^{n_k}_t-\psi_t|\quad\text{as $n_k\to\infty$}\quad (\lim_{k\to \infty}\phi^{n_k}=\phi)\nonumber\\&\quad+C_{T,L_2,\xi}\int_{0}^{t}K_1(t,s)\cdot|v^{n_k}_s-v_s|\d s\to 0\quad\text{as $n_k\to\infty$}\quad (v^{n_k}\text{converges weakly in $L^2$ to}\ v).\nonumber
		\end{align}
		So we obtain  easily   that
		\begin{align}
			\psi_{t}&=\lim_{k\to \infty}\psi^{n_k}\nonumber\\&=\lim_{k\to \infty}\Big(\int_{0}^{t}K_1(t,s)\nabla_{\psi^n_s}b(s,X^0_{s,\xi},\delta_{X^0_{s,\xi}})\d s+\int_{0}^{t}K_1(t,s)\sigma(s,X^0_{s,\xi},\delta_{X^0_{s,\xi}})v^n_s\d s\Big)\nonumber\\&=\int_{0}^{t}K_1(t,s)\nabla_{\psi_s}b(s,X^0_{s,\xi},\delta_{X^0_{s,\xi}})\d s+\int_{0}^{t}K_1(t,s)\sigma(s,X^0_{s,\xi},\delta_{X^0_{s,\xi}})v_s\d s\nonumber.
		\end{align}
		Since $v\in\mathcal{S}_{2N}$  we see  $\psi\in L_N$, which concludes the proof of the closeness  and therefore of the compactness of $L_N$.
	\end{proof}
Combining Lemma \ref{Y vare v t s} and Lemma \ref{compact level sets-MDP}, Theorem \ref{MDP} is a disect result of Lemma \ref{general LDP}.
\appendix
\section{Appendix}\label{Appendix}
\subsection{Proof of Lemma  \ref{existence}}
\begin{proof} 
	We utilize the successive approximation scheme. Define recursively $(\phi^n)_{n\geq 1}$ as follows: $\phi^1_t:=\xi$, $t\in [0,T]$ and for $n=0, 1, 2, \cdots$ %each $n\in\mathbb{Z}_+$,
	\begin{align}\label{Picard}
		\phi^{n+1}_t:=\xi+\int_{0}^{t}K_1(t,s)b(s,\phi_s^n,\delta_{X^0_s})\d s+\int_{0}^{t}K_1(t,s)\sigma(s,\phi_s^n,\delta_{X^0_s})v_s\d s, t\in [0,T].
	\end{align}
	By H$\mathrm{\ddot{o}}$lder's inequality, Lemma \ref{x0xi}, \textbf{(H1)} and \textbf{(H3)}  we have
	\begin{align}\label{estimate of X_n}
		|\phi_t^{n+1}|^2&\leq 3|\xi|^2	+3\Big|\int_{0}^{t}K_1(t,s)b(s,\phi_s^n,\delta_{X^0_{s,\xi}})\d s\Big|^2+3\Big|\int_{0}^{t}K_1(t,s)\sigma(s,\phi_s^n,\delta_{X^0_{s,\xi}})v_s\d s\Big|^2\nonumber\\&\leq 3|\xi|^2 +3\int_{0}^{t}K_1(t,s)\cdot\big|b(s,\phi_s^n,\delta_{X^0_{s,\xi}})\big|^2\d s\cdot\Big|\int_{0}^{t}K_1(t,s)\d s\Big|\nonumber\\&\quad+3N\int_{0}^{t}K_1^2(t,s)\cdot\big\|\sigma(s,\phi_s^n,\delta_{X^0_{s,\xi}})\big\|^2\d s\nonumber\\&\leq 3|\xi|^2+C_{T,N,L_2}+C_{T,L_2}\int_{0}^{t}K_1(t,s)\cdot|\phi_s^{n}|^2\d s+C_{p,T,N,L_2}\int_{0}^{t}K^2_1(t,s)\cdot|\phi_s^{n}|^2\d s\nonumber\\&\quad+C_{T,N,L_2}\int_{0}^{t}|X^0_{s,\xi}|^2\d s\nonumber\\&\leq C_{T,N,L_2}(1+|\xi|^2)+\int_{0}^{t}\big(C_{T,L_2}K_1(t,s)+C_{T,N,L_2}K^2_1(t,s)\big)\cdot|\phi^n_s|^2\d s.
	\end{align}
	This  means 
	$$f_m(t)\leq C_{T,N,L_2}(1+|\xi|^2)+\int_{0}^{t}K_{1,1}(t,s)\cdot f_m(s)\d s,$$
	where 	$$f_m(t):=\sup_{n=1,\cdots,m}|\phi_s^n|^2,$$
	%then it is clear that $f_m<\infty$ for all $m\in \mathbb{N}$ and we also easily get
and  $K_{1,1}:=C_{T,L_2}K_1(t,s)+C_{T,N,L_2}K_1^2(t,s)$ %and the constants $C_{T,L_2}$ and $C_{T,N,L_2}$ are 
is  independent of $m$.
	
	Let $R^{K_{1,1}}$ be defined by \eqref{Resolvent} in terms of $K_{1,2}$.  By Remark \ref{condition of Volterra}, Lemma \ref{Gronwall}, \eqref{definition of resolvent}, \eqref{finite of resolvent} and \textbf{(H1)}, we obtain that for almost all $t\in [0,T]$,
	\begin{align}\label{bounded}
		\sup_{n\in\mathbb{N}}|\phi^n_t|^2&=\lim_{m\to\infty}f_m(t)\leq C_{p,T,N,L_2}\Big(|\xi|^2+1+\int_{0}^{t}R^{K_{1,1}}(t,s)\cdot (|\xi|^2+1)\d s\Big)\nonumber\\&\leq C_{T,N,L_2}(1+|\xi|^2).
	\end{align}
	On the other hand, we set
	$$Z_{n,m}(t):=\phi^n_t-\phi^m_t$$
	and
	$$f(t):=\limsup_{n,m\to\infty}|Z_{n,m}(t)|^2.$$
	First,    by \eqref{bounded} we see
	\[
	f(t)\le \sup_{n,m }|Z_{n,m}(t)|^2 
	\le 2 \sup_{n  }|\phi^n_t|^p <\infty\,. 
	\] 
	By H$\mathrm{\ddot{o}}$lder's inequality, \textbf{(H1)} and \textbf{(H2)} we have
	\begin{align}
		|Z_{n+1,m+1}(t)|^2&=|\phi^{n+1}_t-\phi^{m+1}_t|^2\nonumber\\&\leq 2\Big|\int_{0}^{t}|b(t,s,\phi^{n}_s,\delta_{X^0_{s,\xi}})-b(t,s,\phi^{m}_s,\delta_{X^0_{s,\xi}})|\d s\Big|^2\nonumber\\&\quad+2\Big|\int_{0}^{t}\big(\sigma(t,s,\phi^{n}_s,\delta_{X^0_{s,\xi}})-\sigma(t,s,\phi^{m}_s,\delta_{X^0_{s,\xi}})\big)v_s\d s\Big|^2\nonumber\\&\leq C_{L_1}\Big|\int_{0}^{t}K_1(t,s)\cdot\big|\phi^{n}_s-\phi^{m}_s|\d s\Big|^2\nonumber\\&\quad+2N\Big|\int_{0}^{t}K_1^2(t,s)\cdot\big\|\sigma(t,s,\phi^{n}_s,\delta_{X^0_{s,\xi}})-\sigma(t,s,\phi^{m}_s,\delta_{X^0_{s,\xi}})\big\|^2\d s\Big|\nonumber\\&\leq  C_{L_1}\int_{0}^{t}K_1(t,s)\cdot \big|\phi^{n}_s-\phi^{m}_s\big|^2\d s\cdot\int_{0}^{t}K_1(t,s)\d s\nonumber\\&\quad+C_{N,L_1}\int_{0}^{t}K^2_1(t,s)\cdot \big|\phi^{n}_s-\phi^{m}_s\big|^2\d s\nonumber\\&\leq C_{T,L_1}\int_{0}^{t}K_1(t,s)\cdot |\phi_s^n-\phi_s^m|^2\d s+C_{T,N,L_1}\int_{0}^{t}K^2_1(t,s)\cdot |\phi_s^n-\phi_s^m|^2\d s\nonumber\\&\leq C_{T,N,L_1}\int_{0}^{t}\Big(K_1(t,s)+K_1^2(t,s)\Big)\cdot |Z_{n,m}(t)|^2 \d s.\nonumber
	\end{align}
	By \eqref{bounded}, \textbf{(H1)} and applying Fatou's lemma, we have
	$$f(t)\leq C_{T,N,L_1}\int_{0}^{t}\Big(K_1(t,s)+K_1^2(t,s)\Big)\cdot f(s) \d s.$$
	
	By Remark \ref{condition of Volterra} and Lemma \ref{Gronwall}, we get for almost all $t\in [0,T]$,
	$$f(t)=\limsup_{n,m\to\infty}|Z_{n,m}(t)|^p=0.$$
	Hence, there exists a $\phi_t$ such that for almost all $t\in [0,T]$,
	$$\lim_{m\to\infty}|\phi^n_t-\phi_t|^2=0.$$
	Now taking limits in  \eqref{Picard} gives the existence. The uniqueness follows from a similar calculation  and Lyapunov inequality. % gives desired estimate.
\end{proof}
\subsection{Proof of Lemma \ref{zxi}[(ii)]}
	\begin{proof}
	By BDG's inequality, H$\mathrm{\ddot{o}}$lder's inequality and \textbf{(H1)}-\textbf{(H6)} we have
	\begin{align}
		&\E|Z^{\varepsilon}_{t,\xi}-Z^{\varepsilon}_{t,\eta}|^{p}\nonumber\\&=\E\Big|\int_{0}^{t}K_1(t,s)\cdot\frac{b(s,X^\varepsilon_{s,\xi},\mathcal{L}_{X^\varepsilon_{s,\xi}})-b(s,X^0_{s,\xi},\delta_{X^0_{s,\xi}})}{\sqrt{\varepsilon}}\d s+\int_{0}^{t}K_2(t,s)\sigma(s,X^\varepsilon_{s,\xi},\mathcal{L}_{X^\varepsilon_{s,\xi}})\d W_s\nonumber\\&\quad-\int_{0}^{t}K_1(t,s)\cdot\frac{b(s,X^\varepsilon_{s,\eta},\mathcal{L}_{X^\varepsilon_{s,\eta}})-b(s,X^0_{s,\eta},\delta_{X^0_{s,\eta}})}{\sqrt{\varepsilon}}\d s-\int_{0}^{t}K_2(t,s)\sigma(s,X^\varepsilon_{s,\eta},\mathcal{L}_{X^\varepsilon_{s,\eta}})\d W_s\Big|^{p}\nonumber\\&\leq 2^{p-1}\E\Big|\int_{0}^{t}K_1(t,s)\cdot\Big(\frac{b(s,X^\varepsilon_{s,\xi},\mathcal{L}_{X^\varepsilon_{s,\xi}})-b(s,X^0_{s,\xi},\delta_{X^0_{s,\xi}})}{\sqrt{\varepsilon}}-\frac{b(s,X^\varepsilon_{s,\eta},\mathcal{L}_{X^\varepsilon_{s,\eta}})-b(s,X^0_{s,\eta},\delta_{X^0_{s,\eta}})}{\sqrt{\varepsilon}}\Big)\d s\Big|^{p}\nonumber\\&\quad+2^{p-1}\E\Big|\int_{0}^{t}K_2(t,s)\cdot\big(\sigma(s,X^\varepsilon_{s,\xi},\mathcal{L}_{X^\varepsilon_{s,\xi}})-\sigma(s,X^\varepsilon_{s,\eta},\mathcal{L}_{X^\varepsilon_{s,\eta}}) \big)\d W_s\Big|^{p}\nonumber\\&\leq 2^{p-1}\E\Bigg|\int_{0}^{t}K_1(t,s)\cdot\Big(\frac{b(s,X^\varepsilon_{s,\xi},\mathcal{L}_{X^\varepsilon_{s,\xi}})-b(s,X^{\varepsilon}_{s,\xi},\delta_{X^0_{s,\xi}})}{\sqrt{\varepsilon}}+\frac{b(s,X^{\varepsilon}_{s,\xi},\delta_{X^0_{s,\xi}})-b(s,X^0_{s,\xi},\delta_{X^0_{s,\xi}})}{\sqrt{\varepsilon}}\Big)\d s\nonumber\\&\quad-\int_{0}^{t}K_1(t,s)\cdot\Big(\frac{b(s,X^\varepsilon_{s,\eta},\mathcal{L}_{X^\varepsilon_{s,\eta}})-b(s,X^{\varepsilon}_{s,\eta},\delta_{X^0_{s,\eta}})}{\sqrt{\varepsilon}}+\frac{b(s,X^{\varepsilon}_{s,\eta},\delta_{X^0_{s,\eta}})-b(s,X^0_{s,\eta},\delta_{X^0_{s,\eta}})}{\sqrt{\varepsilon}}\Big)\d s\Bigg|^{p}\nonumber\\&\quad+C_{p}\E\Big(\int_{0}^{t}K^2_2(t,s)\cdot\big\|\sigma(s,X^\varepsilon_{s,\xi},\mathcal{L}_{X^\varepsilon_{s,\xi}})-\sigma(s,X^\varepsilon_{s,\eta},\mathcal{L}_{X^\varepsilon_{s,\eta}})\big\|^{2}\d s\Big)^{\frac{p}{2}}\nonumber\\&\leq 2^{p-1}\E\Bigg|\int_{0}^{t}K_1(t,s)\cdot\Big(\int_{0}^{1}\E\big\langle D^Lb(s,X^{\varepsilon}_{s,\xi},\mathcal{L}_{R^{\xi}_s(r)})(R^{\xi}_s(r), Z^{\varepsilon}_{s,\xi}\big\rangle \d r\nonumber\\&\quad+\int_{0}^{1}\nabla_{Z^{\varepsilon}_{s,\xi}}b(s,X^0_{s,\xi}+u(X^{\varepsilon}_{s,\xi}-X^0_{s,\xi}),\delta_{X^0_{t,\xi}})\d u\Big)\d s\nonumber\\&\quad-\int_{0}^{t}K_1(t,s)\cdot\Big(\int_{0}^{1}\E\big\langle D^Lb(s,X^{\varepsilon}_{s,\eta},\mathcal{L}_{R^{\eta}_s(r)})(R^{\eta}_s(r),Z^{\varepsilon}_{s,\eta}\big\rangle \d r\nonumber\\&\quad+\int_{0}^{1}\nabla_{Z^{\varepsilon}_{s,\eta}}b(s,X^0_{s,\eta}+u(X^{\varepsilon}_{s,\eta}-X^0_{s,\eta}),\delta_{X^0_{t,\eta}})\d u\Big)\d s\Bigg|^{p}\nonumber\\&\quad+C_{p}\E\int_{0}^{t}K_2^2(t,s)\cdot\big\|\sigma(s,X^\varepsilon_{s,\xi},\mathcal{L}_{X^\varepsilon_{s,\xi}})-\sigma(s,X^\varepsilon_{s,\eta},\mathcal{L}_{X^\varepsilon_{s,\eta}})\big\|^{p}\d s\cdot\Big|\int_{0}^{t}K_2^2(t,s)\d s\Big|^{\frac{p}{2}-1}\nonumber\\&\leq2^{p-1}\E\Bigg|\int_{0}^{t}K_1(t,s)\cdot\Big(\int_{0}^{1}\E\big\langle D^Lb(s,X^{\varepsilon}_{s,\xi},\mathcal{L}_{R^{\xi}_s(r)})(R^{\xi}_s(r)), Z^{\varepsilon}_{s,\xi}\big\rangle-\big\langle D^Lb(s,X^{\varepsilon}_{s,\eta},\mathcal{L}_{R^{\eta}_s(r)})(R^{\eta}_s(r)), Z^{\varepsilon}_{s,\eta}\big\rangle \d r\nonumber\\&\quad+\int_{0}^{1}\big(\nabla_{Z^{\varepsilon}_{s,\xi}}b(s,M^{\xi}_s(u),\delta_{X^0_{t,\xi}})-\nabla_{Z^{\varepsilon}_{s,\eta}}b(s,M^{\eta}_s(u),\delta_{X^0_{t,\eta}})\big)\d u\Big)\d s\Bigg|^{p}\nonumber\\&\quad+C_{p,T,L_1}\int_{0}^{t}K_2^2(t,s)\cdot \E|X^{\varepsilon}_{s,\xi}-X^{\varepsilon}_{s,\eta}|^{p}\d s\nonumber\\&\leq2^{p-1}\E\int_{0}^{t}K_1(t,s)\cdot\Bigg|\int_{0}^{1}\E\big(\big\langle D^Lb(s,X^{\varepsilon}_{s,\xi},\mathcal{L}_{R^{\xi}_s(r)})(R^{\xi}_s(r),Z^{\varepsilon}_{s,\xi}\big\rangle-\big\langle D^Lb(s,X^{\varepsilon}_{s,\eta},\mathcal{L}_{R^{\eta}_s(r)})(R^{\eta}_s(r)),Z^{\varepsilon}_{s,\eta}\big\rangle\big)\d r\nonumber\\&\quad+\int_{0}^{1}\big(\nabla_{Z^{\varepsilon}_{s,\xi}}b(s,M^{\xi}_s(u),\delta_{X^0_{t,\xi}})-\nabla_{Z^{\varepsilon}_{s,\eta}}b(s,M^{\eta}_s(u),\delta_{X^0_{t,\eta}})\big)\d u\Bigg|^{p}\d s\cdot\Big|\int_{0}^{t}K_1(t,s)\d s\Big|^{p-1}\nonumber\\&\quad+C_{p,T,L_1}\int_{0}^{t}K_2^2(t,s)\cdot \E|X^{\varepsilon}_{s,\xi}-X^{\varepsilon}_{s,\eta}|^{p}\d s\nonumber\\&\leq C_{p,T}\E\int_{0}^{t}K_1(t,s)\cdot\int_{0}^{1}\Big|\big\langle D^Lb(s,X^{\varepsilon}_{s,\xi},\mathcal{L}_{R^{\xi}_s(r)})(R^{\xi}_s(r),Z^{\varepsilon}_{s,\xi}\big\rangle-\big\langle D^Lb(s,X^{\varepsilon}_{s,\eta},\mathcal{L}_{R^{\eta}_s(r)})(R^{\eta}_s(r)),Z^{\varepsilon}_{s,\eta}\big\rangle\Big|^{p}\d r\d s\nonumber\\&\quad+C_{p,T}\E\int_{0}^{t}K_1(t,s)\cdot\int_{0}^{1}\Big|\nabla_{Z^{\varepsilon}_{s,\xi}}b(s,M^{\xi}_s(u),\delta_{X^0_{t,\xi}})-\nabla_{Z^{\varepsilon}_{s,\eta}}b(s,M^{\eta}_s(u),\delta_{X^0_{t,\eta}})\Big|^{p}\d u\d s\nonumber\\&\quad+C_{p,T,L_1}\int_{0}^{t}K_2^2(t,s)\cdot \E|X^{\varepsilon}_{s,\xi}-X^{\varepsilon}_{s,\eta}|^{p}\d s\nonumber\\&\leq C_{p,T}\E\int_{0}^{t}K_1(t,s)\cdot\Big|\int_{0}^{1}\E\big(\big\langle D^Lb(s,X^{\varepsilon}_{s,\xi},\mathcal{L}_{R^{\xi}_s(r)})(R^{\xi}_s(r),Z^{\varepsilon}_{s,\xi}\big\rangle-\big\langle D^Lb(s,X^{\varepsilon}_{s,\eta},\mathcal{L}_{R^{\eta}_s(r)})(R^{\eta}_s(r)),Z^{\varepsilon}_{s,\xi}\big\rangle\big)\Big|^p\d r\d s\nonumber\\&\quad+C_{p,T}E\int_{0}^{t}K_1(t,s)\cdot\Big|\int_{0}^{1}\E\big(\big\langle D^Lb(s,X^{\varepsilon}_{s,\eta},\mathcal{L}_{R^{\eta}_s(r)})(R^{\eta}_s(r),Z^{\varepsilon}_{s,\xi}\big\rangle-\big\langle D^Lb(s,X^{\varepsilon}_{s,\eta},\mathcal{L}_{R^{\eta}_s(r)})(R^{\eta}_s(r)),Z^{\varepsilon}_{s,\eta}\big\rangle\big)\Big|^p\d r\d s\nonumber\\&\quad+C_{p,T}\E\int_{0}^{t}K_1(t,s)\cdot\int_{0}^{1}\Big|\nabla_{Z^{\varepsilon}_{s,\xi}}b(s,M^{\xi}_s(u),\delta_{X^0_{t,\xi}})-\nabla_{Z^{\varepsilon}_{s,\xi}}b(s,M^{\eta}_s(u),\delta_{X^0_{t,\eta}})\Big|^{p}\d u\d s\nonumber\\&\quad+C_{p,T}\E\int_{0}^{t}K_1(t,s)\cdot\int_{0}^{1}\Big|\nabla_{Z^{\varepsilon}_{s,\xi}}b(s,M^{\eta}_s(u),\delta_{X^0_{t,\eta}})-\nabla_{Z^{\varepsilon}_{s,\eta}}b(s,M^{\eta}_s(u),\delta_{X^0_{t,\eta}})\Big|^{p}\d u\d s\nonumber\\&\quad+C_{p,T,L_1}\int_{0}^{t}K_2^2(t,s)\cdot \E|X^{\varepsilon}_{s,\xi}-X^{\varepsilon}_{s,\eta}|^{p}\d s\nonumber\\&\leq C_{p,T,L_4}\E\int_{0}^{t}K_1(t,s)\cdot\int_{0}^{1}\Big||\mathcal{L}_{R_s^{\xi}(r)}-\mathcal{L}_{R_s^{\eta}(r)}|+\mathbb{W}_2(\mathcal{L}_{R_s^{\xi}(r)},\mathcal{L}_{R_s^{\eta}(r)})+\big(\E|R_s^{\xi}(r)-R^{\eta}_{s}(r)|^2\big)^{\frac{1}{2}}\Big|^{p}\d r\cdot \E|Z^{\varepsilon}_{s,\xi}|^p\d s\nonumber\\&\quad+C_{p,T,L_3}\E\int_{0}^{t}K_1(t,s)\cdot|Z^{\varepsilon}_{s,\xi}-Z^{\varepsilon}_{s,\eta}|^p\d s+C_{p,T,L_5}\E\int_{0}^{t}K_1(t,s)\cdot\int_{0}^{1}|M^{\xi}_s(u)-M^{\eta}_s(u)|^{p}\d u\cdot |Z^{\varepsilon}_{s,\xi}|^p\d s\nonumber\\&\quad+C_{p,T,L_3}\int_{0}^{t}K_1(t,s)\cdot \E|Z^{\varepsilon}_{s,\xi}-Z^{\varepsilon}_{s,\eta}|^p\d s+C_{p,T,L_1}\int_{0}^{t}K_2^2(t,s)\cdot \E|X^{\varepsilon}_{s,\xi}-X^{\varepsilon}_{s,\eta}|^{p}\d s\nonumber\\&\leq C_{p,T,L_4}\int_{0}^{t}K_1(t,s)\cdot \E\Big(|X^{0}_{s,\xi}-X^{0}_{s,\eta}|^p+|X^{\varepsilon}_{s,\xi}-X^{\varepsilon}_{s,\eta}|^p\Big)\cdot \E|Z^{\varepsilon}_{s,\xi}|^p\d s\nonumber\\&\quad+C_{p,T,L_3}\int_{0}^{t}K_1(t,s)\cdot \E|Z^{\varepsilon}_{s,\xi}-Z^{\varepsilon}_{s,\eta}|^p\d s\nonumber\\&\quad+C_{p,T,L_5}\E\int_{0}^{t}K_1(t,s)\cdot\Big(|X^{0}_{s,\xi}-X^{0}_{s,\eta}|^p+|X^{\varepsilon}_{s,\xi}-X^{\varepsilon}_{s,\eta}|^p\Big)\cdot|Z^{\varepsilon}_{s,\xi}|^p\d s\nonumber\\ &\quad+C_{p,T,L_3}\int_{0}^{t}K_1(t,s)\cdot \E|Z^{\varepsilon}_{s,\xi}-Z^{\varepsilon}_{s,\eta}|^p\d s+C_{p,T,L_1}\int_{0}^{t}K_2^2(t,s)\cdot \E|X^{\varepsilon}_{s,\xi}-X^{\varepsilon}_{s,\eta}|^{p}\d s\nonumber\\&\leq C_{p,T,L_4}\int_{0}^{t}K_1(t,s)\cdot \E|X^{0}_{s,\xi}-X^{0}_{s,\eta}|^p\cdot \E|Z^{\varepsilon}_{s,\xi}|^p\d s+C_{p,T,L_4}\int_{0}^{t}K_1(t,s)\cdot \E|X^{\varepsilon}_{s,\xi}-X^{\varepsilon}_{s,\eta}|^p\cdot \E|Z^{\varepsilon}_{s,\xi}|^p\d s\nonumber\\&\quad+C_{p,T,L_5}\int_{0}^{t}K_1(t,s)\cdot \E\Big(\big(|X^{0}_{s,\xi}-X^{0}_{s,\eta}|^p+|X^{\varepsilon}_{s,\xi}-X^{\varepsilon}_{s,\eta}|^p\big)\cdot |Z^{\varepsilon}_{s,\xi}|^p\Big)\d s\nonumber\\&\quad+C_{p,T,L_3}\int_{0}^{t}K_1(t,s)\cdot \E|Z^{\varepsilon}_{s,\xi}-Z^{\varepsilon}_{s,\eta}|^p\d s+C_{p,T,L_1}\int_{0}^{t}K_2^2(t,s)\cdot \E|X^{\varepsilon}_{s,\xi}-X^{\varepsilon}_{s,\eta}|^{p}\d s\nonumber\\&\leq C_{p,T,L_4}\int_{0}^{t}K_1(t,s)\cdot \E|X^{0}_{s,\xi}-X^{0}_{s,\eta}|^p\cdot \E|Z^{\varepsilon}_{s,\xi}|^p\d s+C_{p,T,L_4}\int_{0}^{t}K_1(t,s)\cdot \E|X^{\varepsilon}_{s,\xi}-X^{\varepsilon}_{s,\eta}|^p\cdot \E|Z^{\varepsilon}_{s,\xi}|^p\d s\nonumber\\&\quad+C_{p,T,L_5}\int_{0}^{t}K_1(t,s)\cdot\Big(\E|X^{0}_{s,\xi}-X^{0}_{s,\eta}|^{2p}\Big)^{\frac{1}{2}}\cdot\Big(\E|Z^{\varepsilon}_{s,\xi}|^{2p}\Big)^{\frac{1}{2}}\d s\nonumber\\&\quad+C_{p,T,L_5}\int_{0}^{t}K_1(t,s)\cdot\Big(\E|X^{\varepsilon}_{s,\xi}-X^{\varepsilon}_{s,\eta}|^{2p}\Big)^{\frac{1}{2}}\cdot\Big(\E|Z^{\varepsilon}_{s,\xi}|^{2p}\Big)^{\frac{1}{2}}\d s\nonumber\\&\quad+C_{p,T,L_3}\int_{0}^{t}K_1(t,s)\cdot \E|Z^{\varepsilon}_{s,\xi}-Z^{\varepsilon}_{s,\eta}|^p\d s+C_{p,T,L_1}\int_{0}^{t}K_2^2(t,s)\cdot \E|X^{\varepsilon}_{s,\xi}-X^{\varepsilon}_{s,\eta}|^{p}\d s\nonumber\\&\leq C_{p,T,L_1,L_2,L_4,L_5}(1+|\xi|^p)|\xi-\eta|^p+C_{p,T,L_3}\int_{0}^{t}K_1(t,s)\cdot \E|Z^{\varepsilon}_{s,\xi}-Z^{\varepsilon}_{s,\eta}|^p\d s,\nonumber
	\end{align}
	where $R^{\xi}_s(r):=X^0_{s,\xi}+r(X^{\varepsilon}_{s,\xi}-X^0_{s,\xi}),R^{\eta}_s(r):=X^0_{s,\eta}+r(X^{\varepsilon}_{s,\eta}-X^0_{s,\eta}),M_s^{\xi}(u):=X^0_{s,\xi}+u(X^{\varepsilon}_{s,\xi}-X^0_{s,\xi}),M^{\eta}_s(u):=X^0_{s,\eta}+u(X^{\varepsilon}_{s,\eta}-X^0_{s,\eta})\ r,u\in [0,T]$, and in the last inequality Lemma \ref{xxi}, Lemma \ref{x0xi} and Lemma \ref{zxi} are used.
	
	Finally, let  $R^{K^{*}_1}$ be defined by \eqref{Resolvent} with $K_1^{*}:=C_{p,T,L_3}K_1(t,s)$.  From  Remark \ref{condition of Volterra} and Lemma \ref{Gronwall}, it follows that
	\begin{align}
		\E|Z^{\varepsilon}_{t,\xi}-Z^{\varepsilon}_{t,\eta}|^{p}&\leq C_{p,T,L_1,L_2,L_4,L_5}(1+|\xi|^p)|\xi-\eta|^p\nonumber\\&+\int_{0}^{t}R^{K^{*}_{1}}(t,s)\cdot\big(C_{p,T,L_1,L_2,L_4,L_5}(1+|\xi|^p)|\xi-\eta|^p\big)\d s\nonumber\\&\leq C_{p,T,L_1,L_2,L_3,L_4,L_5}(1+|\xi|^{p})|\xi-\eta|^p.\nonumber
	\end{align}
\end{proof}
\subsection{Proof of Theorem \ref{existence-mid}}
\begin{proof} 
	We utilize the successive approximation scheme. Define recursively $(\psi^n)_{n\geq 1}$ as follows: $\psi^1_t:=0$, $t\in [0,T]$ and for each $n\in\mathbb{N}_+$,
	\begin{align}\label{Picard-mdp}
		\psi^{n+1}_t:=\int_{0}^{t}K_1(t,s)\nabla_{\psi^n_s}b(s,X^0_{s,\xi},\delta_{X^0_{s,\xi}})\d s+\int_{0}^{t}K_2(t,s)\sigma(s,X^0_{s,\xi},\delta_{X^0_{s,\xi}})v_s\d s, t\in [0,T].
	\end{align}
	By H$\mathrm{\ddot{o}}$lder's inequality, \textbf{(H1)}, \textbf{(H3)}, \textbf{(H6)}-\textbf{(H7)} and Lemma \ref{x0xi} we have
	\begin{align}\label{estimate of psi_n}
		|\psi_t^{n+1}|^2&\leq 	2\Big|\int_{0}^{t}K_1(t,s)\nabla_{\psi^n_s}b(s,X^0_{s,\xi},\delta_{X^0_{s,
		\xi}})\d s\Big|^2+2\Big|\int_{0}^{t}K_2(t,s)\sigma(s,X^0_{s,\xi},\delta_{X^0_{s,\xi}})v_s\d s\Big|^2\nonumber\\&\leq 2\int_{0}^{t}K_1(t,s)\big\|\nabla_{\psi^n_s}b(s,X^0_{s,\xi},\delta_{X^0_{s,\xi}})\big\|^2\d s\cdot\Big|\int_{0}^{t}K_1(t,s)\d s\Big|\nonumber\\&\quad+2N\int_{0}^{t}K_1^2(t,s)\big\|\sigma(s,X_{s,\xi}^0,\delta_{X^0_{s,\xi}})\big\|^2\d s\nonumber\\&\leq C_{T}\int_{0}^{t}K_1(t,s)\cdot\big\|\nabla_{\psi^n_s}b(s,X^0_{s,\xi},\delta_{X^0_{s,\xi}})-\nabla_{\psi^n_s}b(s,0,\delta_{0})+\nabla_{\psi^n_s}b(s,0,\delta_{0})\big\|^2\d s\nonumber\\&\quad+C_{T,N}\int_{0}^{t}K_2^2(t,s)\cdot\big\|\sigma(s,X_{s,\xi}^0,\delta_{X^0_{s,\xi}})\big\|^2\d s\nonumber\\&\leq C_{T,L_5}\int_{0}^{t}K_1(t,s)\cdot|X_{s,\xi}^0|^2\cdot |\psi^n_s|^2\d s+C_{T,L_6                                                                                                                                                                                                                                                                                                                                                                                                                                                                                                                                                                                   }\int_{0}^{t}K_1(t,s)\cdot |\psi^n_s|^2\d s\nonumber\\&\quad+C_{T,N,L_2}\int_{0}^{t}K_2^2(t,s)\cdot\big(1+|X_{s,\xi}^0|^2\big)\d s\nonumber\\&\leq C_{T,L_2,L_5,L_6}\big(1+|\xi|^2\big)\int_{0}^{t}K_1(t,s)\cdot|\psi^n_s|^2\d s+C_{T,N,L_2}\big(1+|\xi|^2\big)\nonumber\\&:=C_{T,N,L_2}\big(1+|\xi|^2\big)+\int_{0}^{t}\tilde{K}_1(t,s)\cdot|\psi^n_s|^2\d s,
	\end{align}
	where $\tilde{K}_1(t,s):=C_{T,L_2,L_5,L_6}\big(1+|\xi|^2\big)K_1(t,s)$. If we define
	$$g_m(t):=\sup_{n=1,\cdots,m}|\psi_s^n|^2,$$
	then  
%	it is clear that $g_m<\infty$ for all $m\in \mathbb{N}$ and we also easily get
	$$g_m(t)\leq C_{T,N,L_2}(1+|\xi|^2)+\int_{0}^{t}\tilde{K}_1(t,s)\cdot g_m(s)\d s,$$
	where the constant $C_{T,N,L_2}$ is independent of $m$.
	
	Let $R^{\tilde{K}_1}$ be defined by \eqref{Resolvent} in terms of $\tilde{K}_{1}$. Note that by \eqref{definition of R_n}, \eqref{definition of resolvent}, \eqref{finite of resolvent} and \textbf{(H1)}, when $n=1$ we have
	\begin{align}
		\int_{0}^{T}R_1^{\tilde{K}_{1}}(t,s)g_m(s)\d s=\int_{0}^{T}\tilde{K}_{1}(t,s)g_m(s)\d s<\infty,\nonumber
	\end{align}
	and
	\begin{align}
		&\int_{0}^{t}R^{\tilde{K}_{1}}(t,s)\cdot |\xi|^2\d s\nonumber\\&=\int_{0}^{t}\tilde{K}_{1}(t,s)\cdot |\xi|^2\d s+\int_{0}^{t}\Big(\int_{s}^{t}R^{\tilde{K}_{1}}(t,u)\tilde{K}_{1}(u,s)\d u\Big)\cdot |\xi|^2\d s\nonumber\\&=\int_{0}^{t}\tilde{K}_{1}(t,s)\d s\cdot |\xi|^2+\int_{0}^{t}R^{\tilde{K}_{1}}(t,u)\Big(\int_{0}^{u}\tilde{K}_{1}(u,s)ds\Big)\d u\cdot |\xi|^2<\infty.\nonumber
	\end{align}
	Hence, by Lemma \ref{Gronwall}, \eqref{definition of resolvent}, \eqref{finite of resolvent} and \textbf{(H1)}, we obtain that for almost all $t\in [0,T]$,
	\begin{align}\label{bounded-mdp}
		\sup_{n\in\mathbb{N}}|\psi^n_t|^2&=\lim_{m\to\infty}g_m(t)\leq C_{T,N,L_2}\big(1+|\xi|^2\big)+\int_{0}^{t}R^{\tilde{K}_{1}}(t,s)\cdot (1+|\xi|^2)\d s\nonumber\\&\leq C_{T,N,L_2,L_5,L_6}(1+|\xi|^2).
	\end{align}
Denote 
	$$W_{n,m}(t):=\psi^n_t-\psi^m_t$$
	and
	$$g(t):=\limsup_{n,m\to\infty}|W_{n,m}(t)|^2.$$
	By H$\mathrm{\ddot{o}}$lder's inequality, \textbf{(H1)} and \textbf{(H6)}-\textbf{(H7)} we have
	\begin{align}
		|W_{n+1,m+1}(t)|^2&=|\psi^{n+1}_t-\psi^{m+1}_t|^2\nonumber\\&\leq \Big|\int_{0}^{t}K_1(t,s)\cdot\big\|\nabla_{\psi^n_s}b(s,X^0_{s,\xi},\delta_{X^0_{s,\xi}})-\nabla_{\psi^m_s}b(s,X^0_{s,\xi},\delta_{X^0_{s,\xi}})\big\|\d s\Big|^2\nonumber\\&\leq \int_{0}^{t}K_1(t,s)\cdot\big\|\nabla_{\psi^n_s}b(s,X^0_{s,\xi},\delta_{X^0_{s,\xi}})-\nabla_{\psi^m_s}b(s,X^0_{s,\xi},\delta_{X^0_{s,\xi}})\big\|^2\d s\cdot\int_{0}^{t}K_1(t,s)\d s\nonumber\\&\leq C_{T}\int_{0}^{t}K_1(t,s)\cdot|\psi_s^n-\psi_s^m|^2\cdot\big\|\nabla b(s,X^0_{s,\xi},\delta_{X^0_{s,\xi}})-\nabla b(s,0,\delta_{0})+\nabla b(s,0,\delta_{0})\big\|^2\d s\nonumber\\&\leq C_{T,L_2,L_5,L_6}\big(1+|\xi|^2\big)\int_{0}^{t}K_1(t,s)\cdot W_{n,m}(s)\d s.\nonumber
	\end{align}
	By \eqref{bounded-mdp}, \textbf{(H1)} and applying Fatou's lemma, we have
	$$g(t)\leq C_{T,N,L_2,L_5,L_6}\int_{0}^{t}K_1(t,s)\cdot g(s) \d s.$$
	By using Lemma \ref{Gronwall}, we get for almost all $t\in [0,T]$,
	$$g(t)=\limsup_{n,m\to\infty}|W_{n,m}(t)|^2=0.$$
	Hence, there exists a $\phi_t$ such that for almost all $t\in [0,T]$,
	$$\lim_{m\to\infty}|\psi^n_t-\psi_t|^2=0.$$
	Now taking limit in  \eqref{Picard-mdp} gives the existence. The uniqueness follows from a similar calculation. Lyapunov inequality gives desired estimate.
\end{proof}

	\section*{Conflicts of interests}
	The authors declare no conflict of interests.

		{\bf Acknowledgement} S. Liu was supported by China Scholarship Council. Y. Hu was supported by the NSERC discovery fund and a Centennial  fund of University of Alberta. H. Gao was supported in part by the NSFC Grant Nos. 12171084, the Jiangsu Provincial Scientific Research Center of Applied Mathematics under Grant No. BK20233002 and  the fundamental Research
		Funds for the Central Universities No. RF1028623037.
	%  	\bibliographystyle{abbrvnat}
	%    Insert the bibliography data here.
	
	% \bibliographystyle{accountm}
	%	\bibliographystyle{abbrv}
	% \bibliographystyle{nar}
	%\bibliographystyle{plain}
 	\bibliographystyle{amsplain}
	
	\bibliography{ref}

\providecommand{\bysame}{\leavevmode\hbox to3em{\hrulefill}\thinspace}
\providecommand{\MR}{\relax\ifhmode\unskip\space\fi MR }
% \MRhref is called by the amsart/book/proc definition of \MR.
\providecommand{\MRhref}[2]{%
  \href{http://www.ams.org/mathscinet-getitem?mr=#1}{#2}
}
\providecommand{\href}[2]{#2}
\begin{thebibliography}{10}

\bibitem{ALP}
E.~Abi~Jaber, M.~Larsson, and S.~Pulido, \emph{Affine {V}olterra processes},
  Ann. Appl. Probab. \textbf{29} (2019), no.~5, 3155--3200.

\bibitem{AKO}
J.~Ackermann, T.~Kruse, and L.~Overbeck, \emph{Inhomogeneous affine {V}olterra
  processes}, Stochastic Process. Appl. \textbf{150} (2022), 250--279.

\bibitem{BM1}
M.~A. Berger and V.~J. Mizel, \emph{Volterra equations with {I}t\^{o}
  integrals. {I}}, J. Integral Equations \textbf{2} (1980), no.~3, 187--245.

\bibitem{BM2}
\bysame, \emph{Volterra equations with {I}t\^{o} integrals. {II}}, J. Integral
  Equations \textbf{2} (1980), no.~4, 319--337.

\bibitem{Bil}
P.~Billingsley, \emph{Convergence of probability measures}, second ed., Wiley
  Series in Probability and Statistics: Probability and Statistics, John Wiley
  \& Sons, Inc., New York, 1999, A Wiley-Interscience Publication.

\bibitem{CF}
A.~Chiarini and M.~Fischer, \emph{On large deviations for small noise {I}t\^{o}
  processes}, Adv. in Appl. Probab. \textbf{46} (2014), no.~4, 1126--1147.

\bibitem{DZ}
A.~Dembo and O.~Zeitouni, \emph{Large deviations techniques and applications},
  second ed., Applications of Mathematics (New York), vol.~38, Springer-Verlag,
  New York, 1998.

\bibitem{dST}
G.~dos Reis, W.~Salkeld, and J.~Tugaut, \emph{Freidlin-{W}entzell {LDP} in path
  space for {M}c{K}ean-{V}lasov equations and the functional iterated logarithm
  law}, Ann. Appl. Probab. \textbf{29} (2019), no.~3, 1487--1540.

\bibitem{DW}
J.~Duan and W.~Wang, \emph{Effective dynamics of stochastic partial
  differential equations}, Elsevier Insights, Elsevier, Amsterdam, 2014.

\bibitem{GG}
G.~Gripenberg, \emph{On the resolvents of nonconvolution {V}olterra kernels},
  Funkcial. Ekvac. \textbf{23} (1980), no.~1, 83--95.

\bibitem{HIP}
S.~Herrmann, P.~Imkeller, and D.~Peithmann, \emph{Large deviations and a
  {K}ramers' type law for self-stabilizing diffusions}, Ann. Appl. Probab.
  \textbf{18} (2008), no.~4, 1379--1423.

\bibitem{HLL}
W.~Hong, S.~Li, and W.~Liu, \emph{Large deviation principle for
  {M}c{K}ean-{V}lasov quasilinear stochastic evolution equations}, Appl. Math.
  Optim. \textbf{84} (2021), no.~suppl. 1, S1119--S1147.

\bibitem{JP}
A.~Jacquier and A.~Pannier, \emph{Large and moderate deviations for stochastic
  {V}olterra systems}, Stochastic Process. Appl. \textbf{149} (2022), 142--187.

\bibitem{JLZ}
L.~Jie, L.~Luo, and H.~Zhang, \emph{One-dimensional {M}c{K}ean-{V}lasov
  stochastic {V}olterra equations with {H}\"{o}lder diffusion coefficients},
  Statist. Probab. Lett. \textbf{205} (2024), Paper No. 109970, 11.

\bibitem{Kac56}
M.~Kac, \emph{Foundations of kinetic theory}, Proceedings of the {T}hird
  {B}erkeley {S}ymposium on {M}athematical {S}tatistics and {P}robability,
  1954--1955, vol. {III}, Univ. California Press, Berkeley-Los Angeles, Calif.,
  1956, pp.~171--197.

\bibitem{Kac59}
\bysame, \emph{Probability and related topics in physical sciences}, Lectures
  in Applied Mathematics, Interscience Publishers, London-New York, 1959, With
  special lectures by G. E. Uhlenbeck, A. R. Hibbs, and B. van der Pol.

\bibitem{Lak}
E.~H. Lakhel, \emph{Large deviation for stochastic {V}olterra equation in the
  {B}esov-{O}rlicz space and application}, Random Oper. Stochastic Equations
  \textbf{11} (2003), no.~4, 333--350.

\bibitem{LHH1}
M.~Li, C.~Huang, and Y.~Hu, \emph{Asymptotic separation for stochastic
  {V}olterra integral equations with doubly singular kernels}, Appl. Math.
  Lett. \textbf{113} (2021), Paper No. 106880, 7.

\bibitem{LHH2}
\bysame, \emph{Numerical methods for stochastic {V}olterra integral equations
  with weakly singular kernels}, IMA J. Numer. Anal. \textbf{42} (2022), no.~3,
  2656--2683.

\bibitem{LWYZ}
Y.~Li, R.~Wang, N.~Yao, and S.~Zhang, \emph{A moderate deviation principle for
  stochastic {V}olterra equation}, Statist. Probab. Lett. \textbf{122} (2017),
  79--85.

\bibitem{LG}
S.~Liu and H.~Gao, \emph{{V}olterra type {M}c{K}ean-{V}lasov {SDE}s with
  singular kernels: {W}ell-posedness, {P}ropagation of {C}haos and {E}uler
  schemes}, arXiv preprint arXiv:2311.06863 (2023).

\bibitem{LSZZ}
W.~Liu, Y.~Song, J.~Zhai, and T.~Zhang, \emph{Large and moderate deviation
  principles for {M}c{K}ean-{V}lasov {SDE}s with jumps}, Potential Anal.
  \textbf{59} (2023), no.~3, 1141--1190.

\bibitem{NR}
D.~Nualart and C.~Rovira, \emph{Large deviations for stochastic {V}olterra
  equations}, Bernoulli \textbf{6} (2000), no.~2, 339--355.

\bibitem{PP}
\'{E}. Pardoux and P.~Protter, \emph{Stochastic {V}olterra equations with
  anticipating coefficients}, Ann. Probab. \textbf{18} (1990), no.~4,
  1635--1655.

\bibitem{PS1}
D.~J. Pr\"{o}mel and D.~Scheffels, \emph{Mean-field stochastic {V}olterra
  equations}, arXiv preprint arXiv:2307.13775 (2023).

\bibitem{PS}
\bysame, \emph{Stochastic {V}olterra equations with {H}\"{o}lder diffusion
  coefficients}, Stochastic Process. Appl. \textbf{161} (2023), 291--315.

\bibitem{Pro}
P.~Protter, \emph{Volterra equations driven by semimartingales}, Ann. Probab.
  \textbf{13} (1985), no.~2, 519--530.

\bibitem{Qiao}
H.~Qiao, \emph{The central limit theorem for stochastic {V}olterra equations
  with singular kernels}, arXiv preprint arXiv:2303.01715 (2023).

\bibitem{RW}
P.~Ren and F-Y. Wang, \emph{Bismut formula for {L}ions derivative of
  distribution dependent {SDE}s and applications}, J. Differential Equations
  \textbf{267} (2019), no.~8, 4745--4777.

\bibitem{RY}
D.~Revuz and M.~Yor, \emph{Continuous martingales and {B}rownian motion}, third
  ed., Grundlehren der mathematischen Wissenschaften [Fundamental Principles of
  Mathematical Sciences], vol. 293, Springer-Verlag, Berlin, 1999.

\bibitem{RTY}
A.~Richard, X.~Tan, and F.~Yang, \emph{Discrete-time simulation of stochastic
  {V}olterra equations}, Stochastic Process. Appl. \textbf{141} (2021),
  109--138.

\bibitem{SWY}
Y.~Shi, T.~Wang, and J.~Yong, \emph{Mean-field backward stochastic {V}olterra
  integral equations}, Discrete Contin. Dyn. Syst. Ser. B \textbf{18} (2013),
  no.~7, 1929--1967.

\bibitem{Var}
S.~R.~S. Varadhan, \emph{Large deviations and applications}, CBMS-NSF Regional
  Conference Series in Applied Mathematics, vol.~46, Society for Industrial and
  Applied Mathematics (SIAM), Philadelphia, PA, 1984.

\bibitem{WH23}
H.~Wu and J.~Hu, \emph{Mean field backward doubly stochastic {V}olterra
  integral equations and their applications}, Discrete Contin. Dyn. Syst. Ser.
  S \textbf{16} (2023), no.~5, 937--966.

\bibitem{ZX1}
X.~Zhang, \emph{Euler schemes and large deviations for stochastic {V}olterra
  equations with singular kernels}, J. Differential Equations \textbf{244}
  (2008), no.~9, 2226--2250.

\bibitem{ZX2}
\bysame, \emph{Stochastic {V}olterra equations in {B}anach spaces and
  stochastic partial differential equation}, J. Funct. Anal. \textbf{258}
  (2010), no.~4, 1361--1425.

\end{thebibliography}
	
\end{document}